\def\be{\begin{equation}}
\def\ee{\end{equation}}
\def\ba{\begin{eqnarray}}
\def\ea{\end{eqnarray}}
\def\id{\mathrm{id}}
\def\CE{{\mathcal E}}
\def\CA{\mathcal A}
\def\r{\mathfrak{r}}
\def \K{\mathbb K}
\def \otimesk {\otimes_{\mathbb K}}
\def\CO{\mathcal{O}}
\def\CI{\mathcal{I}}
\def\CJ{\mathcal{J}}
\def\CC{\mathcal{C}}
\def\CT{\mathcal{T}ree}
\def\FM{\mathfrak M}
\def \CEb{\mathcal E_{\bullet}}
\def \FMb {\mathfrak M_{\bullet}}
\newcommand{\card}{\mathrm{card}}
\newcommand{\SdegFM}{
{S}^{\geq 2}({\mathcal T}ree[\FM_{\bullet}])
}
\newcommand{\Sdeg}{
{S}^{\geq}
}
\newcommand{\TdegFM}{
{{\mathcal T}ree}^{\geq 2}[\FM_{\bullet}]
}
\newcommand{\Tdeg}{
{{\mathcal T}ree}^{ \geq 2}
}
\newcommand{\tdeg}{
{Tree}^{ \geq 2}
}
\newcommand{\p}{\mathrm p^{ \geq 2}}
\def \ptr {{\mathrm{p}}^1_{|}}
\def \pv {{\mathrm{p}}^1_{\vee}}
\newcommand{\wtree}{\scalebox{0.15}{\, \, \begin{forest}
for tree = {grow' = 90}, nice empty nodes, for tree={ inner sep=0 pt, s sep= 0 pt, fit=band, 
},
[[, tier =1] [, tier =1] [, tier= 1]]
\path[fill=black] (.parent anchor) circle[radius=2pt];
\end{forest}}
}
\newcommand{\fourtree}{\scalebox{0.19}{\, \, \begin{forest}
for tree = {grow' = 90}, nice empty nodes, for tree={ inner sep=0 pt, s sep= 0 pt, fit=band, 
},
[[, tier =1] [, tier =1] [, tier= 1][, tier= 1]]
\path[fill=black] (.parent anchor) circle[radius=2pt];
\end{forest}}
}
\def \vtree {\vee}
\newcommand{\ltree}{\scalebox{0.10}{\,\, \begin{forest}
for tree = {grow' = 90}, nice empty nodes,
[
 [
 [, tier =1] 
 [, tier = 1]
 ]
 [, tier =1]
]
\path[fill=black] (.parent anchor) circle[radius=2pt]
                (!1.child anchor) circle[radius=2pt];
\end{forest}
}
}
\newcommand{\rtree}{\scalebox{0.10}{\,\, \begin{forest}
for tree = {grow' = 90}, nice empty nodes,
[
 [, tier =1]
 [
 [, tier =1] 
 [, tier = 1]
 ]
]
\path[fill=black] (.parent anchor) circle[radius=2pt]
                (!2.child anchor) circle[radius=2pt];
\end{forest}
}
}
\newtheorem{theorem}{Theorem}[section]
\newtheorem*{theorem*}{Theorem}
\newtheorem{proposition}[theorem]{Proposition}
\newtheorem{remark}[theorem]{Remark}
\newtheorem{example}[theorem]{Example}
\newtheorem{definition}[theorem]{Definition}
\newtheorem{corollary}[theorem]{Corollary}
\newtheorem{lemma}[theorem]{Lemma}
\newtheorem{convention}[theorem]{Convention}
\newtheorem{construction}[theorem]{Construction}
\newtheorem{question}[theorem]{Question}
\title{Koszul-Tate resolutions and decorated trees}
\author[A. Hancharuk]{Aliaksandr Hancharuk}
\address{Aliaksandr Hancharuk: Department of Mathematics, Jilin University, Changchun 130012, Jilin, China}
\email{hancharukATjlu.edu.cn}
\author[C. Laurent-Gengoux]{Camille~Laurent-Gengoux}
\address{Camille~Laurent-Gengoux: 
Universit\'e de Lorraine, CNRS, IECL, F-57000 Metz, France }
\email{camille.laurent-gengouxATuniv-lorraine.fr}
\author[T. Strobl]{Thomas Strobl}
\address{Thomas Strobl: Institut Camille Jordan, %Universit\'e de Lyon, 
Universit\'e Claude Bernard Lyon 1, 69622 Villeurbanne,  France}
\email{stroblATmath.univ-lyon1.fr}
\date{\today}
\begin{document}
\tikzset{mystyle/.style={xshift = - 2.4ex, yshift = - 1.5ex, align=left}}

\tikzset{mystyle2/.style={xshift = - 1.7ex, yshift = - 2.ex, align=left}}

\tikzset{mystyle3/.style={xshift = + 2.4ex, yshift = - 1.5ex, align=left}}

\begin{abstract}\noindent Given a commutative algebra $\CO$, a proper ideal $\CI$, and a resolution of  $\mathcal O/ \mathcal I$ by projective $\mathcal O $-modules, we construct an explicit Koszul-Tate resolution. We call it the arborescent Koszul-Tate resolution since it is indexed by decorated trees.
%, i.e. a differential graded commutative symmetric algebra resolution,  of the quotient $\mathcal O/ \mathcal I$ of a commutative algebra $\mathcal O$ by an ideal $\mathcal I $
When the $ \CO$-module resolution has finite length, only  finitely many operations are needed in our constructions---this is to be compared with the classical Tate algorithm, which requires infinitely many such computations if $ \CI$ is not a complete intersection.
 % Examples and applications are discussed.
  As a by-product of our construction, the initial projective $\CO $-module resolution becomes equipped with an explicit $A_\infty$-algebra. 
  %Several consequences are derived.
  %, and give a partial answer to the question of knowing if that projective $\CO $-resolution can be made a graded algebra.
%  {\tiny{For $\CO$ being a polynomial ring and $\CI$ a monomial ideal, we use part of the arborescent Koszul-Tate structure to give an elementary proof of a theorem by Avramov
 % \cite{Avramov}: for the ideal $\CI$ not generated by a regular sequence, any Koszul-Tate resolution has infinitely many generators. Last, for $\CO$ being a local Noetherian ring or a graded polynomial ring we give a definition of a minimal Koszul-Tate resolution and we study its properties.}}
 \end{abstract}

\maketitle

\tableofcontents

\section{Introduction}
\label{sec:introduction}
\noindent Let $\mathcal O $ be a commutative algebra and $\mathcal I \subset \mathcal O $ an ideal. Then the quotient $\CO/\CI $ is
\begin{enumerate}
\item[i.] an $\mathcal O $-module (if $ \mathcal O$ is unital, it is even a \emph{cyclic module}, i.e. a module with one generator) and 
\item[ii.]  a commutative algebra.
\end{enumerate}
\noindent
Correspondingly, there are two different meanings of "a resolution of $\mathcal O/\mathcal I $".
 
\begin{enumerate}
\item[i.] If we forget the algebra structure and only keep in mind the $\mathcal O $-module structure, then, as for any $\mathcal O $-module, it is natural to look for a resolution $(\FM_\bullet,d) $ of $ \CO/\CI$ by free or projective  $ \mathcal O$-modules, starting with $ \CO$ itself:
\begin{equation}
    \label{eq:MdO}
\begin{tikzcd}[row sep=2.5em, column sep=2.5em]
  \cdots \arrow[r, "d"] & \FM_{k} \arrow[r, "d"] & \cdots \arrow[r, "d"] & \FM_1 \arrow[r, "d"] & \CO \arrow[r, ] & \CO/\CI \arrow[r, ]& 0.
\end{tikzcd}
\end{equation}
\noindent More precisely, a chain complex as in \eqref{eq:MdO} such that
\begin{enumerate}
    \item every $\FM_k$ is a projective (or free) $\mathcal O $-module and  $d $ is $\mathcal O $-linear, and
     \item the complex has no homology in positive degrees while $H_0(\FM_{\bullet}, d) = \CO/d(\FM_1)=\CO/\CI$, %(i.e. it is a resolution),
    %
%    (i.e. it is a resolution in the category of projective (resp. free) $\mathcal O $-modules).
    %\item  we can assume the resolution starts with $\CO $ and $ d\colon \CO \to \CO/\CI$ is the quotient map.
\end{enumerate}
is called a projective (or free) \emph{$\mathcal O $-module  resolution} of the quotient $ \CO/\CI$.

\item[] Note that, at least a priori,  $\CO \oplus \FM_\bullet :=\mathcal O \oplus \FM_1 \oplus \FM_2 \oplus \cdots $ is an $\mathcal O $-module, but not a ring or an algebra.
%However, later on in this article, we equip $\CO \oplus \FM_\bullet $
%with the structure of an $A_\infty$-algebra.

\item[ii.] If we do not want to forget the algebra structure on $\CO/\CI$, then one may look for a free/projective resolution as in \eqref{eq:MdO} such that $\CO \oplus \FM_\bullet  $ admits a graded commutative product compatible with the differential $d$ making it a differential graded commutative algebra.
It is known  that such a commutative product may not exist on a \emph{given} $\CO$-module resolution
$(\FM_\bullet,d)$: See, for example, Appendix 7 in \cite{zbMATH03468959}, which provides a negative answer to a conjecture in \cite{zbMATH03580664}, or see  \cite{zbMATH03699062} and \cite{Katthan2019} for further counter-examples. 

\item[] 
 A particular resolution that also respects the algebra structure, extending the one on $\CO$ in a particular way is what is called a (free or projective) \emph{Koszul-Tate resolution} \cite{Tate} of $\mathcal O/\mathcal I$.%\footnote{
%In this article, if not mentioned explicitly otherwise, we will always assume Koszul-Tate resolutions to be free and simply call them KT resolutions.} 
By definition the latter is a pair $(\mathcal A,\delta)$ such that 
%a graded-symmetric algebra over a projective module concentrated in positive degree
% $$  KT := \oplus_{i \geq 0}  KT_i,  $$
% is a graded commutative algebra concentrated in positive degree, and $\delta $ is a degree $ -1$ derivation squaring to zero satisfying the next two conditions:
 \begin{enumerate}
  \item  $\mathcal A$ is a graded commutative $\mathcal O $-algebra isomorphic to the symmetric algebra generated by some free/projective $\CO$-module $\CE_\bullet = (\CE_i)_{i \geq 1}  $.
     \item $\delta$ is a degree $-1$ derivation of $\mathcal A$ squaring to zero whose homology is zero in all degrees except for in degree $0$, where it is $\CO/\CI $.
 \end{enumerate}
Let us give some explanations on these two conditions.
Condition (a)
means that: 
      $$\begin{array}{rcl}
           {\mathcal A} &:=& S ( \oplus_{i=1}^\infty \CE_i ) 
           \end{array}
           $$
           where the symbol $S$ stands for the graded symmetric $\mathcal O $-algebra generated by the graded $\CO$-module $\CE_\bullet$  and $\odot$ stands for its product (see Convention \ref{conv:tensors} below). In lowest degrees:
           $$
           \begin{array}{rcl}
           {\mathcal A} &=& \underbrace{\CO}_{=\mathcal A_0} \oplus \underbrace{\CE_1}_{=\mathcal A_1}  \oplus  \, \underbrace{\CE_2 \oplus \CE_1\odot \CE_1 }_{=\mathcal A_2} \oplus  \, \underbrace{\CE_3 \oplus \CE_1\odot \CE_2 \oplus \CE_1\odot \CE_1 \odot \CE_1 }_{=\mathcal A_3}   \, \oplus \cdots \: .
      \end{array}$$
%      In this article, the tensor product $\otimes $ of $\CO$-modules and its graded symmetrization $\odot$ are by default taken over $\CO$. When considering the tensor product over a field $\mathbb K$, the notations $\otimes_{\mathbb K}$ will be used. 
    Condition (b)  
  means that  the complex 
  \[
\begin{tikzcd}[row sep=2.5em, column sep=2.5em]
  \cdots \arrow[r, "\delta"] & \CA_{3} \arrow[r, "\delta"] & \CA_2 \arrow[r, "\delta"] & \CA_1 \arrow[r, "\delta"] & \CO \arrow[r, "0"]& 0 
\end{tikzcd}
\]
is an $\CO$-module resolution of $ \CO/\CI$ by free/projective $ \CO$-modules:
%  admits the following homology
      $$ H_i (\mathcal A,\delta )=0 \hbox{ for $ i \geq 1$ and }  H_0 (\mathcal A,\delta ) = \mathcal O/\mathcal I  .$$
      The last condition means that necessarily $\delta ( \mathcal A_1) = \mathcal I  $.
%      It implies that the pair $(\oplus_{i \geq 0} KT_i, \delta)  $ is a projective $\mathcal O $-resolution of $\mathcal O/\mathcal I $.
  %    \item[] Since, for degree reason, $\delta=0 $ on $\mathcal O $, the derivation $\delta$ is $\mathcal O $-linear.
      \end{enumerate}

%see, e.g. \cite{Avramov} who calls them "DG algebra resolutions", especially in deformation theory, see e.g. \cite{Fantechi}. 
%This aspect will be developed on further in a forthcoming paper \cite{Sasha-Thomas}.
%{\bf SHORTEN}
%There are several instances where Koszul-Tate resolutions are used: first, from the physics side it is an essential part of
 % Batalin-(Fradkin)-Vilkoviskiy techniques (see e.g. \cite{BFV1, BFV2, BV1, BV2}). The B(F)V technique is a homological construction that associates to a Poisson algebra $(\CO,\{\cdot,\cdot\})$ and an ideal $\CI\subsetneq \CO$ stable under Lie algebra bracket $\{ \CI,\CI\} \subset \CI $ a differential graded Poisson algebra $(P, Q)$ that encodes the Poisson subalgebra $N(\CI)/\CI$ \cite{HT, stasheff_poisson}. This is used later in the path integral quantization of gauge theories \cite{HT}. These resolutions are used as well in the theory of PDE \cite{Verbovetsky2001RemarksOT}, see \cite{Grigoriev_2019} for a development in the direction of gauge PDE. 

%They are interesting by themselves as well, since they provide an example of infinite minimal free resolutions \cite{mccullough2015infinite}, a topic largely undeveloped. 
% Koszul-Tate resolutions are also used in algebraic geometry using different names, see, e.g. \cite{Avramov} who calls them "DG algebra resolutions", especially in deformation theory, see e.g. \cite{Fantechi}.

%\normalfont
\vspace{1mm} 
 
\noindent
The existence of free resolutions of any $\CO $-module, in the sense of both item i.\ and item ii.\ above, is a classical result of commutative algebra. 
In the first case and for the $\CO $-module $\CO/\CI $ this works as follows:   
    \begin{construction} $ $ %\textit{Free resolution}
    \label{const:free.resolution}
        \normalfont
        % For the quotient algebra $\CO/\CI $, the construction goes as follows:
    \begin{enumerate}
        \item[(a)] We choose generators $(E_j)_{j \in J} $ of the $ \CO$-module $\mathcal I $, labelled by a (not necessarily finite) set $J$. 
        \item[(b)] We define $ \FM_1$ to be the free $\CO $-module generated by elements in $J$, i.e. sums of the form 
        $$ \sum_{j \in J} f_j [j]  $$ 
        where only finitely many of the elements $f_j \in \CO$ are not equal to zero. 
        We define $d \colon \FM_1 \to \CO $ to be the $\mathcal O $-linear map
        \begin{equation}\label{eq:dM1}d \, \colon \,  \sum_{j \in J} f_j [j] \mapsto  \sum_{j \in J} f_j  E_j .\end{equation}
        \item[(c)] We choose generators $ (F_k)_{k \in K} $ of the kernel of $d \colon \FM_1 \to \CO $, then define $\FM_2 $ to be the free $\mathcal O $-module generated by $K$, %i.e. sums
%          $$ \sum_{j \in J} g_j [j] $$
  %      where only finitely many of the elements $g_j \in \CO$ are not equal to zero. 
   %     And 
  and we define $d \colon \FM_2 \to \FM_1$ by
        $$d \, \colon \,  \sum_{k \in K} g_k [k] \mapsto  \sum_{k \in K} g_k  F_k .$$
        The procedure continues by recursion. 
    \end{enumerate}   
    \end{construction}

    \vspace{1mm} 
 
\noindent
    Moreover, there are wide classes of algebras $\CO$  for which this construction can be chosen to be "small".
 For instance, if $\CO $ is a Noetherian algebra, then each one of the free $ \CO$-module  $\FM_k $ can be chosen to be finitely generated. Moreover,  some Noetherian algebras $\mathcal O$ have the property that for any ideal $\CI$ the quotient $\CO/\CI$ 
 admit a free $\mathcal O$-module resolution $(\FM,d)$ of finite length, i.e. satisfy that there exists an integer $N$ such that $\FM_k=0 $ for $k \geq N+1$. Noetherian algebras admitting such resolutions of finite length are called \emph{Syzygy algebras}.  
 A classical example of a Syzygy algebra is the algebra  $ \mathbb K[X_1, \dots, X_{N}] $ of polynomials in $N$ variables, with $\mathbb K=\mathbb R $ or $\mathbb C $. Also, germs at $0$ of real analytic functions on $\mathbb R^{N} $, or holomorphic functions on $\mathbb C^{N} $, are Syzygy algebras \cite{Tougeron1976}. There are also many instances of Noetherian algebras which are not Syzygy algebras: for instance the algebra of functions on a singular affine variety in $\mathbb C^n$ is in general not a Syzygy algebra. For practical purposes, especially for Syzygy algebras, one may find explicit $\CO$-module resolutions using the Macaulay software \cite{Macaulay}.
 
 %Those are examples of \emph{Syzygy rings}, i.e. rings $\mathcal O $ for which any $\mathcal O $-module admits a free resolution of finite length and of finite rank in all degree.
    %\item [(ii)]
    \vspace{1mm} 
 
\noindent
     Koszul-Tate resolutions can be constructed by the  \emph{Tate algorithm}, that we now briefly describe. 
     \begin{construction} Tate algorithm:
         \label{const:tate.algorithm}
         \normalfont
    \begin{enumerate}
     \item[  
     (a)] The module $\mathcal E_1$ can be constructed exactly as the module $\FM_1$ in Construction \ref{const:free.resolution}: $\mathcal E_1 : = \FM_1$ is a free module equipped with an $\CO $-linear map $ d_1: \FM_1 \longrightarrow \CO$ such that $d_1(\FM_1)=\CI$. 
        \item [(b)] The differential $d_1\colon {\mathcal E}_1 \to\CO$ extends to  $S( {\mathcal E}_1)$  as a graded derivation of degree $-1 $ squaring to zero that we denote by $ \delta_1$.  By construction, $H_0( S( {\mathcal E}_1),\delta_1) =\CO/\CI $. Of course,  in general $H_1( S( {\mathcal E}_1),\delta_1) \neq 0$ but it is an $\CO $-module. Hence, the free $\CO $-module  $ {\CE}_2$ generated by generators of $H_1( S( {\mathcal E}_1),\delta_1) $ comes equipped with an $\CO$-linear map $d_2\colon   {\mathcal E}_2  \to S( {\mathcal E}_1)_1 $  whose image is $H_1(S( {\mathcal E}_1), \delta_1)$. %Now, by construction, $H_1( S( {\mathcal E}_1) \oplus {\mathcal E}_2,\delta_1)=0 $.  
        The maps $ \delta_1,d_2$ then extend to a degree $-1$ derivation $\delta_2$ of $ S( {\mathcal E}_1 \oplus  {\mathcal E}_2)$  that squares to zero. Now, by construction, 
         $$ H_0( S( {\mathcal E}_1 \oplus  {\mathcal E}_2), \delta_2) = \CO/\CI \hbox{ and } H_1( S( {\mathcal E}_1 \oplus  {\mathcal E}_2), \delta_2)=0  .$$
        \item[(c)] The procedure is repeated in order to construct a sequence  $( {\mathcal E}_{k})_{k \geq 1}$ of free $\CO $-modules and a sequence $d_k  \colon  {\mathcal E}_{k} \to S( {\mathcal E}_{1}\oplus \dots \oplus  {\mathcal E}_{k-1})_{k-1}$ of $\CO$-linear maps such that for every $k > 1$ the image of $d_k$ generates $ H_{k-1}( S( {\mathcal E}_{1}\oplus \dots \oplus  {\mathcal E}_{k-1}), \delta_{k-1}) $. Here $\delta_{k-1}$ stands for the degree $-1$ derivation whose restriction to $ {\mathcal E}_i$ is $ d_i$ for all $i \leq k-1$.
        It is easily proven by recursion that for all $k \geq 1$, $d_k , \delta_{k-1}$ extend to a degree $-1$ derivation such that $\delta_k^2=0 $  and
        $$ H_0\left( S({\mathcal E}_{1}\oplus \dots \oplus  {\mathcal E}_{k} ), \delta_k \right) = \CO/\CI \hbox{ and  }  H_i\left( S({\mathcal E}_{1}\oplus \dots \oplus  {\mathcal E}_{k}), \delta_k \right)=0 \hbox{ for all $i \leq k$ .} $$ 
    \end{enumerate}  
     \end{construction}
\noindent    This implies that the pair 
     $$ (S( {\mathcal E}_{1}\oplus  {\mathcal E}_{2} \oplus \cdots ) , \delta),$$ 
     with $\delta$  being a degree $-1$ derivation whose restriction to $\mathcal E_i $ is $d_i$ for all $i \geq 1 $, is a Koszul-Tate resolution of $\CO/\CI$.
% \end{enumerate}
   
 \vspace{1mm}

    \vspace{1mm} 
 
\noindent
    Although both algorithms seem to be similar at first sight, the Tate algorithm used to produce a free Koszul-Tate resolution is in general significantly more complicated than the algorithm constructing an $\CO$-module resolution. Generically, even for a Syzygy algebra $\CO $, there exist infinitely many $\mathcal E_k$ appearing in the Tate algorithm. This means that one has to compute infinitely many homologies and needs to make equally many  choices of generating sets for those homologies.  Let us remark in parentheses that there is also the following more subtle difference: If, when computing an $ \CO$-module resolution, one finds, for example,  $\FM_{16}=0$, then one can stop the computation there. In contrast, for the Tate algorithm, if one arrives at the possible choice $\mathcal E_{16}=0 $, one still can be forced to put $\mathcal E_{17} \neq 0 $; so there is no obvious manner to say "we can stop there". 
    
    \vspace{1mm}

    \noindent 
    In conclusion, \emph{even for Syzygy algebras whose $\CO$-module resolutions can be found by making finitely many operations, the Tate algorithm obliges one to compute infinitely many homologies 
and to do infinitely many choices.}  As a result, for most ideals $\CI$ of an algebra $ \CO$, no Koszul-Tate resolution of $\CO/\CI $ is explicitly known, although it is possible to find an explicit $\CO$-module resolution. 

\vspace{1mm}

\noindent
The purpose of the present article is mainly to answer the following natural question:

\begin{question}
\label{question:howto}
Is there a way to construct a Koszul-Tate resolution by performing only finitely many homology computations (and having therefore only finitely many choices to take)? 
\end{question}

\noindent
We will provide a positive answer to this question %\ref{question:howto} 
in the case when an $ \CO$-module resolution of finite length and of finite ranks exists, so, e.g., always if  $\mathcal O$ is a Syzygy algebra.

\vspace{1mm} \noindent In addition, with our construction, there are not only finitely many computations and choices to be made, but we also know when we can stop the construction: at the $N$-th step where $N$ is the length of $(\FM_\bullet,d)$.

\noindent

    \vspace{1mm}

\noindent
We now explain how to answer Question \ref{question:howto}. We start with given a projective $\CO$-module resolution $(\FM_\bullet,d) $ as in \eqref{eq:MdO}. Then we consider the free  $\CO $-module $\CT[\FM_\bullet] $  generated by rooted trees  with leaves decorated by homogeneous elements of $ \FM_\bullet $, e.g. elements of the form: 
\begin{equation}
    \label{eq:treesdecorated}
    \begin{array}{ccccc}  
    \scalebox{1.33}{
      \begin{forest}
for tree = {grow' = 90}, nice empty nodes, for tree={ inner sep=0 pt, s sep= 0 pt, fit=band, 
},
[[\scalebox{0.75}{$a_1$}, tier =1]];
\end{forest} 
}
&
\scalebox{1.33}{
 \begin{forest}
for tree = {grow' = 90}, nice empty nodes, for tree={ inner sep=0 pt, s sep= 0 pt, fit=band, 
},
[[\scalebox{0.75}{$a_1$}, tier =1] [\scalebox{0.75}{$a_2$}, tier =1]]
\path[fill=black] (.parent anchor) circle[radius=1.5pt];
\end{forest} 
}
&
    \scalebox{0.75}  {\begin{forest}
for tree = {grow' = 90}, nice empty nodes, for tree={ inner sep=0 pt, s sep= 0 pt, fit=band, 
},
[
    [\scalebox{1.33}{$a_1$}, tier =1]
    [
        [\scalebox{1.33}{$a_2$}, tier =1] 
        [\scalebox{1.33}{$a_3$}, tier =1]
    ]
]
\path[fill=black] (.parent anchor) circle[radius=2.67pt]
(!2.child anchor) circle[radius=2.67pt];
\end{forest}
 }     
&\scalebox{0.75}{
      \begin{forest}
for tree = {grow' = 90}, nice empty nodes, for tree={ inner sep=0 pt, s sep= 0 pt, fit=band, 
},
[
    [\scalebox{1.33}{$a_1$}, tier =1]
    [
        [\scalebox{1.33}{$a_2$}, tier =1] 
        [\scalebox{1.33}{$a_3$}, tier =1]
    ]
    [\scalebox{1.33}{$a_4$}, tier =1]
]
\path[fill=black] (.parent anchor) circle[radius=2.67pt]
(!2.child anchor) circle[radius=2.67pt];
\end{forest}
}
&     
\scalebox{0.5}{
        \begin{forest}
for tree = {grow' = 90}, nice empty nodes, for tree={ inner sep=0 pt, s sep= 0 pt, fit=band, 
},
[
    [\scalebox{2}{$a_1$}, tier =1]
    [
        [\scalebox{2}{$\,\,a_2$}, tier =1] 
        [[\scalebox{2}{$a_3$}, tier =1][\scalebox{2}{$a_4$},tier = 1]]
    ]
     [\scalebox{2}{$\,\,a_5$}, tier =1]
]
\path[fill=black] (.parent anchor) circle[radius=4pt]
(!2.child anchor) circle[radius=4pt]
(!22.child anchor) circle[radius=4pt];
\end{forest}    
}
\end{array}
\end{equation}
with $a_i \in \FM_{\bullet}$ of fixed degree for $i =1, \dots, 5$.
 Such trees have a natural $\CO$-module structure. It becomes graded by declaring the degree of a decorated tree to be the sum of the degrees of the decorations in $\FM_\bullet$ plus the number of vertices. 
 %with respect to $$ F drawing = $$ 
 The main point of the present article is to equip the graded symmetric algebra $S(\CT[\FM_\bullet]) $  of $\CT[\FM_\bullet] $, thought of as a forest of such trees,
 with an explicit differential $\delta $ that makes it a Koszul-Tate  resolution of $\CO/\CI $. We call it the \emph{arborescent Koszul-Tate resolution of $\CO/\CI $} (attached to the free or projective $\CO$-module resolution $(\FM_\bullet,d)$). When constructed, this will answer positively Question \ref{question:howto}. 

\vspace{0.1cm}
 \normalfont
\noindent Moreover, this differential has several components, which are of the following types:
 \begin{enumerate}
    \item The first component consists of acting on the decorations attached to the leaves, i.e.\ on the corresponding elements in $\FM_\bullet$.  Here $\delta$ acts in most cases simply as $d$---up to appropriate signs determined from the tree. 
    \item The second component operates on the trees only: %(ignoring the decorations): 
    Either by removing the root, resulting in a contribution in terms of  a forest, or by 
    removing inner edges (adding appropriate signs). 
     \item  The last and the most intriguing component is the following one: For a tree $t$ with $n$ leaves and $k$ inner vertices it is determined by an $n$-linear map $\psi_t \colon  \otimes^n  \FM_\bullet \to  \FM_\bullet$ of degree $k$. 
 \end{enumerate}
\noindent    
    We will call the maps $\psi_t$, indexed by trees $t$,  \emph{arborescent operations}. Their construction is the only one which %is not mechanical and 
requires choices. Moreover and most importantly, for degree reasons, if the resolution $(\FM_\bullet,d)$ has finite length, there exist only finitely many non-trivial arborescent operations, resulting in only  a finite number of  computations and  choices.
 
\vspace{0.1cm}
\noindent
It is a natural question to ask what is the structure of these "mysterious" arborescent operations.
A complete answer probably involves operads, but we have an answer which is sufficient to explain their importance. Since any Koszul-Tate resolution of $\CO/\CI $ is in particular a projective  $\CO $-module resolution of $\CO/\CI$, and since  any two projective  $\CO $-module resolutions of $\CO/\CI$ are homotopy equivalent,
$(\FM_\bullet,d)$ is homotopy equivalent to any Koszul-Tate resolution. 
%In fact, we will show that it is even a deformation retract of the arborescent Koszum-Tate resolution. 
As a consequence, the homotopy transfer theorem \cite{Kontsevich:2000yf} applies
and yields an $A_\infty$-algebra structure on the complex 
\begin{equation}
\label{eq:complexAinfty}
\begin{tikzcd}[row sep=2.5em, column sep=2.5em]
  \cdots \arrow[r, "d"] & \FM_{k} \arrow[r, "d"] & \cdots \arrow[r, "d"] & \FM_1 \arrow[r, "d"] & \CO \arrow[r, ] & 0.
\end{tikzcd}
\end{equation}
\vspace{1mm} 
 
\noindent
This description is quite abstract. However, Theorem \ref{thm:ainfty} below gives explicit formulas for the $A_\infty$-algebra products in terms of the arborescent operations. We show that these products are even those of a $ C_\infty$-algebra, i.e. the graded commutative equivalent of $ A_\infty$-algebras. We give two proofs of these facts: one  using the homotopy transfer theorem and an \emph{ab initio} proof using only the tools developed here.
 \vspace{1mm} 

\vspace{0.1cm}
\noindent
Koszul-Tate resolutions play an essential role in the Batalin-(Fradkin)-Vilkoviskiy formalism:  It was created in the physics literature in \cite{BFV1, BFV2} to handle the omni-present gauge theories, then formalized, e.g., in \cite{HT, stasheff_poisson,Kazdhan-Felder,MuellerLennart}, where Koszul-Tate resolutions are often taken as a given starting point. 
Koszul-Tate resolutions are also used for PDE's with symmetries, see, e.g., \cite{Grigoriev_2019}.
Finally, they also appear, under different names, in deformation theory of algebraic varieties, see, e.g., \cite{Fantechi}.

\vspace{1mm}
\noindent
In the literature there exists an explicit construction of an algebra resolution by Chuang and King \cite{Chuang-King}, which even works for non-commutative $\CO$.
However,
their algebra is not a graded symmetric algebra---and thus not a Koszul-Tate resolution.

\vspace{1mm} \noindent 
In this article, we provide answers to several side questions which are of interest in their own: For instance, we define the notion of a \emph{minimal} Koszul-Tate resolution, which in good cases reduces to a Koszul-Tate resolution with a minimal number of generators. We also introduce a  \emph{reduced complex} associated uniquely to any $\CO$ and $\CI$ which has the following property: 
 Its homology counts the minimal number of generators needed in a Koszul-Tate resolution at each degree. While arborescent Koszul-Tate resolutions are not minimal, they are very useful in determining lower bounds to the number of generators of any Koszul-Tate resolution. 
 
 \vspace{1mm} 
 
\noindent
 Section \ref{sec:arborescent} is devoted to the construction of the arborescent Koszul-Tate resolution, with a particular emphasis on the arborescent operations. Section \ref{sec:Cinfty} explains how some components of an arborescent Koszul-Tate resolution encode an $A_\infty$- and $C_\infty$- algebra on the module resolution %, which is even a $  algebra.} %Satisfying some additional symmetry properties, it is even a $ C_\infty$ algebra.}
 Examples of Koszul-Tate  resolutions are given in Section \ref{sec:examples}.
 In Section \ref{sec:complexity} we study  the complexity of the construction of arborescent Koszul-Tate resolutions and compare it with the Tate algorithm. %\ref{const:tate.algorithm}.
 In appendix \ref{app:reduced}, we give a short and complete proof of the up-to-homotopy uniqueness of the Koszul-Tate resolution, and we define its reduced complex mentioned above.
 In Appendix \ref{app:minimal} we introduce the notion of minimality for Koszul-Tate resolutions.

%We restrict the arborescent Koszul-Tate resolution to the zero locus of $\CI$: .

\vspace{5mm} 
 
\noindent
We conclude the introduction by a few conventions:

\begin{convention}
In this article, $ (\FM_\bullet,d)$ denotes an $\CO$-module resolution  of $\CO/\CI $  as in \eqref{eq:MdO}.
Also, $\FM_\bullet$ and $\CO\oplus\FM_\bullet$ denote the graded vector spaces $\FM_1 \oplus \FM_2 \oplus \cdots  $ and  $\CO \oplus \FM_1 \oplus \FM_2 \oplus \cdots  $, respectively, without any reference to the differential $d$.
\end{convention} 

%\noindent
%Also, we recall our conventions for tensor products.

\begin{convention}
\label{conv:tensors}
Given a graded $\CO $-modules $\mathcal E_\bullet$ and $\mathcal F_\bullet$, the notations   $ \mathcal E_\bullet \otimes \mathcal F_\bullet$ and $\mathcal E_\bullet \odot \mathcal F_\bullet $ stand for the graded tensor and graded symmetric tensor product  over the algebra $\CO $, respectively. The tensor and symmetric tensor products over the base field $ \mathbb K$ are denoted by $ \otimes_\mathbb K$ and $\odot_\mathbb K $, respectively. 
For $\mathcal E_\bullet$ a graded $\CO$-module,  $$ S(\mathcal E)= \oplus_{k=0}^\infty S^k(\mathcal E) \vspace{-6mm}$$ 
stands by default for the symmetric algebra over $\CO $. Here, $S^0(\CE):=\CO$ and 
 $S^k(\mathcal E) := \overbrace{\mathcal E_\bullet \odot \cdots \odot \mathcal E_\bullet}^{ \hbox{\normalfont $k$ times}} $. Finally, $S^{\geq 2}(\CE) := \oplus_{k \geq 2} S^{k}(\mathcal E)$.
\end{convention}

\begin{convention}
Let us define what we call an \emph{oriented planar tree $t \in {\mathrm{Tree}}$}. 
It is a connected finite planar graph with oriented edges. Its vertices $A \in \mathrm{Ver}(t)$ are of three possible types: internal vertices, denoted by ${\mathrm{InnVer}}(t) $, have one in-going and at least two out-going edges. The external vertices have either no in-going edge, in which case it is called the \emph{root} of $t$, or no out-going edge, in which case this vertex is called a \emph{leaf} of $t$. Given a vertex $A$ which is not a leaf, we call the vertices connected to $A $ by one out-going  edge the \emph{children} of $ A$. The vertices related to $A$ by a chain of out-going  edges are called the \emph{descendants} of $A$. 

\vspace{1mm} \noindent 
We also assume that the leaves are totally ordered. This induces a total order on the outgoing edges of each inner vertex as well as of the root. 
We say that a tree is \emph{binary} if every vertex $A$ which is not a leaf has precisely two children. Then the first out-going edge is called \emph{left}, and the second one \emph{right}. The whole subtree starting from the left edge of $A$ is said to be the \emph{left subtree}: its root is the child to the left of $A$.  
The \emph{right tree} is defined correspondingly. Notice that the right and left subtrees of a leaf are the empty set.
\end{convention}
 
\section{Arborescent Koszul-Tate resolutions associated to 
a module resolution $(\FM_\bullet,d)$}

\label{sec:arborescent}

\noindent In this section, we present an explicit and constructive way to obtain a Koszul-Tate resolution of $\CO/\CI $ out of an $\CO$-module resolution $(\FM_\bullet,d) $ of $ \CO/\CI$ as in \eqref{eq:MdO}. 

\vspace{1mm} 
 
\noindent
%We proceed as follows.
In Section \ref{sec:space}, we describe the free graded commutative algebra $S \left(\CT[\FM_\bullet]  \right)$ generated by the $\CO$-module $\CT[\FM_\bullet] $ of trees with leaves decorated by elements of $\FM_\bullet$. 
%In Section \ref{sec:arborescentoperations}, we introduce the notion of arborescent operations.
In Section \ref{sec:differential}, we explain how to equip the graded commutative algebra
$S \left(\CT[\FM_\bullet]  \right)$
with a differential $\delta_\psi $, that depends on some recursively constructed multi-linear maps $\psi$ on $\FM_\bullet $, the arborescent operations mentioned above.
Theorem \ref{thm:isKT} shows that the homology of $\delta_\psi $ is zero, hence proving that we indeed obtain a Koszul-Tate resolution in this way, an arborescent Koszul-Tate resolution of $\CO/\CI$.

\subsection{Arborescent Koszul-Tate resolutions: the graded algebra of decorated trees}

\label{sec:space}

Throughout this section, $\FM_\bullet = (\FM_i)_{i \geq 1} $ stands for an arbitrary sequence   of $\CO $-modules.
%or of sheaves of $\CO$-modules.
We define the  $\CO$-module ${\mathcal T}ree[\FM_\bullet] $ of decorated trees:
\begin{enumerate}
    \item [(i)] First, let us consider the set of \emph{planar rooted trees}, i.e. the set of rooted trees with ordered leaves, satisfying the property that each vertex has at least two children. The set is enlarged by adding a trivial tree. Typical examples are listed below:
\vspace{2mm}    
    
    \begin{center}
\begin{tabular}{ccc} 
    \scalebox{0.5}{  \begin{forest}
for tree = {grow' = 90}, nice empty nodes, for tree={ inner sep=0 pt, s sep= 0 pt, fit=band, 
},
[
    [, tier =1]
    [
        [, tier =1] 
        [, tier =1]
    ]
]
\path[fill=black] (.parent anchor) circle[radius=4pt]
(!2.child anchor) circle[radius=4pt];
\end{forest}
}
&
   \scalebox{0.5}{   \begin{forest}
for tree = {grow' = 90}, nice empty nodes, for tree={ inner sep=0 pt, s sep= 15 pt, fit=band, 
},
[
    [, tier =1]
      [, tier =1]
    [
        [, tier =1] 
        [, tier =1]
        [, tier =1]
    ]
]
\path[fill=black] (.parent anchor) circle[radius=4pt]
(!3.child anchor) circle[radius=4pt];
\end{forest}
}
&          \begin{forest}
for tree = {grow' = 90}, nice empty nodes, for tree={ inner sep=0 pt, s sep= 0 pt, fit=band, 
},
[[, tier =1]]
;
\end{forest} \\
A $3$-leaves tree & A $5$-leaves tree & The trivial tree
\end{tabular}
\end{center}

     We denote  the free $\mathbb K$-vector space  generated by rooted planar trees by $Tree$, those having $n$ leaves by $Tree^{n} $, those with $k$ vertices by $Tree_k$, while $ {Tree}_k^{n}:= Tree^{n} \cap Tree_k$ denotes their intersection.

    We choose the convention that a vertex of a tree is  either an inner vertex or a root, i.e. leaves are not vertices: for instance, the rightmost tree of \eqref{eq:treesdecorated}  has $3$ vertices. By convention also, the trivial tree has $0$ vertices.
     %stands for. We will consider 
     %We denote the free vector space of rooted planar trees with $n$ leaves by $ Tree(n):= \oplus_{k=1}^{\infty} Tree_n(k)$.  
     Notice that the vector space generated by the trivial tree, ${Tree}_0^{1}$, is isomorphic to $\mathbb K $.
    
    \item [(ii)] The free $\CO$-module of \emph{ordered decorated trees} is defined as
        \begin{equation}
            Tree[\FM_\bullet] = \bigoplus^{\infty}_{n=1}{Tree}^{n}\otimes_{\mathbb K} \underbrace{\FM_{\bullet}\otimes ... \otimes \FM_{\bullet}}_{n \text{ times}} \, .
        \end{equation}
        The $\CO $-module of ordered decorated trees with $n$ leaves is denoted by ${Tree}^{n}[\FM_\bullet]$, the one  with $k$ vertices by  ${Tree}_k[\FM_\bullet] $ and their intersection by 
         ${Tree}_k^{n}[\FM_\bullet]$.
         %= {Tree}_k[\FM_\bullet]  \cap {Tree}^{n}[\FM_\bullet]$.  
         For all $ t \in Tree^{n}$ and $ a_1, \dots, a_n \in \FM_{\bullet} $, the element $t \otimes_\mathbb K a_1 \otimes \cdots \otimes a_n$ in ${Tree}^{n}[\FM_\bullet] $ is denoted as
         $$ t \,  [a_1, \dots, a_n]    .$$
          
        We invite the reader to think that $ a_1, \dots, a_n$ are attached to the leaves of $t$, as in \eqref{eq:treesdecorated}. The $\CO $-module structure is given  by
         \begin{align*} F \cdot t \,  [a_1, \dots, a_n] = t  \, [F a_1, \dots, a_n] = \cdots =t \, [a_1, \dots, F a_n] \\ \qquad \forall  F \in \CO \: , \; t \in Tree^{n}\: , \;  a_1, \dots, a_n \in \FM_{\bullet} .\end{align*}
         
         \begin{remark}
\label{rmk:notation}
\normalfont
Elements in $ Tree^{1}_0 \otimes_{\mathbb K} \FM_\bullet$ are denoted as $  |\, [a]$ with $a \in \FM_\bullet$, the vertical bar $|$ referring to the trivial tree.
\end{remark}
    \item[(iii)]  Let us introduce a grading on $Tree$  by declaring   elements in $Tree^{n}_k$ to be homogeneous of degree $k=\hbox{ $\#$ $\{$Vertices of $t \}$}$ and on $Tree[\FM_\bullet]$ by
     $\mathrm{deg} \left( t \, [ a_1 , \dots , a_n]\right) := \mathrm{deg}(t) + \sum_{i=1}^n \mathrm{deg}(a_i)$.
    Upon abbreviating the notation for the degree by $ |\cdot |$, this becomes 
    \begin{equation} \label{treedegree}
         |  t \, [ a_1 , \dots , a_n]| = |a_1| + \dots + |a_n| + \hbox{ $\#$ $\{$Vertices of $t \}$ } .
           \end{equation}
%where $|a_i|$ is the degree of $a_i \in \FM_{\bullet}$. 
Thus $Tree[\FM_\bullet] = \oplus^{\infty}_{i =1} Tree[\FM_\bullet]_{i} $, where $i$ denotes the degree. In lowest degrees, we have 
    \begin{align*}
    Tree[\FM_\bullet]_1= |[\FM_1] \simeq  \FM_1 \, , \, Tree[\FM_\bullet]_2= |[\FM_2] \simeq  \FM_2 \, ,
    \\
    Tree[\FM_\bullet]_3 =
\begin{gathered}
\scalebox{0.5}{
\begin{forest}
for tree = {grow' = 90}, nice empty nodes,
[
 [, tier =1] 
]
\end{forest}}
\otimes_{\mathbb K}(\FM_3)\enspace \bigoplus
\scalebox{0.5}{
\begin{forest}
for tree = {grow' = 90}, nice empty nodes,
[
 [, tier =1]
 [, tier =1] 
]
\path[fill=black] (.parent anchor) circle[radius=4pt];
\end{forest}}
\otimes_{\mathbb K} (\FM_1 \otimes \FM_1)
\end{gathered}\, ,
\end{align*}
and $Tree[\FM_\bullet]_4 $ is equal to
\begin{equation*}
\hspace{1cm}
\scalebox{0.5}{
\begin{forest}
for tree = {grow' = 90}, nice empty nodes,
[
 [, tier =1] 
]
\end{forest}}
\otimes_{\mathbb K}(\FM_4)\enspace \bigoplus
\scalebox{0.5}{
     \begin{forest}
for tree = {grow' = 90}, nice empty nodes,
[
 [, tier =1]
 [, tier =1] 
 [, tier =1]
] 
\path[fill=black] (.parent anchor) circle[radius=4pt];
\end{forest}}\!\! \otimes_{\mathbb K}(\FM_1 \otimes \FM_1 \otimes \FM_1) \enspace  \bigoplus 
\scalebox{0.5}{
\begin{forest}
for tree = {grow' = 90}, nice empty nodes,
[
 [, tier =1]
 [, tier =1] 
]
\path[fill=black] (.parent anchor) circle[radius=4pt];
\end{forest}}
\!\! \otimes_{\mathbb K} (\FM_2 \otimes \FM_1) \enspace \bigoplus 
\scalebox{0.5}{
\begin{forest}
for tree = {grow' = 90}, nice empty nodes,
[
 [, tier =1]
 [, tier =1] 
] 
\path[fill=black] (.parent anchor) circle[radius=4pt];
\end{forest}}
\!\! \otimes_{\mathbb K}  (\FM_1 \otimes \FM_2).
\end{equation*}
In general,
\begin{equation}
\label{tree_decomposition}
    Tree[\FM_\bullet]_{i}  = \, Tree_0^{1}\otimes_{\mathbb K}\FM_i \, \, \oplus \, \, 
    \tdeg[\FM_\bullet]_{i} 
\end{equation}
where $\tdeg[\FM_\bullet]$ is the $\CO $-module generated by decorated trees with at least two leaves (this only excludes trivial trees). 

\item[(iv)]
Now we are ready to define the $\CO$-module $\CT[\FM_\bullet]$, underlying the to-be-constructed Koszul-Tate resolution on $S(\CT[\FM_\bullet])$. 

\vspace{1mm}

\noindent 
For this purpose, we first define permutations acting on some given $t[a_1, \dots , a_n] \in Tree_k^{n}[\FM_{\bullet}] $ parametrized by the vertices of the underlying tree $t$.  Let $v$ be the number of children of the chosen vertex $V$. These children are themselves ordered decorated trees $$t_1[b_1, \dots , b_{\card(t_1)}] \, , \, \dots\, , \, t_v [b_{\card(t_1)+\cdots \card(t_{v-1})+1}, \dots , b_{m}] \,  $$ of degrees $\theta_1, \dots,\theta_v $, respectively. Here $\card(t_i)$ denotes the number of leaves of the corresponding tree $t_i$, and $m$ is the number of the leaves that descend from $V$. 

\vspace{1mm}

\noindent 
A permutation $\sigma \in S_{v} $ acts by exchanging the position of the children, keeping the rest of the tree untouched. The outcome is a tree that we denote as follows $\sigma \cdot t[a_1, \dots , a_n] = t_{V,\sigma}[a_{\sigma(1)}, \dots , a_{\sigma(n)}] $. Let $\epsilon(t_{V, \sigma}) $ the Koszul sign of the permutation $\sigma $ with respect to the degrees $ \theta_1, \dots, \theta_v$.

\begin{definition} The  $\CO$-module of \emph{decorated trees} as a quotient of the $\CO$-module of ordered decorated trees, 
$$ \mathcal{T}ree[\FM_{\bullet}] := Tree[\FM_{\bullet}]/\!\sim \, , $$ 
by the equivalence relation  
\begin{equation}
     \label{eq:quotientEquiv} t[a_1, \dots , a_n] \, \sim \, \epsilon(t_{V, \sigma}) \,  t_{V, \sigma}  [a_{\sigma(1)} , \dots , a_{\sigma(1)}] .
 \end{equation}
for all vertices $V$ of $t$. \end{definition}

\end{enumerate}

\vspace{1mm}
\noindent
As an example  we regard a tree with five leaves, using the root as the vertex for the permutation:

\begin{center}
\begin{tabular}{ccc}
\adjustbox{valign=c}{
 \begin{forest}
for tree = {grow' = 90}, nice empty nodes, for tree={ inner sep=0 pt, s sep= 10 pt, fit=band, 
},
[
    [$a_1$, tier =1, edge label={node[midway, left]{$t_1$}}]
    [,edge label={node[left]{$t_2$}}
        [$a_2$, tier =1] 
        [$a_3$, tier =1]
        [$a_4$, tier =1]
    ]
    [$a_5$, tier =1, edge label={node[midway, right]{$t_3$}}]
]
\path[fill=black] (.parent anchor) circle[radius=2pt]
(!2.child anchor) circle[radius=2pt];
\end{forest}}
&
\adjustbox{valign=c}{
{\centering  $\sim$}}
& 
\adjustbox{valign=c}{$(-1)^{|a_1|(|a_2|+ |a_3| + |a_4|+ 1)}$}
\adjustbox{valign=c}{
 \begin{forest}
for tree = {grow' = 90}, nice empty nodes, for tree={ inner sep=0 pt, s sep= 10 pt, fit=band, 
},
[
[, ,edge label={node[left]{$t_2$}}
        [$a_2$, tier =1] 
        [$a_3$, tier =1]
        [$a_4$, tier =1]
    ]
    [$a_1$, tier =1,  edge label={node[midway, left]{$t_1$}}]
    [$a_5$, tier =1,  edge label={node[midway, right]{$t_3$}}]
]
\path[fill=black] (.parent anchor) circle[radius=2pt]
(!1.child anchor) circle[radius=2pt];
\end{forest}}.
\end{tabular}
\end{center}

%Proposition \ref{prop:stillfree} allows to consider the following object:

\vspace{0.3cm}

\noindent
Let us give some properties of this quotient:

\begin{proposition}
\label{prop:stillfree}
The $\CO$-module $ \mathcal{T}ree[\FM_{\bullet}] $ of decorated trees is a positively graded $\CO $-module, which is free if the $\CO$-module $\FM_\bullet $ is free and projective if 
 $\FM_\bullet $ is projective.  
%of relations is a free $\CO $-module.
\end{proposition}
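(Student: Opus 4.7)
The positive grading follows immediately from \eqref{treedegree}: since $\FM_i=0$ for $i\leq 0$ and every tree has at least one leaf, every generator $t[a_1,\ldots,a_n]$ has degree $\geq 1$, and the equivalence \eqref{eq:quotientEquiv} is degree-preserving, so the grading descends to the quotient.

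For the free/projective assertion, my plan is to decompose $\mathcal{T}ree[\FM_{\bullet}]$ as a direct sum indexed by isomorphism classes $\tau$ of \emph{abstract} (non-planar) rooted trees in which every non-leaf vertex has at least two children. Because \eqref{eq:quotientEquiv} only relates planar decorated trees whose underlying unordered trees are isomorphic, the submodule $\mathcal{T}ree_\tau[\FM_\bullet]$ of the quotient spanned by decorated trees of shape $\tau$ is well-defined, giving
\[
\mathcal{T}ree[\FM_\bullet] \;=\; \bigoplus_{\tau}\mathcal{T}ree_\tau[\FM_\bullet].
\]

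I would then establish by induction on the number of vertices of $\tau$ the following recursive identification. For the trivial tree, $\mathcal{T}ree_{\mathrm{trivial}}[\FM_\bullet]\cong \FM_\bullet$. Otherwise, grouping the children of the root of $\tau$ by isomorphism type into pairwise distinct shapes $\sigma_1,\ldots,\sigma_k$ with respective multiplicities $m_1,\ldots,m_k$, one should obtain a canonical $\CO$-linear isomorphism
\[
\mathcal{T}ree_\tau[\FM_\bullet] \;\cong\; S^{m_1}\!\bigl(\mathcal{T}ree_{\sigma_1}[\FM_\bullet]\bigr)\,\odot\cdots\odot\, S^{m_k}\!\bigl(\mathcal{T}ree_{\sigma_k}[\FM_\bullet]\bigr),
\]
the symmetric powers and $\odot$ being over $\CO$ with the Koszul signs dictated by \eqref{treedegree} (cf.\ Convention~\ref{conv:tensors}). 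Indeed the equivalences at the root amount to symmetrising each group of identical children, while the equivalences at all deeper vertices are already absorbed in the recursively defined $\mathcal{T}ree_{\sigma_i}[\FM_\bullet]$.

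To conclude, I would use that graded symmetric powers and $\odot$-tensor products preserve both freeness and projectivity of $\CO$-modules. Freeness is clear via a homogeneous basis; projectivity follows because $S^m$ and $\odot$ are functors preserving retractions, so if $i\colon P\hookrightarrow F$ and $r\colon F\to P$ satisfy $r\circ i=\id_P$, then $S^m(r)\circ S^m(i)=\id_{S^m(P)}$, exhibiting $S^m(P)$ as a summand of the free module $S^m(F)$. The main technical hurdle I expect will be the parity bookkeeping in the recursive isomorphism above: verifying that the Koszul signs produced by \eqref{eq:quotientEquiv} at the root of $\tau$ match exactly those of the graded-symmetric tensor product computed with the degrees \eqref{treedegree} of the already-quotiented modules $\mathcal{T}ree_{\sigma_i}[\FM_\bullet]$.
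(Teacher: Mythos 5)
Your proof is correct, but it takes a genuinely different route from the paper's. For freeness the paper writes $\FM_\bullet=M_\bullet\otimes_{\K}\CO$ and observes that the relations \eqref{eq:quotientEquiv} have coefficients in $\K$, so the quotient is of the form $(W/W')\otimes_{\K}\CO$ and hence free; for projectivity it lifts a map from $\FM_\bullet^{\otimes n}$ and then \emph{averages} the lift over the permutation groups $\Sigma_A$ at each vertex (using $\tfrac{1}{|\Sigma_A|}$) to make it descend to the quotient. You instead decompose $\CT[\FM_\bullet]$ by abstract tree shape and identify each piece recursively with an iterated graded symmetric power $S^{m_1}(\CT_{\sigma_1})\odot\cdots\odot S^{m_k}(\CT_{\sigma_k})$ — in effect a shape-by-shape refinement of the root-map isomorphism of Proposition~\ref{prop:2root}, which the paper states without proof — and then invoke closure of free/projective modules under $S^m$ and $\odot$, with projectivity handled cleanly by functoriality of $S^m$ applied to a retraction $P\rightleftarrows F$. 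Your route is longer on sign bookkeeping but yields a sharper structural description of $\CT[\FM_\bullet]$ and avoids the explicit averaging. One caveat you should make explicit: the statement ``graded symmetric powers preserve freeness'' fails over a general commutative ring (for $F=\CO e$ with $e$ odd one gets $S^2(F)\cong\CO/2\CO$, which has torsion); it holds here only because $\CO$ is an algebra over the field $\K$, so that $2$ is invertible or zero. This is not a real gap — the paper's own averaging step likewise needs $|\Sigma_A|$ invertible, hence characteristic zero — but the hypothesis should be named where you use it.
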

\begin{proof}
Assume first that $ \FM_\bullet$ is a free $\CO $-module.
Then $Tree[\FM_{\bullet}] $ is a free $\CO$-module as well.
%Tree[\FM_{\bullet}]/\!\sim \,
The only difficulty is to show that its quotient $ \mathcal{T}ree[\FM_{\bullet}] $ is still free. It suffices to prove this result for trees with a fixed number $n$ of leaves. 

%We first present a proof for the free case.
%[yyy well, maybe not!].
\vspace{1mm} 
 
\noindent
Let $W$ be a $\mathbb K$-vector space $W' \subset W  $ a vector subspace. Then the quotient of the free $\CO $-module $W \otimes_\mathbb K \CO $ by  $W' \otimes_\mathbb K \CO $ is the free $\CO$-module $ W/W' \otimes_{\mathbb K} \CO$. 
By assumption, $\FM_\bullet $ is free, i.e.\ it can be written as $\FM_\bullet = M_\bullet \otimes_\mathbb K \CO $ where $M_\bullet $ is a graded $\mathbb K$-vector space. Now put $W := 
Tree_n \otimes_\mathbb K M^{\otimes n}_\bullet $, which is a (graded) $\mathbb K$-vector space. 

\vspace{1mm} \noindent Since
$ \sim$ consists in dividing by linear relations whose coefficients are in $\mathbb K $, the equivalence relation can be seen effectively as forming the quotient of the above $W$ by a vector subspace $W'$. So the quotient of the $\CO$-modules becomes $ W/W' \otimes_{\mathbb K} \CO$, which is free. 

\vspace{1mm} 
 
\noindent If on the other hand, $ \FM_\bullet$ is projective, then it can be proven as follows. Let $t$ be a tree with $n$ leaves, and  let $t[\FM_{\bullet}, \dots, \FM_{\bullet}]$  be the sub-$ \CO$-module of $ \CT[\FM_{\bullet}]$ of all elements of the form $ t[a_1, \dots,a_n]$ with $ a_1, \dots, a_n \in \FM_\bullet $. Let $\phi \colon t[\FM_{\bullet}, \dots, \FM_{\bullet} ] \to  \mathcal F $ be an $\CO $-module morphism and let $\phi' \colon \mathcal E \to \mathcal F $ be a surjective $ \CO$-module morphism.  
There is a natural map
$ proj \colon \FM_\bullet^{\otimes n}  \to t[\FM_{\bullet}, \dots, \FM_{\bullet}]$ mapping $ a_1 \otimes \dots \otimes a_n$ to $ t[a_1, \dots, a_n]$.
Since $ \FM_\bullet^{\otimes n}$ is projective, there exists an $ \CO$-module morphism $\psi\colon \FM_\bullet^{\otimes n} \to \mathcal E $
making the following diagram commutative:
 \begin{equation}
 \label{eq:moduleprojective} \xymatrix{ \ar[d]_{proj} \ar[r]^{\psi}\FM_\bullet^{\otimes n}& \mathcal E \ar@{->>}[d]^{\phi'}\\ t[\FM_\bullet, \dots, \FM_\bullet] \ar[r]^{\hspace{.9cm}\phi} & \mathcal F}  \end{equation}
For any inner vertex $A$ of $t$, consider the set $ \Sigma_A$ of permutations  of the leaves of $t$ obtained by permuting  the children of $A$. The $ \CO$-module morphism $ \frac{1}{|\Sigma_A|}\sum_{\sigma \in \Sigma_A} \psi \circ \sigma $ (with the understanding that $ \sigma$ acts with Koszul sign on $\FM^{\otimes n} $) still satisfies the commutativity of
\eqref{eq:moduleprojective}.
When this operation is completed for all vertices, the henceforth morphism $ \psi$, being invariant under all the permutations that define $ proj$, comes from an $ \CO$-module morphism $ t[\FM_\bullet, \dots, \FM_\bullet] \to \mathcal E$. This proves that $t[\FM_\bullet, \dots, \FM_\bullet]$ is a projective $ \CO$-module. Since the direct sum of projective $ \CO$-modules is a projective $ \CO$-module, this proves that $\CT[\FM_\bullet] $ is projective.
 \end{proof}

\noindent
 In order to have a short and consistent definition of a yet to be constructed Koszul-Tate differential, we need to adopt a new convention and allow leaves of a decorated tree to be elements in $\CO$.
 
 \begin{convention} 
\label{conv:extensionO}
Let $t$ be a tree with $n>1$ leaves and $a_1, \dots, a_n  $ elements with are either in $\FM_{\bullet} $ or in $ \CO$. We then identify 
$ t[a_1, \dots, a_n] $ with some element in $ \CT[\FM_\bullet] $ using the following rule. Assume $ a_i=F \in \CO$:
\begin{enumerate}
    \item If the parent of the $i$-th leaf is a vertex with at least three children, then we set:
     $$ t[a_1, \dots,\underbrace{F}_{i^{th}},\dots a_n] :=  F \, t_i [ a_1, \dots, \widehat{F},\dots, a_n]  ,$$
    where $t_i$ is the tree obtained by erasing the $i$-th leaf, and
     $\hat{F} $ means that the term is omitted.
    \item  If the parent of the $i$-th leaf is a vertex with $2 $ children, then we set
    $$ t[a_1, \dots,\underbrace{F}_{i^{th}}, \dots, a_n] :=  0 .$$
\end{enumerate}
 \end{convention}
 
 \noindent Let us spell out this convention:
\vspace{1mm} 

\begin{tabular}{lccccc}
\adjustbox{valign = c}{
$1.$} & \adjustbox{valign = c}{\scalebox{0.5}{
\begin{forest}
for tree = {grow' = 90}, nice empty nodes, for tree={ inner sep=0 pt, s sep= 10 pt, fit=band, 
},
[
[, 
        [\scalebox{2}{$a_1$}, tier =1] 
        [\scalebox{2}{$a_2$}, tier =1]
    ]
    [\scalebox{2}{$F$}, tier =1, ]
    [\scalebox{2}{$a_3$}, tier =1,  ]
]
\path[fill=black] (.parent anchor) circle[radius=4pt]
(!1.child anchor) circle[radius=4pt];
\end{forest}}}
& \adjustbox{valign = c}{$=$} & 
\adjustbox{valign = c}{\scalebox{0.5}{
\begin{forest}
for tree = {grow' = 90}, nice empty nodes, for tree={ inner sep=0 pt, s sep= 10 pt, fit=band, 
},
[
[, 
        [\scalebox{2}{$Fa_1$}, tier =1] 
        [\scalebox{2}{$a_2$}, tier =1]
    ]
    [\scalebox{2}{$a_3$}, tier =1,  ]
]
\path[fill=black] (.parent anchor) circle[radius=4pt]
(!1.child anchor) circle[radius=4pt];
\end{forest}}}
& \adjustbox{valign = c}{$=$} & \adjustbox{valign = c}{$F$} \adjustbox{valign = c}{
\scalebox{0.5}{
\begin{forest}
for tree = {grow' = 90}, nice empty nodes, for tree={ inner sep=0 pt, s sep= 10 pt, fit=band, 
},
[
[, 
        [\scalebox{2}{$a_1$}, tier =1] 
        [\scalebox{2}{$a_2$}, tier =1]
    ]
    [\scalebox{2}{$a_3$}, tier =1,  ]
]
\path[fill=black] (.parent anchor) circle[radius=4pt]
(!1.child anchor) circle[radius=4pt];
\end{forest}}} \\ \\
\adjustbox{valign = c}{2.}& \adjustbox{valign = c}{\scalebox{0.5}{\begin{forest}
for tree = {grow' = 90}, nice empty nodes, for tree={ inner sep=0 pt, s sep= 10 pt, fit=band, 
},
[
[, 
        [\scalebox{2}{$a_1$}, tier =1] 
        [\scalebox{2}{$F$}, tier =1]
    ]
    [\scalebox{2}{$a_2$}, tier =1, ]
    [\scalebox{2}{$a_3$}, tier =1,  ]
]
\path[fill=black] (.parent anchor) circle[radius=4pt]
(!1.child anchor) circle[radius=4pt];
\end{forest}}}
& \adjustbox{valign = c}{$=0$.}

\end{tabular} 

 \vspace{.5cm}
\noindent
Let $S(\mathcal{T}ree[\FM_{\bullet}])$ be the graded commutative free algebra   generated  by $\mathcal{T}ree[\FM_{\bullet}] $ over $\CO $. 
%and we equip it with some differential $\delta_\psi $.

\begin{remark}
\normalfont
Elements in $S(\CT[\FM_{\bullet}])$ can be thought of as $\CO$-linear combinations of forests of trees, i.e.\ elements in $\CT[\FM_{\bullet}]$, with some sign appearing when trees in the forest are permuted.
\end{remark}

\noindent
We need two more structures on $S(\mathcal{T}ree[\FM_{\bullet}])$: the root map and its inverse, the unroot map.

\vspace{0.1cm}

\noindent
Consider the $\mathbb K$-vector space $Tree$. For every $n \geq 2$, there is a natural $n$-linear operation $\r_n$ of degree $1$ which associates to $n$ rooted trees $(t_1, \dots, t_n)$ the tree obtained by  adding a root and linking it to the roots of $(t_1, \dots, t_n)$. For $n=3$, for instance
\vspace{-0.6cm}

\begin{center}
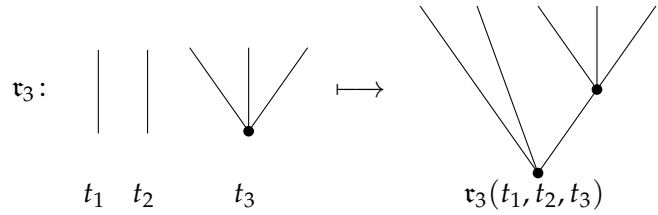
\begin{figure}[H]
\centering
\begin{tabular}{lccccc}
\adjustbox{valign = c}{$\r_3  \colon$}  &
\adjustbox{valign = c}{
\begin{forest}
for tree = {grow' = 90}, nice empty nodes, for tree={ inner sep=0 pt, s sep= 0 pt, fit=band, 
},
[
[, tier =1]
];
\end{forest}}
&\adjustbox{valign = c}{
\begin{forest}
for tree = {grow' = 90}, nice empty nodes, for tree={ inner sep=0 pt, s sep= 0 pt, fit=band, 
},
[
[, tier =1]
];
\end{forest}}
&\adjustbox{valign = c}{
\begin{forest}
for tree = {grow' = 90}, nice empty nodes, for tree={ inner sep=0 pt, s sep= 0 pt, fit=band, 
},
[
[, tier =1]
[, tier =1]
[, tier =1]
]
\path[fill=black] (.parent anchor) circle[radius=2pt];
\end{forest}}
& \adjustbox{valign = c}{$\longmapsto$}
&\adjustbox{valign = c}{
\begin{forest}
for tree = {grow' = 90}, nice empty nodes, for tree={ inner sep= 10 pt, s sep= 20 pt, 
},
[
[, tier =1]
[, tier =1]
[ 
[, tier =1]
[, tier =1]
[, tier =1]
]
]
\path[fill=black] (.parent anchor) circle[radius=2pt]
(!3.child anchor) circle[radius=2pt];
\end{forest}} \\
&$t_1$ & $t_2$ & $t_3$ & &$\r_3(t_1,t_2,t_3)$
\end{tabular}
\caption{Applying $\r_3$ to a forest of 3 trees}
\end{figure}
\end{center}
\vspace{-0.6cm}

\noindent
Altogether, the maps $(\r_n)_{n \geq 2}$ extend to a %degree $+1$ 
linear map
 $\r\colon  \otimes^{\geq 2} (Tree) %\ar[r]^{\hspace{0.5cm}\sim}  
    \xrightarrow{\sim} 
     Tree^{\hbox{$\,\geq \! 2$}}$
     which is an isomorphism
 of $\mathbb K$-vector spaces. 
It extends to decorated trees, then descends to the quotient defined in item (iv). We denote by the same letter and also call \emph{root map} that henceforth obtained map.

 \begin{proposition}
 \label{prop:2root}
The root map $\r$ is a degree $ +1$ isomorphism of $ \CO$-modules
 \begin{equation}\label{eq:rootIsIso} \xymatrix{ \r \colon   \SdegFM \ar[r]^{\hspace{.5cm}\sim} &  \TdegFM }  \end{equation}
For every $k\geq 2$, this isomorphism restricts to a degree $ +1$ isomorphism $\r_k $ from $S^{k} (\mathcal{T}ree[\FM_{\bullet}] ) $ 
 to the space of decorated trees whose root has $k$ children.
 %descendants.
 \end{proposition}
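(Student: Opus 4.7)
The plan is to exhibit an explicit two-sided inverse $\mathrm{u}\colon \TdegFM \to \SdegFM$ (the \emph{unroot map}) and verify the requisite compatibilities. On a representative $t[a_1,\dots,a_n] \in \TdegFM$ whose root $R$ has $k \geq 2$ children, each child is itself the root of a subtree $t_i$ carrying a subset of the decorations, so we obtain $k$ decorated trees $t_1,\dots,t_k \in \CT[\FM_\bullet]$ (possibly trivial). I would then set $\mathrm{u}(t[a_1,\dots,a_n]) := t_1 \odot \cdots \odot t_k \in S^k(\CT[\FM_\bullet])$.

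The key step is showing this descends to the quotient by the equivalence relation \eqref{eq:quotientEquiv}. I would split the check according to which vertex $V$ the permutation acts on. If $V$ is the root $R$, then a permutation $\sigma \in S_k$ of its children produces exactly the Koszul sign $\epsilon(t_{R,\sigma})$ relative to the degrees of the subtrees $t_1,\dots,t_k$, and this is precisely the graded commutativity sign for $\odot$ in $S^k(\CT[\FM_\bullet])$; so both representatives have the same image. If $V$ is an inner vertex, the permutation is entirely contained in one of the subtrees $t_i$, so its Koszul sign is absorbed by the equivalence relation for that $t_i$ inside $\CT[\FM_\bullet]$, and the image is unchanged. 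This compatibility is the main point of the proof --- although it is essentially a matter of design, it must be checked for arbitrary $k\geq 2$ and $\sigma \in S_k$.

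It then remains to check that $\r \circ \mathrm{u} = \id_{\TdegFM}$ and $\mathrm{u} \circ \r = \id_{\SdegFM}$. Both are transparent from the construction: attaching a fresh root to the children subtrees of a tree with root of arity $k$ reconstructs the original tree, and unrooting a tree freshly built by $\r$ recovers the symmetric product one started with. Along the way, the refinement to each $\r_k$ is immediate because $\r$ sends $S^k(\CT[\FM_\bullet])$ precisely onto the subspace of decorated trees whose root has $k$ children, with $\mathrm{u}$ doing the reverse.

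Finally, the bookkeeping is routine: $\CO$-linearity of $\r$ follows because the $\CO$-action on $\CT[\FM_\bullet]$ and on $S(\CT[\FM_\bullet])$ only affects decorations at the leaves, which are untouched by adding or removing a root; the degree is $+1$ because attaching a new root adds exactly one vertex to the underlying tree, in accordance with the convention \eqref{treedegree} for $|\,\cdot\,|$.
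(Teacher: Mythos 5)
Your proof is correct and follows the same route the paper takes implicitly: the paper states this proposition without a written proof, treating it as immediate from the construction of $\r$ and the definition of the unroot map, and your argument simply fills in the details (the only nontrivial point being the descent to the quotient, where you correctly match the Koszul sign $\epsilon(t_{R,\sigma})$ at the root against the graded commutativity of $\odot$, and absorb permutations at inner vertices into the equivalence relation on each factor). Nothing is missing.
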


 \begin{remark}
 \normalfont
 %\end{remark}
 The isomorphism $\r$ does not extend to $ S^1(\mathcal{T}ree[\FM_{\bullet}])$ because we do not allow the root to have only one child.
 \end{remark}
 
 \begin{remark}
 \normalfont
 The constructions presented in this section  are easily checked to be compatible with Convention~\ref{conv:extensionO}.
 \end{remark}

\begin{remark}
\label{rmk:projective}
\normalfont
The constructions and results of the present section extend easily to several contexts of interest. We can choose $\CO $ to be the algebra  $C^\infty(M) $ of smooth functions on a manifold,
%we could have used sheaves: if one replaces the 
or replace it by the sheaf of holomorphic functions over a complex variety $M$.
%. $$, or even by regular functions over a scheme. 
In the smooth case, one has to assume that a there exists a $ \CO$-module resolution $(\FM_\bullet,d) $ given by a sequence of sections of some vector bundles, i.e. $ \FM_i = \Gamma(V_i)$ for all $i \geq 1$.
In the complex case, local resolutions $(\FM_\bullet,d)$ can be constructed  using properties of coherent sheaves.
In the smooth case, Proposition \ref{prop:stillfree} extends to state that $\mathcal{T}ree[\FM_{\bullet}]$ is made of sections of a graded vector bundle, which is of finite rank in every degree.
Finally, in both contexts, Proposition \ref{prop:2root} holds true verbatim, upon replacing free $\CO $-modules by the relevant sheaf of sections.
\end{remark}

\begin{definition}
\label{def:unroot}
We call   the inverse  $\r^{-1} $ of the root map $\r\colon  \SdegFM_{n}  \rightarrow \Tdeg[\FM_\bullet]_{n+1}$ the \emph{unroot map}.  
\end{definition}

\pagebreak
To provide an example:

\begin{center}
\begin{tabular}{cccccccc}
         \adjustbox{valign = c}{ $\r^{-1}\colon$} & \adjustbox{valign = c}{\begin{forest}
for tree = {grow' = 90}, nice empty nodes, for tree={ inner sep=0 pt, s sep= 10 pt, fit=band, 
},
[
[, ,edge label={node[left]{$t_1$}}
        [$a_1$, tier =1] 
        [$a_2$, tier =1]
        [$a_3$, tier =1]
    ]
    [$a_4$, tier =1,  edge label={node[midway, left]{$t_2$}}]
    [$a_5$, tier =1,  edge label={node[midway, right]{$t_3$}}]
]
\path[fill=black] (.parent anchor) circle[radius=2pt]
(!1.child anchor) circle[radius=2pt];
\end{forest}}
&\adjustbox{valign = c}{
$\longmapsto$}
&\adjustbox{valign = c}{
\begin{forest}
for tree = {grow' = 90},  nice empty nodes, for tree={ inner sep=0 pt, s sep= 0 pt, fit=band, 
},
[, 
        [$a_1$, tier =1] 
        [$a_2$, tier =1]
        [$a_3$, tier =1]
    ]
\path[fill=black] (.parent anchor) circle[radius=2pt];
\end{forest}}
&\adjustbox{valign = c}{
$\odot$}
&\adjustbox{valign = c}{
\begin{forest}
for tree = {grow' = 90},  nice empty nodes, for tree={ inner sep=0 pt, s sep= 0 pt, fit=band, 
},
[
[$a_4$, tier =1]
];
\end{forest}}
&\adjustbox{valign = c}{
$\odot$}
&\adjustbox{valign = c}{
\begin{forest}
for tree = {grow' = 90},  nice empty nodes, for tree={ inner sep=0 pt, s sep= 0 pt, fit=band, 
},
[
[$a_5$, tier =1]
];
\end{forest}} \hspace{5mm}.
\\
& $t$ & & $t_1$ & &$t_2$& & $t_3$
  \end{tabular}
  \end{center}
\vspace{2mm}

\subsection{Arborescent operations and the differential $\delta_{\psi}$}

\label{sec:differential}
%I stopped here.YYY
%\begin{definitions}
%\noindent

\noindent 
\subsubsection{Some conventions and notations}

\begin{definition}
Let $(\FM_k)_{k \geq 1}$ be a sequence of free or projective $\CO$-modules.
We call a  linear map 
\begin{equation}\label{eq:obviousMap2}
 \begin{array}{cccc}
     \psi \colon & \Tdeg[ \FM_\bullet]  &\longrightarrow &\FM_{\bullet} .
 \end{array}
 \end{equation}
of degree $-1$ an \emph{arborescent operation}. 
%an  \emph{arborescent operation}. 
\end{definition}
\vspace{0.1cm}
\noindent
Given any non-trivial ordered tree $t$ with $n$ leaves and $k$  vertices, we associate to it an $\CO $-module morphism
\begin{equation}\label{eq:arborescentFor_t} \psi_t \colon  \otimes^n \FM_\bullet \longrightarrow  \FM_{\bullet} 
\end{equation}
as follows
\begin{equation}
\label{eq:psitDef}
\psi_t\colon a_1\otimes \dots \otimes a_n
    \mapsto \left(\psi \circ {\mathrm{pr}}\right) \left( t[a_1, \dots, a_n]\right)  ,
    \end{equation}
for all $a_1, \dots, a_n \in \FM_\bullet$, where ${\mathrm{pr}}$ is the quotient map from $Tree[\FM_\bullet] $ to $\CT[\FM_{\bullet}]$. 

\vspace{0.1cm}
\noindent
By construction, the degree of $ \psi_t$ is $ k-1$. For every $a_1, \dots, a_n \in \CO \oplus \FM_\bullet$, we have
\begin{equation}\label{eq:signs_for_operations}\psi_t\left(a_1 , \dots, a_n\right) \, = \, \epsilon(t_{A,\sigma})\, \psi_{t_{A, \sigma}} \left(a_{\sigma(1)}, \dots, a_{\sigma(n)}\right) \end{equation}
where $\sigma $ is any permutation of the children of a vertex $A$ as on the right-hand side of  Equation~\eqref{eq:quotientEquiv}.
\begin{definition}
Given an arborescent operation $\psi$,
we call the map defined in \eqref{eq:psitDef}, that maps $t \in Tree^n$ to $\psi_t \in \mathrm{Hom}(\otimes^n \FM_\bullet , \FM_\bullet)$,   the \emph{arborescent operation associated to the ordered tree $t$}.  
%The map from $Tree $ to $ $ 
% is denoted by $t \mapsto \psi_t $. 
\end{definition}

\noindent Note that  $\psi_t$ is attached to an ordered tree $t$, while $ \psi$ is defined on symmetrized \emph{decorated} trees. We remark, furthermore, that $\psi_t$ can be viewed upon as an $n$-ary product on $ \CO \oplus \FM_\bullet$. 

\vspace{1mm}\noindent
For future use, we will need to see arborescent operations as a map $\vert [\psi] $ taking values in trivial trees:
\begin{equation}
\label{eq:vertpsi} 
\vert[\psi] \colon t[a_1, \dots, a_n] \mapsto  | \left[ \psi_t(a_1, \dots, a_n)\right] 
\end{equation}
for any tree $t$ with $n$ leaves and any  $a_1 \otimes \dots \otimes a_n \in \FM_\bullet$.

\subsubsection{Description of the differential $\delta_\psi $}

\label{sec:constructingdeltapsi}

We now return to the original context, where $(\FM_\bullet,d) $ is an $\CO$-module resolution of $\CO/\CI $ as in \eqref{eq:MdO}.
%We choose an arbitrary arborescent operation $\psi $ on $ \FM_\bullet$. 
%In the present section \ref{sec:constructingdeltapsi}, 
We choose an arbitrary arborescent operation $\psi$ and construct a degree $-1 $ derivation $\delta_{\psi}$ of $S({\CT}[\FM_\bullet])$  by means of a recursion. 
Being a derivation, it suffices to give its restriction to the $\CO$-module ${\mathcal {T}}ree[\FM_\bullet]$. 
\begin{equation}\label{eq:ourrestriction} \delta_\psi \colon {\mathcal T}ree[\FM_\bullet] \longrightarrow S( {\mathcal T}ree[\FM_\bullet]).\end{equation}
 For degree reason, $\delta_{\psi}$ has to be identically zero on the subspace of elements of degree $0$.
Notice that it implies that $\delta_{\psi}$ is an $\CO$-linear derivation.
Elements of degree $1$ in $\CT[\FM_{\bullet}]$
are elements in $Tree_0^1\otimes_{\mathbb K} \FM_{1}$, i.e. elements of the form $ | \otimes_{\K} a= |\,[a] $ for some $a \in \FM_1 $. Here we require that $\delta_\psi\left(|\, [a] \right) = d(a) \in \CO$. For defining the action of $\delta_\psi$ on higher degrees, we decompose  ${\mathcal T}ree[\FM_\bullet]$ and $S( {\mathcal T}ree[\FM_\bullet])$: 
 %\vspace{1mm}
%\noindent Writing
  $$  \delta_\psi \colon \begin{pmatrix}  \FM_\bullet \oplus \CO\\ \Tdeg[\FM_\bullet] \end{pmatrix} \longrightarrow \begin{pmatrix} \FM_\bullet \oplus \CO\\ \Tdeg[\FM_\bullet] \\ \SdegFM \end{pmatrix}     .$$
% We use the decompositions:
%$$ {\mathcal T}ree[\FM_\bullet]_{n+1} =\Tdeg[\FM_\bullet]_{n+1} \oplus |[\FM_{n+1}]  \hbox{ and }  \Tdeg[\FM_\bullet]_{n+1} \oplus |[\FM_{n+1}]   \longrightarrow 
% |[\FM_{\bullet}] \,\oplus\,  \Tdeg[\FM_\bullet]  \,\oplus\, \SdegFM
% $$ 
 %S(\CT[\FM_\bullet]) \,=\,
% $$|[\FM_{\bullet}] \,\oplus\, % \Tdeg[\FM_\bullet]  \,\oplus\, \SdegFM  $$
 Then the map \eqref{eq:ourrestriction} becomes a $ 3 \times 2$ matrix that we impose to be of the form 
 \begin{equation}
 \label{eq:23matrix}
 \delta_\psi = \begin{pmatrix}   d & - \psi  \\ 0& * \\ 0 & \r^{-1}  \end{pmatrix} .  \end{equation}
%we write
%$$ \delta_\psi = \ta + \tauv + \tautr$$
Here, $ |[\psi]$ is defined in Equation \eqref{eq:vertpsi}, $\r^{-1} $ is the unroot map of Definition \ref{def:unroot},
and $* $ is defined recursively as follows.
 Assume $\delta_\psi$ is defined on elements of degree less or equal to $n$.  Let $\p$ be the the projection to the last component of the direct sum decomposition $ S(\CT[\FM_\bullet]) \,=\,|[\FM_{\bullet}] \,\oplus\,  \Tdeg[\FM_\bullet]  \,\oplus\, \SdegFM$. 
 Then, we put
 \begin{equation} \label{eq:star} * := -\r \circ {\mathrm{p}}^{\geq 2} \circ \delta_{\psi} \circ \r^{-1} \, ,\end{equation}
i.e.\ 
 $$\resizebox{0.85\hsize}{!}{
 \xymatrix{
 \CT[\FM_\bullet]_{n+1}
 \ar@/^3.0pc/[rrrr]^{\hbox{\large{$*$}}}
 \ar[r]^{\r^{-1}}  &\ar[r]^{\delta_\psi} \SdegFM_{n} & \ar[r]^{\mathrm{p}^{\geq 2}}S(\CT[\FM_\bullet])_{n-1} & \ar[r]^{-\r}
 \SdegFM_{n-1} &   
 \CT[\FM_\bullet]_{n}
 } \, .}
 $$

\vspace{0.1cm}
\noindent This now defines $\delta_\psi$ entirely---which, without further restrictions on the arborescent operation $\psi$, does not square to zero in general.

\begin{example}
\normalfont
 To illustrate the description of $ *$, we consider the tree with two leaves decorated by elements $a_1$ and $a_2$ with $|a_1| =1$ and $|a_2|\geq 2$:
 
 \begin{tabular}{l}
 \adjustbox{valign = c}{
 $\r^{-1}\colon$}
 \adjustbox{valign = c}{
 \begin{forest}
for tree = {grow' = 90},  nice empty nodes, for tree={ inner sep=0 pt, s sep= 0 pt, fit=band, 
},
[, 
        [$a_1$, tier =1] 
        [$a_2$, tier =1]
    ]
\path[fill=black] (.parent anchor) circle[radius=2pt];
\end{forest}} \quad \adjustbox{valign = c}{$\mapsto$}\quad 
\adjustbox{valign = c}{
 \begin{forest}
for tree = {grow' = 90},  nice empty nodes, for tree={ inner sep=0 pt, s sep= 0 pt, fit=band, 
},
[, 
        [$a_1$, tier =1] 
    ]
\end{forest}}
\quad \adjustbox{valign = c}{
 \begin{forest}
for tree = {grow' = 90},  nice empty nodes, for tree={ inner sep=0 pt, s sep= 0 pt, fit=band, 
},
[, 
        [$a_2$, tier =1] 
    ]
\end{forest}} \adjustbox{valign = c}{,}
\qquad \adjustbox{valign = c}{
$\delta_{\psi}\colon$}
\adjustbox{valign = c}{
 \begin{forest}
for tree = {grow' = 90},  nice empty nodes, for tree={ inner sep=0 pt, s sep= 0 pt, fit=band, 
},
[, 
        [$a_1$, tier =1] 
    ]
\end{forest}}
\quad
\adjustbox{valign = c}{
 \begin{forest}
for tree = {grow' = 90},  nice empty nodes, for tree={ inner sep=0 pt, s sep= 0 pt, fit=band, 
},
[, 
        [$a_2$, tier =1] 
    ]
\end{forest}}
 \quad \adjustbox{valign = c}{$\mapsto$} \quad
 \adjustbox{valign = c}{
 \begin{forest}
for tree = {grow' = 90},  nice empty nodes, for tree={ inner sep=0 pt, s sep= 0 pt, fit=band, 
},
[, 
        [$(da_1)a_2$, tier =1] 
    ]
\end{forest}}
\adjustbox{valign = c}{
$ - $} \quad
\adjustbox{valign = c}{
 \begin{forest}
for tree = {grow' = 90},  nice empty nodes, for tree={ inner sep=0 pt, s sep= 0 pt, fit=band, 
},
[, 
        [$a_1$, tier =1] 
    ]
\end{forest}}
\quad
\adjustbox{valign = c}{
 \begin{forest}
for tree = {grow' = 90},  nice empty nodes, for tree={ inner sep=0 pt, s sep= 0 pt, fit=band, 
},
[, 
        [$da_2$, tier =1] 
    ]
\end{forest}}\adjustbox{valign = c}{,}
\\ \\
\adjustbox{valign = c}{
$\mathrm{p}^{\geq 2}\colon $} \adjustbox{valign = c}{ \begin{forest}
for tree = {grow' = 90},  nice empty nodes, for tree={ inner sep=0 pt, s sep= 0 pt, fit=band, 
},
[, 
        [$(da_1)a_2$, tier =1] 
    ]
\end{forest}}
\adjustbox{valign = c}{
 $-$} \quad \adjustbox{valign = c}{\begin{forest}
for tree = {grow' = 90},  nice empty nodes, for tree={ inner sep=0 pt, s sep= 0 pt, fit=band, 
},
[, 
        [$a_1$, tier =1] 
    ]
\end{forest}}
\quad
\adjustbox{valign = c}{
 \begin{forest}
for tree = {grow' = 90},  nice empty nodes, for tree={ inner sep=0 pt, s sep= 0 pt, fit=band, 
},
[, 
        [$da_2$, tier =1] 
    ]
\end{forest}}
\quad \adjustbox{valign = c}{$\mapsto$} \quad  \adjustbox{valign = c}{$-$} \quad \adjustbox{valign = c}{\begin{forest}
for tree = {grow' = 90},  nice empty nodes, for tree={ inner sep=0 pt, s sep= 0 pt, fit=band, 
},
[, 
        [$a_1$, tier =1] 
    ]
\end{forest}}
\quad \adjustbox{valign = c}{
 \begin{forest}
for tree = {grow' = 90},  nice empty nodes, for tree={ inner sep=0 pt, s sep= 0 pt, fit=band, 
},
[, 
        [$da_2$, tier =1] 
    ]
\end{forest}}\adjustbox{valign = c}{,}
 \\ \\\adjustbox{valign = c}{ $-\r\colon$  $-$}\quad \adjustbox{valign = c}{\begin{forest}
for tree = {grow' = 90},  nice empty nodes, for tree={ inner sep=0 pt, s sep= 0 pt, fit=band, 
},
[, 
        [$a_1$, tier =1] 
    ]
\end{forest}}
\quad\adjustbox{valign = c}{
 \begin{forest}
for tree = {grow' = 90},  nice empty nodes, for tree={ inner sep=0 pt, s sep= 0 pt, fit=band, 
},
[, 
        [$da_2$, tier =1] 
    ]
\end{forest}} \quad\adjustbox{valign = c}{$\mapsto$}\quad
\adjustbox{valign = c}{\begin{forest}
for tree = {grow' = 90},  nice empty nodes, for tree={ inner sep=0 pt, s sep= 0 pt, fit=band, 
},
[, 
        [$a_1$, tier =1] 
        [$da_2$, tier =1]
    ]
\path[fill=black] (.parent anchor) circle[radius=2pt];
\end{forest}}\adjustbox{valign = c}{.}
 \end{tabular}

\end{example}

\vspace{0.2cm}
\noindent
We will now derive an  explicit (i.e.\ non-recursive) description of the derivation $\delta_\psi$  in terms of the arborescent operation $ \psi$ and several elementary operations on trees:
%described above
%in Section \ref{sec:differential} 
%has an explicit (i.e.\ non-recursive) description in terms of the arborescent operation $ \psi$ and several elementary operations on trees:
For this purpose, we first equip the $\CO$-module of ordered decorated trees  $Tree[\FM_\bullet] $ of all decorated rooted trees with a natural differential  $ \partial $ as follows.

\begin{definition}
\label{def:WA}
For $t[a_1, \dots, a_n]$ a decorated ordered tree, we attach a  \emph{weight}  $W_A\in \mathbb{Z}$ to any inner vertex $A \in t$.
It is the degree of the tree obtained by summing the following two contributions:%removing all subtrees located on the right of the path  from the root to $A$.
 \begin{enumerate}
     \item  the number of edges in the path $ \gamma$ that goes from the root to $A$
     %(for instance, for the tree and the vertex $A$ described in Equation \eqref{eq:tree131}, this number is $2$)},
 \item  the sum of degrees of all left subtrees of a vertex on the path $\gamma$ from the root to  $A$ whose root is not on $ \gamma$.
 By convention, we decide that the root of $t$ is included but $A$ is excluded.
 %(the root of $t$ is included, but $A$ excluded)
 %being included. 
\end{enumerate}
The weight can be defined in the same way for a leaf. We denote  the weight of the $i$-th leaf by $ W_i$. For the root $R$, $ W_R$ is $0$. 
\end{definition}

\begin{example}
\normalfont
 Let us consider the following tree with vertex $A$: 
 \begin{equation}
 \label{eq:AB}
\adjustbox{valign=c}{\scalebox{0.5}{
\begin{forest}
for tree = {grow' = 90}, nice empty nodes, for tree={ inner sep= 0 pt, s sep= 0 pt, 
},
[, {label=[mystyle]{\scalebox{2}{$R\,\,\, $}}} 
[\scalebox{2}{$a_1$}, tier =1]
[, ,{label=[mystyle3]{\scalebox{2}{$\,\,\, B$}}}, 
[, {label=[mystyle]{\scalebox{2}{$A\,\,\, $}}},
[\scalebox{2}{$a_2$}, tier = 1]
[
[\scalebox{2}{$a_3$}, tier = 1]
[\scalebox{2}{$a_4$}, tier = 1]
]
]
[\scalebox{2}{$a_5$}, tier =1]
]
]
\path[fill=black] (.parent anchor) circle[radius=4pt]
(!2.child anchor) circle[radius=4pt]
(!21.child anchor) circle[radius=4pt]
(!212.child anchor) circle[radius=4pt];
\end{forest}}}
\end{equation}
\noindent
The number of edges in the path from the root to $A$ is $2$.
The left subtree of the root, namely $ |[a_1]$, contributes to the degree by $|a_1|$. % $|[a_1] $
Since the root $A$ of the left subtree of $B$ is \emph{on} the path, it does not count. Overall, in this case, $ W_A=2+|a_1|$.   
\end{example}
    
    \begin{definition}
\label{def:deltaA}
    Let $t$ be an ordered tree, and $A$ one of its inner vertices. We denote the tree obtained by merging the vertex $A$ with its parent vertex by
 $\partial_A t$.
 \vspace{1mm}
 \noindent
 Since $\partial_A $ does not modify the cardinality of leaves and is compatible with the symmetry relations, $\partial_A $ can be defined on a decorated rooted tree as follows:
 $\partial_A \colon t[a_1, \dots,a_n]  \mapsto (\partial_A t)[a_1, \dots, a_n] $. 
 \end{definition}
 %If $A$ is the root, we define $\partial_A t$ to be zero.

% We still denote by $\partial_A $ this induced map.
% \end{enumerate}

\vspace{1mm}
\noindent We then consider the degree $-1$ map 

   \begin{equation} \label{eq:differential_on_trees} \begin{array}{rrcl} \partial \colon & Tree [\FM_\bullet] & \longrightarrow & Tree [\FM_{\bullet}]\\
   & t[a_1, \dots,a_n] 
 & \mapsto & \sum_{A \in \mathrm{InnVer}(\,t \,)}(-1)^{W_A}\partial_A t[a_1, \dots,a_n] \, ,
 \end{array} \end{equation}
 where we recall that $\mathrm{InnVer}(t)$ is the set of all inner vertices  of the tree $t$.

\begin{example} \normalfont
We illustrate $\partial$ on the following ordered decorated tree with the decorations $a, b, c, d, e$ of  definite degrees $|a|, |b|, |c|, |d|, |e|$, respectively.   

  $$\adjustbox{valign=c}{$\partial\colon$}  \adjustbox{valign=c}{\scalebox{0.5}{
\begin{forest}
for tree = {grow' = 90}, nice empty nodes, for tree={ inner sep= 0 pt, s sep= 0 pt, 
},
[
[\scalebox{2}{$a$}, tier =1]
[ 
[
[\scalebox{2}{$b$}, tier = 1]
[\scalebox{2}{$c$}, tier = 1]
[\scalebox{2}{$d$}, tier = 1]
]
[\scalebox{2}{$e$}, tier =1]
]
]
\path[fill=black] (.parent anchor) circle[radius=4pt]
(!2.child anchor) circle[radius=4pt]
(!21.child anchor) circle[radius=4pt];
\end{forest}}}
\adjustbox{valign=c}{$\mapsto (-1)^{|a|+1}$}\adjustbox{valign=c}{\scalebox{0.75}{
\begin{forest}
for tree = {grow' = 90}, nice empty nodes, for tree={ inner sep= 0 pt, s sep= 0 pt, 
},
[
[\scalebox{1.33}{$a$}, tier =1] 
[
[\scalebox{1.33}{$b$}, tier = 1]
[\scalebox{1.33}{$c$}, tier = 1]
[\scalebox{1.33}{$d$}, tier = 1]
]
[\scalebox{1.33}{$e$}, tier =1]
]
\path[fill=black] (.parent anchor) circle[radius=2.67pt]
(!2.child anchor) circle[radius=2.67pt];
\end{forest}}}
\adjustbox{valign=c}{$+ (-1)^{|a|}$}\adjustbox{valign=c}{\scalebox{0.75}{
\begin{forest}
for tree = {grow' = 90}, nice empty nodes, for tree={ inner sep= 0 pt, s sep= 0 pt, 
},
[
[\scalebox{1.33}{$a$}, tier =1]
[ 
[\scalebox{1.33}{$b$}, tier = 1]
[\scalebox{1.33}{$c$}, tier = 1]
[\scalebox{1.33}{$d$}, tier = 1]
[\scalebox{1.33}{$e$}, tier =1]
]
]
\path[fill=black] (.parent anchor) circle[radius=2.67pt]
(!2.child anchor) circle[radius=2.67pt];
\end{forest}}}
$$
\end{example}

 \begin{proposition}
 \label{prop:partial} The map $\partial $ squares to zero. It descends to the quotient $\CT[\FM_{\bullet}]$.
 \end{proposition}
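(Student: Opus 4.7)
The plan is to prove both assertions by a careful bookkeeping of the weights $W_A$. Since each contraction $\partial_A$ only restructures the underlying tree and leaves the decorations $a_1, \ldots, a_n$ untouched, everything reduces to a combinatorial statement on signed trees.

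For descent to the quotient $\CT[\FM_\bullet]$, I fix a vertex $V$, a permutation $\sigma$ of its children, and verify that
\begin{equation*}
\partial\bigl(t[a_1,\ldots,a_n]\bigr) - \epsilon(t_{V,\sigma})\, \partial\bigl(t_{V,\sigma}[a_{\sigma(1)},\ldots,a_{\sigma(n)}]\bigr)
\end{equation*}
lies in the span of the equivalence relation $\sim$. I split the summation over inner vertices $A$ into three cases: (i) $A$ lies outside the subtree rooted at $V$; (ii) $A$ is a strict descendant of $V$; (iii) $A = V$. In case (i), both $\partial_A t$ and $W_A$ are untouched by $\sigma$, so the $\sigma$-permutation at $V$ survives inside $\partial_A t$ and the equivalence is immediate. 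In case (ii), $\partial_A t_{V,\sigma}$ is the $\sigma$-permutation at $V$ of $\partial_A t$, and the difference between $W_A$ computed in $t$ and in $t_{V,\sigma}$ is exactly the Koszul sign obtained when sliding the swapped left-subtrees past each other at $V$, which matches the sign already present in $\epsilon(t_{V,\sigma})$. Case (iii) is handled analogously, with $V$ absorbed into its parent and the same Koszul bookkeeping.

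For $\partial^2 = 0$, I expand
\begin{equation*}
\partial^2\bigl( t[a_1,\ldots,a_n]\bigr) = \sum_{A,B\in \mathrm{InnVer}(t)}\, (-1)^{W_A + W_B^{(A)}}\, (\partial_B \partial_A t)[a_1,\ldots,a_n],
\end{equation*}
where $W_B^{(A)}$ denotes the weight of $B$ inside $\partial_A t$, and pair each summand $(A,B)$ with $(B,A)$. A case analysis on the relative position of $A$ and $B$ in $t$ (incomparable, $A$ a strict ancestor of $B$, or the symmetric case) will show that in every configuration $\partial_A \partial_B t = \partial_B \partial_A t$ as underlying trees, and that the weight sums satisfy
\begin{equation*}
W_A + W_B^{(A)} \equiv W_B + W_A^{(B)} + 1 \pmod 2,
\end{equation*}
which forces pairwise cancellation in $\partial^2 t$.

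The main technical obstacle is the ancestor case: when $B$ lies on the path from the root to $A$, contracting $B$ shortens this path by one edge and redistributes the left-siblings of $B$ onto $B$'s parent, modifying $W_A$ in a delicate way. I expect to establish the parity identity above by using that the degree of the subtree rooted at $B$ exceeds the sum of the degrees of its children's subtrees by exactly $1$ (the vertex $B$ itself), and by tracking the ``edge count'' and ``left-subtree degree'' contributions to $W$ separately in each sub-case. Once the parity identity is verified, both statements of the proposition follow.
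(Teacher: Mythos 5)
Your approach is correct, and it differs from the paper's: the paper disposes of this proposition in one line (``proven by recursion on the number of inner vertices and left to the reader''), whereas you give a direct, global argument via the involution $(A,B)\leftrightarrow(B,A)$ on pairs of inner vertices together with the sign identity $W_A+W_B^{(A)}\equiv W_B+W_A^{(B)}+1 \pmod 2$. I checked this identity in all three configurations and it holds, so the cancellation scheme works; likewise your Koszul bookkeeping for the descent to $\CT[\FM_\bullet]$ is consistent (in the case where $A$ is a strict descendant of a permuted vertex $V$, the change in $(-1)^{W_A}$ by the degrees of the subtrees slid past the path is exactly compensated by the discrepancy between $\epsilon(t_{V,\sigma})$ and the Koszul sign of the same permutation computed in $\partial_A t$, where the subtree containing $A$ has lost one vertex). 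One remark on your case analysis for $\partial^2=0$: the ancestor case you single out as the main obstacle is actually the easy one --- contracting an ancestor $B$ shortens the root-to-$A$ path by one edge while leaving the left-subtree contribution to $W_A$ unchanged (the left siblings are merely concatenated), so $W_A^{(B)}=W_A-1$ and $W_B^{(A)}=W_B$. The case that genuinely needs your ``degree drops by exactly $1$'' observation is the incomparable one: exactly one of $A,B$ sits in a left subtree relative to the other's root path (never both, since the paths diverge at a single vertex), and contracting it lowers that subtree's degree by one, which produces the parity flip. What your route buys is an actual proof where the paper offers none, at the cost of a three-case sign verification; the paper's suggested recursion would instead reduce everything to the behaviour of $\partial$ under the unroot/root decomposition, in the spirit of the proof of Proposition \ref{prop:details}. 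Your proposal is a plan rather than a finished proof --- the case analysis is announced, not executed --- but every key fact it rests on is correct.
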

 \begin{proof}
This is proven by recursion on the number of inner vertices and is left to the reader.     
 \end{proof}
 \vspace{0.1cm}
\noindent
 By abuse of notations, we denote again this quotient map by $\partial$.
 \noindent
%We can now give an alternative description of $\delta_\psi $. We start by a definition:
We need two more operations on trees.

\begin{definition}
\label{def:upanddown}
For any vertex $A$ of a tree $t$, we denote by $t_{\downarrow A} $
the tree obtained by replacing that vertex by a leaf, and $t_{\uparrow A}$  the subtree of $t$ with  root $A$. %the rest of the tree.
\end{definition}

\begin{example}
\normalfont
Let us give an example explaining Definition \ref{def:upanddown}.
Consider the tree
\normalfont
    $$
   \begin{tabular}{ccc} \adjustbox{valign = c}{ $t =$} &
    \adjustbox{valign = c}{\begin{forest}
for tree = {grow' = 90}, nice empty nodes, for tree={ inner sep=0 pt, s sep= 10 pt, fit=band, 
},
[, label={left:$R$}
[, ,edge label={node[left]{$A$}}
        [, tier =1] 
        [, tier =1]
    ]
    [, tier =1, ]
    [, tier =1,]
]
\path[fill=black] (.parent anchor) circle[radius=2pt]
(!1.child anchor) circle[radius=2pt];
 \end{forest}}
 &%\adjustbox{valign = c}{.} &
 \adjustbox{valign = c}{ $\begin{array}{c} \\ \\ \\ . \end{array}$}\end{tabular}
$$
For the vertex being the root $R$, the operations are rather trivial: $t_{\uparrow R}=t $ and $t_{\downarrow R}$ is the trivial tree. For the vertex $A$, 
we get
%,\hbox{ then $t_{\uparrow A} = $} 
$$
\begin{array}{rlcrlr}
\adjustbox{valign =c}{$t_{\uparrow A} = $}
&
\adjustbox{valign =c}{
\begin{forest}
for tree = {grow' = 90}, nice empty nodes, for tree={ inner sep=0 pt, s sep= 10 pt, fit=band, 
},
[, label={below:$ A$}
        [$ $, tier =1] 
        [$ $, tier =1]
]
\path[fill=black] (.parent anchor) circle[radius=2pt];
\end{forest}}
& \adjustbox{valign =c}{  \hspace{1cm} }&
\adjustbox{valign =c}{  $t_{\downarrow A} = $} &
\adjustbox{valign =c}{
\begin{forest}
for tree = {grow' = 90}, nice empty nodes, for tree={ inner sep=0 pt, s sep= 10 pt, fit=band, 
},
[, label={below:$ $}
        [$A $
        %\widehat t_{\downarrow A}{[a_1, a_2 ]}$
        , tier =1] 
        [$ $, tier =1]
        [$ $, tier =1]
]
\path[fill=black] (.parent anchor) circle[radius=2pt];
\end{forest}}
&
\adjustbox{valign =c}{ $\begin{array}{c} \\ \\ \\ . \end{array} $}
\end{array}
    $$

\end{example}

\noindent
Consider now an \emph{ordered} tree $t$ with $n \geq 2 $ leaves and define, for all $a_1, \dots, a_n \in \FM_\bullet $: 
\begin{equation}
\label{eq:deltaexpression}
\begin{array}{rcl}
%\begin{split}
     \widehat{\delta_\psi} (t[a_1, \dots, a_n]) &=&  \r^{-1} \left( t[a_1, \dots, a_n] \right) +  \partial\left(t  [a_1, \dots, a_n]\right) + 
    \, \sum_{i=1}^n (-1)^{W_i} t[a_1, \dots, da_i, \dots, a_n] \\[5pt]
    & &
    - | \left[\psi_t(a_1, \dots, a_n)\right]
- \sum_{A \in \mathrm{InnVer}(t)}(-1)^{W_A}\left( 
t_{\downarrow A}[a_1, \dots,\psi_{t_{\uparrow A}}(a_A) , \dots a_n]\right) \\
 \end{array}
 \end{equation}
 where  in the third term, Convention \ref{conv:extensionO} must be applied when $ a_i \in \FM_1$ and where $ a_A$ denotes the decorations of the leaves that descend from the vertex $A$. Let us shorten Equation \eqref{eq:deltaexpression} by introducing a sum that runs over all vertices,  root and leaves included. For this purpose  we introduce the following 
\begin{convention}
If $A$ is an external vertex, i.e.\ either the root or a leaf, we put $\partial_A = 0$. In addition, if  $A$ is the $i$-th leaf, we set $t_{\uparrow A}[a_A] := | \otimes_{\K} a_i$ and  $ \psi_{|}(a_i) := - d(a_i) $.
\end{convention}
 \noindent Then 
Equation \eqref{eq:deltaexpression} reads
\begin{equation}
\label{eq:NewConventions}
\begin{array}{rcl}\widehat{\delta_\psi} (t[a_1, \dots, a_n])& =&  \r^{-1} \left( t[a_1, \dots, a_n] \right) 
\\&&+ \sum_{\scalebox{0.5}{{$\begin{array}{r}A \in \mathrm{Ver}(t)\end{array}$}}}(-1)^{W_A}\left( -
t_{\downarrow A}[a_1, \dots,\psi_{t_{\uparrow A}}(a_A) , \dots a_n]+(\partial_A t)[a_1, \dots,a_n] \right).\end{array}\end{equation}

\begin{proposition}
\label{prop:details}
%For all  _in xxx$,
The map  $\widehat{\delta_\psi}$ descends to $\CT[\FM_{\bullet}]$ where it coincides with $\delta_\psi$.
\end{proposition}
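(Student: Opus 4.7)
The plan is in two stages: first to show that $\widehat{\delta_\psi}$ descends from $Tree[\FM_\bullet]$ to the quotient $\CT[\FM_\bullet]$, and second to show that on $\CT[\FM_\bullet]$ it agrees with $\delta_\psi$ as defined recursively by \eqref{eq:23matrix} and \eqref{eq:star}.

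For the first stage, one inspects the five summands of \eqref{eq:deltaexpression}. The $\r^{-1}$-summand descends by Proposition \ref{prop:2root} and the $\partial$-summand by Proposition \ref{prop:partial}. Both $\psi$-summands descend thanks to the symmetry \eqref{eq:signs_for_operations}, applied to $\psi_t$ and to each $\psi_{t_{\uparrow A}}$. Only the term $\sum_{i}(-1)^{W_i}\, t[a_1,\dots,da_i,\dots,a_n]$ requires genuine verification: by Definition \ref{def:WA}, permuting two sibling subtrees attached to a vertex $V$ shifts the weights $W_i$ of all leaves descending from $V$ by a single global quantity equal to the Koszul sign of the permutation on the children of $V$, which is precisely what \eqref{eq:quotientEquiv} demands.

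For the second stage I argue by induction on the total degree. In degrees $0$ and $1$ both maps reduce to $0$ and to $d$ respectively. For a decorated tree $t[a_1,\dots,a_n]\in \Tdeg[\FM_\bullet]$ of higher degree, I split the value of $\widehat{\delta_\psi}$ according to the decomposition $|[\FM_\bullet]\oplus \Tdeg[\FM_\bullet]\oplus \SdegFM$: its $\SdegFM$-component is $\r^{-1}(t[a_1,\dots,a_n])$ and its $|[\FM_\bullet]$-component is $-|[\psi_t(a_1,\dots,a_n)]$, matching the third and first rows of \eqref{eq:23matrix}. It thus remains to show that the $\Tdeg[\FM_\bullet]$-component of $\widehat{\delta_\psi}(t[a_1,\dots,a_n])$ coincides with $*\, t[a_1,\dots,a_n]$, and this is the main obstacle.

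To handle it, let $c_1,\dots, c_k$ (with $k\geq 2$) be the children of the root of $t$ and write $t_{c_j}[a_{c_j}]$ for the corresponding decorated subtrees, so that $\r^{-1}(t[a_1,\dots,a_n])=t_{c_1}[a_{c_1}]\odot\cdots\odot t_{c_k}[a_{c_k}]$. Applying $\delta_\psi$ as a derivation, using the inductive hypothesis to expand each $\delta_\psi(t_{c_j}[a_{c_j}])$ via \eqref{eq:deltaexpression}, then applying $\p$ and $-\r$, one obtains a sum whose summands match term by term those of the $\Tdeg[\FM_\bullet]$-component of $\widehat{\delta_\psi}(t[a_1,\dots,a_n])$. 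The heart of the verification is the weight identity that for any vertex or leaf $A$ strictly inside $t_{c_j}$, the weight of $A$ computed in $t$ equals $1+|t_{c_1}|+\cdots+|t_{c_{j-1}}|$ plus the weight of $A$ computed within $t_{c_j}$: this combines the Koszul sign $(-1)^{|t_{c_1}|+\cdots+|t_{c_{j-1}}|}$ coming from the derivation, the internal weight appearing in $\widehat{\delta_\psi}(t_{c_j}[a_{c_j}])$, and the global minus sign in \eqref{eq:star} into exactly $(-1)^{W_A}$. When $A=c_j$ itself, the $\r^{-1}$-summand of $\widehat{\delta_\psi}(t_{c_j}[a_{c_j}])$ is reassembled by $-\r$ into $(-1)^{W_{c_j}}\partial_{c_j} t[a_1,\dots,a_n]$, supplying the root-level contribution to $\partial t$ that is absent inside any individual subtree. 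Finally, the leaf contributions, with Convention \ref{conv:extensionO} applied whenever $d a_i\in \CO$, produce the remaining $d$-terms, and the $\p$-projection correctly annihilates precisely those configurations that Convention \ref{conv:extensionO} also forces to vanish in \eqref{eq:deltaexpression}.
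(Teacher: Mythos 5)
Your proof is correct and follows essentially the same route as the paper's: equivariance of each summand of \eqref{eq:deltaexpression} for the descent, direct identification of the $\r^{-1}$ and $-\vert[\psi]$ components, and a recursive expansion of $-\r\circ\p\circ\delta_\psi\circ\r^{-1}$ via the derivation property to match the residual terms (the paper inducts on the number of inner vertices rather than on total degree, which is an immaterial difference). Your explicit weight identity $W_A^{(t)}=1+|t_{c_1}|+\cdots+|t_{c_{j-1}}|+W_A^{(t_{c_j})}$ makes precise the sign bookkeeping that the paper leaves implicit.
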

    
    %\begin{remark}
    %\normalfont
    
    %\end{remark}
 
\begin{proof} 
For every tree $t$ with $n$ leaves, consider the subgroup of the group of permutations $\Sigma_n $ 
obtained by permuting children of a given vertex. 
Each one of the operations defining $\widehat{\delta_\psi}$ on the r.h.s. of \eqref{eq:deltaexpression} is equivariant under this subgroup, which proves that $\widehat{\delta_\psi}$ descends to the quotient. 

\vspace{1mm}
\noindent
Recall from \eqref{eq:23matrix}
 that $ \delta_\psi$ is the sum of three terms: $\r^{-1}, -\psi $ and $ *$. The first line in the equation is the unroot map $ \r^{-1} $, which is one of the components of $\delta_\psi $. 
For $A$ the root $R$ of the tree $t$, we have $\partial_R=0$, $W_R=0$, $a_R= a_1, \dots, a_n$ and:
$$ -t_{\downarrow R}[a_1, \dots,\psi_{t_{\uparrow R}}(a_R) , \dots a_n]  =-\vert \, [\psi_{t_{\uparrow R}} (a_1, \dots, a_n)] .$$
 One recognizes the term $ -\psi$.
We have therefore to check that the residual terms in the sum coincide with $* $.
This is done by a recursion on the number of inner vertices $k$.  For $k = 0$, these residual terms are read as:
$$ 
\adjustbox{valign =c}{$-$}
\adjustbox{valign =c}{
\scalebox{0.5}{
\begin{forest}
for tree = {grow' = 90}, nice empty nodes,
[
 [\scalebox{2}{$da_1$}, tier =1]
 [\scalebox{2}{$a_2$}, tier =1]
 [\scalebox{2}{$\dots$},edge= dashed,  tier =1]
 [\scalebox{2}{$a_n$}, tier = 1]
]
\path[fill=black] (.parent anchor) circle[radius=4pt];
\end{forest}}}
\adjustbox{valign =c}{$ + (-1)^{|a_1|+1}$}
\adjustbox{valign =c}{
\scalebox{0.5}{
\begin{forest}
for tree = {grow' = 90}, nice empty nodes,
[
 [\scalebox{2}{$a_1$}, tier =1]
 [\scalebox{2}{$da_2$}, tier =1]
 [\scalebox{2}{$\dots$},edge= dashed,  tier =1]
 [\scalebox{2}{$a_n$}, tier = 1]
]
\path[fill=black] (.parent anchor) circle[radius=4pt];
\end{forest}}}
\adjustbox{valign =c}{$
+
\dots +
(-1)^{|a_1|+\dots + |a_{n-1}|+1}$}
\adjustbox{valign =c}{
\scalebox{0.5}{
\begin{forest}
for tree = {grow' = 90}, nice empty nodes,
[
 [\scalebox{2}{$a_1$}, tier =1]
 [\scalebox{2}{$a_2$}, tier =1]
 [\scalebox{2}{$\dots$},edge= dashed,  tier =1]
 [\scalebox{2}{$da_n$}, tier = 1]
]
\path[fill=black] (.parent anchor) circle[radius=4pt];
\end{forest}}}
$$
In view of the Conventions \ref{conv:extensionO}, it is straightforward to check that the summands above add up to $*$. Assume that the results holds true fo tress with $k$ inner vertices. For  $k+1$ inner vertices the tree $t[a_1, \dots, a_n]$ we view as a rooting of $j$ trees $t_{A_1}[a_{A_1}], \dots, t_{A_j}[a_{A_j}]$:
$$
\adjustbox{valign =c}{$
t[a_1, \dots, a_n] =$}
\adjustbox{valign =c}{ 
\scalebox{0.5}{
\begin{forest}
for tree = {grow' = 90}, nice empty nodes,
[
 [\scalebox{2}{$t_{A_1}{[a_{A_1}]}$}, tier =1]
 [\scalebox{2}{$t_{A_2}{[a_{A_2}]}$}, tier =1]
 [\scalebox{2}{$\dots$},edge= dashed,  tier =1]
 [\scalebox{2}{$t_{A_j}{[a_{A_j}]}$}, tier = 1]
]
\path[fill=black] (.parent anchor) circle[radius=4pt];
\end{forest}}}
$$
Then the remaining sum can be rewritten using the recursion assumption as: 
$$
\adjustbox{valign =c}{ $\begin{array}{c} \\
\sum_{i=1}^{j}(-1)^{1+|t_{A_1}{[a_{A_1}]}|+\dots+ |t_{A_{i-1}}{[a_{A_{i-1}}]}|}\end{array}$}
\adjustbox{valign =c}{ \scalebox{0.5}{
\begin{forest}
for tree = {grow' = 90}, nice empty nodes,
[
 [\scalebox{2}{$t_{A_1}{[a_{A_1}]}$}, tier =1]
 [\scalebox{2}{$\dots$},edge= dashed,  tier =1]
 [\scalebox{2}{$\delta_{\psi}t_{A_i}{[a_{A_i}]}$},  tier =1]
 [\scalebox{2}{$\dots$},edge= dashed,  tier =1]
 [\scalebox{2}{$t_{A_j}{[a_{A_j}]}$}, tier = 1]
]
\path[fill=black] (.parent anchor) circle[radius=4pt];
\end{forest}}}
$$
which are identifed then with the expression $* = - \r \circ \p \circ \delta_{\psi} \circ \r^{-1}(t[a_1, \dots, a_n])$. The statement follows.
\end{proof}

\begin{example}
\normalfont
We illustrate the content of Proposition \ref{prop:details} by means of three examples:

\begin{enumerate} 
\item
For the tree with one root and two leaves decorated by elements $a_1, a_2 \in \FM_\bullet $ of degrees at least $2$, we have:
\begin{equation*}
            \begin{gathered}
\adjustbox{valign =c}{$\begin{array}{c}  \\ \delta_{\psi}:\end{array}$}
\adjustbox{valign =c}{
\begin{forest}
for tree = {grow' = 90}, nice empty nodes,
[
 [$a_1$, tier =1]
 [$a_2$, tier =1] 
]
\path[fill=black] (.parent anchor) circle[radius=2pt];
\end{forest}}
\adjustbox{valign =c}{$
\begin{array}{c}  \\ \longmapsto \end{array}$}
\adjustbox{valign =c}{
\begin{forest}
for tree = {grow' = 90}, nice empty nodes,
[
 [$a_1$, tier =1] 
]
\end{forest}}
\adjustbox{valign =c}{
\begin{forest}
for tree = {grow' = 90}, nice empty nodes,
[
[$a_2$, tier =1]
]
\end{forest}}
\adjustbox{valign =c}{$
\begin{array}{c}  \\- \end{array}$}
\adjustbox{valign =c}{
\begin{forest}
for tree = {grow' = 90}, nice empty nodes,
[
 [$da_1$, tier =1]
 [$a_2$, tier =1] 
]
\path[fill=black] (.parent anchor) circle[radius=2pt];
\end{forest}}
\adjustbox{valign =c}{$\begin{array}{c}  \\ +
(-1)^{|a_1|+1}\end{array}$}
\adjustbox{valign =c}{
\begin{forest}
for tree = {grow' = 90}, nice empty nodes,
[
 [$a_1$, tier =1]
 [$da_2$, tier =1] 
]
\path[fill=black] (.parent anchor) circle[radius=2pt];
\end{forest}}
\adjustbox{valign =c}{ $\begin{array}{c} \\ -\end{array}$ }
\adjustbox{valign =c}{ 
\begin{forest}
for tree = {grow' = 90}, nice empty nodes,
[
[$\psi_\vee(a_1{,\,}a_2)$, tier =1]
]
\end{forest}}
\end{gathered}
\adjustbox{valign =c}{$ \begin{array}{c} \\ \\ \\ . \end{array}$}
     \end{equation*}
     \item
For the tree with one root and two leaves decorated by elements $a_1 \in \FM_1$ of degree $1$ and $ a_2 \in \FM_\bullet $ of degree at least $2$, we have:
%For the tree $t=\vee[a_1,a_2]$ with one root and two leaves decorated by elements $a_1, a_2$  $\FM_\bullet $ of degree $1$ and $ \geq 2$ respectively, we have:
\begin{equation*}
            \begin{gathered}
\adjustbox{valign =c}{
$\begin{array}{c}  \\             \delta_{\psi}:\end{array}$}
\adjustbox{valign =c}{
\begin{forest}
for tree = {grow' = 90}, nice empty nodes,
[
 [$a_1$, tier =1]
 [$a_2$, tier =1] 
]
\path[fill=black] (.parent anchor) circle[radius=2pt];
\end{forest}}
\adjustbox{valign =c}{$
\begin{array}{c} \\ \longmapsto \end{array}$}
\adjustbox{valign =c}{
\begin{forest}
for tree = {grow' = 90}, nice empty nodes,
[
 [$a_1$, tier =1] 
]
\end{forest}}
\adjustbox{valign =c}{
\begin{forest}
for tree = {grow' = 90}, nice empty nodes,
[
[$a_2$, tier =1]
]
\end{forest}}
\adjustbox{valign =c}{$
\begin{array}{c} \\ +\end{array}$}
\adjustbox{valign =c}{
\begin{forest}
for tree = {grow' = 90}, nice empty nodes,
[
 [$a_1$, tier =1]
 [$da_2$, tier =1] 
]
\path[fill=black] (.parent anchor) circle[radius=2pt];
\end{forest}}
\adjustbox{valign =c}{$\begin{array}{c} \\ 
-\end{array} $}
\adjustbox{valign =c}{
\begin{forest}
for tree = {grow' = 90}, nice empty nodes,
[
[$\psi_\vee(a_1{,\,}a_2)$, tier =1]
]
\end{forest}}\adjustbox{valign =c}{$  \begin{array}{c} \\ \\ \\
. \end{array}$}
\end{gathered}
     \end{equation*}
     \item
For the decorated tree $t[a_1,a_2,a_3]$ with $t$ as below, with the elements $a_1, a_2, a_3$ of degrees at least $2$, formula \eqref{eq:deltaexpression} yields:
\begin{equation*} \begin{array}{rcl}

            \begin{gathered}
\adjustbox{valign =c}{$
         \begin{array}{c} \\   \delta_{\psi}: \end{array}$}
\adjustbox{valign =c}{
\scalebox{0.5}{\begin{forest}
for tree = {grow' = 90}, nice empty nodes,
[
[, ,edge label={node[left]{$ $}}
 [\scalebox{2}{$a_1$}, tier =1]
 [\scalebox{2}{$a_2$}, tier =1] 
 ]
 [\scalebox{2}{$a_3$}, tier = 1]
]
\path[fill=black] (.parent anchor) circle[radius=4pt]
                (!1.child anchor) circle[radius=4pt];
\end{forest}}} \end{gathered}&
\adjustbox{valign =c}{$ \begin{array}{c} \\
\longmapsto \end{array}$} & \begin{gathered} 
\adjustbox{valign =c}{
\begin{forest}
for tree = {grow' = 90}, nice empty nodes,
[,
 [$a_1$, tier =1] 
 [$a_2$, tier =1] 
]
\path[fill=black] (.parent anchor) circle[radius=2pt];
\end{forest}}
\adjustbox{valign =c}{
\begin{forest}
for tree = {grow' = 90}, nice empty nodes,
[
[$a_3$, tier =1]
]
\end{forest}}
\adjustbox{valign =c}{$\begin{array}{c} \\
- \end{array}$}
\adjustbox{valign =c}{
\begin{forest}
for tree = {grow' = 90}, nice empty nodes,
[
 [$a_1$, tier =1] 
 [$a_2$, tier =1] 
 [$a_3$, tier = 1]
]
\path[fill=black] (.parent anchor) circle[radius=2pt];
\end{forest}} \end{gathered}
\\[30pt] && \begin{gathered} \adjustbox{valign =c}{$\begin{array}{c} \\
+\end{array}$} 
\adjustbox{valign =c}{  \scalebox{0.5}{\begin{forest}
for tree = {grow' = 90}, nice empty nodes,
[
[
 [\scalebox{2}{$da_1$}, tier =1]
 [\scalebox{2}{$a_2$}, tier =1] 
 ]
 [\scalebox{2}{$a_3$}, tier = 1]
]
\path[fill=black] (.parent anchor) circle[radius=4pt]
                (!1.child anchor) circle[radius=4pt];
\end{forest}}}
\adjustbox{valign =c}{$ \begin{array}{c} \\ + (-1)^{|a_1|}\end{array}$}
\adjustbox{valign =c}{\scalebox{0.5}{\begin{forest}
for tree = {grow' = 90}, nice empty nodes,
[
[
 [\scalebox{2}{$a_1$}, tier =1]
 [\scalebox{2}{$da_2$}, tier =1] 
 ]
 [\scalebox{2}{$a_3$}, tier = 1]
]
\path[fill=black] (.parent anchor) circle[radius=4pt]
                (!1.child anchor) circle[radius=4pt];
\end{forest}}}
\adjustbox{valign =c}{$ + (-1)^{|a_1| + |a_2|}$}
\adjustbox{valign =c}{\scalebox{0.5}{\begin{forest}
for tree = {grow' = 90}, nice empty nodes,
[
[
 [\scalebox{2}{$a_1$}, tier =1]
 [\scalebox{2}{$a_2$}, tier =1] 
 ]
 [\scalebox{2}{$da_3$}, tier = 1]
]
\path[fill=black] (.parent anchor) circle[radius=4pt]
                (!1.child anchor) circle[radius=4pt];
\end{forest}}} \end{gathered}
\\[20pt] & & 
\begin{gathered}
\adjustbox{valign =c}{$\begin{array}{c} \\
-\end{array}$}
\adjustbox{valign =c}{
 \begin{forest}
for tree = {grow' = 90}, nice empty nodes,
[
[$\psi_{t} (a_1{,\,} a_2{,\,} a_3)$, tier =1]
]
\end{forest}}
\adjustbox{valign =c}{$  \begin{array}{c} \\
+ \end{array}$}
\adjustbox{valign =c}{
\begin{forest}
for tree = {grow' = 90}, nice empty nodes,
[
[$ \psi_{\vee} (a_1{,\,}a_2)$
 ]
 [$a_3$, tier =1] 
]
\path[fill=black] (.parent anchor) circle[radius=2pt];
\end{forest}}
\adjustbox{valign =c}{$  \begin{array}{c} \\ \\ \\
. \end{array}$}

\end{gathered}
\end{array}
     \end{equation*}
     \end{enumerate}
\end{example}
     
 \subsubsection{Compatibility conditions to ensure $\delta_\psi^2=0 $} 
\label{sec:compatibility.conditions}

Let $(\FM_\bullet,d) $ be a free or projective $\CO$-module resolution  of $\CO/\CI $ as in \eqref{eq:MdO}. Let $ \delta_\psi$ be the derivation of $S(\CT[\FM_\bullet]) $ associated to some arborescent operations $ \psi \colon t \mapsto \psi_t$ as in Section \ref{sec:constructingdeltapsi}.

\begin{proposition}
\label{prop:deltaPsiSquare}
Assume that the arborescent operations
$t \mapsto \psi_t $ have been chosen such that $\delta_\psi^2 $ vanishes on elements of degree less or equal to some $i \geq 1$.
Then the square of $\delta_\psi $ restricted to $\Tdeg[\FM_\bullet]_{i+1}$ takes values in $\vert \otimes_{\K} \FM_{i-1} $.
%The square of $\delta_\psi $ is a derivation of $S(\CT[\FM_\bullet])$ whose restriction to $\Tdeg[\FM_\bullet]_{i+1}$ takes values in $\vert \otimes_{\K} \FM_{i-1} $.
\end{proposition}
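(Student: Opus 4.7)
The plan is to use the matrix form \eqref{eq:23matrix} of $\delta_\psi$, the recursive definition \eqref{eq:star} of its component $*$, and the induction hypothesis. Since $\delta_\psi$ is a derivation of odd degree $-1$, a short direct calculation shows that $\delta_\psi^2$ is an additive (even) derivation of $S(\CT[\FM_\bullet])$ of degree $-2$. I would begin by verifying that $\delta_\psi^2(\r^{-1}(\tau)) = 0$: by Proposition~\ref{prop:2root}, $\r^{-1}(\tau)$ lies in $\SdegFM_{i}$, so each of its monomial summands is a product of at least two trees of positive degree summing to $i$, and every factor has degree at most $i-1$. The induction hypothesis kills $\delta_\psi^2$ on each such factor, and the derivation property propagates this to products.

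Next I would decompose $\delta_\psi(\r^{-1}(\tau)) = X + Y + Z$ along the three summands $|[\FM_{\bullet}] \oplus \Tdeg[\FM_\bullet] \oplus \SdegFM$. The defining formula $\p \circ \delta_\psi \circ \r^{-1} = -\r^{-1} \circ *$ forces $Z = -\r^{-1}(*(\tau))$. Applying $\delta_\psi$ once more, the vanishing $\delta_\psi(X + Y + Z) = 0$ from the previous step, projected onto its $\SdegFM$-component, produces the combination $\r^{-1}(Y) + \r^{-1}(*(*(\tau)))$: the second term comes from $\delta_\psi(Z)$ via \eqref{eq:star} applied to $*(\tau) \in \Tdeg[\FM_\bullet]_i$, while $\delta_\psi(X)$ contributes only to $|[\FM_{i-2}]$. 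Hence $Y = -*(*(\tau))$.

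Finally I would expand $\delta_\psi^2(\tau) = \delta_\psi(-|[\psi(\tau)]) + \delta_\psi(*(\tau)) + \delta_\psi(\r^{-1}(\tau))$ using \eqref{eq:23matrix}: the three summands read off as $-|[d\psi(\tau)]$; $-|[\psi(*(\tau))] + *(*(\tau)) + \r^{-1}(*(\tau))$; and $X - *(*(\tau)) - \r^{-1}(*(\tau))$ (using the identification of $Y$ above). The $\Tdeg[\FM_\bullet]$- and $\SdegFM$-contributions cancel in pairs, leaving
\[ \delta_\psi^2(\tau) \;=\; -|[d\psi(\tau)] \;-\; |[\psi(*(\tau))] \;+\; X \;\in\; |[\FM_{i-1}] , \]
which is the claim. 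The main obstacle will be the bookkeeping: carefully tracking each term in the three-summand decomposition and applying the defining identity of $*$ twice (first to $\tau$ and then to $*(\tau)$). The explicit form of $X$ never enters the conclusion; only the fact that $X$ lies in $|[\FM_{i-1}]$, which is built into the decomposition.
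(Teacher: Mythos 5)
Your proof is correct and follows essentially the same route as the paper: both arguments rest on the block decomposition \eqref{eq:23matrix}, the tautological identity $\p\circ\delta_\psi\circ\r^{-1}=-\r^{-1}\circ *$ from \eqref{eq:star}, and the identity $\pv\circ\delta_\psi\circ\r^{-1}=-*^2$ obtained from the induction hypothesis. The only difference is organizational --- the paper expands $*^2$ directly as a matrix entry of $\delta_\psi^2$, while you recover the same identity by projecting $\delta_\psi^2(\r^{-1}(\tau))=0$ onto $\SdegFM$ --- so the two cancellations you exhibit are exactly the vanishing of the second and third entries of the middle column of \eqref{eq:matricdeltasquare}.
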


\begin{proof}
%and Equation \eqref{eq:deltaPsiSquare}]
Let us use the recursive description of the differential $\delta_{\psi}$
given in Section \ref{sec:constructingdeltapsi}.
It follows from equation \eqref{eq:23matrix}
that $ \delta_\psi$, seen as a $ 3 \times 3$ matrix with respect to the decomposition 
\begin{equation} \label{eq:decomp}S(\CT[\FM_\bullet]) \,= \underbrace{\FM_{\bullet} \oplus \CO} \,\oplus\,  \Tdeg[\FM_\bullet]  \,\oplus\, \SdegFM.\end{equation}
takes the form:
\begin{equation}
 \label{eq:33matrix}
 \delta_\psi = \begin{pmatrix} d & - \psi & \ptr \circ \delta_\psi  \\[10pt] 0& * & \pv \circ \delta_\psi  \\[10pt] 0 & \r^{-1} & \p \circ \delta_\psi   \end{pmatrix} .  \end{equation}
 Above, we used the three canonical projections with respect to the decomposition in Equation \eqref{eq:decomp}:\footnote{The upper index of $\mathrm p$ refers to the polynomial degree in $\SdegFM $, while the symbols $| $ and $ \vee$ indicates whether one projects on trivial or on non-trivial trees.} 
\begin{equation}
\label{eq:three_projections}
\begin{array}{lcrcl}
 \ptr &\colon& S(\CT[\FM_\bullet]) &\longrightarrow& \FM_\bullet \oplus \CO , \vspace{1mm}
    \\
    \pv  &\colon& S(\CT[\FM_\bullet]) &\longrightarrow &\Tdeg[\FM_\bullet]   ,\vspace{2mm}\\
\p &\colon& S(\CT[\FM_\bullet]) &\longrightarrow&
    \SdegFM.\end{array}  
\end{equation}

%in particular the decomposition
\vspace{1mm}
\noindent
Let us compute the matrix that correspond to $\delta_\psi^2$ restricted to the subspace $S(\CT[\FM_\bullet])_{i+1}$: 
%We do not need to compute the terms on the last column, so we denote them with $ \cdots$.
\begin{equation}\label{eq:matricdeltasquare} \delta_\psi^2 = \begin{pmatrix} {\mathrm{id}} \otimes d^2 &  - d\circ \psi - \psi \circ *   +\ptr\circ \delta_\psi \circ \r^{-1} & 0 \\[10pt] 0& *^2 
 + \pv\circ \delta_\psi \circ \r^{-1}  & 0 \\[10pt] 0 & p^{\geq 2} \circ \delta_\psi \circ \r^{-1} + \r^{-1} \circ * & 0 \end{pmatrix}   .\end{equation}
 In the first column, all terms are zero since $ d^2=0$. In the last column, all terms are zero since, by assumption of the proposition, the derivation  $\delta_\psi^2 =\tfrac{1}{2}\left[\delta_\psi,\delta_\psi\right]$ is zero on the subspace $\oplus_{k=0}^i S(\CT[\FM_\bullet])_k$.
 To establish Proposition  \ref{prop:deltaPsiSquare}, it remains only to show that the second and third terms in the second column are zero. For the third term, it is a tautological consequence of Equation \eqref{eq:star}:
$$ \p \circ \delta_\psi \circ \r^{-1} + \r^{-1} \circ * = \p \circ \delta_\psi \circ \r^{-1} -\r^{-1} \circ \r \circ \p \circ \delta_{\psi} \circ \r^{-1} =0.$$
For the second term, the computation is more involved:
$$  \begin{array}{rcl} *^2 &= & \r \circ {\mathrm{p}}^{\geq 2} \circ \delta_{\psi} \circ \r^{-1} \circ \r \circ {\mathrm{p}}^{\geq 2} \circ \delta_{\psi} \circ \r^{-1} \\ 
& =& \r \circ {\mathrm{p}}^{\geq 2} \circ \delta_{\psi}  \circ {\mathrm{p}}^{\geq 2} \circ \delta_{\psi} \circ \r^{-1}  \\
%& = &   \r \circ {\mathrm{p}}^{\geq 2} \circ \delta_{\psi}  \circ \delta_{\psi} \circ \r^{-1} - \r \circ {\mathrm{p}}^{\geq 2} \circ \delta_{\psi}  \circ q_2 \circ \delta_{\psi} \circ \r^{-1} 
\end{array}   $$
Now, in view of $ \p + \ptr + \pv = {\mathrm{id}}$, we have
$$ \begin{array}{rcl} \delta_{\psi}  \circ {\mathrm{p}}^{\geq 2} \circ \delta_{\psi} &=& \delta_{\psi}^2 -  \delta_{\psi}\circ \ptr \circ \delta_{\psi} -  \delta_{\psi}\circ \pv \circ \delta_{\psi}
. \end{array} $$
The first term on the r.h.s. of this last equation can be dropped since when combined with  $\r^{-1} $ it gives no contribution by our induction hypothesis.
Hence:
$$   \begin{array}{rcl} *^2 &= & - \r \circ {\mathrm{p}}^{\geq 2} \circ \delta_{\psi}  \circ \ptr \circ \delta_{\psi} \circ \r^{-1} - \r \circ {\mathrm{p}}^{\geq 2} \circ \delta_{\psi}  \circ \pv \circ \delta_{\psi} \circ \r^{-1} \end{array}  $$
Obviously, $ {\mathrm{p}}^{\geq 2} \circ \delta_{\psi}  \circ \ptr = 0$ since  $ \delta_\psi$ of a trivial tree is a trivial tree. Therefore the first term vanishes. 
Now, upon restriction to the space $\CT[\FM_\bullet] $ of decorated trees, $  \p \circ \delta_\psi = \r^{-1} $, 
so that we are left with
%the second term is equal to $\pv \circ \delta_{\psi} \circ \r^{-1}$.
%Also, ${\mathrm{p}}^{\geq 2} \circ \delta_{\psi}  \circ \ptr=0$ when applied to a trivial tree. Hence,
%As a consequence:
$$   \begin{array}{rcl} *^2 &= & -  \pv \circ \delta_{\psi} \circ \r^{-1} \end{array}  $$
which means that the second term in the second column in the matrix \eqref{eq:matricdeltasquare} vanishes as well. This completes the proof. \end{proof}

\vspace{0.1cm}
\noindent
%Before proving , 
Let us derive a corollary
of Proposition \ref{prop:deltaPsiSquare}

\begin{corollary}
\label{cor:differentialexists}
There exists a choice of arborescent operations $t \mapsto \psi_t $ such that $\delta_\psi^2 =0$.
\end{corollary}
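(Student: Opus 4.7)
The plan is to construct $\psi$ by induction on total degree, extending it one degree at a time so that $\delta_\psi^2$ vanishes on everything constructed so far. In small degrees, the only nontrivial pieces of $\CT[\FM_\bullet]$ lie in $|[\FM_\bullet]$, where $\delta_\psi$ coincides with $d$, so $\delta_\psi^2 = d^2 = 0$ is automatic; the induction only begins to bite in degree $3$, where $\Tdeg[\FM_\bullet]$ first becomes nonzero via corollas $\vee[a_1,a_2]$ with $a_1,a_2\in\FM_1$.

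Suppose inductively that $\psi_t$ has been defined for all ordered trees $t$ whose decorated versions have total degree at most $i$, and that $\delta_\psi^2$ vanishes on $\CT[\FM_\bullet]_{\leq i}$ (hence on $S(\CT[\FM_\bullet])_{\leq i}$ by the derivation property). Extend $\psi$ by zero provisionally on $\Tdeg[\FM_\bullet]_{i+1}$; by Proposition \ref{prop:deltaPsiSquare}, the resulting $\delta_\psi^2$ restricted to $\Tdeg[\FM_\bullet]_{i+1}$ takes values in $|\otimes_\K\FM_{i-1}$, defining an obstruction map $O$. The key observation is that $d\circ O = 0$: associativity of composition gives $\delta_\psi\circ\delta_\psi^2 = \delta_\psi^2\circ\delta_\psi$; for $x\in\Tdeg[\FM_\bullet]_{i+1}$ the element $\delta_\psi(x)$ lives in degree $i$, where $\delta_\psi^2$ vanishes by induction; and on $|[\FM_\bullet]$ the differential $\delta_\psi$ acts as $d$, so $d(O(x)) = \delta_\psi(\delta_\psi^2(x)) = \delta_\psi^2(\delta_\psi(x)) = 0$.

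Since $(\FM_\bullet,d)$ is an $\CO$-module resolution, $\ker d|_{\FM_{i-1}} = d(\FM_i)$ for $i\geq 2$, so $O$ factors through $d(\FM_i)$. By Proposition \ref{prop:stillfree}, $\Tdeg[\FM_\bullet]_{i+1}$ is a projective $\CO$-module, hence there exists an $\CO$-linear map $\psi_{\mathrm{new}}\colon\Tdeg[\FM_\bullet]_{i+1}\to\FM_i$ with $d\circ\psi_{\mathrm{new}} = O$. Redefining $\psi$ on $\Tdeg[\FM_\bullet]_{i+1}$ to be $\psi_{\mathrm{new}}$ changes the top entry of the second column of the matrix \eqref{eq:matricdeltasquare} by exactly $-d\circ\psi_{\mathrm{new}} = -O$, cancelling the obstruction; the remaining entries involve only the restriction of $\psi$ to degrees $\leq i$ and are therefore untouched. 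On the complementary summands of $\CT[\FM_\bullet]_{i+1}$, namely $|[\FM_{i+1}]$, one has $\delta_\psi^2 = d^2 = 0$, and on $\SdegFM_{i+1}$ the vanishing follows from the Leibniz rule for the derivation $\delta_\psi^2$ together with the induction hypothesis applied to factors of strictly smaller degree.

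I expect the main obstacle to be verifying the cocycle identity $d\circ O = 0$ cleanly, since $\delta_\psi^3$ must be interpreted with respect to the three-part decomposition $|[\FM_\bullet\oplus\CO]\oplus\Tdeg[\FM_\bullet]\oplus\SdegFM$ that underlies Proposition \ref{prop:deltaPsiSquare}. Once this is in hand, the remainder is a classical obstruction-theoretic lift, using projectivity of the tree module and exactness of $(\FM_\bullet,d)$ in positive degrees; the only choice made at each stage is that of the lift $\psi_{\mathrm{new}}$.
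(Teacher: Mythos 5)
Your proposal is correct and follows essentially the same route as the paper: the cocycle identity $d\circ O=0$ is exactly the paper's Lemma \ref{cor:spadesuit} (proved there by the same associativity-of-$\delta_\psi^3$ argument), and the lift of the obstruction uses exactness of $(\FM_\bullet,d)$ together with projectivity of $\Tdeg[\FM_\bullet]_{i+1}$ from Proposition \ref{prop:stillfree}, just as in the text. The only cosmetic difference is that you phrase the step as "provisionally extend by zero, then correct," whereas the paper directly produces $\psi^{(i+1)}$ with $d\circ\psi^{(i+1)}=B$; the two formulations are interchangeable.
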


\noindent
To prove it, we first establish the following lemma.

\begin{lemma}
\label{cor:spadesuit}
Assume that the arborescent operations have been chosen such that $\delta_\psi^2 $ vanishes on elements of degree less or equal to $i$, 
then for any element $t[a_1, \dots, a_n] \in \CT[\FM_\bullet]_{i+1}$,
we have $\delta_\psi^2 (t[a_1, \dots, a_n]) = | \otimes_{\K} B(t[a_1, \dots, a_n ])$ with $ B(t[a_1, \dots, a_n ]) $ being an element in $\FM_{i-1} $ that satisfies $d (B(t[a_1, \dots, a_n ]))=0 $.
\end{lemma}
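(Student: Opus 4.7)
My plan is to split the conclusion of the lemma into two separate claims: (a) that $\delta_\psi^2(t[a_1,\dots,a_n])$ lies in $|\otimes_\K \FM_{i-1}$, so it can be written as $|[B]$ for a uniquely determined $B\in\FM_{i-1}$; and (b) that $d(B)=0$. Claim (a) is essentially already done, so the new content is claim (b).

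For (a), I use the decomposition $\CT[\FM_\bullet]_{i+1}=|[\FM_{i+1}]\oplus \Tdeg[\FM_\bullet]_{i+1}$ from \eqref{tree_decomposition}. On the non-trivial-tree summand, the claim is exactly Proposition \ref{prop:deltaPsiSquare}, which I invoke directly. On the trivial-tree summand, $t[a_1,\dots,a_n]=|[a]$ with $a\in\FM_{i+1}$, and the first column of the matrix \eqref{eq:23matrix} yields $\delta_\psi(|[a])=d(a)$; squaring and using $d^2=0$ gives $\delta_\psi^2(|[a])=0$, so one may take $B=0$ here.

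For (b), the key observation is the tautological identity $\delta_\psi\circ\delta_\psi^2=\delta_\psi^3=\delta_\psi^2\circ\delta_\psi$. I apply both expressions to $t[a_1,\dots,a_n]$. On one side, since $\delta_\psi$ has degree $-1$, the element $\delta_\psi(t[a_1,\dots,a_n])$ lies in $S(\CT[\FM_\bullet])_i$, and the standing hypothesis of the lemma forces $\delta_\psi^2(\delta_\psi(t[a_1,\dots,a_n]))=0$. On the other side, $\delta_\psi(\delta_\psi^2(t[a_1,\dots,a_n]))=\delta_\psi(|[B])=d(B)$, where the last equality follows from the top-left entry of \eqref{eq:23matrix}, which says that $\delta_\psi$ acts on $|[\FM_\bullet]$ as $d$ in all degrees. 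Equating the two expressions for $\delta_\psi^3(t[a_1,\dots,a_n])$ yields $d(B)=0$, as desired.

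I anticipate no real obstacle here: the substantive work is already contained in Proposition \ref{prop:deltaPsiSquare}, and the remainder is a purely formal bootstrap from the identity $\delta_\psi\circ\delta_\psi^2=\delta_\psi^2\circ\delta_\psi$ together with the standing hypothesis. The only mild point to keep in mind is that the identification $\delta_\psi(|[b])=d(b)$, explicitly stated in the excerpt only for $b\in\FM_1$, is valid in all degrees by the construction encoded in the first column of \eqref{eq:23matrix}; this is immediate from the fact that no other components contribute there.
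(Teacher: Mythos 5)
Your proof is correct and follows essentially the same route as the paper: the paper's own (very terse) argument is precisely that the hypothesis gives $\delta_\psi^2\circ\delta_\psi(t[a_1,\dots,a_n])=0$ because $\delta_\psi(t[a_1,\dots,a_n])$ has degree $i$, and then $\delta_\psi\circ\delta_\psi^2(t[a_1,\dots,a_n])=\delta_\psi(|\otimes_\K B)=|\otimes_\K d(B)$ forces $d(B)=0$. Your additional care about the trivial-tree summand and about $\delta_\psi$ acting as $d$ on $|[\FM_\bullet]$ in all degrees is a correct elaboration of what the paper leaves implicit.
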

\begin{proof}
The recursion assumption implies $ \delta_\psi^2 \circ \delta_\psi (t[a_1, \dots, a_n])=0$, so that $\delta_\psi \circ \delta_\psi^2(t[a_1, \dots, a_n]) = \delta_\psi (| \otimes_\K  B(t[a_1, \dots, a_n ]))= | \otimes_\K d( B(t[a_1, \dots, a_n ])) =0$. %This proves the result.
\end{proof}

\begin{proof}[Proof of Corollary \ref{cor:differentialexists}]
The proof consists in constructing the arborescent operation $\psi$ degree by degree. More precisely, we intend to define $\psi_t[a_1 \dots,a_n]$ for every decorated tree $t[a_1, \dots,a_n] $ of degree $ i+1$ provided it is already defined on all decorated trees of degree less or equal to $i$. If the relation $\delta^2_{\psi} = 0$ holds on decorated trees of degree at most $i$,
Proposition \ref{prop:deltaPsiSquare} 
states that restriction of  $ \delta^2_{\psi} $ to elements of degree $i+1 $ 
is of the form $|\otimes_\K B $ for some $\CO$-linear map  $B$ from $ {\mathcal T}ree[\FM]_{i+1} \to \FM_{i-1}$.
Corollary \ref{cor:spadesuit} then assures that 
$B $ is valued in $d$-cycles of $\FM_{i-1}$. 
 By exactness of the complex $(\FM_{\bullet}, d)$,  $B $ takes values in $d$-boundaries.
 Since the $\CO$-module $(\Tdeg[\FM_\bullet])_{i+1}$
is free or projective by Proposition \ref{prop:stillfree}, 
 %$ \psi$ by an
 there exists an $\CO$-linear map %$\psi^{(i+1)} $ from decorated trees of degree $i+1$: 
 $$
\psi^{(i+1)} \colon (\Tdeg[\FM_\bullet])_{i+1} \rightarrow \FM_{i}
$$
which gives back $B$ when composed with $d\colon \FM_i \to \FM_{i-1}$, $d \circ \psi^{(i+1)} = B$.
%for every
%$t[a_1, \dots, a_n]$.
%The henceforth constructed $\CO$-linear map are our new arborescent operations $\psi$, now defined in degree $ i+1$. 
The arborescent operation $\psi:t\mapsto \psi_t$, now 
extended by $ \psi^{(i+1)}$ on decorated trees of degree $ i+1$,
induces a differential $\delta_\psi $ that satisfies $\delta_{\psi}^2 = 0$ on all elements of degree $i+1$: By equation \eqref{eq:23matrix}, for any ordered decorated tre $t[a_1, \dots, a_n] $ of degree $ i+1$, 
the new value of $ \delta_\psi$ differs from the previous one by $ -|\otimesk \psi^{(i+1)}{{\circ {\mathrm{pr}}}}(t[a_1, \dots, a_n])=0$. When  applying $ \delta_\psi$ again to the expression just obtained, we get $| \otimesk B \circ {{\mathrm{pr}}}(t[a_1, \dot, a_n]) - | \otimesk d \psi^{(i+1)}{\circ\mathrm{pr}}(t[a_1, \dots, a_n]) $.
The construction continues by recursion.
\end{proof}

\begin{proposition}
    The  derivation $\delta_\psi $ of degree $ -1$ associated to an arborescent operation $ \psi \colon t \mapsto \psi_t$ squares to zero if and only if
\begin{equation}
\label{eq:deltaPsiSquare}\left(  \sum_{A \in \mathrm{InnVer}(t)} \left(\psi_{t_{\downarrow A}} \circ_{A} \psi_ {t_{\uparrow A}} - (-1)^{W_A} \psi_{\partial_A t}\right) - [d, \psi_t] + \pi^1\circ d\right) (a_1\otimes \dots \otimes a_n)=0. 
\end{equation}
Here $d$ is extended to the tensor product by the graded Leibniz rule and $\pi^1 $ stands for the canonical projection $\oplus_{j=1}^{\infty}\left(\otimes^{j}\FM_\bullet \right)\rightarrow \otimes^1 \FM_\bullet= \FM_\bullet$.
In the above, we use two abbreviations:
\begin{equation}\label{eq:conventions}\begin{array}{rrcl}  \left(\psi_{t_{\downarrow A}} \circ_{A} \psi_ {t_{\uparrow A}} \right) \colon &
a_1\otimes \dots \otimes a_n &\mapsto&  (-1)^{W_A}\psi_ {t_{\downarrow A}}(a_1, \dots, \psi_{t_{\uparrow A}}(a_A), \dots, a_n) \, ,\\[10pt]
 \left[ d, \psi_t \right] \colon
& a_1\otimes \dots \otimes a_n& \mapsto & d\circ \psi_t (a_1, \dots, a_n) + \sum_{i=1}^n (-1)^{W_i} \psi_t(a_1, \dots, da_i, \dots, a_n) \, .\\[10pt] %\psi_{\partial_A t}\colon& a_1\otimes \dots \otimes a_n& \mapsto  &(-1)^{W_A}\psi \circ {\mathrm{pr}} \, (\partial_A t[a_1, \dots, a_n]) \, 
.\end{array} 
\end{equation}
\end{proposition}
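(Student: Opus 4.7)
The plan is to identify the $|\,[\FM_\bullet]$-component of $\delta_\psi^2(t[a_1,\dots,a_n])$ with the left-hand side of~\eqref{eq:deltaPsiSquare}, and then to combine this identification with Proposition~\ref{prop:deltaPsiSquare} to conclude. Since $\delta_\psi^2$ is a graded derivation of $S(\CT[\FM_\bullet])$, its vanishing on the whole algebra is equivalent to its vanishing on the generators $\CT[\FM_\bullet] = |\,[\FM_\bullet] \oplus \Tdeg[\FM_\bullet]$; on the summand $|\,[\FM_\bullet]$ the map reduces to $d^2=0$, so only the restriction to $\Tdeg[\FM_\bullet]$ is genuinely at stake.

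The central calculation uses the matrix form~\eqref{eq:33matrix} to read off the $|\,[\FM_\bullet]$-component of $\delta_\psi^2$ on $\Tdeg[\FM_\bullet]$ as
\[
-\,d\circ\psi\;-\;\psi\circ *\;+\;\ptr\circ\delta_\psi\circ\r^{-1}.
\]
I then expand $*$ via the explicit formula~\eqref{eq:deltaexpression}: in $\Tdeg[\FM_\bullet]$ it produces the edge contractions $\partial(t[\ldots])$, the leaf-derivatives $\sum_i (-1)^{W_i}\, t[\ldots,da_i,\ldots]$, and the root-lowerings $-\sum_{A\in\mathrm{InnVer}(t)} (-1)^{W_A}\, t_{\downarrow A}[\ldots,\psi_{t_{\uparrow A}}(a_A),\ldots]$. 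Applying $-\psi$ to these converts them respectively into $-\sum_A (-1)^{W_A}\psi_{\partial_A t}$, $-\sum_i (-1)^{W_i}\psi_t(\ldots,da_i,\ldots)$ (with Convention~\ref{conv:extensionO} applied whenever $a_i\in\FM_1$), and $\sum_A \psi_{t_{\downarrow A}}\circ_A\psi_{t_{\uparrow A}}$ in the notation of~\eqref{eq:conventions}. The initial summand $-d\circ\psi$ combines with the second of these to yield $-[d,\psi_t]$.

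The remaining piece $\ptr\circ\delta_\psi\circ\r^{-1}(t[\ldots])$ I would analyse by writing $\r^{-1}(t[\ldots])$ as a symmetric product of the decorated subtrees hanging at the children of the root and invoking the graded Leibniz rule: a Leibniz summand lands in $|\,[\FM_\bullet]$ only if the differentiated factor is a trivial leaf $|\,[a_i]$ with $a_i\in\FM_1$ (so that $\delta_\psi|\,[a_i]=d(a_i)\in\CO$ is a scalar) and, moreover, the remaining factors collapse to a single trivial leaf via the $\CO$-module structure. A short case analysis shows this happens precisely when the root of $t$ has exactly two children, both leaves, in which case the contribution equals $\pi^1\circ d(a_1\otimes a_2)$---consistently with $\pi^1\circ d = 0$ on tensors of rank $\neq 2$. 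Summing all contributions yields the left-hand side of~\eqref{eq:deltaPsiSquare}.

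With this identification in hand, the $\Rightarrow$ direction of the proposition is immediate; for $\Leftarrow$ I would argue by induction on the total degree of elements of $\CT[\FM_\bullet]$. Degrees $0$ and $1$ reduce to $d^2=0$, and assuming $\delta_\psi^2=0$ on degrees $\le i$, Proposition~\ref{prop:deltaPsiSquare} guarantees that on $\Tdeg[\FM_\bullet]_{i+1}$ the map $\delta_\psi^2$ already lies in the trivial-tree subspace, whose component equals the left-hand side of~\eqref{eq:deltaPsiSquare} and therefore vanishes by assumption. The hard part will be the meticulous tracking of Koszul signs---in particular verifying that the $(-1)^{W_A}$ weights of Definition~\ref{def:WA}, attached to both $\partial_A t$ and to the root-lowering summand, combine with the sign convention of~\eqref{eq:conventions} on $\circ_A$ to produce exactly the signs appearing in the left-hand side of~\eqref{eq:deltaPsiSquare}.
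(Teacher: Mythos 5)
Your proposal is correct and follows essentially the same route as the paper: reduce to the single potentially nonzero entry $-d\circ\psi-\psi\circ *+\ptr\circ\delta_\psi\circ\r^{-1}$ of the matrix of $\delta_\psi^2$ via Proposition \ref{prop:deltaPsiSquare}, expand $*$ using the explicit formula \eqref{eq:NewConventions} to recover the $\psi_{t_{\downarrow A}}\circ_A\psi_{t_{\uparrow A}}$, $\psi_{\partial_A t}$ and $[d,\psi_t]$ terms, and identify $\ptr\circ\delta_\psi\circ\r^{-1}$ with $\pi^1\circ d$ by observing it is nonzero only on two-leaf trees with a degree-one decoration. Your explicit induction on degree for the converse direction merely makes precise what the paper leaves implicit.
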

\vspace{-7mm} \noindent We remark that $ \pi^1\circ d ( a_1\otimes \dots \otimes a_n)$ can be non-zero only if $n$ is equal to $1$ or $2$ and, in the latter case, with at least one of the two elements being of degree $1$. 

\begin{proof}
The relation  $ \delta_\psi^2=0$ holds if and only if all terms in the matrix \eqref{eq:matricdeltasquare} are zero.
It was checked in the proof of Proposition \ref{prop:deltaPsiSquare} that all terms are zero, except maybe for the first term in the second column, which is equal to: 
$$   -d\circ \psi \circ {\mathrm{pr}} (t[a_1, \dots, a_n]) - \psi \circ * \circ {\mathrm{pr}}(t[a_1, \dots, a_n])  +\ptr\circ \delta_\psi \circ \r^{-1} \circ {\mathrm{pr}}(t[a_1, \dots, a_n])=0. $$
 Here $ {\mathrm{pr}}$ is the projection from ordered trees onto symetrized trees as in Equation \eqref{eq:psitDef}.
%There is a slight abuse of notation here: we consider that $\ptr $ is valued in $\FM_\bullet $, that we identify with $ |\otimesk \FM_\bullet $. 
We have to check that this relation is equivalent to Equation \eqref{eq:deltaPsiSquare}.

\vspace{1mm}
\noindent
First we remark that $\ptr \circ \delta_{\psi} \circ \r^{-1}$ is non-zero only when evaluated on a tree with one root and two leaves:
$$
\adjustbox{valign =c}{$\begin{array}{c} \\ \vee[a_1, a_2] =\end{array}$} 
\adjustbox{valign =c}{\begin{forest}
for tree = {grow' = 90}, nice empty nodes,
[
 [$a_1$, tier =1]
 [$a_2$, tier =1] 
]
\path[fill=black] (.parent anchor) circle[radius=2pt];
    \end{forest}}$$
    where  $a_1$ or $a_2$ are of degree  $1$. This allows us to identify $\ptr \circ \delta_{\psi} \circ \r^{-1} (t[a_1, \dots, a_n])$ with $\vert \otimes_{\K} \pi^1 \circ d (a_1 \otimes \dots \otimes a_n)$.

\vspace{1mm}
\noindent
In view of Equations \eqref{eq:star} and \eqref{eq:NewConventions} %Proposition \ref{prop:details}, 
we have
$$
* \circ \mathrm{pr} ( t[a_1, \dots, a_n]) = 
\sum_{\scalebox{0.5}{$\begin{array}{r}A \in \mathrm{Ver}(t)
%\\ \cup \, \mathrm{Leaves } \cup { \mathrm root}(t)
\end{array}$}}-(-1)^{W_A}
\left( t_{\downarrow A}[a_1, \dots,\psi_{t_{\uparrow A}}(a_A) , \dots a_n]+(-1)^{W_A}(\partial_A t)[a_1, \dots,a_n].
\right)$$
Therefore, we can rewrite $-d\circ \psi  - \psi \circ *$ as  
\begin{equation}  \label{eq:psi}
-d\circ \psi  - \psi \circ * = \sum_{A \in \mathrm{InnVer}(t)} \left(\psi_{t_{\downarrow A}} \circ_{A} \psi_ {t_{\uparrow A}} - (-1)^{W_A}\psi_{\partial_A t}\right) - [d, \psi_t].
\end{equation}
%\begin{equation}  \label{eq:psi_1} \begin{array}{rcl}
%-d\circ \psi  + \psi \circ * & =&  
%-d \circ \psi+\psi \circ \r \circ \p \circ \delta_{\psi} \circ \r^{-1}   
%\\
%&=& \sum_{A \in InnVer(t)} \left(\psi_{t_{\downarrow A}} \circ_{A} \psi_ {t_{\uparrow A}} - \psi_{\partial_A t}\right) - [d, \psi_t] .
%   \end{array}   \end{equation}
This completes the proof.
\end{proof}

\noindent
For future use, it will be useful to write Equation \eqref{eq:deltaPsiSquare}
under the following form:

 \begin{equation}
\label{eq:psi3}
 \begin{gathered}
     d \circ \psi_t (a_1, \dots, a_n) 
 \, = \\ \,\left(  \sum_{A \in \mathrm{InnVer}(t)} \left(\psi_{t_{\downarrow A}} \circ_{A} \psi_ {t_{\uparrow A}} - (-1)^{W_A}\psi_{\partial_A t}\right) -\sum_{i=1}^n(-1)^{W_i}\psi_t(a_1, \dots, da_i, \dots,a_n) + \pi^1\circ d\right) (a_1\otimes \dots \otimes a_n).
 \end{gathered}
 \end{equation}
This form describes the recursion procedure used to construct $ \psi$, since $t[a_1, \dots, a_n] $ has degree $ i+1$, the terms that appear in the second line of \eqref{eq:psi3} have degree less or equal to $ i$.
For $ n \geq 3$, Equation \eqref{eq:psi3} also has the compact expression: 
\begin{equation}
   \label{eq:psi4} 
\sum_{\scalebox{0.5}{$\begin{array}{r}A \in \mathrm{Ver}(t)\end{array}$}} \psi_{t_{\downarrow A}} \circ_{A} \psi_ {t_{\uparrow A}} (a_1, \dots, a_n)= 
\sum_{\scalebox{0.5}{$\begin{array}{r}A \in \mathrm{InnVer}(t)\\   \end{array}$}}  (-1)^{W_A} \psi_{\partial_A t} (a_1, \dots, a_n), 
   \end{equation}  with the additional convention that $\psi_{t_{\downarrow A}} = -d$ and 
   $\psi_{t_{\uparrow A}} = \psi_t$
   if $A$ is the root of $t$, while $\psi_{t_{\uparrow A}} = -d$ and $\psi_{t_{\downarrow A}} = \psi_t$ if $A$ is a leaf of $t$.

\subsection{Arborescent Koszul-Tate resolutions}

\noindent
We now present the main theorem of the article.

\begin{theorem}
\label{thm:isKT}
    Let $\CI$ be an ideal in a commutative algebra $\CO$.
    For any free or projective $\mathcal O$-module resolution of $\CO/\CI$ 
$$
\begin{tikzcd}
	\cdots \arrow[r ] &\FM_k \arrow[r, "d" ]& \cdots \arrow[r, "d" ] &\FM_1 \arrow[r, "d" ] &\CO \arrow[r] & \CO/\CI \, \, ,
\end{tikzcd} 
$$
\noindent 
\begin{enumerate}
\item 
there exist arborescent operations $\psi \colon t \mapsto \psi_t $ such that the derivation $\delta_\psi$
described in Section~\ref{sec:constructingdeltapsi} 
squares to zero.
\item For any such a choice of $\psi $, the pair $(S(\CT[\FM_\bullet]), \delta_\psi) $ is a Koszul-Tate resolution of $\CO/\CI$. 
\end{enumerate}
    \end{theorem}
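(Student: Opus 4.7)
Part (1) is precisely Corollary~\ref{cor:differentialexists}. For part (2), we must verify the three defining properties of a Koszul-Tate resolution. The algebraic condition---that $S(\CT[\FM_\bullet])$ be the graded symmetric $\CO$-algebra on a projective (respectively free) graded $\CO$-module---follows immediately from Proposition~\ref{prop:stillfree}. That $\delta_\psi$ is a degree $-1$ derivation with $\delta_\psi^2=0$ follows by construction together with part (1). The degree $0$ homology is computed at once: $\delta_\psi$ restricted to $|\,[\FM_1]$ equals $d$, so its image in $\CO$ is exactly $d(\FM_1)=\CI$, yielding $H_0(S(\CT[\FM_\bullet]),\delta_\psi)=\CO/\CI$.

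It remains to show that $H_i=0$ for $i\geq 1$. My plan is to introduce the increasing filtration $F_k$ of $S(\CT[\FM_\bullet])$ by the total number of leaves appearing across all trees of a forest. Inspecting formula~\eqref{eq:deltaexpression}, every summand of $\delta_\psi$ either preserves the leaf count---namely $\r^{-1}$, $\partial$, and $d$ applied to an $\FM_{\geq 2}$-decoration---or strictly lowers it---namely the arborescent terms $-|\,[\psi_t(\ldots)]$ and $t_{\downarrow A}[\ldots,\psi_{t_{\uparrow A}}(a_A),\ldots]$, together with $d$ on $\FM_1$-decorations via Convention~\ref{conv:extensionO}. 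Hence $F_\bullet$ is a filtration by subcomplexes, and the induced differential $\bar\delta_0$ on each $\mathrm{gr}_k F:=F_k/F_{k-1}$ is manifestly independent of the choice of $\psi$: it consists purely of $\r^{-1}$, $\partial$, and the restriction of $d$ to $\FM_{\geq 2}$-decorations, extended by derivation.

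To compute $H(\mathrm{gr}_\bullet,\bar\delta_0)$, and hence bound that of $\delta_\psi$ via the associated spectral sequence, I would use the root map $\r$ of Proposition~\ref{prop:2root} as a contracting homotopy: since $\r^{-1}\circ\r=\mathrm{id}$ on $\SdegFM$, the operator $h$ defined as $\r$ on the $\SdegFM$ part and zero elsewhere satisfies $\bar\delta_0 h+h\bar\delta_0=\mathrm{id}$ on the $S^{\geq 2}$ summand \emph{modulo} corrections living in lower polynomial degree in $S$ (coming from $\partial$, from $d$, and from the Leibniz rule on products). A secondary induction on polynomial degree then reduces the whole computation to the summand $\CO\oplus|\,[\FM_\bullet]$, whose induced complex is exactly $(\CO\oplus\FM_\bullet,d)$, whose homology is $\CO/\CI$ in degree $0$ and vanishes in higher degrees by the assumption that $(\FM_\bullet,d)$ is a resolution. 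The main obstacle is precisely this bookkeeping of the lower-polynomial-degree corrections in the homotopy identity: showing that they do not reintroduce nontrivial homology across the iterated filtration. If this direct approach proves too cumbersome, a cleaner alternative is to verify that the inclusion $(\CO\oplus|\,[\FM_\bullet],d)\hookrightarrow(S(\CT[\FM_\bullet]),\delta_\psi)$ is a quasi-isomorphism by constructing an explicit chain-level retraction through the same inductive scheme used to define the arborescent operations $\psi_t$, so that compatibility with $\delta_\psi$ follows tautologically from the defining relation $\delta_\psi^2=0$.
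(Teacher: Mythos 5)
Parts of your proposal are fine and coincide with the paper: part (1) is indeed Corollary~\ref{cor:differentialexists}, the algebra structure comes from Proposition~\ref{prop:stillfree}, and the degree-$0$ computation is exactly the paper's. The problem is the positive-degree acyclicity, where you set up a leaf-count filtration, a spectral sequence, and a ``secondary induction on polynomial degree,'' and then explicitly leave open the bookkeeping of the lower-polynomial-degree corrections in the homotopy identity. As written that is a genuine gap: the claimed identity $\bar\delta_0 h+h\bar\delta_0=\mathrm{id}$ on the $S^{\geq 2}$ summand does not hold on the nose (the $\partial$- and Leibniz-type terms do interfere), and you give no argument that the corrections are controlled. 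Your fallback (an explicit retraction onto $(\CO\oplus\FM_\bullet,d)$) is viable --- it is essentially Proposition~\ref{prop:Homotopy} of the paper --- but you do not execute it either, and the chain-map property of the retraction is not ``tautological'' from $\delta_\psi^2=0$; it requires the matrix computation carried out there.

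The paper closes this step with a much shorter direct argument that needs no filtration. Given a $\delta_\psi$-closed $a$ of degree $i\geq 1$, write $a=a^{\geq 2}+a^1$ with $a^{\geq 2}\in\SdegFM$ and $a^1\in S^1$. Since the component of $\delta_\psi$ mapping $\Tdeg[\FM_\bullet]$ into $\SdegFM$ is exactly the unroot isomorphism $\r^{-1}$, the element $a':=a-\delta_\psi\bigl(\r(a^{\geq 2})\bigr)$ has vanishing $S^{\geq 2}$-component, i.e.\ lies in $S^1(\CT[\FM_\bullet])_i\simeq\CT[\FM_\bullet]_i$ and is still closed. Closedness forces $a'$ to lie in the kernel of $\r^{-1}$, which is precisely the span of trivial decorated trees, so $a'=|\otimes_{\K} b$ with $b\in\FM_i$; then $0=\delta_\psi(a')=|\otimes_{\K} db$ and exactness of $(\FM_\bullet,d)$ gives $b=db'$, whence $a=\delta_\psi\bigl(\r(a^{\geq 2})+|\otimes_{\K} b'\bigr)$. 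In other words, the single structural fact that the $S^1\to S^{\geq 2}$ block of $\delta_\psi$ is an isomorphism onto its image with kernel the trivial trees already collapses the homology onto $(\FM_\bullet,d)$ in one step; I recommend you replace your filtration scheme with this two-line reduction, or, if you prefer the retraction route, carry out the matrix verification of Proposition~\ref{prop:Homotopy} in full.
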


%\noindent
 %   This theorem makes sense of the definition below.

%\subsection{Proof of Theorem \ref{thm:isKT}}
\begin{proof}
The first item was established in Corollary \ref{cor:differentialexists}.
In order to prove the second item, it suffices to check that for any such a choice $\psi $, the homology of $\delta_\psi $ is zero in every positive degree and $\CO/\CI $ in degree $0$.

\vspace{0.1cm}

\noindent
Since $S(\CT[\FM_{\bullet}])_1 = | \otimes_\K \FM_1 $,  $\delta_\psi (| \otimes a) = da $ for every $a \in \FM_1 $, and  the image of $d$  is $ \CI$, the degree $0$ homology of $\delta_\psi $ is $\CO/\CI $.

\vspace{0.1cm}

\noindent
Let us now prove that any $\delta_\psi$-closed element of degree $i \geq 1 $ is exact. 
Let $a\in S( \CT[\FM_{\bullet}])_i$ be a $\delta_\psi$-closed element of degree $i$, $\delta_\psi a =0$. Let us decompose $a$ as a sum of two terms: 
\begin{equation}
\label{element_a} a =a^{\geq 2}+a^1  \end{equation}
 with
 %\begin{itemize}
  %   \item 
 $a^{\geq 2} \in  S^{\geq 2}( \mathcal{T}ree[\FM_{\bullet}])$ and $a^1 \in  S^{1}( \mathcal{T}ree[\FM_{\bullet}]) \simeq \mathcal{T}ree[\FM_{\bullet}]$.
%Consider $a - \delta_\psi \circ \r ( a^{\geq 2}) $. 

\vspace{0.1cm}
\noindent
Let us compute $\delta_\psi (\r (a^{\geq 2}))$. It admits a component in $ S^{\geq 2}(\CT[\FM_\bullet])$, which is $a^{\geq 2}$ by definition of $\delta_\psi $, see  Equation \eqref{eq:23matrix}.
As a consequence,
$$ a':=a -\delta_\psi \left( \r(a^{\geq 2})\right)  $$
 and $ \delta_\psi$-closed  element in  $ S^1(\CT[\FM_\bullet])_i \simeq \CT[\FM_\bullet]_i$.

\vspace{1mm}
\noindent
Recall from Equation \eqref{eq:23matrix} that the component of $\delta_\psi $ that goes from $\CT[\FM_\bullet]_i $ to $S^{\geq 2}(\CT[\FM_\bullet)_{i-1} $ is the unroot map $\r^{-1} $.
Since $a'$ is  $\delta_\psi $-closed, it belongs to the kernel of the unroot map $\r^{-1} $. The kernel of $\r^{-1} $ is the space of trivial decorated trees; this implies that there is $b \in \FM_i $ such that $ a' = | \otimesk b $.
Since $0 = \delta_\psi (a')= | \otimes_\K db $, the element  $b$ has to be $ d $-closed. By exactness of $\FM_\bullet $, $ b = d b'  $ for some $b' \in \FM_{i+1} $. By construction
 $ \delta_\psi ( | \otimes_\K b') = | \otimes_\K b = a'$.
 This proves that 
   $$ a = \delta_\psi (\r(a_{\geq 2}) + | \otimes_\K b') $$
   is exact, so that the homology of $\delta_\psi $ is zero.  
%  concludes the proof of exactness. 
\end{proof}

\vspace{0.1cm}

\noindent
Theorem \ref{thm:isKT} allows to give the following definition.

\begin{definition}
For $(\FM_\bullet, d) $ a free or projective  $\CO$-module resolution of $\CO/\CI $, we call any Koszul-Tate resolution of $ \CO/\CI$ as in Theorem \ref{thm:isKT} an \emph{arborescent Koszul-Tate resolution} of $\CO/\CI$.  %of $\CO/\CI $ attached to the arborescent operations $ t \mapsto \psi_t$ 
 
\end{definition}

\section{The induced $A_\infty$-algebra structure on $(\FM_\bullet,d)$.}
\label{sec:Cinfty}

\subsection{$(\FM_\bullet,d)$ as a deformation retract of its arborescent Koszul-Tate resolution}

\noindent Consider an arborescent Koszul-Tate resolution of $\CO/\CI $ associated to a free or projective $\CO$-module resolution $(\FM_\bullet,d) $ with arborescent operations $ \psi$  and differential  $\delta_\psi $. 
 Since  \emph{(i)} $(S(\CT[\FM_\bullet]),\delta_\psi) $  is also a free or projective $\CO$-module resolution of $\CO/\CI$ by  the second item of Theorem \ref{thm:isKT}, \emph{(ii)} any  two  free or projective $\CO$-module resolutions of the same $ \CO$-module are homotopy equivalent as $\CO $-modules \cite{Eisenbud}, $(S(\CT[\FM_\bullet]),\delta_\psi) $ 
 has to be homotopy equivalent to $(\FM_\bullet,d) $ as $\CO $-modules.
So while existence is a general fact, in our case, we are able describe an explicit
expression for this $\CO$-linear homotopy equivalence and show that it is even a deformation retract.

\vspace{0.1cm}
\noindent
The  $\CO $-linear inclusion map
 $$ \begin{array}{rrcl}\mathrm{Incl} \colon &  \FM_\bullet  \oplus \CO &\to & S(\CT[\FM]_\bullet)  \\& a +  F &\mapsto & |\, [a]  + F\end{array} $$
 is obviously a chain map. Let us consider a second map, 
$$\mathrm{Proj}_\psi \colon  S(\CT[\FM_\bullet]) \to \FM_\bullet \oplus \CO $$ 
defined, in the decomposition \eqref{eq:decomp}, by the matrix
 \begin{equation} \label{eq:matricesInclProj} {\mathrm{Proj}}_\psi =  \begin{pmatrix} {\mathrm{id}}& 0 & \psi \circ \r  \end{pmatrix} . \end{equation}
In particular, for all $ j \geq 2$, one has  \begin{align*} \mathrm{Proj}_\psi \left( t_1[a_1^1, \dots, a_{n_1}^1] \odot \cdots \odot  t_j [a_1^j, \dots, a_{n_j}^j]  \right)
       =  \psi_{\r(t_1, \dots, t_j )}(a_1^1, \dots, a_{n_1}^1, \dots, a_1^j, \dots, a_{n_j}^j ) \, ,  \end{align*}
       where the tree $ \r(t_1, \dots, t_j ) $ is obtained by rooting the trees $t_1, \dots, t_j$ altogether.

\begin{proposition}
\label{prop:Homotopy}
Consider an arborescent Koszul-Tate resolution of $\CO/\CI $ associated to a free or projective $\CO$-module resolution $(\FM_\bullet,d) $ with arborescent operations $ \psi$  and differential  $\delta_\psi $.
The $\CO $-linear maps 

%\begin{xy}
%(250, 0)*+{S(\CT[\FM])}="a"; (270, 0)*+{\FM} = "b";
%{\ar^{\mathrm{Proj}_{\psi}}@<1.ex>"a"; "b"};
%{\ar^(0.3){\mathrm{Incl}}@<1.ex> "b";"a"};
%\end{xy}

\begin{center}
\begin{tikzpicture}[scale=1]
% Vertices
\coordinate (A) at (0,0);
\coordinate (B) at (2,0);

% Labels
\node at (A) [left] {$S(\CT[\FM])$};
\node at (B) [right] {$\FM$};
%\node at (F) [below right] {$c$};

% Edges
%\draw (A) -- (B) -- (F);
\draw[->] (0,0.2) -- (2,0.2) node[midway, above] {$\mathrm{Proj}_{\psi}$};
\draw[<-] (0,-0.2) -- (2,-0.2) node[midway, below] {$\mathrm{Incl}$};
\end{tikzpicture}
\end{center}
are homotopy inverse one to an other. More precisely,
 $$ \left\{ \begin{array}{rcl} {\mathrm{Proj}}_\psi \circ {\mathrm{Incl}} &=& {\mathrm{id}} \\ 
{\mathrm{Incl}} \circ {\mathrm{Proj}}_\psi  &= & {\mathrm{id}} - \left(h \circ \delta_\psi + \delta_\psi \circ h \right)  \end{array}\right.$$
where the homotopy $h$ is given by $ h:= \r \circ \p 
%\hbox{ with $n \geq N+1$} .
$.
Moreover, the so-called side relations (see \cite{zbMATH03080175}) are satisfied, i.e.: 
 \begin{equation}\label{eq:additional} h^2 =0 \, , \, h \circ {\mathrm{Incl}} =0 \, , \, {\mathrm{Proj}}_\psi \circ h =0. \end{equation}
%
%is a contracting homotopy on $(S^{\bullet}( \mathcal{T}[\FM_{\bullet}])_n, \delta)$ in degree $n\geq N+1$, i.e. 
% $$ \partial_\psi \circ h_n + h_{n-1} \circ  \partial_\psi (S) =S $$
% for every $S \in  S^{\bullet}( \mathcal{T}[\FM_{\bullet}])_n$ for all $n  \geq N+1$.
\end{proposition}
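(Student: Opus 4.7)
The plan is to decompose any element of $S(\CT[\FM_\bullet])$ according to the splitting
\[
S(\CT[\FM_\bullet]) = \CO \oplus |[\FM_\bullet] \oplus \Tdeg[\FM_\bullet] \oplus \SdegFM,
\]
writing $x = F + |[a] + u + v$ with $F \in \CO$, $a \in \FM_\bullet$, $u \in \Tdeg[\FM_\bullet]$ and $v \in \SdegFM$, and then to verify all five identities componentwise using the $3\times 3$-matrix form \eqref{eq:33matrix} of $\delta_\psi$ together with the matrix form \eqref{eq:matricesInclProj} of $\mathrm{Proj}_\psi$. The homotopy $h = \r \circ \p$ kills the first three summands and sends $v$ to $\r(v) \in \Tdeg[\FM_\bullet]$, so in particular its image sits inside $\Tdeg[\FM_\bullet]$.

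First I would dispose of the easy identities. The relation $\mathrm{Proj}_\psi \circ \mathrm{Incl} = \mathrm{id}$ is immediate from the matrix definitions. The three side relations \eqref{eq:additional} all reduce to the observation that $h$ factors through $\Tdeg[\FM_\bullet]$: $h^2 = 0$ because $\p$ vanishes on $\Tdeg[\FM_\bullet]$; $h \circ \mathrm{Incl} = 0$ because $\mathrm{Incl}$ misses $\SdegFM$; and $\mathrm{Proj}_\psi \circ h = 0$ because the $\Tdeg[\FM_\bullet]$-column of $\mathrm{Proj}_\psi$ in \eqref{eq:matricesInclProj} is zero.

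The main step is to verify the homotopy formula $\mathrm{Incl} \circ \mathrm{Proj}_\psi = \mathrm{id} - (h\delta_\psi + \delta_\psi h)$. A direct bookkeeping using \eqref{eq:33matrix} shows that $\delta_\psi \circ h(x) = \delta_\psi(\r(v)) = -|[\psi\r(v)] + *\r(v) + v$, and that $h \circ \delta_\psi(x) = u + \r \p \delta_\psi(v)$, since only the $\r^{-1}$ block applied to $u$ and the $\p\delta_\psi$ block applied to $v$ survive the projection $\p$. Summing and subtracting from $\mathrm{id}(x)$, the $\CO$-, $|[\FM_\bullet]$- and $\SdegFM$-components collapse exactly to $\mathrm{Incl} \circ \mathrm{Proj}_\psi(x) = F + |[a + \psi\r(v)]$, while the $\Tdeg[\FM_\bullet]$-component reduces to $-*\r(v) - \r \p \delta_\psi(v)$.

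The crux, and essentially the only nontrivial point, is that this leftover $\Tdeg[\FM_\bullet]$-component vanishes by the very definition \eqref{eq:star} of $*$: indeed $*\r(v) = -\r \p \delta_\psi \r^{-1}\r(v) = -\r \p \delta_\psi(v)$. Everything else is formal. The main obstacle is therefore not conceptual but organizational: one has to track all entries of the $3\times 3$ matrix of $\delta_\psi$ (including the third column $\ptr\delta_\psi, \pv\delta_\psi, \p\delta_\psi$) through the compositions $h\delta_\psi$ and $\delta_\psi h$, and recognize that among the various terms produced by $v \in \SdegFM$ only the $\p\delta_\psi$-contribution survives the application of $h$, and that it is precisely this contribution which pairs with $*\r(v)$ via the defining equation of~$*$.
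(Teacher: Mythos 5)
Your proposal is correct and follows essentially the same route as the paper: the paper also works with the block decomposition \eqref{eq:decomp}, computes $\mathrm{Incl}\circ\mathrm{Proj}_\psi$ and $h\circ\delta_\psi+\delta_\psi\circ h$ via the matrix forms \eqref{eq:33matrix} and \eqref{eq:matricesInclProj}, and reduces everything to the identity $*\circ\r+\r\circ\p\circ\delta_\psi=0$ on $\SdegFM$, which is exactly the defining equation \eqref{eq:star} of $*$. Your evaluation on a general element $x=F+|[a]+u+v$ is just the componentwise transcription of the paper's matrix multiplication, and all your intermediate terms check out.
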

\begin{proof}
A direct computation gives the three relations listed in Equation \eqref{eq:additional}. Noting that 
 $${\mathrm{Incl}} = \begin{pmatrix} {\mathrm{id}}\\ 0 \\ 0  \end{pmatrix} $$
the relation $  {\mathrm{Proj}}_\psi \circ {\mathrm{Incl}} = {\mathrm{id}}$ is evident. 
We are left with  proving:
\begin{equation}
    {\mathrm{Incl}} \circ {\mathrm{Proj}}_\psi  =  {\mathrm{id}} - \left(h \circ \delta_\psi + \delta_\psi \circ h \right). 
    \label{sss}
\end{equation}
For the left-hand side we have
$${\mathrm{Incl}} \circ {\mathrm{Proj}}_\psi  = \begin{pmatrix} \mathrm{id} & 0 &  \psi \circ \r %+| [ \psi \circ \r ]
 \\[10pt] 0 &0 & 0 \\[10pt] 0&0 &  0  \end{pmatrix}   .$$
%The relation is obviously true on $ \CO= S^0(\CT[\FM_\bullet])$, so we just have to prove it on the remaining space.
On the other hand, using \eqref{eq:33matrix} to represent $\delta_\psi$ and noting that $h$ becomes 
 $$ h = \begin{pmatrix}  0&0 & 0\\0&0 & \r \\ 0&0 & 0\end{pmatrix}   $$
in the  decomposition \eqref{eq:decomp}, 
the matrix multiplications yield 
 $$ h \circ \delta_\psi + \delta_\psi \circ h = \begin{pmatrix} 0 & 0 &  -\psi \circ r  %+| [ \psi \circ \r ]
 \\[10pt] 0 &\r \circ \r^{-1} &  \r \circ  \p \circ \delta_\psi + * \circ \r \\[10pt] 0&0 &  \r^{-1} \circ \r \end{pmatrix}  .$$
By definition, $\r \circ \r^{-1}$ is the identity on $ \Tdeg[\FM_\bullet]  $, while $ \r^{-1} \circ \r$ is the identity on $\SdegFM $. 
Equation \eqref{sss} is now seen to hold true provided only that $* \circ \r + \r \circ  \p \circ \delta_\psi=0$ on $\SdegFM $, which follows immediately upon using the definition of $ *$ given in Equation \eqref{eq:star}.
\end{proof}

\vspace{1mm}
\noindent
We conclude this section with a statement that may be of interest in its own.
%Proposition \ref{prop:backtoKoszul}
%is a particular instance of a more general construction, valid for any arborescent Koszul-Tate resolution.

\begin{proposition}
\label{prop:quotient}
Let $\mathfrak J $ be a graded differential ideal of an arborescent Koszul-Tate resolution $(S(\CT[\FM_\bullet]), \delta_{\psi}) $, which is  $h$-stable, $\delta_{\psi}$-stable, and contained in the kernel of $\mathrm{Proj}_\psi$.
Then $S(\CT[\FM_\bullet])/\mathfrak J$, equipped with the differential $ \overline{\delta}_\psi$ induced from $ \delta_\psi$, is a graded commutative algebra whose homology is $0$ in every positive degree and $ \CO/\CI$ in degree $0$. 
\end{proposition}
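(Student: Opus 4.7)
\smallskip
\noindent \textbf{Proof strategy.} The plan is to descend the deformation retract of Proposition~\ref{prop:Homotopy} to the quotient and then conclude from the known homology of $(\FM_\bullet\oplus\CO,d)$, which is $\CO/\CI$ in degree $0$ and zero in positive degrees because $(\FM_\bullet,d)$ is a resolution of $\CO/\CI$.

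\smallskip
\noindent First, since $\mathfrak J$ is a graded differential ideal, the quotient $S(\CT[\FM_\bullet])/\mathfrak J$ inherits a graded commutative $\CO$-algebra structure together with a well-defined degree $-1$ derivation $\overline{\delta}_\psi$ squaring to zero. Denote by $\pi\colon S(\CT[\FM_\bullet])\to S(\CT[\FM_\bullet])/\mathfrak J$ the canonical projection. I would then descend each of the three pieces of data from Proposition~\ref{prop:Homotopy}: the projection $\mathrm{Proj}_\psi$ descends to $\overline{\mathrm{Proj}}_\psi\colon S(\CT[\FM_\bullet])/\mathfrak J\to \FM_\bullet\oplus\CO$ because $\mathfrak J\subset \ker\mathrm{Proj}_\psi$; the homotopy $h$ descends to $\overline{h}$ on the quotient because $h(\mathfrak J)\subset\mathfrak J$; and one simply sets $\overline{\mathrm{Incl}}:=\pi\circ\mathrm{Incl}\colon \FM_\bullet\oplus\CO\to S(\CT[\FM_\bullet])/\mathfrak J$. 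That $\overline{\mathrm{Incl}}$ and $\overline{\mathrm{Proj}}_\psi$ are chain maps for $(\overline{\delta}_\psi,d)$ follows from their unbarred counterparts being chain maps together with the fact that $\pi$ intertwines $\delta_\psi$ and $\overline{\delta}_\psi$.

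\smallskip
\noindent Next, I would verify that the relations of Proposition~\ref{prop:Homotopy} pass to the quotient. The identity $\overline{\mathrm{Proj}}_\psi\circ\overline{\mathrm{Incl}}=\mathrm{id}$ follows directly from $\mathrm{Proj}_\psi\circ\mathrm{Incl}=\mathrm{id}$. For the deformation retract identity, given $x\in S(\CT[\FM_\bullet])$ one has
\[
(\overline{\mathrm{Incl}}\circ\overline{\mathrm{Proj}}_\psi)(\pi(x))=\pi\bigl(\mathrm{Incl}\circ\mathrm{Proj}_\psi(x)\bigr)=\pi\bigl(x-(h\delta_\psi+\delta_\psi h)(x)\bigr)=\pi(x)-(\overline{h}\,\overline{\delta}_\psi+\overline{\delta}_\psi\,\overline{h})(\pi(x)).
\]
The side relations $\overline{h}^2=0$, $\overline{h}\circ\overline{\mathrm{Incl}}=0$ and $\overline{\mathrm{Proj}}_\psi\circ\overline{h}=0$ descend in the same way. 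This exhibits $(\FM_\bullet\oplus\CO,d)$ as a deformation retract of $(S(\CT[\FM_\bullet])/\mathfrak J,\overline{\delta}_\psi)$, so the two complexes have isomorphic homology, which proves the stated result.

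\smallskip
\noindent I do not anticipate a real obstacle: every step is forced once one has arranged the three closure conditions on $\mathfrak J$ in the hypothesis. The only point to be a little careful about is to keep the quotient map commuting with $h$, $\delta_\psi$ and $\mathrm{Proj}_\psi$ when transporting the identities---which is exactly why the three stability/annihilation hypotheses on $\mathfrak J$ are imposed.
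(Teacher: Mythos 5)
Your proof is correct, but it takes a different route from the paper. The paper works with the short exact sequence of complexes $0 \to \mathfrak J \to S(\CT[\FM_\bullet]) \to S(\CT[\FM_\bullet])/\mathfrak J \to 0$ and its long exact homology sequence: restricting the identity $\mathrm{Incl}\circ\mathrm{Proj}_\psi = \mathrm{id} - (h\delta_\psi+\delta_\psi h)$ to $\mathfrak J$ and using $\mathfrak J \subset \ker \mathrm{Proj}_\psi$ together with $h$-stability, it obtains $\delta_\psi|_{\mathfrak J}\circ h|_{\mathfrak J} + h|_{\mathfrak J}\circ\delta_\psi|_{\mathfrak J} = \mathrm{id}_{\mathfrak J}$, hence $H_\bullet(\mathfrak J)=0$, and the long exact sequence identifies the homology of the quotient with that of the full resolution. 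You instead push the entire deformation retract of Proposition \ref{prop:Homotopy} down to the quotient, using the three hypotheses precisely to make $\overline{h}$, $\overline{\mathrm{Proj}}_\psi$ and $\overline{\delta}_\psi$ well defined, and then transport all the identities along the surjection $\pi$. Both arguments exploit exactly the same hypotheses and the same retract identity, so they are close cousins; the paper's version is the more standard homological-algebra shortcut, while yours is slightly longer but yields strictly more information: it exhibits $(\FM_\bullet\oplus\CO,d)$ as an explicit deformation retract of the quotient, with induced homotopy $\overline h$ and induced side relations, which is the kind of explicit contraction data the authors themselves emphasize as valuable for applications (cf.\ the discussion at the end of Section \ref{sec:complexity}). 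No gaps: every descent step is justified by one of the three stability/annihilation hypotheses, and the identification of the homology of $(\FM_\bullet\oplus\CO,d)$ with $\CO/\CI$ in degree $0$ and $0$ in positive degrees is exactly the defining property of the module resolution.
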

\begin{proof}  The short exact sequence of differential graded $ \CO$-modules
$$ 0 \longrightarrow (\mathfrak J, \left.\delta_{\psi}\right|_{\mathfrak J}) \longrightarrow (S(\CT[\FM_\bullet]), \delta_{\psi}) \longrightarrow (S(\CT[\FM_\bullet])/\mathfrak J, \underline{\delta}_{\psi}) \longrightarrow 0
$$ 
yields a long exact sequence 
 $$ \cdots \longrightarrow H_i (\mathfrak J,\left.\delta_{\psi}\right|_{\mathfrak J}) \longrightarrow  H_i (S(\CT[\FM_\bullet]),\delta_{\psi}) \longrightarrow  H_i (S(\CT[\FM_\bullet])/\mathfrak J,\underline{\delta}_{\psi})  \longrightarrow H_{i-1} (\mathfrak J,\left.\delta_{\psi}\right._{|_{\mathfrak J}}) \longrightarrow \cdots .$$
Since $\mathfrak J$ is in the kernel of $\mathrm{Proj}_\psi$ and is $h$-stable, the relation $\left.\delta_{\psi}\right|_{\mathfrak J}\circ \left.h\right|_{\mathfrak J} + \left.h\right|_{\mathfrak J} \circ \left.\delta_{\psi}\right|_{\mathfrak J} = \id_{\mathfrak J}$ holds. We conclude that $H_{i}(\mathfrak J,  \left.\delta_{\psi}\right|_{\mathfrak J}) = 0 $ for all $ i \geq 0$. Hence the homologies of 
$S(\CT[\FM_\bullet]),\delta_{\psi}) $ and  $ S(\CT[\FM_\bullet])/\mathfrak J,\underline{\delta}_{\psi}) $ are isomorphic. This proves that $(S(\CT[\FM_\bullet])/\mathfrak J,\left.\delta_{\psi}\right|_{\mathfrak J})$ has trivial homology except in degree $ 0$ where it is  $ \CO/\CI$. %The result follows.
\end{proof}

\noindent In Example \ref{ex:4.1} below we illustrate how this proposition can be applied in concrete cases.

\subsection{Construction of the $A_\infty$-algebra structure on $(\FM_\bullet,d)$.}

\noindent
It is an old problem (see, e.g., \cite{zbMATH03699062} and \cite{KUSTIN1994371}) to know whether or not projective $ \CO$-resolutions $(\FMb,d) $ of $\CO/\I $ admit a graded commutative algebra product. It was observed by Buchsbaum and Eisenbud \cite{Buchsbaum-Eisenbud} that this is possible only up to homotopy.
We make this statement more precise in this section and show that there is always an $ A_\infty$-algebra structure on $ \FMb \oplus \CO$, which is "graded commutative" in the sense that it is even a $C_\infty $-algebra structure  \cite{MR2503530,Getzler-Cheng}. 

\vspace{1mm}\noindent 
In particular, we show here that some components of the arborescent operations $\psi$ of an arborescent Koszul-Tate resolution encode a $C_\infty$-algebra structure on $ (\FMb \oplus \CO ,d)$. 
Let us first provide the definition of an $A_\infty $-algebra using the sign and degree conventions of \cite{Getzler-Cheng} (for the original definition \cite{StasheffAinfty1, StasheffAinfty2, Keller1999IntroductionT} see Appendix \ref{app:alternative}).

\begin{definition}
%\cite{StasheffAinfty1, StasheffAinfty2, Keller1999IntroductionT,JonesGetzler,Getzler-Cheng}.
Let $ (\mathfrak A,d)$ be a complex of $ \CO$-modules. A sequence of  degree $+1$ $\mathcal O $-linear maps $m_i \colon \otimes^i \mathfrak A_\bullet \to \mathfrak A_{\bullet} $, 
is said to be an  $ A_\infty$-algebra if it satisfies the "higher associativity" conditions  
\begin{equation}
\label{eq:ainfty1}
\sum_{\scalebox{0.5}{$\begin{array}{c} i,k \geq 0, j \geq 1 \\ i+j+k = n \end{array}$}} 
m_{n-j+1}(\id^{\otimes i}\otimes m_{j} \otimes \id^{\otimes k}) = 0
\end{equation}
for all $n \geq 1$ where $m_1=
d $.
\end{definition}

\vspace{1mm}
\noindent
In \cite{Kontsevich:2000yf},   it is stated that every complex $(\mathfrak A_\bullet ,d)$ which is homotopy equivalent to an $A_\infty$-algebra inherits one as well.
 This result is called the \emph{homotopy transfer theorem}. 
The  homotopy transfer theorem makes more than just announcing the existence of an $A_\infty$-algebra structure, an explicit formula as a particular summation over ordered trees was given in \cite{Kontsevich:2000yf}, see also \cite{Polishchuk_Ainfty}, \cite{markl2004transferring},
\cite{Getzler-Cheng} or \cite{Vallette}.
Let us recall how this construction works when one starts from a homotopy retract of a differential graded algebra $ (S_\bullet,\star, d_S)$, where the above-mentioned sum is over binary ordered trees only.
We follow the sign conventions of \cite{Getzler-Cheng}
and make a shift in degree by considering $ S_i$ to be of degree $ i-1$. We denote this new degree for any homogeneous $a \in S_\bullet$ by $ |a|'$, $ |a|'=|a|+1$. %One needs to
%let us make a shift in degree and consider $ S_i$ to be of degree $ i-1$, and 
%replace the initial product $ \star$ by the operation given by
Consider the new binary operation on $S_\bullet$ given for all homogeneous $a,b\in S_\bullet$ by
 \begin{equation}\label{eq:starprime}  a \star' b := (-1)^{|a|'} a \star b .\end{equation}
 We now have for all $ a,b \in S_\bullet$:
  $$ d_S(  a \star' b)= -(d_S a) \star' b  +  a \star' (d_S b) .$$
Assume we are given a homotopy retract from  $(S_\bullet,d_S) $  to a complex $({\mathfrak{A}}_\bullet,d) $:
\vspace{-9mm}
$$  
\begin{tikzcd}[column sep = 4em]
(S_\bullet,d_S)  \arrow[r, bend left=20, "{\mathrm{proj}}", shift ={(0 ,1mm)}]  \arrow[out=225, in=135, looseness=8, loop, " h "] & (\mathfrak A_\bullet,d) \arrow[l,  bend left=20, swap, "{\mathrm i}", shift = {(0, -1mm)}]
\end{tikzcd}.
$$

\vspace{-9mm}
\noindent
Here, $h$ is a homotopy $h$, i.e.\ $  d \circ h+ h \circ d = {\mathrm{id}} + {\mathrm{i}} \circ {\mathrm{proj}} $, the inclusion $\mathrm{i} $  and the projection $ {\mathrm{proj}}$ are chain maps, and the \emph{side relations}  are satisfied:
$$ h^2 =0 \, , \, h \circ \mathfrak i =0 \, , \, {\mathrm{proj}}\circ h=0 .$$ 
We now also shift the degree by $ +1$  on $ \mathfrak A_\bullet$ by $ +1$ and denote this new graded vector space by  ${\mathfrak{A}}_\bullet[1] $.
Then the $A_\infty $-algebra structure on $({\mathfrak{A}}_\bullet[1] , d)$ is given by $ m_1=d$ and
\begin{equation}
\label{eq:allchi}
\begin{array}{llllll}
        m_n\colon & \otimes^n {\mathfrak{A}}_\bullet[1] & \rightarrow & {\mathfrak{A}}_\bullet [1] \\
     & a_1, \dots, a_n & \mapsto & \sum_{ t \in Y_{n}}   \chi_t ( a_1, \dots,  a_n) 
\end{array}
\end{equation}
where $Y_n$ stands for the set of ordered binary trees with $n$ leaves, and $\chi_t \colon \otimes^n \mathfrak A_\bullet[1] \to \mathfrak A_\bullet[1] $ is the map described by the following procedure:  
\begin{enumerate}
    \item Put $ {\mathrm{i}}(a_1), \dots, {\mathrm{i}}(a_n)$ to the leaves of $t$,  
    \item replace all inner edges by $h$ and   each inner vertex as well as the root by the product $\star'$;
    \item  finally apply ${\mathrm{proj}} $ to the result.
\end{enumerate}
For instance, for the tree $\ltree $, the procedure gives
the map  
\begin{equation} 
\label{eq:ltree0}
\begin{array}{rcll}
\chi_{\ltree}  \colon &  \otimes^n \mathfrak A[1] & \longrightarrow & \mathfrak A[1]   \\ &(a,b,c) &\mapsto &  {\mathrm{proj}}( h( {\mathrm{i}}(a) \star'  {\mathrm{i}}(b)) \star'  {\mathrm{i}}(c) ) .\end{array}
\end{equation}
The homotopy transfer theorem then states in this case that the collection of maps $ (m_n)_{n \geq 1}$ defines an $A_\infty$-algebra.

\vspace{0.1cm}
\noindent
According to Proposition \ref{prop:Homotopy}, an arborescent Koszul-Tate resolution $(S(\CT[\FM_{\bullet}]) , \delta_\psi)$ constructed out of an $\CO $-module resolution  $(\FM_\bullet, d)  $ is a DGA which is is homotopy equivalent to the initial module $\CO$-resolution $(\FM_\bullet \oplus \CO, d) $ and the homotopy equivalence satisfies the side relations. %Thus  $(\FM_\bullet, d)  $ must carry an $ A_\infty$-algebra structure, after a shift in degree. 
The above construction therefore applies.
In our case, Equation \eqref{eq:starprime} becomes
$$  {\mathrm{pr}} (t_1[a_1, \dots, a_n]) \star'{\mathrm{pr}}  (t_2[[b_1, \dots, b_m])= (-1)^{|t_1[a_1, \dots, a_n]|'} {\mathrm{pr}} (t_1[a_1, \dots, a_n]) \odot {\mathrm{pr}} (t_2[b_1, \dots, b_m]) $$
 for all ordered decorated trees $ t_1[a_1, \dots, a_n] $ and $ t_2[b_1, \dots, b_m]$.
 Recall that ${\mathrm{pr}} $ stands for the projection from ordered decorated trees to $ \CT[\FM_{\bullet}] $.
Note that if  $ t[a_1, \dots,a_n] \in Tree[\FMb] $ is  a binary tree, then the initial degree is given by
$$ |t[a_1, \dots,a_n]|=  |a_1| + \cdots + |a_n| + n-1,$$
so that the shifted degree $|t[a_1, \dots,a_n]|'=|t[a_1, \dots,a_n]|+1$ is given by
\begin{equation}\label{eq:degreetree} |t[a_1,\dots,a_n] |'= |a_1|' + \cdots + |a_n|'.\end{equation}

\vspace{1mm}
\noindent
Since the homotopy is the root map $\r$, the inclusion of $ \FMb $ is the inclusion using trivial trees,  and the projection is ${\mathrm{Proj}}_\psi = \psi \circ \r $, one checks directly that for any binary  decorated tree $ t[a_1, \dots,a_n]$
\begin{equation} \label{sign}
\chi_t ( a_1, \dots, a_n) = (-1)^{P'(t[a_1, \dots, a_n])} \psi_t(a_1, \dots, a_n)  .   
\end{equation} 
%where $P'(t[a_1, \dots,a_n]) $ is the sum of the degrees of all left subtrees of $t[a_1,\dots,a_n] $.
Here $P'(t[a_1, \dots,a_n])$ is the sum of the shifted degrees of all left  decorated subtrees of inner vertices and roots in $t[a_1, \dots,a_n]\in \CT[\FM_\bullet]$, where one makes use of Equation \eqref{eq:degreetree} for explicit calculations.
Alternatively $ P'(t[a_1, \dots,a_n])$, modulo $2$, is the sum of the degrees of those decorations for which the path from the root to that leaf makes an odd number of left turns.
%The sign comes from Equation \eqref{eq:starprime}. 
We illustrate  this by computing $ m_2$ and $ m_3$ explicitly.

\vspace{1mm}
\noindent
 For $ n=2$, the sum in Equation \eqref{eq:allchi} runs only over  one tree, namely $ \vee$. Thus, in this case, the procedure gives 
$$\begin{array}{rcl}  m_2(a_1,a_2)&=&\chi_\vee (a_1,a_2)  
=\psi \circ \r ( |[a_1]\star' |[a_2])  = (-1)^{|a_1|'} \psi \circ \r ( |[a_1]\odot |[a_2])   \\[2mm] & =&  (-1)^{|a_1|'} \psi \circ {\mathrm{pr}} \, ( \vee[a_1,a_2]) =  (-1)^{|a_1|'}  \psi_\vee (a_1,a_2)\end{array}$$
for every $a_1,a_2 \in \FM_\bullet$.
This evidently agrees with \eqref{sign}.

\vspace{1mm}
\noindent
For $n=3$, it follows from Equation \eqref{eq:ltree0} that $\chi_{\ltree}$ is given by: 
$$
\begin{array}{rcl}\chi_{\ltree} ( a_1,a_2,a_3) &=&     \psi \circ \r \left(\r (|[a_1] \star' |[a_2] ) \star' |[a_3] \right)\\[1mm]
&=&     (-1)^{|a_1|'}  \,\psi \circ \r \left(\r (|[a_1]  \odot  |[a_2] )  \star'  |[a_3] \right)
\\[1mm]& =&  (-1)^{|a_1|' }  \, \psi \circ \r \left( {\mathrm{pr}} (\vee[a_1,a_2])  \star' |[a_3] \right) 
\\[1mm] & =&  (-1)^{|a_1|'+|\vee[a_1,a_2]|'}  \, \psi \circ \r \left( {\mathrm{pr}} (\vee[a_1,a_2])  \odot |[a_3] \right) \\[1mm]
&=&  (-1)^{|a_1|'+|a_1|'+|a_2|'} \, \psi \circ {\mathrm{pr}} \left( \ltree [a_1,a_2,a_3] \right) \\[1mm]
&=&  (-1)^{|a_2|'} \, \psi_{\ltree}  (a_1,a_2,a_3).
\end{array}
$$
A similar computation can be done for $ \psi_{\rtree}$, and 
we obtain that for every $a_1,a_2,a_3\in \FMb$:
 \begin{equation}
 \label{eq:m3}
 m_3(a_1,a_2,a_3)=  (-1)^{|a_2|'}  \psi_{\ltree}(a_1,a_2,a_3) + (-1)^{|a_1|'+|a_2|'}\psi_{\rtree}(a_1, a_2, a_3)  . \end{equation}

\noindent Furthermore, the homotopy transfer theorem described above yields more when $(S_\bullet, d_S,\star) $ is a graded \emph{commutative} algebra.
Theorem 12 in \cite{Getzler-Cheng} states that, in this case, the $ A_\infty$-algebra obtained using the explicit formulas above is a $ C_\infty$-algebra:

\begin{definition}
%\cite{Getzler} 
\cite{MR2503530,Getzler-Cheng} An $A_\infty$-algebra structure $(m_n)_{n \geq 1} $  on a graded $ \CO$-module ${\mathfrak{A}}_\bullet $  is said to be a \emph{$ C_\infty$-algebra} if the following condition is satisfied: for every $ n \geq 1$ and all homogeneous $ a_1, \dots,a_n \in {\mathfrak A}_\bullet$, one has
\begin{equation}\label{eq:Cinfty} \sum_{\sigma \in {\mathrm{Sh}}(i,n)} 
%Getlzer e(\sigma)
\theta'(\sigma,a)  \, m_n(a_{\sigma(1)}, \dots, a_{\sigma(n)})  =0 .\end{equation}
 Here ${\mathrm{Sh}}(i,n)$ stands for the set of all $(i,n)$-shuffles of $\{1, \dots, n\} $, i.e. all permutations $ \sigma$ of $ \{1, \dots,n\}$ such that  $ \sigma(a) \leq \sigma (b) $ if   $1 \leq a \leq b \leq i$ or $i+1 \leq a \leq b \leq n$.
Furthermore, 
%Getzler  $e(\sigma) \in \{-1,1\}$ is  the ordinary signature of $ \sigma$ and 
$\theta'(\sigma,a) $ is the Koszul signature of $\sigma$  defined to be the unique sign that satisfies
 $$ a_{\sigma(1)} \odot \dots \odot a_{\sigma(n)} = \theta'(\sigma,a) \, a_1 \odot \dots \odot a_n  $$
  within the graded symmetric algebra $ S(\mathfrak A_\bullet)$  generated by $\mathfrak A_\bullet $.
\end{definition}

\noindent
The next theorem concludes this discussion. 

\begin{theorem}
\label{thm:ainfty}
Consider an arborescent Koszul-Tate resolution of $\CO/\CI $ associated to a free or projective $\CO$-module resolution $(\FM_\bullet,d) $ with arborescent operations $ \psi$  and differential  $\delta_\psi $.
Then the following structure maps $(m_n)_{n\geq 1} $  define an $A_\infty $-algebra structure on the graded free module defined to be $\FM_{i+1}$ in degree $i\geq 0$, $ 
\CO$ in degree $-1$, and $0$ in degree $ \leq -2$:

\begin{enumerate}
\item For $n=1 $, we set $ m_1=
d$.
\item For $ n=2$, we set for all $ a_1,a_2 \in \FM_\bullet$ and $ F_1,F_2 \in \CO$  $$\begin{array}{rrcl}
    m_2\colon & \otimes^2  \left( {\FM}_\bullet \oplus \CO \right) & \rightarrow &  {\FM}_\bullet  \oplus \CO \\
     & (a_1 + F_1 \, ,  \, a_2+F_2) & \mapsto & -F_1a_2+ (-1)^{|a_1|+1} F_2 a_1  +  (-1)^{|a_1|+1} \psi_\vee (a_1,a_2) - F_1 F_2
 \end{array} .$$
\item For all $n \geq 3$, we set for all homogeneous $ a_1, \dots, a_n \in \FM_\bullet$ and $F_1, \dots, F_n \in\CO$
 \begin{equation}
     \label{eq:homotopytransfer} \begin{array}{rrcl}
    m_n\colon & \otimes^n  \left( {\FM}_\bullet \oplus \CO\right) & \rightarrow & {\FM}_\bullet  \oplus  \CO \\
     & (a_1+F_1, \dots, a_n+F_n) & \mapsto &\sum_{ t \in Y_n} (-1)^{P'(t[a_1, \dots,a_n])
     %Getzler +\sum_{i=1}^{n}(n-i)|a_i|
     } 
     \, \,  \psi_t(a_1, \dots, a_n), 
 \end{array} 
 \end{equation}
 where $Y_n$ is the set of ordered binary trees with $n$ leaves, and 
$P'(t[a_1, \dots,a_n])$ is defined in the paragraph following Equation \eqref{sign}.

 \end{enumerate}
Moreover, the structure maps $(m_n)_{n \geq 1} $ satisfy the defining property \eqref{eq:Cinfty} of a $C_\infty$-structure.

\end{theorem}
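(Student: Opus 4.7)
The plan is to deduce this statement from the homotopy transfer theorem (HTT) applied to the explicit deformation retract provided by Proposition \ref{prop:Homotopy}. Since $(S(\CT[\FM_\bullet]), \odot, \delta_\psi)$ is a differential graded commutative algebra over $\CO$, and Proposition \ref{prop:Homotopy} exhibits $(\FM_\bullet \oplus \CO, d)$ as its deformation retract via $\mathrm{Incl}$, $\mathrm{Proj}_\psi$ and the homotopy $h = \r \circ \p$ satisfying the side relations \eqref{eq:additional}, the HTT in the form recalled above endows the (shifted) complex with a transferred $A_\infty$-algebra structure given by \eqref{eq:allchi}. Because the ambient DGA is graded commutative, Theorem 12 of \cite{Getzler-Cheng} guarantees automatically that the transferred structure is in fact a $C_\infty$-algebra, which takes care of the last assertion of the theorem at no extra cost.

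The main computational task is then to identify the abstract HTT product \eqref{eq:allchi} with the explicit formulas \eqref{eq:homotopytransfer}. I would proceed by induction on the number of leaves of an ordered binary tree $t \in Y_n$, generalising the sample computations of $m_2$ and $m_3$ already performed above. The key algebraic observation is that for $t = \r_2(t_L, t_R)$ with left and right subtrees $t_L, t_R$, the subordinate computations produce, up to their inductive signs, the elements $\mathrm{pr}(t_L[a_1,\ldots,a_\ell])$ and $\mathrm{pr}(t_R[a_{\ell+1},\ldots,a_n])$ in $\CT[\FM_\bullet]$, precisely because $h = \r \circ \p$ re-roots any product of at least two trees. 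Combining them via $\star'$ contributes a sign $(-1)^{|t_L[a_1,\ldots,a_\ell]|'}$, after which $\mathrm{Proj}_\psi = \psi \circ \r$ yields $\psi_t(a_1,\ldots,a_n)$. Iterated along $t$, these factors assemble to the sum of shifted degrees of all left decorated subtrees of inner vertices and of the root, which is by definition $P'(t[a_1,\ldots,a_n])$, giving \eqref{eq:homotopytransfer}. For the unit components: when $n = 2$, the four terms of the stated formula for $m_2$ arise from expanding $\mathrm{Proj}_\psi(|[a_1+F_1] \star' |[a_2+F_2])$ together with $m_1 = d$; for $n \geq 3$, any leaf decorated by an element $F \in \CO$ forces $\psi_t(a_1,\ldots,a_n) = 0$ via Convention \ref{conv:extensionO}(2), since each inner vertex of a binary tree has exactly two children, so no additional terms involving the $F_i$'s appear.

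The main obstacle I anticipate is the careful bookkeeping of signs in the iterated composition of $\star'$, $h$ and $\mathrm{Proj}_\psi$: each $\star'$-insertion contributes a factor depending on the shifted degree of its left factor, and one must use \eqref{eq:degreetree} together with the inductive hypothesis to verify that these factors assemble exactly to $P'(t[a_1,\ldots,a_n])$ and not to some sign shifted by a boundary term. A conceptually independent alternative, announced in the introduction, is the \emph{ab initio} proof obtained by directly verifying the higher-associativity relations \eqref{eq:ainfty1} and the shuffle relations \eqref{eq:Cinfty} from the arborescent compatibility equations \eqref{eq:psi4}; this bypasses HTT and the Getzler--Cheng reduction altogether, but requires a rather delicate combinatorial manipulation of \eqref{eq:psi4} summed over binary trees with the appropriate signs.
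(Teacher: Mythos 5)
Your proposal is correct and follows one of the two routes the paper itself takes: the homotopy-transfer derivation via the deformation retract of Proposition \ref{prop:Homotopy}, the identification $\chi_t = (-1)^{P'(t[a_1,\dots,a_n])}\psi_t$ by tracking the $\star'$-signs along the tree, and Theorem 12 of \cite{Getzler-Cheng} for the $C_\infty$ property --- this is exactly the derivation carried out in Section \ref{sec:Cinfty} leading up to the theorem, including your treatment of the unit components and of Convention \ref{conv:extensionO} for $n\geq 3$. The only caveat is that the paper regards the HTT route as insufficiently self-contained (citing the difficulty of locating a complete proof of the transfer theorem with these sign conventions) and therefore designates as its official proof the \emph{ab initio} argument of Appendix \ref{app:alternative}, which verifies the relations \eqref{eq:ainfty1} and \eqref{eq:Cinfty} directly from \eqref{eq:psi4} summed over binary trees with signs $(-1)^{P(t[a])}$, using the closedness of the elements $k_n[a]$ under $\partial$ and a sign-cancelling duality on pairs of trees; you correctly identify this as the alternative, so your plan is sound, with the understanding that a fully rigorous write-up would either supply the HTT in the precise form used or fall back on that direct combinatorial verification.
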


\noindent
It is difficult to find a precise and complete proof of the homotopy transfer theorem in the literature. Also, having an independent argument is of use. 
A direct proof of Theorem \ref{thm:ainfty} is therefore given in Appendix \ref{app:alternative}.

\vspace{1mm}
\noindent
To conclude this section,
in view of Theorem \ref{thm:ainfty}, the following definition makes sense.

\begin{definition}
We call the $A_\infty$-algebra brackets  defined in Theorem \ref{thm:ainfty}
the \emph{arborescent $A_\infty$-algebra structure} on $\FM_\bullet\oplus \CO$. 
\end{definition}

 \section{Examples of arborescent Koszul-Tate resolutions}
 \label{sec:examples}

\noindent  In section \ref{sec:basicExamples}, we describe examples of arborescent Koszul-Tate resolutions of ideals $ \CI \subset \CO$ which are not complete intersections, and are therefore not covered by the Koszul resolution, and for which there is no explicit construction known. In section
\ref{sec:DGCA}, we assume that the $\CO$-module resolution $(\FM_\bullet,d) $  of  $ \CO/\CI$ admits a graded commutative associative algebra structure, which gives another class of examples for arborescent Koszul-Tate resolutions.
In particular, this class contains Koszul resolutions of complete intersections and Taylor resolutions of monomial ideals.

\subsection{Non-complete intersections}
\label{sec:basicExamples}

Here we will provide three examples, where the ideals are not complete intersections.
In each one of these cases, we will first provide an elementary $\CO$-module resolution and then construct arborescent operations $ \psi$ ensuring $ \delta_\psi^2=0$.
\vspace{0.1cm}

\begin{example}[Quadratic Functions]\label{ex:4.1}
\normalfont
  Let $\CO$ be  the polynomial algebra in the two variables $x$ and $y$, and $\CI\subset \CO $ the the ideal generated by quadratic elements, i.e. by $x^2, xy, y^2$.  This ideal has the following $\CO$-module resolution of length two:
$$
    \begin{tikzcd}
    0 \arrow[r ] &\underbrace{\FM_2}_{rk = 2} \arrow[r, "d" ] &\underbrace{\FM_1}_{rk = 3} \arrow[r, "d" ] &\CO \arrow[r, ->>] & \CO/\CI \, .
\end{tikzcd}
$$ 
Let us denote the three generators of degree $1$ by $\pi^{xx}$, $\pi^{xy}$, and $\pi^{yy}$ and the two generators of degree $2$ by $\pi^{xxy}$ and $ \pi^{xyy}$. The differential $d$ is defined as follows on them:  
\begin{equation*}
\begin{array}{ll}
    \hbox{Degree $1$} & d \pi^{xx} = x^2, \qquad d \pi^{xy} = xy,\qquad d \pi^{yy} = y^2, \\
    \hbox{Degree $2$} & d \pi^{xxy} = x \pi^{xy} - y \pi^{xx}, \qquad  d \pi^{xyy} = x \pi^{yy} - y \pi^{xy}.
\end{array}
\end{equation*}
Here the arborescent operations  $\psi_t$ are unique. For degree reasons, they vanish except for $t=\vee$, where one finds by a direct computation: 
   \begin{equation*}
\begin{array} {l}
       \psi_{\vtree} (\pi^{xx},\pi^{xy}) = x\, \pi^{xxy}, \\
       \psi_{\vtree}(\pi^{xy},\pi^{yy}) = y\,\pi^{xyy} \\
      \psi_{\vtree}(\pi^{xx},\pi^{yy}) = x\,\pi^{xyy}- y\, \pi^{xxy}. 
\end{array}
\end{equation*}

\vspace{0.1cm} \noindent
These data determine $\delta_{\psi}$. 

\vspace{0.1cm} \noindent 
Arborescent Koszul-Tate resolutions are in general not minimal. While we will define the notion of minimality carefully in Appendix \ref{app:minimal}, we observe here simply the following. Applying the rules for the action of $\delta_\psi$, Equation \eqref{eq:deltaexpression}, we find in the present case for example: 
\begin{equation}
    \label{eq:contr.pairs}
\adjustbox{valign =c}{$\delta_\psi$} \left(
\adjustbox{valign =c}{
\scalebox{0.5}{
\begin{forest}
for tree = {grow' = 90}, nice empty nodes,
[
[
 [\scalebox{2}{$\pi^{xx}$}, tier =1]
 [\scalebox{2}{$\pi^{xy}$}, tier =1] 
]
 [\scalebox{2}{$\pi^{yy}$}, tier = 1]
 ]
]
\path[fill=black] (.parent anchor) circle[radius=4pt] (!1.child anchor) circle[radius=4pt];\end{forest}}}\right) \adjustbox{valign =c}{$=$} \adjustbox{valign =c}{\scalebox{1}{
\begin{forest}
for tree = {grow' = 90}, nice empty nodes,
[
 [\scalebox{1}{$\pi^{xx}$}, tier =1]
 [\scalebox{1}{$\pi^{xy}$}, tier =1]
]
\path[fill=black] (.parent anchor) circle[radius=2pt];
\end{forest}}}\adjustbox{valign =c}{\scalebox{1}{
\begin{forest}
for tree = {grow' = 90}, nice empty nodes,
[
 [\scalebox{1}{$\pi^{yy}$}, tier = 1]
]
\end{forest}}}\adjustbox{valign =c}{$-$}  \adjustbox{valign =c}{\scalebox{0.5}{
\begin{forest}
for tree = {grow' = 90}, nice empty nodes,
[
 [\scalebox{2}{$\pi^{xx}$}, tier =1]
 [\scalebox{2}{$\pi^{xy}$}, tier =1] 
 [\scalebox{2}{$\pi^{yy}$}, tier = 1]
]
\path[fill=black] (.parent anchor) circle[radius=4pt];
\end{forest}}} \adjustbox{valign =c}{$+ x$} \adjustbox{valign =c}{\scalebox{1}{
\begin{forest}
for tree = {grow' = 90}, nice empty nodes,
[
 [\scalebox{1}{$\pi^{xxy}$}, tier =1]
 [\scalebox{1}{$\pi^{yy}$}, tier =1]
]
\path[fill=black] (.parent anchor) circle[radius=2pt];
\end{forest}}}\adjustbox{valign =c}{$.$}
\end{equation}
\vspace{0.1cm} \noindent 
Since $\delta_\psi$ squares to zero, this yields
\begin{equation}
    \label{eq:redundanttree}
\adjustbox{valign =c}{$\delta_\psi$} \left(
\adjustbox{valign =c}{
\scalebox{0.5}{
\begin{forest}
for tree = {grow' = 90}, nice empty nodes,
[
 [\scalebox{2}{$\pi^{xx}$}, tier =1]
 [\scalebox{2}{$\pi^{xy}$}, tier =1] 
 [\scalebox{2}{$\pi^{yy}$}, tier = 1]
 ]
]
\path[fill=black] (.parent anchor) circle[radius=4pt];\end{forest}}}\right)
                \adjustbox{valign =c}{$=\delta_{\psi}$}\left( \adjustbox{valign =c}{\scalebox{1}{
\begin{forest}
for tree = {grow' = 90}, nice empty nodes,
[
 [\scalebox{1}{$\pi^{xx}$}, tier =1]
 [\scalebox{1}{$\pi^{xy}$}, tier =1]
]
\path[fill=black] (.parent anchor) circle[radius=2pt];
\end{forest}}}\adjustbox{valign =c}{\scalebox{1}{
\begin{forest}
for tree = {grow' = 90}, nice empty nodes,
[
 [\scalebox{1}{$\pi^{yy}$}, tier = 1]
]
\end{forest}}} \adjustbox{valign =c}{$+ x$} \adjustbox{valign =c}{\scalebox{1}{
\begin{forest}
for tree = {grow' = 90}, nice empty nodes,
[
 [\scalebox{1}{$\pi^{xxy}$}, tier =1]
 [\scalebox{1}{$\pi^{yy}$}, tier =1]
]
\path[fill=black] (.parent anchor) circle[radius=2pt];
\end{forest}}} \right)\adjustbox{valign =c}{$.$}
\end{equation}

\noindent
From this equation we learn that the $\delta_\psi$-cycle on the l.h.s., obtained by the application of $\delta_\psi$ to the tree $\wtree [\pi^{xx}, \pi^{xy},\pi^{yy}]$, is already $\delta_\psi$-exact with respect to the generators up to degree 4 omitting that one tree. This implies that, for what concerns exactness of the Koszul-Tate resolution, there is no reason for including the generator $\wtree [\pi^{xx}, \pi^{xy},\pi^{yy}]$. 

\vspace{1mm} \noindent
To consistently eliminate this generator---together with $\ltree [\pi^{xx}, \pi^{xy}, \pi^{yy}]$---, we may apply 
 Proposition \ref{prop:quotient}. Consider the following two elements of degree $ 5$ and $4$
 %For this purpose we first define the ideal $\mathfrak J$ with respect to which we will take the quotient: $ \mathfrak J$ is generated by the following two elements of degree $5$ and $4$
% Let $\mathfrak_5,\mathfrak _4 $ be the free $ \CO$ sub-modules generated by
$$
   \adjustbox{valign =c}{
\scalebox{0.5}{
\begin{forest}
for tree = {grow' = 90}, nice empty nodes,
[
[
 [\scalebox{2}{$\pi^{xx}$}, tier =1]
 [\scalebox{2}{$\pi^{xy}$}, tier =1] 
]
 [\scalebox{2}{$\pi^{yy}$}, tier = 1]
 ]
]
\path[fill=black] (.parent anchor) circle[radius=4pt]
                (!1.child anchor) circle[radius=4pt];
\end{forest}}}
 \hbox{and }\delta_\psi\left( 
\adjustbox{valign =c}{
\scalebox{0.5}{
\begin{forest}
for tree = {grow' = 90}, nice empty nodes,
[
[
 [\scalebox{2}{$\pi^{xx}$}, tier =1]
 [\scalebox{2}{$\pi^{xy}$}, tier =1] 
]
 [\scalebox{2}{$\pi^{yy}$}, tier = 1]
 ]
]
\path[fill=black] (.parent anchor) circle[radius=4pt]
                (!1.child anchor) circle[radius=4pt];\end{forest}}}\right),
   $$
   respectively. 
We construct a graded $\CO$-module $\mathfrak N_{\bullet}$ as follows:
$$
\mathfrak N_{i} = \begin{cases}
    0 \hbox{ if } i\leq 3,\\   \langle\delta_\psi(\ltree [\pi^{xx}, \pi^{xy}, \pi^{yy}]) \rangle \hbox { if } i =4,\\
    \langle \ltree [\pi^{xx}, \pi^{xy}, \pi^{yy}] \rangle\hbox{ if } i=5,\\ 
    \langle \hbox{decorated trees of degree $i$ containing a subtree $\in \r ((\mathfrak N_4 \oplus \mathfrak N_5)\odot S(\CT[\FMb]))$ } \rangle \hbox{ for $i\geq 6$}.
\end{cases}
$$ 
There exists graded free $\CO $-sub-modules $ \CEb \subset \CT[\FM_\bullet]$ such that 
 $  \mathfrak N_\bullet $ and $ \CE_\bullet   $ form free generators of the symmetric algebra $ S(\CT[\FMb])$. 
It is routine to check that the ideal $\mathfrak J $ generated by $\mathfrak N_\bullet $ satisfies all requirements of Proposition \ref{prop:quotient}.

\vspace{0.2cm}
\noindent
The Koszul-Tate resolution obtained on the quotient can be described as follows: it is the symmetric algebra generated by all decorated trees, where the following trees are omitted:
 $$  \adjustbox{valign =c}{\scalebox{0.5}{
\begin{forest}
for tree = {grow' = 90}, nice empty nodes,
[
[
 [\scalebox{2}{$\pi^{xx}$}, tier =1]
 [\scalebox{2}{$\pi^{xy}$}, tier =1] 
]
 [\scalebox{2}{$\pi^{yy}$}, tier = 1]
 ]
]
\path[fill=black] (.parent anchor) circle[radius=4pt]
                (!1.child anchor) circle[radius=4pt];
\end{forest}}} \hbox{ and }\adjustbox{valign =c}{\scalebox{0.5}{
\begin{forest}
for tree = {grow' = 90}, nice empty nodes,
[
 [\scalebox{2}{$\pi^{xx}$}, tier =1]
 [\scalebox{2}{$\pi^{xy}$}, tier =1] 
 [\scalebox{2}{$\pi^{yy}$}, tier = 1]
]
\path[fill=black] (.parent anchor) circle[radius=4pt];
\end{forest}}}  $$
together with any decorated tree admitting these trees as a subtree. 
The Koszul-Tate differential acts as before on the remaining trees with the following amendment: 
Every term of  $\delta_\psi t$ containing $\ltree[\pi^{xx},\pi^{xy},\pi^{yy}]$ is simply put to zero. However, if we obtain the tree $\wtree[\pi^{xx},\pi^{xy},\pi^{yy}]$, we perform the following substitution: 
$$
\adjustbox{valign =c}{
\scalebox{0.5}{
\begin{forest}
for tree = {grow' = 90}, nice empty nodes,
[
 [\scalebox{2}{$\pi^{xx}$}, tier =1]
 [\scalebox{2}{$\pi^{xy}$}, tier =1] 
 [\scalebox{2}{$\pi^{yy}$}, tier = 1]
 ]
]
\path[fill=black] (.parent anchor) circle[radius=4pt];\end{forest}}}
                \adjustbox{valign =c}{$\longrightarrow$} \adjustbox{valign =c}{\scalebox{1}{
\begin{forest}
for tree = {grow' = 90}, nice empty nodes,
[
 [\scalebox{1}{$\pi^{xx}$}, tier =1]
 [\scalebox{1}{$\pi^{xy}$}, tier =1]
]
\path[fill=black] (.parent anchor) circle[radius=2pt];
\end{forest}}}\adjustbox{valign =c}{\scalebox{1}{
\begin{forest}
for tree = {grow' = 90}, nice empty nodes,
[
 [\scalebox{1}{$\pi^{yy}$}, tier = 1]
]
\end{forest}}} \adjustbox{valign =c}{$+ x$} \adjustbox{valign =c}{\scalebox{1}{
\begin{forest}
for tree = {grow' = 90}, nice empty nodes,
[
 [\scalebox{1}{$\pi^{xxy}$}, tier =1]
 [\scalebox{1}{$\pi^{yy}$}, tier =1]
]
\path[fill=black] (.parent anchor) circle[radius=2pt];
\end{forest}}}\adjustbox{valign =c}{$.$}
$$
And likewise so if the $\wtree$ appears as a subtree, e.g.:
$$
\adjustbox{valign = c}{\scalebox{0.5}{\begin{forest}
for tree = {grow' = 90}, nice empty nodes, for tree={ inner sep=2 pt, s sep= 2 pt, fit=band, 
},
[
[
        [\scalebox{2}{$\pi^{xx}$}, tier =1] 
        [\scalebox{2}{$\pi^{xy}$}, tier =1]
    [\scalebox{2}{$\pi^{yy}$}, tier =1 ]
    ]
    [\scalebox{2}{$\pi^{xx}$}, tier =1]
]
\path[fill=black] (.parent anchor) circle[radius=4pt]
(!1.child anchor) circle[radius=4pt];
 \end{forest}}} \longrightarrow
\adjustbox{valign = c}{\scalebox{0.5}{\begin{forest}
for tree = {grow' = 90}, nice empty nodes, for tree={ inner sep=2 pt, s sep= 2 pt, fit=band, 
},
[
[
        [\scalebox{2}{$\pi^{xx}$}, tier =1] 
        [\scalebox{2}{$\pi^{xy}$}, tier =1]
    ]
    [\scalebox{2}{$\pi^{yy}$}, tier =1 ]
    [\scalebox{2}{$\pi^{xx}$}, tier =1]
]
\path[fill=black] (.parent anchor) circle[radius=4pt]
(!1.child anchor) circle[radius=4pt];
 \end{forest}}} + x
\adjustbox{valign = c}{\scalebox{0.5}{\begin{forest}
for tree = {grow' = 90}, nice empty nodes, for tree={ inner sep=2 pt, s sep= 2 pt, fit=band, 
},
[
[
        [\scalebox{2}{$\pi^{xxy}$}, tier =1] 
        [\scalebox{2}{$\pi^{yy}$}, tier =1]
    ]
    [\scalebox{2}{$\pi^{xx}$}, tier =1]
]
\path[fill=black] (.parent anchor) circle[radius=4pt]
(!1.child anchor) circle[radius=4pt];
 \end{forest}}}
$$
\end{example}

 \noindent
\begin{example}
\label{ex:4.2}
\normalfont
Let  $\CO$ be the polynomial algebra in the three variables $x, y, z$, and  $\CI\subset \CO $ the ideal generated by    $ x^2, xy, y^2, xz$.
 The ideal $\CI$ admits a free $\CO$-module resolution of length three:
    
       $$ \begin{tikzcd}
    0 \arrow[r ] &\underbrace{\FM_3}_{rk = 1}\arrow[r, "d" ] &\underbrace{\FM_2}_{rk = 4} \arrow[r, "d" ] &\underbrace{\FM_1}_{rk = 4} \arrow[r, "d" ] &\CO \arrow[r, ->>] & \CO/\CI
\end{tikzcd}
    $$
    \noindent
    We denote generators of $\FM_1 $ by $\pi^{xx},\pi^{xy}, \pi^{yz},\pi^{yy}  $, generators of degree $2$ by $\pi^{xyy},\pi^{yxx},\pi^{zxx},\pi^{zxy} $, and the generator of degree $3$ by $ \pi$.
    These choices of indices are natural in view of the differential $d$ that we now introduce:
    \begin{equation*}
\begin{array}{ll}
    \hbox{Degree $1$} & d \pi^{xx} = x^2, \qquad d \pi^{xy} = xy,\qquad d \pi^{xz} = xz, \qquad d \pi^{yy} = y^2, \\
    \hbox{Degree $2$} & d \pi^{xyy} = x \pi^{yy} - y \pi^{xy}, \qquad  d \pi^{yxx} = y \pi^{xx} - x \pi^{xy}, \\ \multicolumn{1}{c}{\hbox{}} &
    d \pi^{zxx} = z \pi^{xx} - x \pi^{xz}, \qquad d \pi^{zxy} = z \pi^{xy} - y \pi^{xz},\\
    \hbox{Degree $3$} & d\pi = z\pi^{yxx}- y \pi^{zxx} + x\pi^{zxy}.
\end{array}
\end{equation*}
 Since $\FM_i \neq 0$ for $i=1,2,3$ only, all maps $\psi_t$ in the arborescent Koszul-Tate resolution vanish except for $t=\vee$, the tree with one root and two leaves, and $t=\wtree$, the tree with one root and three leaves.
 %\psi_w $,
 Those can be computed explicitly:
   \begin{equation*}
\begin{array} {ll}
      \\ \psi_{\vtree} (\pi^{xx},\pi^{xy}) = -x\, \pi^{yxx}, &
       \psi_{\vtree}(\pi^{xx},\pi^{xz}) = -x\,\pi^{zxx} \\
      \psi_{\vtree}(\pi^{xx},\pi^{yy}) = x\,\pi^{xyy}- y\, \pi^{yxx}, &    \psi_{\vtree}(\pi^{xy},\pi^{xz}) = - x\,\pi^{zxy} \\
       \psi_{\vtree}(\pi^{xz},\pi^{yy}) =z\,\pi^{xyy} + y\,\pi^{zxy}, &     \psi_{\vtree}(\pi^{xy},\pi^{yy}) = y \,\pi^{xyy}
\\& \\
      \psi_{\vtree}(\pi^{xx},\pi^{zxy}) = x\,\pi, &    \psi_{\vtree}(\pi^{xy},\pi^{zxx}) = -x\,\pi\\
     \psi_{\vtree}(\pi^{xz},\pi^{yxx}) = x\,\pi, &    \psi_{\vtree}(\pi^{yy},\pi^{zxx}) = -y\,\pi \\ & 
     \\\psi_{\wtree}(\pi^{xx},\pi^{xy},\pi^{xz}) = -x^2\,\pi, &   \psi_{\wtree}(\pi^{xx},\pi^{xz},\pi^{yy}) = xy\,\pi.\\\\
\end{array}
\end{equation*}
 In this example, the product $\mu_2 $ is associative, which is obvious also for degree reasons, and the higher products vanish, $\mu_n=0 $ for all $n \geq 3 $.
\end{example}

%\vspace{5mm}
%\normalfont
%In the examples (I) and (II), arborescent operations have been described, which permits to describe the arborescent Koszul-Tate resolutions. In these two examples,  
%the corresponding
%arborescent $A_\infty $-algebra admits an associative product $\mu_2 $ while all  $ \mu_n$ are zero for $n \geq 3 $.
%For example (III) discussed below, on the other hand,  $ \mu_3$ cannot be chosen to be zero. 

\vspace{.5cm}
\noindent
\begin{example}
\normalfont
\label{ex:Katthan}
 Let $\CO$ be  the polynomial algebra in the four variables $x, y, z,w $  and $\CI\subset \CO $ the ideal generated by $x^2, xy, y^2z^2, zw, w^2$. %This ideal is considered in \cite{Katthan2019}.
 This example was studied in \cite{Katthan2019}, where it was shown that the resolution does not admit a differential graded algebra structure.
 A free $\CO$-module resolution of  length four is given by:
       $$ \begin{tikzcd}
    0 \arrow[r ] &\underbrace{\FM_4}_{rk = 1}\arrow[r, "d" ]&\underbrace{\FM_3}_{rk = 5}\arrow[r, "d" ] &\underbrace{\FM_2}_{rk = 8} \arrow[r, "d" ] &\underbrace{\FM_1}_{rk = 5} \arrow[r, "d" ] &\CO \arrow[r, ->>] & \CO/\CI
\end{tikzcd}
    $$
    
\vspace{0.1cm}
\noindent
The generators and the differential $d$ are given by
\begin{equation*}
    \begin{array}{ll}
        \hbox{Degree 1} & d\pi^a= x^2, \quad d\pi^b = xy, \quad d\pi^c = y^2z^2, \quad d\pi^d = wz, \quad d\pi^e = w^2, \\
        \hbox{Degree 2} & d\pi^{ab}= x\pi^b - y\pi^a, \quad  d\pi^{ad}= x^2 \pi^d - wz\pi^a, \quad d\pi^{ae} = x^2 \pi^e - w^2 \pi^a,\\
        \hbox{} & d\pi^{bc}= x\pi^c - yz^2\pi^b, \quad d\pi^{bd}= xy\pi^d - wz\pi^b, \quad d\pi^{be} = xy \pi^e- w^2\pi^b,\\
        \hbox{} & d\pi^{cd}= y^2z\pi^d- w\pi^c, \quad d\pi^{de} = z\pi^e - w\pi^d,\\
        \hbox{Degree 3} &  d \pi^{abd}= wz\pi^{ab}+x\pi^{bd}-y\pi^{ad}, \quad d\pi^{abe} = w^2\pi^{ab}+x\pi^{be}-y\pi^{ae}, \\ \hbox{} & 
 d\pi^{ade} = x^2\pi^{de} + w \pi^{ad}- z\pi^{ae}, d\pi^{bde}= xy\pi^{de} + w\pi^{bd} - z\pi^{be}, \\
        \hbox{} & d\pi^{bcd} = w\pi^{bc} + x\pi^{cd} - yz\pi^{bd},\\
        \hbox{Degree 4} & d\pi^{abde} = -w\pi^{abd} + z\pi^{abe}- y\pi^{ade}+ x\pi^{bde}.\\
    \end{array}
\end{equation*}
%The arborescent operations satisfying \eqref{eq:psi} correspond to the multiplication on $(\FM, d)$ constructed in \cite{Katthan2019}.
By a direct calculation the following complete list of non-vanishing arborescent operations: 
 \begin{equation*}
\begin{array} {ll}
       \psi_{\vtree} (\pi^{a},\pi^{b}) = x\, \pi^{ab}, &
       \psi_{\vtree}(\pi^{a},\pi^{c}) = yz^2\,\pi^{ab} +  x\, \pi^{bc}\\
      \psi_{\vtree}(\pi^{a},\pi^{d}) = \pi^{ad}, &    \psi_{\vtree}(\pi^{a},\pi^{e}) = \pi^{ae} \\
       \psi_{\vtree}(\pi^{b},\pi^{c}) =y\,\pi^{bc}, &     \psi_{\vtree}(\pi^{b},\pi^{d}) = \pi^{bd}\\
      \psi_{\vtree}(\pi^{b},\pi^{e}) = \pi^{be}, &    \psi_{\vtree}(\pi^{c},\pi^{d}) = z\,\pi^{cd}\\
     \psi_{\vtree}(\pi^{c},\pi^{e}) = w\,\pi^{cd}+y^2z\,\pi^{de} , &    \psi_{\vtree}(\pi^{d},\pi^{e}) = w\,\pi^{de} 
%\\ &   \\ 
\end{array}
\end{equation*}
\begin{equation*}
\begin{array} {ll}
\psi_{\vtree} (\pi^{ab},\pi^{d}) = \pi^{abd}, &
       \psi_{\vtree}(\pi^{ab},\pi^{e}) = \pi^{abe}\\
      \psi_{\vtree}(\pi^{ad},\pi^{b}) = -x\, \pi^{abd}, &    \psi_{\vtree}(\pi^{ad},\pi^{c}) = -yz^2\,\pi^{abd} - xz\, \pi^{bcd} \\
       \psi_{\vtree}(\pi^{ad},\pi^{e}) =w\,\pi^{ade}, &     \psi_{\vtree}(\pi^{ae},\pi^{b}) = -x\,\pi^{abe}\\
      \psi_{\vtree}(\pi^{ae},\pi^{c}) = -wx\,\pi^{bcd} - yz^2\, \pi^{abe} - xyz\, \pi^{bde}, &    \psi_{\vtree}(\pi^{ae},\pi^{d}) = -w\,\pi^{ade}\\
     \psi_{\vtree}(\pi^{bc},\pi^{d}) = z\,\pi^{bcd}, &    \psi_{\vtree}(\pi^{bc},\pi^{e}) = w\,\pi^{bcd}+ yz\,\pi^{bde}\\
     
     \psi_{\vtree} (\pi^{bd},\pi^{a}) = x\, \pi^{abd}, &
       \psi_{\vtree}(\pi^{bd},\pi^{c}) = -yz\, \pi^{bcd}\\
      \psi_{\vtree}(\pi^{bd},\pi^{e}) = w\, \pi^{bde}, &    \psi_{\vtree}(\pi^{be},\pi^{a}) = x\,\pi^{abe} \\
       \psi_{\vtree}(\pi^{be},\pi^{c}) =-wy\,\pi^{bcd} - y^2z\,\pi^{bde}, &     \psi_{\vtree}(\pi^{be},\pi^{d}) = -w\,\pi^{abe}\\
      \psi_{\vtree}(\pi^{cd},\pi^{a}) = yz\,\pi^{abd} +x \, \pi^{bcd}, &    \psi_{\vtree}(\pi^{cd},\pi^{b}) = y\,\pi^{bcd}\\
   \psi_{\vtree}(\pi^{de},\pi^{a}) = \pi^{ade} ,& \psi_{\vtree}(\pi^{de},\pi^{b}) = \pi^{bde}\\
%\end{array}
%\end{equation*}
%\begin{equation*}
%\begin{array} {ll}
     \psi_{\vtree} (\pi^{ab},\pi^{de}) = \pi^{abde}, &
       \psi_{\vtree}(\pi^{ad},\pi^{be}) = -wx\, \pi^{abde}\\
      \psi_{\vtree}(\pi^{ae},\pi^{bd}) = wx\, \pi^{abde}, &    \psi_{\vtree}(\pi^{abd},\pi^{e}) = w\,\pi^{abde} \\
       \psi_{\vtree}(\pi^{abe},\pi^{d}) =-w\,\pi^{abde}, &     \psi_{\vtree}(\pi^{ade},\pi^{b}) = x\,\pi^{abde}\\
      \psi_{\vtree}(\pi^{ade},\pi^{c}) = yz^2\,\pi^{abde} &    \psi_{\vtree}(\pi^{bde},\pi^{a}) = -x\,\pi^{abde}\\
   & \\
   \psi_{\wtree}(\pi^a, \pi^b, \pi^d) = x\,\pi^{abd}, & \psi_{\wtree}(\pi^a, \pi^b, \pi^e) = x\,\pi^{abe}\\
   \psi_{\wtree}(\pi^a, \pi^c, \pi^d) = yz^2\,\pi^{abd}+ xz\, \pi^{bcd}, & \psi_{\wtree}(\pi^a, \pi^c, \pi^e) = yz^2\,\pi^{abe}+ wx\,\pi^{bcd}+ xyz\,\pi^{bde}\\
   \psi_{\wtree}(\pi^a, \pi^d, \pi^e) = w\,\pi^{ade}, & \psi_{\wtree}(\pi^b, \pi^c, \pi^d) = yz\,\pi^{bcd}\\
   \psi_{\wtree}(\pi^b, \pi^c, \pi^e) = wy\,\pi^{bcd}+ yz^2\,\pi^{bde}, & \psi_{\wtree}(\pi^b, \pi^d, \pi^e) = w\,\pi^{bbe}\\ & \\
   \psi_{\wtree}(\pi^{ab}, \pi^d, \pi^e) = w\,\pi^{abde}, & \psi_{\wtree}(\pi^{ad}, \pi^b, \pi^e) = -wx\,\pi^{abde}\\
   \psi_{\wtree}(\pi^{ad}, \pi^c, \pi^e) = -wyz^2\,\pi^{abde}, & \psi_{\wtree}(\pi^{bd}, \pi^a, \pi^e) = wx\,\pi^{abde}\\
   \psi_{\wtree}(\pi^{cd}, \pi^a, \pi^e) = -wyz\,\pi^{abde}, & \psi_{\wtree}(\pi^{de}, \pi^a, \pi^b) = x\,\pi^{abde}\\
   \psi_{\wtree}(\pi^{de}, \pi^a, \pi^c) = yz^2\,\pi^{abde}, & \psi_{\rtree}(\pi^{a}, \pi^e, \pi^c) = yz\,\pi^{abde}\\
   \psi_{\fourtree}(\pi^a, \pi^b, \pi^d, \pi^e) = wx\pi^{abde}, & \psi_{\fourtree}(\pi^a, \pi^c, \pi^d, \pi^e) = wyz^2 \pi^{abde}. 
\end{array}
\end{equation*}
\vspace{1mm}

%To establish these formulas, we do as follows. 
%The $2$-ary operation $ \psi_{\vtree}$ was established such that it is compatible with $d$.  
\noindent
One can then check that the associator of the bilinear operation $\psi_\vee $ is zero except in the following cases:
\begin{equation}
\label{eq:abcd}
\begin{array}{rcccl}
 \psi_{\vtree}(\pi^{a}, \psi_{\vtree}(\pi^e, \pi^c)) - \psi_{\vtree}(\psi_{\vtree}(\pi^a, \pi^e), \pi^c)&=& d\circ (\psi_{\ltree} +\psi_{ \rtree})(\pi^a, \pi^e, \pi^c) &=&  \, \, \,  yz \, d\pi^{abde}, \\
 \psi_{\vtree}(\pi^{a}, \psi_{\vtree}(\pi^c, \pi^e)) - \psi_{\vtree}(\psi_{\vtree}(\pi^a, \pi^c), \pi^e)&=& d\circ (\psi_{\ltree} +\psi_{ \rtree})(\pi^a, \pi^c, \pi^e) &=& \! \! -yz \, d\pi^{abde}.
\end{array}
\end{equation}
\vspace{1mm}
\noindent
This explain the occurrence of the exceptional term $
\psi_{\rtree }(\pi^a, \pi^c, \pi^e) = -yz \pi^{abde}$.
For degree reasons, there are no non-zero arborescent operations corresponding to trees with $\geq 3$ vertices, and we obtain a complete description of the arborescent Koszul-Tate operations.
The corresponding
arborescent $A_\infty $-algebra products  admits a non-zero term $\mu_3 $ while all  $ \mu_n$ are zero for $n \geq 4 $.% }
\end{example}

\subsection{Resolutions having a DGCA structure}
\label{sec:DGCA}

\subsubsection{An explicit arborescent Koszul-Tate resolution}

Several $ \CO$-module resolutions $(\FM_\bullet,d)$ of $\CO/\CI $  admit a structure of a differential graded commutative algebra  (DGCA). Still, they may not be Koszul-Tate resolutions, because they may not be graded symmetric algebras. However,  such $ \CO$-module resolutions $(\FM_\bullet,d)$ allow the construction of an explicit arborescent Koszul-Tate resolution in quite a straightforward manner.
Before stating our result, let us introduce a convention: for all $n \geq 2$, denote by $t_n$ the tree with $n$ leaves and one root:
  \begin{equation}
  \label{eq:onerootnleaves}\begin{array}{c}t_n = \\ \vspace{.4cm} \end{array} \overbrace{
  \begin{forest}
for tree = {grow' = 90}, nice empty nodes,
[
 [, tier =1]
 [, tier =1]
 [,edge= dashed,  tier =1]
 [,edge= dashed,label={[mystyle2] {$...$}}, tier = 1]
]
\path[fill=black] (.parent anchor) circle[radius=2pt];
\end{forest}}^{\hbox{$n$ leaves}}
 . \end{equation}
 Notice that $t_2$ and $ t_3$ already appeared as $ \vee$  and $ \scalebox{1.5}{\wtree}$ in the expressions $ \psi_\vee$ and $ \psi_{\wtree}$.

\begin{proposition}
\label{prop:psi.dga}
    Let $(\FM_{\bullet}, d)$ be a free $\CO$-module resolution of $ \CO/\CI$ such that $ \FM_\bullet \oplus \CO$ is equipped with a graded commutative associative multiplication $\star$ turning it into a DGCA. Here the multiplication $\star$ is assumed to extend the given $\CO$-module structure: 
    $F * a =a * F=Fa $ for all $ F \in \CO$, $a\in \FM_\bullet \oplus \CO$. Choose the arborescent operations $\psi \colon t \mapsto \psi_t$  as follows
 $$ \psi_{t_n} \left(    a_1 \otimes \dots \otimes a_n\right) = a_1 \star \cdots  \star a_n \hbox{ for all $ a_1, \dots, a_n \in \FM_\bullet$}$$
 and  $\psi_t=0 $  for any tree not of the previous form.
 %\in \CO \oplus \FM_\bullet $
 Then the pair $(S(\CT[\FM_{\bullet}]),\delta_\psi)$ 
 is an arborescent Koszul-Tate resolution of $ \CO/\CI$.
\end{proposition}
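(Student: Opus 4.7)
The plan is to invoke the second item of Theorem \ref{thm:isKT}: it suffices to show that the specified arborescent operations $\psi$ satisfy the compatibility relation \eqref{eq:psi4} (together with the $n\leq 2$ variant \eqref{eq:deltaPsiSquare} incorporating the $\pi^1\circ d$ correction), which by Section \ref{sec:compatibility.conditions} is equivalent to $\delta_\psi^2=0$. The verification then reduces to three ingredients of the DGCA structure: the graded Leibniz rule for $d$ with respect to $\star$, the associativity of $\star$, and Convention \ref{conv:extensionO} handling degree-$1$ terms.

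The bulk of the argument is a case analysis on the ordered tree $t$. For $t=t_n$, the set $\mathrm{InnVer}(t)$ is empty, so the right-hand side of \eqref{eq:psi4} vanishes. On the left, only the root and the leaves contribute: the root term is $-d(a_1\star\cdots\star a_n)$, while a direct computation gives $W_i=1+|a_1|+\cdots+|a_{i-1}|$ for the $i$-th leaf, so that the leaf contributions reassemble into $d(a_1\star\cdots\star a_n)$ by the graded Leibniz rule. Hence the two sides cancel. For any $t$ not of the form $t_n$ one has $\psi_t=0$, so the root and leaf terms in \eqref{eq:psi4} drop out; a contribution from an inner vertex $A$ survives only if both $\psi_{t_{\uparrow A}}$ and $\psi_{t_{\downarrow A}}$ are nonzero, which forces both $t_{\uparrow A}$ and $t_{\downarrow A}$ to be of the form $t_k$. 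An easy inspection shows that this happens precisely when $t$ has a unique inner vertex $A$, whose parent is the root and whose children are all leaves; in that case $\partial_A t=t_{k+m-1}$ and the $A$-contributions on both sides of \eqref{eq:psi4} reduce to the identity $\psi_{t_k}(\dots,\psi_{t_m}(b_1,\dots,b_m),\dots)=\psi_{t_{k+m-1}}(\dots,b_1,\dots,b_m,\dots)$, which is exactly associativity of $\star$. In every other configuration both sides of \eqref{eq:psi4} vanish term by term.

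The delicate point is the low-arity case $n=2$ controlled by \eqref{eq:deltaPsiSquare}, where the correction $\pi^1\circ d(a_1\otimes a_2)$ appears. When, say, $|a_1|=1$, one has $da_1\in\CO$, so by Convention \ref{conv:extensionO} the Leibniz term $\psi_\vee(da_1,a_2)$ is set to zero; simultaneously, the same quantity $da_1\cdot a_2$ is precisely what the projection $\pi^1\circ d(a_1\otimes a_2)$ returns under the identification $\CO\otimes\FM\simeq\FM$. So the two corrections match and the Leibniz identity for $d$ on $a_1\star a_2$ is recovered intact. The principal obstacle in writing the argument cleanly is purely combinatorial: one has to track the weights $W_A$ through the associativity identity and verify that graded commutativity of $\star$ is compatible with the covariance relation \eqref{eq:signs_for_operations} of $\psi$ under permutations of children; the algebraic content itself reduces, as expected, to the DGCA axioms.
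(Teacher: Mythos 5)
Your proof is correct and follows the same route as the paper: the paper's own (two-line) argument is precisely that Equation \eqref{eq:deltaPsiSquare} need only be checked on the trees $t_n$, where it reduces to the graded Leibniz rule for $d$, and on the trees obtained by grafting $t_k$ onto a leaf of $t_m$, where it reduces to associativity of $\star$. Your case analysis, including the verification that all other configurations vanish term by term and the matching of the $\pi^1\circ d$ correction with Convention \ref{conv:extensionO} in the $n=2$ case, is a faithful (and more detailed) elaboration of exactly that argument.
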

\begin{proof}
%We leave it to the reader to check that the arborescent operations above satisfy Equation \eqref{eq:deltaPsiSquare}.
It suffices to check Equation \eqref{eq:deltaPsiSquare} for \emph{i)} the trees $t_n$ with $n \in \mathbb N$ and  for \emph{ii)} the trees obtained by plugging $ t_n$ on some leaf of $ t_m$ for  $ n,m \geq 2$. 
\emph{i)} holds true because $d$ is a derivation of $*$ and \emph{ii)} holds true because $*$ is associative.
\end{proof} 
\vspace{1mm}
\noindent We now have two DGCAs: $(\FM_\bullet \oplus \CO,\star, d) $ and $(S(\CT[\FM_{\bullet}]),\odot,\delta_\psi)$. Proposition \ref{prop:Homotopy} provides a projection ${\mathrm{Proj}}_\psi $ from the second complex to the first complex which is a chain map.

\begin{proposition}
\label{prop:backtoKoszul}
For an arborescent Koszul-Tate resolution as in Proposition \ref{prop:psi.dga}, the projection $${\mathrm{Proj}}_\psi \colon S(\CT[\FM_{\bullet}]) \longrightarrow \FM_\bullet \oplus \CO$$ is a morphism of differential graded  commutative algebras.    
\end{proposition}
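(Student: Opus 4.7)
Since Proposition \ref{prop:Homotopy} already establishes that $\mathrm{Proj}_\psi$ intertwines the differentials $\delta_\psi$ and $d$, the only remaining task is to verify multiplicativity, i.e.\ that
$\mathrm{Proj}_\psi(x \odot y) = \mathrm{Proj}_\psi(x) \star \mathrm{Proj}_\psi(y)$
holds for all $x, y \in S(\CT[\FM_\bullet])$, together with preservation of units. The latter is immediate from \eqref{eq:matricesInclProj}, which shows that $\mathrm{Proj}_\psi$ restricts to the identity on $\CO \subset S(\CT[\FM_\bullet])$; similarly, degree preservation is built into the matrix form of $\mathrm{Proj}_\psi$.

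The plan is to unpack $\mathrm{Proj}_\psi$ completely under the specific choice of $\psi$ from Proposition \ref{prop:psi.dga}. Combining the matrix \eqref{eq:matricesInclProj} with the vanishing $\psi_t = 0$ for every tree $t$ distinct from the trees $t_n$ of \eqref{eq:onerootnleaves}, one arrives at the following description on a monomial $w = t_1[a^1] \odot \cdots \odot t_j[a^j]$ in $S(\CT[\FM_\bullet])$: if $j = 0$, so that $w = F \in \CO$, then $\mathrm{Proj}_\psi(w) = F$; if $j = 1$ and $t_1$ is trivial, then $\mathrm{Proj}_\psi(w) = a^1$; if $j = 1$ and $t_1$ is non-trivial, then $\mathrm{Proj}_\psi(w) = 0$; if $j \geq 2$ and every $t_i$ is trivial, then $\r(w) = t_j[a^1,\ldots,a^j]$, so
$\mathrm{Proj}_\psi(w) = \psi_{t_j}(a^1,\ldots,a^j) = a^1 \star a^2 \star \cdots \star a^j$;
and finally, if $j \geq 2$ but at least one $t_i$ is non-trivial, then the underlying tree of $\r(w)$ carries the inner vertices of those $t_i$'s and is therefore not of the form $t_n$, yielding $\mathrm{Proj}_\psi(w) = 0$.

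Multiplicativity is then checked by bilinearity through a case distinction on pairs of monomials. When both $x$ and $y$ are linear combinations of monomials whose tree factors are all trivial, so is $x \odot y$, and the identity collapses to the associativity of $\star$ on $\FM_\bullet \oplus \CO$. Otherwise at least one of $x$ or $y$ contains a non-trivial tree factor; by the description above the corresponding factor then vanishes under $\mathrm{Proj}_\psi$, killing the right-hand side, while the product $x \odot y$ inherits the very same non-trivial factor, so that the left-hand side vanishes as well. No serious obstacle arises in this argument; the only substantive input is the associativity of the DGCA product $\star$, which is supplied by hypothesis.
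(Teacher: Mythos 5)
Your proof is correct and follows essentially the same route as the paper: both arguments reduce to the two observations that $\psi\circ\r$ on a forest of trivial trees returns the iterated $\star$-product, and that $\psi$ vanishes on any rooted forest containing a non-trivial factor because the resulting tree is not a corolla $t_n$. The only cosmetic difference is that you verify the binary product and invoke associativity of $\star$, while the paper checks the $k$-fold product $\mathrm{Proj}_\psi(T_1\odot\cdots\odot T_k)$ directly.
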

\begin{proof}
%${\mathrm{Proj}}_\psi $ is a chain map
%by Proposition \ref{prop:Homotopy}.
Let us check that ${\mathrm{Proj}}_\psi $  is a  morphism of graded commutative algebras,
i.e.
\begin{equation} \label{eq:morphismneeded} {\mathrm{Proj}}_\psi( T_1  \odot  \cdots \odot T_k) =  {\mathrm{Proj}}_\psi( T_1  ) \star \cdots \star   {\mathrm{Proj}}_\psi( T_k  ) \end{equation}
%Let $S \in S(\CT[\FM_{\bullet}])$ be of the form
% $$  S= T_1 \odot T_2. $$
for all $ T_1, \dots, T_k$ in $\CT[\FM_{\bullet}] $.
If  $ T_1=|\otimesk a_1 $, $\ldots$,  $T_k = |\otimesk a_k$ are all trivial trees, then, by the definition of the projection in Equation \eqref{eq:matricesInclProj}:
$$ {\mathrm{Proj}}_\psi \left(\begin{array}{c} \\ \scalebox{0.5}{\begin{forest}
for tree = {grow' = 90}, nice empty nodes,
[
 [\scalebox{2}{$a_1$}, tier =1] 
]
\end{forest}} \begin{array}{c}  \odot \dots \odot \\ \vspace{.2cm}  \end{array} \scalebox{0.5}{\begin{forest}
for tree = {grow' = 90}, nice empty nodes,
[
 [\scalebox{2}{$a_k$}, tier =1] 
]
\end{forest}} \\ \end{array}\right) \begin{array}{c} \\= \psi_{t_k}(a_1,\cdots,a_k)=  \\ \\ \end{array}  \begin{array}{c} \\  a_1\star \cdots \star a_k \, .\vspace{.5cm} \end{array}$$
If, on the other hand, one of them is not a trivial tree, 
 then ${\mathrm{Proj}}_\psi( T_1 \odot \dots \odot T_k)=(\psi \circ \r) (T_1 \odot \cdots  \odot T_k)=0$ since $ \r (T_1 \odot \cdots  \odot T_k)$ is not a tree of the form of Equation \eqref{eq:onerootnleaves}.

\end{proof}

\begin{remark}
    \normalfont
Note that the kernel of ${\mathrm{Proj}}_\psi $ satisfies the requirements on the ideal $\mathfrak J $ in Proposition~\ref{prop:quotient}.
\end{remark}

\vspace{0.1cm}

\noindent
Examples \ref{ex:4.1} and \ref{ex:4.2} in Section \ref{sec:basicExamples} above provided two examples of  $ \CO$-module resolutions coming with a graded commutative associative product. We conclude this discussion here with a last and more exotic example.

\begin{example}
\normalfont
 Consider $ \CO= \mathbb K[x,y]/\langle xy \rangle$ with the two generators $\overline{x}$ and $\overline{y} $. Let $\CI $ be the ideal generated by $\overline{x}$. 
Let $\FM_i  $ be the free $ \CO$-module with one generator $ e_i$ for all $i \geq 1$.
 Then $ \FM_\bullet$ becomes an $ \CO$-module resolution by choosing the boundary map $d$ according to
 $$d(e_{i} )=\left\{\begin{array}{ll} \overline{y} \, e_{i-1} & \hbox{ for $i$ even} \\  \overline{x} \, e_{i-1} & \hbox{ for $i$ odd.} \end{array}\right.  $$
 %for even integer $i $ and $ d(e_i) = x e_{i-1} $ for any odd integer $ i $. 
 $(\FM_\bullet \oplus \CO,d) $ is turned into a DGCA by equipping it with the product $\star$ defined by means of 
 $ e_i \star e_j = \mu_{i,j} \, e_{i+j}$ with 
 $$ \mu_{i,j} = 
 \left\{\begin{array}{cl} 0 & \hbox{if $i,j $ odd}  \\[10pt]
 \begin{pmatrix} \tfrac{1}{2}(i+j) \\ \tfrac{1}{2}j \end{pmatrix} & \hbox{ if $i,j $ are even} \\[10pt] 
 \begin{pmatrix} \tfrac{1}{2}(i+j-1) \\ \tfrac{1}{2}(j-1) \end{pmatrix}& \hbox{ if $i $ is even and $j$ is odd} \end{array}\right. 
$$
where $\begin{pmatrix} n \\ k   
\end{pmatrix}$ denotes  binomial coefficients. 
This can be verified by a direct calculation.

\vspace{2mm}
\noindent
Let us remark that this DGCA is a Koszul-Tate resolution, since it is isomorphic, as a graded algebra, to the symmetric algebra of the free $ \CO$-module with one generator of  degree $1$ and one of  degree~$2$. 

\end{example}

\subsubsection{Koszul complex}
\label{sec:Koszul}

Assume that an ideal $\CI \subset \CO$ is generated by $r$ elements $ \varphi_1, \dots, \varphi_r$. 
Let $V$ be a vector space of dimension $k$ with basis $e_1, \dots, e_k $.
The pair $(K_\bullet(k),\delta)$ given by $K_\bullet(k) := \mathcal O \otimes_{\mathbb K} \wedge V^* $,  and $ \delta = \sum_{i=1}^r \varphi_i \otimes \mathfrak i_{e_i}  $ is a differential graded commutative algebra called \emph{Koszul complex}. 
The following result, attributed to Jean-Louis Koszul, is well-known.
Recall that  $\varphi$  is said to be a \emph{regular sequence} if  $\phi_1 \hbox{ is not a zero divisor in } \CO$, and for all $i\in \{2, \dots, k\}$, $\phi_i $ is not a zero divisor in $ \CO/\langle \phi_1, \dots \phi_{i-1} \rangle $.
%and $(\varphi_1, \dots, \varphi_r)\CO \neq \CO$.

\begin{proposition} \cite{Eisenbud,zbMATH00043569} Let $\CO$ be a Noetherian ring. The Koszul complex $(K_\bullet(k),\delta)$ is a Koszul-Tate resolution of $\CO/\CI $ if and only if  $\varphi_1, \dots, \varphi_k \in \CO $ is  a regular sequence.
\end{proposition}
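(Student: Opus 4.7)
The plan is to prove the equivalence by induction on $k$, exploiting the recursive structure of the Koszul complex. For the base case $k=1$, the complex reduces to $0 \to \CO \xrightarrow{\varphi_1} \CO \to 0$, and $H_1 = \mathrm{Ann}(\varphi_1)$ vanishes precisely when $\varphi_1$ is a non-zero-divisor, which is the regularity condition at length one. Exactness in degree $0$ yields $\CO/\langle \varphi_1\rangle$, and the underlying algebra $\CO \otimes_\K \wedge V^*$ is manifestly the graded symmetric algebra on a free rank-one module in degree $1$, so the Koszul-Tate conditions are met iff $\varphi_1$ is regular.

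For the inductive step, I would use the well-known identification of $K_\bullet(k)$ with the mapping cone of multiplication by $\varphi_k$ on $K_\bullet(k-1)$, which gives a short exact sequence of complexes
\begin{equation*}
0 \longrightarrow K_\bullet(k-1) \longrightarrow K_\bullet(k) \longrightarrow K_\bullet(k-1)[-1] \longrightarrow 0,
\end{equation*}
and hence a long exact sequence
\begin{equation*}
\cdots \to H_i(K_\bullet(k-1)) \xrightarrow{\pm\varphi_k} H_i(K_\bullet(k-1)) \to H_i(K_\bullet(k)) \to H_{i-1}(K_\bullet(k-1)) \to \cdots
\end{equation*}
For the \emph{regular $\Rightarrow$ exact} direction, the induction hypothesis applied to $\varphi_1,\dots,\varphi_{k-1}$ gives $H_i(K_\bullet(k-1))=0$ for $i \geq 1$ and $H_0(K_\bullet(k-1)) = \CO/\langle \varphi_1,\dots,\varphi_{k-1}\rangle$; the regularity condition then says $\varphi_k$ is a non-zero-divisor on this $H_0$, so the map at the lowest degree of the long exact sequence is injective and the higher $H_i(K_\bullet(k))$ all vanish.

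For the converse, exactness of $K_\bullet(k)$ in positive degree forces $\varphi_k$ to act as an isomorphism on each $H_i(K_\bullet(k-1))$ with $i \geq 1$, and to be a non-zero-divisor on $H_0(K_\bullet(k-1))$. To pass from ``$\varphi_k$ is an isomorphism'' to ``$H_i(K_\bullet(k-1)) = 0$'', I would invoke Nakayama's lemma after localizing at a maximal ideal containing the $\varphi_i$; Noetherianity ensures each $H_i(K_\bullet(k-1))$ is finitely generated (being a subquotient of a finitely generated $\CO$-module), so Nakayama applies in the local setting. Induction on $k-1$ then yields that $\varphi_1,\dots,\varphi_{k-1}$ is regular, and the non-zero-divisor condition on $H_0$ adds $\varphi_k$ to complete the regular sequence.

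The main obstacle is the converse direction: the jump from ``$\varphi_k$ acts invertibly on $H_i(K_\bullet(k-1))$'' to the vanishing of those homologies genuinely requires a Nakayama-type argument, and hence the Noetherian hypothesis (through finite generation of homology) together with a reduction to the local case. Once this subtlety is handled, the rest of the argument is a clean induction, and the Koszul-Tate conditions (graded symmetric algebra structure, $H_0 = \CO/\CI$) are automatic from the construction of $K_\bullet(k) = \CO \otimes_\K \wedge V^*$ with $\delta(e_i^*) = \varphi_i$.
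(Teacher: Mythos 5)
The paper offers no proof of this proposition; it is quoted from \cite{Eisenbud,zbMATH00043569}, so there is no in-paper argument to compare routes with. Your forward direction (regular $\Rightarrow$ resolution), via the base case $k=1$ and the mapping-cone long exact sequence with connecting map $\pm\varphi_k$, is the standard argument and is correct, as is the observation that $\CO\otimes_{\mathbb K}\wedge V^*$ is the graded symmetric algebra on a free module concentrated in degree $1$.

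The converse direction, however, has a genuine gap. From acyclicity of $K_\bullet(k)$ you correctly deduce that $\varphi_k$ acts bijectively on $H_i(K_\bullet(k-1))$ for $i\geq 1$, but the proposed Nakayama step does not close the argument: localizing at a maximal ideal $\mathfrak m$ containing $\varphi_k$ indeed gives $H_i(K_\bullet(k-1))_{\mathfrak m}=0$, but $H_i(K_\bullet(k-1))$ may be supported entirely on the locus where $\varphi_k$ is a unit, and there ``multiplication by $\varphi_k$ is an isomorphism'' carries no information. In fact the ``only if'' half is false for a general Noetherian ring exactly because of this. Take $\CO=\mathbb K[x,y,z]$ with $\varphi_1=y(1-x)$, $\varphi_2=z(1-x)$, $\varphi_3=x$. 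The reordering $x,\,y(1-x),\,z(1-x)$ is a regular sequence, and the Koszul complex is invariant up to isomorphism under permutation of the $\varphi_i$, so $K_\bullet(3)$ is acyclic in positive degrees with $H_0=\CO/\langle x,y,z\rangle$. Yet the sequence in the stated order is not regular: $y\cdot z(1-x)=z\cdot y(1-x)\in\langle y(1-x)\rangle$ while $y\notin\langle y(1-x)\rangle$, so $\varphi_2$ is a zero divisor modulo $\varphi_1$. Here the nonzero class $[(z,-y)]\in H_1(K_\bullet(2))$ is annihilated by $1-x$, so this homology is supported on $V(1-x)$, which is disjoint from $V(\varphi_3)=V(x)$ --- precisely where your localization argument is silent. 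Your argument (and the proposition itself) becomes correct when $\CO$ is local with all $\varphi_i$ in the maximal ideal, or in the positively graded setting with homogeneous $\varphi_i$ of positive degree, which is how the cited references state the theorem; some such hypothesis must be added for the ``only if'' direction.
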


\begin{example}
\normalfont
\label{ex:K1}
Consider $\CI$ an ideal generated by a non-zero divisor $ \varphi_1 \in \CO$. Then $\varphi_1 $ is a regular sequence of length one.  As an $ \CO$-module, $K_\bullet(1) $  has therefore two generators: the first one is the unit $1 $ and we denote the second one by $\theta $.
The differential is given by $ \delta(1)=0$ and $ \delta (\theta)=\varphi_1$.
\end{example}

\begin{example}
\normalfont
Consider a polynomial $\psi \in \mathbb C[x_1, \dots, x_n]$ which is homogeneous of degree $> 0 $. If $\psi $ has an isolated singularity at $0$, then  it is a classical result \cite{Shafarevich} that the sequence
 $ \frac{\partial \psi}{\partial x_1}, \dots,  \frac{\psi}{\partial x_n} $ is a regular sequence. Its Koszul resolution is canonically isomorphic to the graded algebra of polynomial polyvector fields on $\mathbb C^n$ equipped with the contraction by the exact $1$-form $d\psi $.
\end{example}

\vspace{0.1cm}

\noindent
Since the Koszul resolution is an $\CO $-module resolution, one can construct its arborescent Kozul-Tate resolution as in Proposition \ref{prop:psi.dga}. 

\begin{example}
\normalfont
\label{ex:K1_2} 
%Consider the context .
The Koszul resolution of Example \ref{ex:K1} is an $ \CO$-module resolution for which $\FM_i=0 $ for all $ i \geq 2$ while $ \FM_1$ has one generator $ \theta$.
Since this generator $\theta $ is odd,  for every decorated tree $t$ with $n \geq 2 $ leaves, $t[ \theta , \dots, \theta ]$ has a class in the quotient space  $ \CT[\FM_\bullet]$ which is equal to $0$.  As a consequence, the whole construction of Section \ref{sec:arborescent} becomes trivial, and the arborescent Koszul-Tate resolution coincides with the initial Koszul resolution. 
\end{example}

\vspace{0.1cm}

\noindent
Example \ref{ex:K1_2} is a very particular case. It follows from Proposition \ref{prop:backtoKoszul} that the construction of Section \ref{sec:arborescent}, applied to the  $ \CO$-module resolution given by the Koszul resolution $(K_\bullet(k), \delta) $ of a regular sequence  of length  $k \geq 2$ yields an  arborescent Koszul-Tate resolution that will be much larger than $(K_\bullet(k), \delta) $.  
For Koszul complexes, there is no gain in considering arborescent Kozul-Tate resolution, even on the contrary, it makes things more complicated there. However, in the following we provide examples for which our construction is very useful: one is given an $\CO$-resolution that happens to be a DGCA but not a Koszul-Tate resolution and obtain an aborescent Koszul-Tate resolution by the use of Proposition \ref{prop:psi.dga}.

\subsubsection{Taylor resolutions of monomial ideals}
\label{sec:taylor.monomials}

\vspace{0.1cm}

\noindent
Let $\CO$ be the polynomial ring $\CO =\mathbb K[x_1, \dots, x_n]$. An ideal $\CI \subset \CO $ is said to be a {\emph{monomial ideal}} if it admits a set  $\varphi = \lbrace \varphi_1, \dots, \varphi_k \rbrace$ of monomial generators. Then $ \CO/\CI$  admits resolutions equipped with a differential graded commutative algebra structure,
the so-called \emph{Taylor resolutions}, that we now describe  following \cite{peeva2010graded}.
% We denote by $\CJ$ the \emph{irrelevant ideal}, i.e the maximal ideal generated by $x_1, \dots, x_n$.  Notice that $ \CI \subset \CJ $. 

\vspace{1mm}
\noindent
\begin{construction}[Taylor resolutions] 
\normalfont
The \emph{Taylor complex} is the chain complex 
\[
    \begin{tikzcd}[row sep=2.5em, column sep=2.5em]
   \cdots \arrow[r, "d"] & C_i(k) \arrow[r, "d"] & \cdots \arrow[r, "d"] & \mathcal C_0(k) 
  \end{tikzcd}
    \]
    of free $\CO$-modules $C_i(\varphi)$  constructed as follows:
\begin{itemize}
    \item The graded vector space $C_\bullet(k) $ only depends on the number $k$ of monomials generating $ \CI$.
Let $C_\bullet $ be the free $ \CO$-module  generated by elements $e_\tau $ indexed by subsets  $ \tau \subset  \{1, \dots, k\} $:
 $$ C_\bullet(k) = \oplus_{\tau \subset \{1, \dots, k\}}  \CO \, e_{\tau} .$$
We equip $C_\bullet $ with a grading by imposing the degree of $ e_{\tau}$ to be the cardinality of $ \tau$.  For instance, $ C_0(k) = \CO \, e_{{\emptyset}} \simeq \CO$ while $C_1(k)= \oplus_{i=1}^k \CO \, e_{\{i\}} $.
    \item For every subset $\tau$ of $ \{1, \dots,k\}$, let $m_\tau $ be
    the least common multiple of the monomials $ \varphi_i$ whose index $i$ is in $ \tau$. Explicitly, for $ \tau= \{i_1,\dots, i_m\}  \subset \{1, \dots,k\}$, we have $m_\tau = \mathrm{lcm} \left( \varphi_{i_1}, \ldots,  \varphi_{i_m}  \right)$.  
   Then the differential $d$ is defined by:
    $$
    d(e_{\tau}) = \sum_{j \in \tau} \, \mathrm{sign}(j, \tau) \, \frac{m_\tau}{m_{\tau \backslash \{j\}}} \, e_{\tau \backslash \{j\}}
    $$
    where $\mathrm{sign}(j, \tau)$ is $+1$ if $ \tau$ contains an even number of elements strictly smaller than $j$ and $ -1$ otherwise. 
    %$(-1)^{p_{j} +1}$, with $p_{j}$ being the position of $j$ in the ordered subset $\tau$.
\end{itemize}
\end{construction}

   % \begin{remark}
    %\normalfont
    \noindent
     The complex $(C_\bullet(k),d)$ is an $\CO$-module resolution of $\CO/\CI$, see e.g. Theorem 26.7 in \cite{peeva2010graded} (the minimality condition on the generators imposed there can be dropped). This complex is called the \emph{Taylor resolution} of $ \CO/\CI$.

\begin{remark}
\normalfont
Taylor resolutions need not be minimal $\CO $-resolutions in general. For instance, for the ideal of $\mathbb C[x,y]$ generated by $ x^2,xy,y^2$, the Taylor resolution has length three, while the $ \CO$-resolution in Section \ref{sec:basicExamples} case I has length two.
\end{remark}

\vspace{0.1cm}

\noindent
As explained in \cite{Scarf},
    the Taylor resolution is equipped with an associative and graded-commutative multiplication $\star$: for every subsets $ \sigma, \tau \subset \{1, \dots, k\} $ we pose
    \begin{equation}
        \label{eq:mult.taylor}
       e_{\sigma}\star e_{\tau} =  \left\{ \begin{array}{ll}
     \mathrm{sign}(\sigma, \tau) \; \dfrac{\, m_{\sigma} \, m_{\tau}\, }{m_{\sigma\cup \tau}} \; \:e_{\sigma\cup \tau},  & 
     \hbox{if $\sigma \cap \tau= \varnothing $} \\[10pt] 0 & 
       \hbox{if $\sigma\cap \tau \neq \varnothing $.}
       \end{array}\right.
    \end{equation}  
    Here $\mathrm{sign}(\sigma , \tau)=(-1)^N$ where $N$ is the cardinality of pairs $ (i,j)$ with $i \in \sigma, j \in \tau$ such that $ i > j $. It is routine to check that this multiplication is compatible with $d$  as defined above, turning 
the Taylor resolution of  $ \CO/\CI$ into a DGCA.

    \begin{remark}
    \normalfont
If $\varphi_1, \dots, \varphi_k $ is a regular sequence of monomials, then the Taylor resolution  coincides with the Koszul complex $(K_\bullet(k), \delta)$ of $\CO/\CI$ as DGCAs.
    \end{remark}

\vspace{1mm}
\noindent
In general, Taylor resolutions are \emph{not} Koszul-Tate resolutions, because they may  not be symmetric algebras.
Being free $\CO$-module resolutions, Proposition  \ref{prop:psi.dga} allows us to construct an arborescent Koszul-Tate resolution out of them. 
 As far as we know,  this is the first explicit construction of a Koszul-Tate resolution which works for any monomial ideal even if the Koszul complex does not provide a resolution.

\section{Complexity of Koszul-Tate resolutions}
\label{sec:complexity}
\subsection{Counting computations needed for constructing  Koszul-Tate resolutions}

 Arborescent Koszul-Tate resolutions are explicit in the sense that finitely many computations often suffice to construct it. 
 This section intends to make this statement precise and rigorous.

\vspace{1mm}
\noindent
Let us consider the following two classes of problems.
\begin{itemize}
    \item "\emph{$\CO $-linear problem of the first type.}" Given free or projective $\CO$-modules $\mathcal N_1 $ and $ \mathcal N_2 $ and an $ \CO$-linear map $d_1 \colon \mathcal N_1 \to \mathcal N_2 $, find a free or projective $\CO$-module $ \mathcal N_0$ and an $ \CO$-linear map $ d_0 \colon \mathcal N_0 \to \mathcal N_1 $ such that $ {\mathrm{im}}(d_0)={\mathrm{ker}}(d_1) $.
    \item "\emph{$\CO $-linear problem of the second type.}" Given three $ \CO$-modules $\mathcal N_0,\mathcal N_1, \mathcal N_2 $ with $ \mathcal N_0$ free or projective, and given 
    $ \chi_1\colon \mathcal N_0 \to \mathcal N_2$ and $ \chi_2 \colon \mathcal N_1 \to \mathcal N_2$ $\CO $-linear maps with $ \chi_2$ surjective, 
    construct an $ \CO$-linear map $\psi $ making the following diagram commutative:
     $$ \begin{tikzcd} \mathcal N_0 \arrow[r, dotted,"\psi"]  \ar[dr,"\chi_1"']& \mathcal N_1 \ar[d,->>,"\chi_2"]\\ & \mathcal N_2  \end{tikzcd} .$$     
\end{itemize}
For both problems above a solution exists. 

\vspace{1mm}
\noindent
Recall that an $ \CO$-module resolution $ (\FM_\bullet, d)$ of $\CO/\CI $ is said to have \emph{finite length} if $\exists N \in \mathbb N$ s.t. $ \forall i > N$, $ \FM_i=0$.

 \begin{theorem}
 \label{th:finitelyMany}
 For every  $ \CO$-resolution $(\FM_\bullet, d)$ of $\CO/\CI $ of finite length,  its arborescent Koszul-Tate resolutions can be computed by solving finitely many $\CO$-linear problems of the second type.
 \end{theorem}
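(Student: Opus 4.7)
The plan is to re-examine the recursive construction of the arborescent operations given in the proof of Corollary \ref{cor:differentialexists} and to reinterpret each recursion step as an $\CO$-linear problem of the second type, then to truncate the recursion using the finite length hypothesis.

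Concretely, first I would recall that $\psi$ is constructed degree by degree: assuming arborescent operations have been chosen on decorated trees of degree $\leq i$ so that $\delta_\psi^2$ vanishes up to degree $i$, Proposition \ref{prop:deltaPsiSquare} together with Lemma \ref{cor:spadesuit} produce an $\CO$-linear map
\[
B \colon (\Tdeg[\FM_\bullet])_{i+1} \longrightarrow \FM_{i-1}
\]
whose image lies in the $d$-cycles of $\FM_{i-1}$, hence (by exactness of $(\FM_\bullet,d)$ in positive degrees) in the $d$-boundaries. Extending $\psi$ to degree $i+1$ amounts exactly to choosing an $\CO$-linear lift $\psi^{(i+1)}\colon (\Tdeg[\FM_\bullet])_{i+1} \to \FM_i$ with $d\circ \psi^{(i+1)} = B$. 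Taking $\mathcal N_0 := (\Tdeg[\FM_\bullet])_{i+1}$, $\mathcal N_1 := \FM_i$, $\mathcal N_2 := d(\FM_i)\subseteq \FM_{i-1}$, $\chi_1 := B$ and $\chi_2 := d|_{\FM_i}$ gives precisely a problem of the second type: $\chi_2$ is surjective by construction of $\mathcal N_2$, and Proposition \ref{prop:stillfree} ensures that $\mathcal N_0$ is projective (or free, if $\FM_\bullet$ is). So each recursion step contributes exactly one $\CO$-linear problem of the second type.

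Next I would show that only finitely many of these problems are genuinely required when $(\FM_\bullet,d)$ has finite length $N$. For $i \geq N+2$ the target $\FM_{i-1}$ vanishes, so $B=0$ and $\psi^{(i+1)} := 0$ solves the problem trivially. The borderline case $i=N+1$ is the main point: here $\FM_i = \FM_{N+1} = 0$ forces $\psi^{(N+2)}=0$, so a choice is possible only if $B$ itself vanishes. This is where I would invoke that $\FM_{N+1}=0$ combined with exactness at $\FM_N$ forces $\ker(d|_{\FM_N}) = \operatorname{im}(d\colon\FM_{N+1}\to\FM_N) = 0$, whence $d|_{\FM_N}$ is injective and every $d$-cycle in $\FM_N$ vanishes; since $B$ takes values in such cycles, $B=0$ automatically. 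Consequently, nontrivial lifting problems appear only for $1\leq i\leq N$, giving at most $N$ problems of the second type. Combined with Theorem \ref{thm:isKT}, any such sequence of lifts yields an arborescent Koszul-Tate resolution, proving the claim.

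The main obstacle, as indicated, is the borderline vanishing argument at $i=N+1$: one must carefully combine the finite length hypothesis with the exactness of $(\FM_\bullet,d)$ to conclude that the obstruction $B$ is zero despite the target $\FM_{N-1}$ being nonzero in general. Once this is in place, identifying the recursion step with a problem of the second type is essentially a rewriting of the content of Corollary \ref{cor:differentialexists}, and the counting of steps is immediate from the decomposition of $\CT[\FM_\bullet]$ into graded components.
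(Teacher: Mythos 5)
Your proof is correct and follows essentially the same route as the paper: each step of the recursion defining $\psi$ (Equation \eqref{eq:psi3}, Corollary \ref{cor:differentialexists}) is recognized as an $\CO$-linear problem of the second type, and the finite length of $(\FM_\bullet,d)$ forces all but finitely many of these to be trivial. The only difference is organizational --- the paper counts the finitely many decorated trees of degree $\le N+1$ on which $\psi$ can be nonzero and dismisses higher degrees with ``for degree reasons,'' whereas you count one lifting problem per degree and spell out the borderline case $i=N+1$ (where $\FM_{N+1}=0$ but $\FM_N\neq 0$, so one must use exactness at $\FM_N$ to see that the obstruction $B$ vanishes); your extra care there is a welcome sharpening but not a different argument.
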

 \begin{proof} 
 The recursion describing the construction of the arborescent operations $\psi $ such that $ \delta_\psi=0$ is given in Equation \eqref{eq:psi3}. It consists in solving an $ \CO$-linear problem  of the second type. 
 Let $N$ be the length of $(\FM_\bullet, d) $. For degree reasons, $ \psi_t(a_1, \dots, a_n)$ is zero if $t[a_1, \dots, a_n]$ is of degree greater than $N+1$. There are therefore finitely many trees for which $\psi_t \neq 0 $.
 One therefore needs to solve finitely many $ \CO$-linear problems of the second type only.
 \end{proof}
 
% at least for Syzygy algebras. 
\vspace{1mm}
\noindent
We say that a Noetherian  algebra $\CO$ is a \emph{Syzygy algebra} if every finitely generated free $\CO$-module has a  finite length $\CO $-module resolution by finitely generated free   $\CO $-modules.
For example, by Hilbert's Syzygy theorem and its generalizations \cite{Eisenbud}, polynomial rings in $N$ variables, germs at a point  of holomorphic or real analytic functions in $N$ variables, or formal power series in $N$ variables are all Syzygy algebras \cite{Tougeron1976}. Moreover every finitely generated module over the previous Syzygy algebras admit free $ \CO$-module resolutions of length less than or equal to~$N$.  

\begin{corollary}
\label{cor:finmanylinear} 
Let $ \CO$ be a Syzygy algebra.
For every ideal $\CI \subset \CO$, one can construct an arborescent Koszul-Tate resolution of $ \CO/\CI$  by solving a finite number of $\mathcal O$-linear problems involving free $ \CO$-modules of finite ranks only.
\end{corollary}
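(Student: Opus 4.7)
The plan is to combine Theorem~\ref{th:finitelyMany} with the defining property of a Syzygy algebra, checking in addition that every module appearing along the way is finitely generated and free. First I would use Noetherianity of $\CO$ to conclude that $\CI$ is finitely generated, and then invoke the Syzygy hypothesis on $\CO/\CI$ to obtain an $\CO$-module resolution $(\FM_\bullet,d)$ of $\CO/\CI$ of finite length $N$ in which each $\FM_k$ is a finitely generated free $\CO$-module. Construction~\ref{const:free.resolution} produces such a resolution by finitely many $\CO$-linear problems of the first type, each involving finitely generated free modules: at every step $k \leq N$ the kernel of $d\colon\FM_k \to \FM_{k-1}$ is a submodule of a finitely generated module over a Noetherian ring, hence finitely generated, and the Syzygy hypothesis guarantees that this recursion stops after at most $N$ steps.

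Next I would apply Theorem~\ref{th:finitelyMany} to $(\FM_\bullet,d)$. The recursion \eqref{eq:psi3} defining the arborescent operations $\psi_t$ only ever involves trees $t$ with $t[a_1,\dots,a_n]$ of degree at most $N+1$, since above that degree all spaces $\FM_i$ vanish and $\psi_t$ is forced to be zero. Hence only finitely many instances of the $\CO$-linear problem of the second type have to be solved.

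The remaining point is that each of these problems involves only finitely generated free $\CO$-modules. For any fixed $n \leq N+1$ there are only finitely many planar rooted trees with at most $n$ vertices, only finitely many distributions of the residual degree among the leaves, and each $\FM_i$ has finite rank; thus $Tree[\FM_\bullet]_n$ is a finitely generated free $\CO$-module. The argument in the proof of Proposition~\ref{prop:stillfree}, which realises $\CT[\FM_\bullet]_n$ as the quotient of a free module by a $\mathbb K$-linear subspace tensored with $\CO$, then shows that $\CT[\FM_\bullet]_n$ and its summand $(\Tdeg[\FM_\bullet])_n$ remain finitely generated free $\CO$-modules. Consequently, at each step of the recursion the three modules in the $\CO$-linear problem of the second type, namely $\mathcal N_0=(\Tdeg[\FM_\bullet])_{i+1}$, $\mathcal N_1=\FM_i$ and $\mathcal N_2=d(\FM_i)\subset\FM_{i-1}$, are all finitely generated free.

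The main obstacle is essentially bookkeeping rather than conceptual: one must confirm that the recursive definition of $\psi$ on degree $i+1$ genuinely reduces to a single $\CO$-linear lifting problem against the surjection $d\colon\FM_i\twoheadrightarrow d(\FM_i)$, with the right-hand side computed from data of strictly lower degree already constructed. This is however immediate from the explicit form \eqref{eq:psi3} together with Corollary~\ref{cor:differentialexists}, so the corollary follows.
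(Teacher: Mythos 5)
Your proof is correct and follows essentially the same route as the paper: build a finite-length, finite-rank free resolution by finitely many $\CO$-linear problems of the first type (using Noetherianity and the Syzygy hypothesis), then invoke Theorem~\ref{th:finitelyMany} for the finitely many problems of the second type. Your explicit verification that each $(\Tdeg[\FM_\bullet])_{i+1}$ is finitely generated and free (finitely many tree shapes and degree distributions in each bounded degree) is a detail the paper leaves implicit, and it is exactly what justifies the "finite ranks only" clause of the statement.
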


\begin{proof}
The $n$ first terms of a free $ \CO$-module resolution of $\CO/\CI $ can be computed for all $n$ by solving finitely many $\mathcal O$-linear problems of the first type. Since $\CO$ is a Syzygy algebra, this free $\CO$-module resolution $(\FM_\bullet, d) $  can be chosen of finite length. The result then follows from Theorem~\ref{th:finitelyMany}.
\end{proof}

\noindent
If one restricts further to the polynomial algebra,  $ \CO=\mathbb K[x_1, \dots,x_n]$, one can obtain even a much stronger statement  than
Corollary \ref{cor:finmanylinear}.
$\CO $-linear problems of the first or the second type, with all $ \CO$-modules involved being of finite ranks, can be solved by an algorithm that uses only finitely many algebraic operations  $ +, - ,\times, / $. This follows from the theory of Gr\"obner basis and Schreyer’s theorem, see e.g. Theorems 2.4.2 and 2.4.4 in \cite{zbMATH00638938}, or Chapter 15 in \cite{Eisenbud}. 

\begin{corollary}
\label{cor:finmanyoperations}
For any ideal $\CI $ of $\CO=\mathbb K[x_1, \dots,x_n]$, an arborescent Koszul-Tate resolution of $ \CO/\CI$ can be constructed through an algorithm that uses finitely many algebraic operations.
\end{corollary}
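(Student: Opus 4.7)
The plan is to combine Corollary \ref{cor:finmanylinear} with the algorithmic content of Gröbner basis theory, both of which are essentially set up already in the paragraph preceding the corollary. First I would invoke Corollary \ref{cor:finmanylinear} applied to $\CO=\mathbb K[x_1,\dots,x_n]$, which is a Syzygy algebra by Hilbert's Syzygy theorem. This reduces the construction of an arborescent Koszul-Tate resolution of $\CO/\CI$ to the solution of \emph{finitely many} $\CO$-linear problems of the first and of the second type, all of which involve only finitely generated free $\CO$-modules.

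Second, I would verify that each such finite-rank $\CO$-linear problem is solvable by an algorithm using only finitely many field operations $+,-,\times,/$ in $\mathbb K$. For problems of the first type, this is precisely Schreyer's theorem: given a presentation matrix of $d_1\colon\mathcal N_1\to\mathcal N_2$, one computes a Gröbner basis for $\mathrm{im}(d_1)$, and Schreyer's procedure outputs a finite free module $\mathcal N_0$ together with an $\CO$-linear map $d_0\colon\mathcal N_0\to\mathcal N_1$ whose image equals $\ker(d_1)$ (see Theorems~2.4.2 and 2.4.4 in \cite{zbMATH00638938} or Chapter~15 of \cite{Eisenbud}). For problems of the second type, the lift $\psi\colon\mathcal N_0\to\mathcal N_1$ is constructed generator by generator on $\mathcal N_0$, and each individual lift reduces to an ideal-membership-with-cofactors problem along the surjection $\chi_2$, solved by the division algorithm against a Gröbner basis of a chosen presentation of $\mathrm{im}(\chi_2)$. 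All these subroutines manipulate matrices whose entries are polynomials in $x_1,\dots,x_n$ and terminate after finitely many algebraic operations.

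Finally I would assemble the two ingredients: by the proof of Theorem~\ref{th:finitelyMany}, only finitely many trees $t$ contribute a non-trivial $\psi_t$ (since the degree of $t[a_1,\dots,a_n]$ is bounded by $N+1$, where $N$ is the length of $(\FM_\bullet,d)$), and at each stage of the recursion in Section~\ref{sec:compatibility.conditions} the required $\psi^{(i+1)}$ is produced by solving one $\CO$-linear problem of the second type with finite-rank data. Chaining finitely many finitary subroutines yields an algorithm that constructs $\psi$, and hence $\delta_\psi$, in finitely many algebraic operations.

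The main obstacle I expect is not mathematical but a bookkeeping one: at every recursion step one must check that the input data of the next Gröbner-basis subroutine is itself the output of previously performed finitary computations, so that every matrix entry remains a polynomial in $\mathbb K[x_1,\dots,x_n]$ with explicitly computable coefficients. This is automatic if the starting generators of $\CI$ are given as polynomials, because Construction~\ref{const:free.resolution} and Equation~\eqref{eq:psi3} only compose, add, and $\CO$-scale previously obtained polynomial data; once this is checked, no genuine difficulty remains and the corollary follows.
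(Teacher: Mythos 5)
Your proposal is correct and follows essentially the same route as the paper, which leaves the corollary without a separate proof because it is exactly the combination of Corollary \ref{cor:finmanylinear} with the preceding paragraph on Gr\"obner bases and Schreyer's theorem. Your additional bookkeeping remark (that every intermediate datum stays explicitly polynomial) is a reasonable elaboration of what the paper takes for granted.
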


\vspace{.4cm}
%\vspace{0.1cm}

\noindent
In contrast, the Tate algorithm requires solving infinitely many $\CO $-linear problems of the first type, even for a Syzygy or polynomial algebra. The next proposition and the discussion below make this statement precise. The ranks  of free $\CO $-modules $\CEb=\oplus_{i \geq 1} \CE_i$ entering a Koszul-Tate resolution of $\CO/\CI $ can not be arbitrarily small.
In Appendix \ref{app:reduced}, we give a lower bound to these ranks. %In Appendix \ref{app:minimal}, we even show these bounds are reached under some assumptions on $ \CO$ or $ \CI$.
%over $ \CO$ of $\CE_i $, where $ S( \CE)$ %I sent you the zoom link
It goes through the notion of a reduced complex $(\CEb\otimes \CO/\CI, \underline \delta)$ of $\CO/\CI $, which is a complex of free or projective $\CO/\CI $-modules.
We show that it is unique up to homotopy equivalence. Therefore, its
homology
%denoted by $H(\CEb \otimes \CO/\CI, \underline \delta)$,
does not depend on the choice of a Koszul-Tate resolution.
The same holds for its homology of the complex of $\mathbb K $-vector spaces obtained by tensoring by $ \mathbb K \simeq \CO/\CJ$ with $ \CJ$ a maximal ideal containing $ \CI$. This  homology is a graded vector space of dimension $ b_i(\CO/\CJ,\CI) \in \mathbb N_0 \cap \{+\infty\}$ in degree $ i \geq 1$.  By Theorem \ref{thm:lowernboundonrank}, for any free $\CO $-module $\CEb=\oplus_{i \geq 1} \CE_i$ entering a Koszul-Tate resolution of $\CO/\CI $, the rank $\CE_i $ is greater or equal to $b_i(\CO/\CI,\CJ)$. This has consequences for the complexity of the Tate algorithm.

\begin{proposition}
\label{prop:TateIsInfinite}
   If infinitely many of the integers $(b_i(\CO/\CI,\CJ))_{i \in \mathbb N}$ are non-zero for a maximal ideal $ \CJ$ containing $ \CI$, then the Tate algorithm requires to solve an infinite number of $ \CO$-linear problems of the first type.
\end{proposition}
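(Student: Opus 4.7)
The plan is to derive a contradiction from the assumption that the Tate algorithm terminates after finitely many computations, using the rank lower bounds provided by the reduced complex. Recall from Construction \ref{const:tate.algorithm} that at the $k$-th step of the Tate algorithm one computes $H_{k-1}(S(\CE_1\oplus\cdots\oplus\CE_{k-1}),\delta_{k-1})$, chooses generators of this $\CO$-module, and declares $\CE_k$ to be the free $\CO$-module on those generators (together with a map $d_k$ whose image realizes that homology). This is precisely an $\CO$-linear problem of the first type: find a free cover of the kernel/image that exhibits the homology as an $\CO$-module quotient.

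Second, I would spell out what \emph{finitely many such problems} should mean here: it means that there exists an integer $N$ such that, for all $k>N$, the choice $\CE_k=0$ (with $d_k=0$) is forced to be compatible with the Tate recursion, so no further $\CO$-linear problem of the first type has to be solved. Under this assumption, the algorithm produces a Koszul-Tate resolution $(S(\CE_\bullet),\delta)$ of $\CO/\CI$ with $\CE_k=0$ for every $k>N$.

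The third and final step is to invoke Theorem \ref{thm:lowernboundonrank}: for any Koszul-Tate resolution of $\CO/\CI$ and any maximal ideal $\CJ$ containing $\CI$, the rank of $\CE_k$ is bounded below by the Betti number $b_k(\CO/\CI,\CJ)$ of the reduced complex (which by Appendix \ref{app:reduced} depends only on $\CO/\CI$ and $\CJ$, not on the chosen Koszul-Tate resolution). Hence $\CE_k=0$ forces $b_k(\CO/\CI,\CJ)=0$. Combined with the previous step, the hypothesis "finitely many problems" implies $b_k(\CO/\CI,\CJ)=0$ for all $k>N$, contradicting the hypothesis of the proposition that infinitely many of the $b_k(\CO/\CI,\CJ)$ are non-zero.

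The one delicate point, which I expect to be the main thing to nail down carefully, is the precise formalization of the phrase "requires to solve an infinite number of $\CO$-linear problems of the first type": one must make sure that any way of running the Tate algorithm that uses only finitely many such problems actually yields a Koszul-Tate resolution with only finitely many non-zero $\CE_k$, so that the lower bound from Theorem \ref{thm:lowernboundonrank} can be applied. Once this is made rigorous, the three-line argument above closes the proof.
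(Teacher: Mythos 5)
Your proposal is correct and follows essentially the same route as the paper: both rest on Theorem \ref{thm:lowernboundonrank} (so that $b_i(\CO/\CI,\CJ)\neq 0$ forces $\CE_i\neq 0$) together with the observation that each step of Construction \ref{const:tate.algorithm} producing a non-zero $\CE_i$ is an $\CO$-linear problem of the first type. The paper simply argues in the direct rather than the contrapositive direction, which also sidesteps the formalization worry you raise at the end.
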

\begin{proof}
Consider a Koszul-Tate resolution $ (S(\CE), \delta)$  coming from the Tate algorithm as in Construction \ref{const:tate.algorithm}.
Theorem \ref{thm:lowernboundonrank} implies that $\CE_i \neq 0$ for every $i \in \mathbb N$ such that $b_i(\CO/\CI,\CJ) \neq 0$.
When  $ \CE_i$ is non-zero, the $i$-th step of the Tate algorithm requires to solve an $ \CO$-linear problem of the first type, since the $i$-th step consists in finding generators of the kernel of $ \delta_{i-1}$.
 %for every $i$ such that $ d_i \neq 0$.
\end{proof}

\vspace{0.1cm}

\noindent
Proposition \ref{prop:TateIsInfinite} is useful when one can compute the homology of the reduced complex of $\CO/\CI $ tensored with $\CO/\CJ $ for some maximal ideal $ \CJ$.
Arborescent Koszul-Tate resolutions allow precisely to make such computations, since it allows to describe explicitly a reduced complex of $ \CO/\CI$.

\vspace{0.1cm}

\noindent
We conclude this section by applying this strategy to monomial ideals of a polynomial algebra.

\begin{proposition}
\label{prop:nonregular} 
Let $\CI \subset \CO = \mathbb K[x_1, \dots, x_n]$ be a monomial ideal minimally generated by a sequence $\phi_1, \dots, \phi_k $ which is not regular.
Then the Tate algorithm requires infinitely many algebraic operations.
\end{proposition}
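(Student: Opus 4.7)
The plan is to derive Proposition~\ref{prop:nonregular} from Proposition~\ref{prop:TateIsInfinite}. I first choose the maximal ideal $\CJ := \langle x_1, \dots, x_n\rangle$, which contains every monomial of positive degree and hence contains $\CI$. It then suffices to exhibit infinitely many indices $i$ for which $b_i(\CO/\CI,\CJ)\neq 0$, since each such $i$ forces the Tate algorithm to solve an $\CO$-linear problem of the first type, which by Gr\"obner basis theory (cf.\ Corollary~\ref{cor:finmanyoperations}) involves at least one algebraic operation.

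The core of the argument is to identify the numbers $b_i(\CO/\CI,\CJ)$ defined through the reduced complex of Appendix~\ref{app:reduced} with the classical \emph{deviations} $\varepsilon_i(R_\CJ)$ of the Noetherian local ring $R_\CJ := (\CO/\CI)_\CJ$. Given the up-to-homotopy uniqueness of Koszul-Tate resolutions established in Appendix~\ref{app:reduced}, the $b_i$'s equal the minimal rank of $\CE_i$ across all Koszul-Tate resolutions of $\CO/\CI$; after localizing at $\CJ$ and reducing modulo the maximal ideal, a minimal Koszul-Tate resolution of $\CO/\CI$ becomes an acyclic closure of the residue field $\mathbb K$ over $R_\CJ$, and the generating ranks of this acyclic closure are by definition the deviations $\varepsilon_i(R_\CJ)$. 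Once this identification is in place, I invoke the Gulliksen-Avramov theorem: a Noetherian local ring is a complete intersection if and only if its deviations vanish for all $i \geq 3$, and any non complete intersection has non-vanishing deviations in infinitely many degrees. The assumption that the minimal generating set $\phi_1,\dots,\phi_k$ of $\CI$ is not regular is equivalent to $\mathrm{height}(\CI)<k$, which is exactly the condition that $R_\CJ$ fails to be a complete intersection; hence $\varepsilon_i(R_\CJ)\neq 0$, and therefore $b_i(\CO/\CI,\CJ)\neq 0$, for infinitely many $i$, concluding the proof via Proposition~\ref{prop:TateIsInfinite}.

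The main obstacle I anticipate is the explicit identification between the paper's $b_i(\CO/\CI,\CJ)$ and the classical deviations $\varepsilon_i(R_\CJ)$. Both invariants morally count the minimal number of generators needed at each degree, but making the correspondence rigorous requires tracking carefully the effect of localization on the minimality notion of Appendix~\ref{app:minimal} and comparing the reduced complex of Appendix~\ref{app:reduced} tensored with $\mathbb K$ to the minimal model of the acyclic closure of $\mathbb K$ over $R_\CJ$. By contrast, the deep input invoked at the very end, namely the rigidity of deviations for non complete intersections, is Avramov's classical theorem and is used here as a black box.
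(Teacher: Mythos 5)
Your opening move---reducing to Proposition \ref{prop:TateIsInfinite} with $\CJ=\langle x_1,\dots,x_n\rangle$ and exhibiting infinitely many non-zero $b_i(\CO/\CI,\CJ)$---is exactly the paper's, but from there you take a genuinely different route. The paper keeps everything self-contained: Lemma \ref{lem:strictdivisor} extracts from the failure of regularity two monomial generators with $\mathrm{lcm}(\varphi_i,\varphi_j)\neq\varphi_i\varphi_j$, and Lemma \ref{lem:nonregular} then works inside the arborescent Koszul--Tate resolution built from the Taylor resolution (via Proposition \ref{prop:psi.dga}) to produce, for every $m$, an explicit binary ``caterpillar'' tree $T_m$ of degree $2m+1$ whose $\delta_\psi$-image lies in $\CJ\,\CT[C(k)]\oplus S^{\geq 2}(\CT[C(k)])$ and which is not exact in the reduced complex; this gives $b_{2m+1}\neq 0$ for all $m$. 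You instead propose to import the Gulliksen--Halperin--Avramov rigidity theorem for deviations of local rings. That shortcut is legitimate in principle---the remark following the proposition's proof acknowledges that the statement is essentially Avramov's Rigidity Theorem \cite{Avramov}---and it would give non-vanishing in all degrees rather than just the odd ones; what it sacrifices is self-containedness and the demonstration of the arborescent machinery, which is visibly part of the paper's purpose here.

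However, the pivotal step of your argument is not just left unproven but mis-stated. A minimal Koszul--Tate resolution of $\CO/\CI$ is, after localization, an acyclic closure of $R_\CJ$ over the \emph{regular} local ring $\CO_\CJ$, not an acyclic closure of the residue field $\mathbb K$ over $R_\CJ$; its generator ranks are therefore the deviations of the surjection $\CO_\CJ\to R_\CJ$, and relating these to the deviations $\varepsilon_j(R_\CJ)$ of the ring itself requires the degree-shifting Jacobi--Zariski (six-term) exact sequence of homotopy Lie algebras, which for regular $\CO_\CJ$ yields $b_i(\CO/\CI,\CJ)=\varepsilon_{i+1}(R_\CJ)$ for $i\geq 1$. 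Without that bridge the appeal to ``$\varepsilon_j(R_\CJ)\neq 0$ for infinitely many $j$'' does not attach to the numbers $b_i$ that Proposition \ref{prop:TateIsInfinite} actually needs. Two smaller points also deserve a sentence each: that localization at $\CJ$ does not change the minimal generator ranks (harmless here because monomial ideals are homogeneous, so graded and local minimality agree), and that ``the minimal monomial generating set is not regular'' is equivalent to ``$R_\CJ$ is not a complete intersection'' (this uses that $\CO$ is Cohen--Macaulay, so a height-$k$ ideal generated by $k$ elements is generated by a regular sequence). All of these are standard and available in \cite{Avramov}, but as written the central identification in your proposal is a placeholder rather than a proof, whereas the paper's Lemmas \ref{lem:strictdivisor} and \ref{lem:nonregular} do the corresponding work explicitly.
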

\vspace{0.2cm}
\noindent
We start with a technical lemma.
\begin{lemma}
\label{lem:strictdivisor}
    Let $\varphi_1, \dots, \varphi_k$ be a not a regular sequence of monomials, then there exist distinct $i ,j \in \{1, \dots, k\}$ such that $\mathrm{lcm}\lbrace \varphi_i, \varphi_j \rbrace \neq \varphi_i\cdot \varphi_j$.
\end{lemma}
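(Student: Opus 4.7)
The plan is to prove the contrapositive: if $\mathrm{lcm}\{\varphi_i,\varphi_j\}=\varphi_i\varphi_j$ for every pair $i\neq j$, then $\varphi_1,\dots,\varphi_k$ forms a regular sequence. The condition on the lcm is precisely the combinatorial statement that $\varphi_i$ and $\varphi_j$ share no variable, so the hypothesis becomes: the monomials $\varphi_1,\dots,\varphi_k$ are supported on pairwise disjoint subsets of $\{x_1,\dots,x_n\}$. Since regularity of a sequence of monomials in a polynomial ring is independent of the order, it is enough to show that $\varphi_k$ is not a zero divisor modulo $I:=(\varphi_1,\dots,\varphi_{k-1})$ and then to proceed by induction on $k$. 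The base case $k=1$ is trivial because $\CO$ is an integral domain.

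For the inductive step, the key tool is the standard observation about monomial ideals: $f\in I$ if and only if every monomial appearing in the expansion of $f$ is divisible by at least one generator $\varphi_i$ (with $i<k$). Assume $\varphi_k\cdot f\in I$ and write $f=\sum_m c_m\, m$ as a $\mathbb{K}$-linear combination of distinct monomials. Then the expansion of $\varphi_k f$ is a combination of distinct monomials $\varphi_k m$, and the monomial-ideal criterion forces each $\varphi_k m$ to be divisible by some $\varphi_{i(m)}$ with $i(m)<k$.

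At this point the coprimality hypothesis takes over: since $\varphi_{i(m)}$ and $\varphi_k$ have no common variable, every variable occurring in $\varphi_{i(m)}$ must be supplied by $m$, hence $\varphi_{i(m)}\mid m$ and $m\in I$. Summing over $m$, we conclude $f\in I$, showing that $\varphi_k$ is not a zero divisor in $\CO/I$, which completes the induction.

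I do not anticipate any real obstacle; the only point to handle with some care is the translation between the algebraic hypothesis $\mathrm{lcm}\{\varphi_i,\varphi_j\}=\varphi_i\varphi_j$ and the combinatorial notion of disjoint variable supports, and the standard but crucial characterisation of membership in a monomial ideal via divisibility of individual monomials. The order-independence of regularity for sequences of monomials (which reduces the inductive step to considering $\varphi_k$ only) is used implicitly and can be invoked as a classical fact about polynomial rings.
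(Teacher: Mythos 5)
Your proof is correct, but it runs in the opposite direction from the paper's. You prove the contrapositive — pairwise coprimality of the monomials (equivalently, disjoint variable supports) implies regularity — by invoking the standard membership criterion for monomial ideals: $f\in(\varphi_1,\dots,\varphi_{i-1})$ iff every monomial of $f$ is divisible by some generator, so from $\varphi_i m$ divisible by $\varphi_{j}$ and $\gcd(\varphi_j,\varphi_i)=1$ you extract $\varphi_j\mid m$ and conclude. The paper instead argues forward: from the failure of regularity at some index $m$ it takes a witness relation $a\,\varphi_m=\sum_{i<m}\alpha_i\varphi_i$ with $a\notin\langle\varphi_1,\dots,\varphi_{m-1}\rangle$, restricts to a single multidegree component to make $a$ and the $\alpha_i$ monomials, matches multidegrees to get $\varphi_i\mid a\varphi_m$ for some $i$ while $\varphi_i\nmid a$, and deduces $\gcd(\varphi_i,\varphi_m)\neq 1$, i.e.\ $\mathrm{lcm}\{\varphi_i,\varphi_m\}\neq\varphi_i\varphi_m$. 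The two arguments lean on essentially dual pieces of standard monomial technology (the membership criterion versus multigraded decomposition of a syzygy), and both are complete; yours establishes the slightly stronger and classical fact that pairwise coprime monomials always form a regular sequence, while the paper's pinpoints the offending pair directly from the non-regularity witness. Two minor remarks on your write-up: the appeal to order-independence of regularity is unnecessary, since the induction already hands you regularity of $\varphi_1,\dots,\varphi_{k-1}$ and the same coprimality argument verifies each intermediate non-zero-divisor condition directly; and it is worth noting explicitly (as you implicitly do) that $m\mapsto\varphi_k m$ is injective on monomials, so no cancellation occurs in the expansion of $\varphi_k f$.
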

 \begin{proof}
Let $\varphi_1, \dots, \varphi_k$ be a not regular sequence in $\CO$. By definition, there exists $m\geq 2$ (case $m =1$ is excluded) such that $a\cdot \varphi_m = \alpha_1 \cdot \varphi_1 + \dots \alpha_{m-1}\cdot \varphi_{m-1}$, $a, \alpha_{\bullet} \in \CO$ and $a\notin \langle \varphi_1, \dots, \varphi_m \rangle_{\CO}$. Without any loss of generality we choose $a, \alpha_{\bullet}$ to be monomials and we impose that any non-zero summand $\alpha_i\varphi_i$ has the same multipolynomial degree as $a\varphi_m$. Then it easily follows if $\alpha_i\varphi_i \neq 0$ there exists $b\in \mathbb K^*$ such that $b\alpha_i\varphi_i = a\varphi_m$. Thus $\varphi_i$ divides  $a\varphi_m$. Since $\varphi_i$ does not divide $a$ alone according to the assumptions, the greatest common divisor of $\varphi_i$ and $\varphi_m$ is non-trivial and therefore their least common multiplier is not just given by their product. Therefore there exists a monomial $\chi \in \CJ $ such that $ \varphi_i\cdot \varphi_j=\chi \cdot \,\mathrm{lcm}\lbrace \varphi_i, \varphi_j \rbrace$.
\end{proof}

\vspace{0.2cm}
\noindent
Let $ \CJ \subset \CO$ be the maximal ideal generated by $x_1,\dots,x_n$. Notice that $ \CI \subset \CJ$.

\begin{lemma}
\label{lem:nonregular}
    For every  $i \geq 0 $, we have $ b_{2i+1}(\CO/\CI,\CJ) \neq 0$.
 
\end{lemma}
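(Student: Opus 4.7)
The plan is to construct an arborescent Koszul-Tate resolution of $\CO/\CI$ from the Taylor DGCA $(C_\bullet(k), d, \star)$ via Proposition~\ref{prop:psi.dga}, and then exhibit explicit cycles of every odd degree in its reduced complex modulo $\CJ$. Since the reduced complex is uniquely determined up to homotopy, this gives the desired lower bounds on $b_{2i+1}(\CO/\CI, \CJ)$.

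By Lemma~\ref{lem:strictdivisor}, first choose indices $\alpha, \beta \in \{1, \ldots, k\}$ with $m := \mathrm{lcm}(\varphi_\alpha, \varphi_\beta)$ a strict divisor of $\varphi_\alpha \varphi_\beta$, and set $\chi := \varphi_\alpha \varphi_\beta / m$, a non-constant monomial lying in $\CJ$. The basic cycle in degree $1$ is $z_0 = |[e_\alpha]$: its reduced differential $|[\varphi_\alpha]$ lies in $\CI \subset \CJ$. For $i \geq 1$ and degree $2i+1$, I would take $z_i$ to be a binary tree with $i+1$ leaves of degree $1$, decorated by $e_\alpha$ and $e_\beta$ in an asymmetric pattern (so that the element survives the Koszul-symmetrized quotient). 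The reduced differential $\underline{\delta_\psi}(z_i)$ modulo $\CJ$ splits into three types of contributions. \emph{Leaf derivations} $d(e_\bullet) = \varphi_\bullet \in \CO$ vanish by Convention~\ref{conv:extensionO}(2), since each leaf has a binary parent. \emph{Arborescent contributions} $\psi_{t_{\uparrow A}}$ are nontrivial only for $t_{\uparrow A} = \vee$, by the choice of $\psi$ in Proposition~\ref{prop:psi.dga}; in that case $\psi_\vee(e_\alpha, e_\beta) = \pm \chi \cdot e_{\{\alpha, \beta\}}$ carries a $\chi \in \CJ$ factor, while $\psi_\vee(e_\bullet, e_\bullet) = 0$ by Equation~\eqref{eq:mult.taylor}. \emph{Edge collapses} $\partial_A T_i$ produce non-binary trees of degree $2i$ which vanish by Koszul symmetry at the newly formed multi-child vertex, provided the decoration pattern is chosen so that two identical odd-degree decorations become siblings after collapse.

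For non-exactness, I would verify that no element of $\CE_{2i+2}$ maps to $z_i$ under $\underline{\delta_\psi} \otimes 1_{\CO/\CJ}$. The key observation is that the reduced differential modulo $\CJ$ preserves enough tree-shape information to stratify $\CE_\bullet \otimes \CO/\CJ$: decorations in $\FM_j$ with $j \geq 2$ can produce lower-$j$ decorations only when the monomial structure forces a constant coefficient ($m_\tau = m_{\tau \setminus j}$), and arborescent operations can only act at $\vee$-subtrees, generating $\CJ$-coefficients whenever the two leaves share variables. Cataloguing the finitely many shapes of degree-$(2i+2)$ generators and tracking their images in $\CE_{2i+1} \otimes \CO/\CJ$, one shows that $z_i$ lies outside the image.

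The main obstacle will be organizing the non-exactness verification uniformly across all $i$, especially handling mixed generators (trees with one leaf of high degree versus many leaves of low degree) that can simultaneously contribute in degree $2i+2$. The cleanest route is likely an induction on $i$ combined with a combinatorial decomposition of $\CE_{2i+2} \otimes \CO/\CJ$ into pieces mapping into distinct strata of $\CE_{2i+1} \otimes \CO/\CJ$, verifying that the stratum of $z_i$ receives no nontrivial map modulo $\CJ$.
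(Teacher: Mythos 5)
Your cycle construction and closedness argument coincide with the paper's: the same pair $(e_\alpha,e_\beta)$ from Lemma \ref{lem:strictdivisor}, the same choice of arborescent operations via Proposition \ref{prop:psi.dga}, and a binary $(i+1)$-leaf tree (the paper uses the left comb $T_i$ with $e_{\{\alpha\}}$ at the deepest vertex and $e_{\{\beta\}}$ elsewhere) whose reduced differential survives only through the $\CJ$-valued term $\psi_\vee(e_{\{\alpha\}},e_{\{\beta\}})=\chi\, e_{\{\alpha,\beta\}}$ and the unroot term in $S^{\geq 2}$, the edge collapses dying by Koszul symmetry of two odd sibling decorations and the leaf derivations by Convention \ref{conv:extensionO}. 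Up to making the "asymmetric pattern" explicit, this half is the paper's proof verbatim.

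The gap is the non-exactness step, which you defer to an unspecified stratification-plus-induction and flag as ``the main obstacle.'' In the paper this step is a single observation requiring no induction and no cataloguing of degree-$(2i+2)$ generators: every term in the image of $\underline{\delta_\psi}$ on $\CT[C(k)]\otimes\CO/\CJ$ is a tree that is either (a) non-binary (the $\partial_A$ terms always create a vertex with at least three children), or (b) carries a $d$-exact decoration --- and these vanish mod $\CJ$ because minimality of the generating set forces all coefficients $m_\tau/m_{\tau\setminus\{j\}}$ of $d(e_\tau)$ into $\CJ$ --- or (c) carries a decoration lying in the image of some $\psi_{t_n}$, hence in $C(k)_{\geq 2}$. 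Since $T_i$ is binary with all decorations among the degree-one basis elements $e_{\{\alpha\}}, e_{\{\beta\}}$, it cannot appear in the span of such terms. Note that (b) is the one place where minimality of $\varphi_1,\dots,\varphi_k$ (assumed in Proposition \ref{prop:nonregular}) is actually used; your sketch mentions the condition $m_\tau=m_{\tau\setminus\{j\}}$ but does not tie it to minimality, and without that the argument that nothing maps onto a degree-one decoration would fail. Replacing your stratification program by this trichotomy closes the proof uniformly in $i$.
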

\begin{proof}
By Lemma \ref{lem:strictdivisor}, there exists indices $i,j$ and a monomial $ \chi \in \CJ$ such that \begin{equation}\label{eq:explicitproductmonomials} e_{\{i\}} \star e_{\{j\}}= \chi \, e_{\{i,j\}}.\end{equation} 

\vspace{1mm}
\noindent
Consider the arborescent Koszul-Tate resolution $(S(\CT[C(k)]), \delta_\psi)$, where the arborescent operations are chosen as in Proposition \ref{prop:psi.dga}.
In particular
\begin{equation}\label{eq:explicitproductmonomials2} \psi_\vee (e_{\{i\}} , e_{\{j\}})= \chi \, e_{\{i,j\}}\end{equation} 
takes values in $\CJ \CT[C(k)]$.
Let us now examine the ordered decorated tree $T_m$ of degree $ 2m+1$
         $$\adjustbox{valign=c}{$\begin{array}{c} \\T_m \, := \, \end{array}$}
         \adjustbox{valign=c}{\scalebox{0.5}{
    \begin{forest}
for tree = {grow' = 90}, nice empty nodes,
[,label = {[mystyle] \scalebox{2}{$ A_1 \quad$}}
[, edge = dashed, 
            [,label = {[mystyle] \scalebox{2}{$ A_m \quad$}}, edge = dashed,
      [\scalebox{2}{$e_{\{i\}}$}, tier =1 ]
      [\scalebox{2}{$e_{\{j\}}$}, tier =1]
         ]
         [\scalebox{2}{$e_{\{j\}}$}, tier = 1 ]
 ]
    [
    \scalebox{2}{$e_{\{j\}}$}, tier =1
    ]
 ]
\path[fill=black] (.parent anchor) circle[radius=3pt]
                (!11.child anchor) circle[radius=3pt]
                (!1.child anchor) circle[radius=3pt];
\end{forest}
}}\adjustbox{valign=c}{$.$}
$$
A direct computation using Equation \eqref{eq:deltaexpression} gives:
$$\adjustbox{valign=c}{$\begin{array}{c} \\
\delta_\psi T_m = (-1)^m\end{array}$}
\adjustbox{valign=c}{\scalebox{0.5}{
    \begin{forest}
for tree = {grow' = 90}, nice empty nodes,
[,label = {[mystyle] \scalebox{2}{$ A_1 \quad$}}
[, edge = dashed, 
            [,label = {[mystyle] \scalebox{2}{$ A_{m-1} \quad\quad $}}, edge = dashed,
      [\scalebox{2}{$\psi_\vee\left(e_{\{i\}}, e_{\{j\}}\right)$}, tier =1 ]
      [\scalebox{2}{$e_{\{j\}}$}, tier =1]
         ]
         [\scalebox{2}{$e_{\{j\}}$}, tier = 1 ]
 ]
    [
    \scalebox{2}{$e_{\{j\}}$}, tier =1
    ]
 ]
\path[fill=black] (.parent anchor) circle[radius=3pt]
                (!11.child anchor) circle[radius=3pt]
                (!1.child anchor) circle[radius=3pt];
\end{forest}
}} \adjustbox{valign=c}{$\begin{array}{c} \\+\end{array}$}\adjustbox{valign=c}{
\scalebox{0.5}{
    \begin{forest}
for tree = {grow' = 90}, nice empty nodes,
[,label = {[mystyle] \scalebox{2}{$ A_1 \quad$}}
[, edge = dashed, 
            [,label = {[mystyle] \scalebox{2}{$ A_{m-1} \quad \quad$}}, edge = dashed,
      [\scalebox{2}{$e_{\{i\}}$}, tier =1 ]
      [\scalebox{2}{$e_{\{j\}}$}, tier =1]
         ]
         [\scalebox{2}{$e_{\{j\}}$}, tier = 1 ]
 ]
    [
    \scalebox{2}{$e_{\{j\}}$}, tier =1
    ]
 ]
\path[fill=black] (.parent anchor) circle[radius=3pt]
                (!11.child anchor) circle[radius=3pt]
                (!1.child anchor) circle[radius=3pt];
\end{forest}
}}\adjustbox{valign=c}{$\begin{array}{c} \\
\odot \end{array}$}\adjustbox{valign=c}{
\scalebox{1.4}{\begin{forest}
for tree = {grow' = 90}, nice empty nodes,
[
 [\scalebox{0.714}{$e_{\{j\}}$}, tier =1] 
]
\end{forest}} \\ \vspace{0.5cm}}\adjustbox{valign=c}{$\begin{array}{c} \\,\end{array}$}
$$
all the remaining terms being zero either for symmetry reasons, or because the only non-vanishing arborescent operations $  \psi_t$ are those where $t$ is of the form given in Equation \eqref{eq:onerootnleaves}. 
The first term on the r.h.s. is in $ \CJ \, \CT[C(k)]$ in view of Equation \eqref{eq:explicitproductmonomials2}.
Since the second term is in $\Sdeg (\CT[C(k)])$,
we have that $$\delta_\psi  (T_m ) \in \CJ \, \CT[C(k)]\oplus \Sdeg (\CT[C(k)]).$$
Therefore, $T_m$ induces a closed element $[T_m]$ in the reduced complex $(\CT[C(k)] \otimes \CO/\CJ, \underline{\delta_\psi})$. 
Also, $[T_m]$ can not be exact: this follows from Equation \eqref{eq:deltaexpression}  since the image of $\underline{\delta_\psi}$ is made of linear combinations of trees which are either  non-binary trees, or trees with $d$-exact decorations, or trees with decorations in the image of $\psi_t$ for some tree $t$ as in Equation \eqref{eq:onerootnleaves} and therefore valued in  $C(k)_{\geq 2}$.
Since $ T_m$ has degree $ 2m+1$, we have $b_{2m+1}(\CO/\CI, \CJ) \neq 0$.
\end{proof}

\vspace{0.1cm}
\noindent Similarly to Lemma \ref{lem:nonregular}, one can also prove $b_{2i}(\CO/\CI,\CJ) \neq 0$, but since we do not need it for proving Proposition \ref{prop:nonregular}, we omit it here. 

\vspace{0.1cm}
\noindent
\begin{proof}[Proof of Proposition \ref{prop:nonregular}.]
The statement is an immediate consequence of Proposition \ref{prop:TateIsInfinite} and Lemma \ref{lem:nonregular}. 
\end{proof}

\begin{remark}
\normalfont
A statement similar
to Proposition \ref{prop:nonregular}
appears under the name "Rigidity Theorem" in Avramov's \cite{Avramov}, where Koszul-Tate resolutions are denoted by $ \hat{S}$ but are not given a name.
\end{remark}

\subsection{A  word of conclusion about simplicity}

\vspace{0.1cm}

\noindent
We now have two main types of Koszul-Tate resolutions:  the arborescent Kozul-Tate resolutions obtained of a well-chosen free or projective $\CO$-module resolution, and those obtained out of the Tate algorithm. Hence a natural question: Which of the two is simpler?

\vspace{0.1cm}

\noindent
From the point of view of complexity, i.e. the quantity of computations, arborescent Koszul-Tate resolutions are significantly simpler, as can be seen by comparing, e.g., Theorem \ref{th:finitelyMany}  with Proposition \ref{prop:nonregular}. 
In particular, they can often be computed using finitely many elementary operations, see, e.g. Corollary \ref{cor:finmanyoperations}.

\vspace{0.1cm}

\noindent
However, 
as we see in Appendix \ref{app:minimal}, if properly constructed, the Tate algorithm is minimal. It suffices to choose a minimal set of generators of the homology of the truncated complex $$H_k\left( S({\mathcal E}_{1}\oplus \dots \oplus  {\mathcal E}_{k}), \delta_k \right)$$ at each step of Construction \ref{const:tate.algorithm}, thus forming a "smallest possible" free $\CO$-module $\CE_{k+1}$. 
In contrast, the arborescent Koszul-Tate resolution is almost never minimal. 

\vspace{0.1cm}
\noindent
In conclusion, arborescent Koszul-Tate resolutions are simpler from the point of view of complexity, but not from the point of view of  size.

\vspace{0.1cm}
\noindent
Also, arborescent Koszul-Tate resolutions have easier homotopies. 
Assume a module resolution $(\FM_\bullet,d) $  of $ \CO/\CI$ of length $N$ is given.

\vspace{0.1cm}
\noindent
First, 
both its arborescent Koszul-Tate resolutions and any Koszul-Tate resolution obtained by the Tate algorithm are homotopy equivalent to $(FM_\bullet,d) $. While in our construction this homotopy is completely explicit, see Proposition \ref{prop:Homotopy}, while in the second case,  the construction of such a homotopy equivalence would again need  infinitely many computations.  

\vspace{0.1cm}
\noindent 
Also, Proposition \ref{prop:Homotopy} gives explicit expression of a homotopy $h$ for $\delta_\psi $ upon restricting to a subspace of the kernel of ${\mathrm{Proj}}_\psi $, i.e.\ in particular to $\CT[\FM_\bullet]_{\geq N+1}$. This is very useful to have in applications like the BV or BFV constructions, see \cite{SashaThomas}.

\section*{Acknowledgements}
\noindent A.H. and T.S. are thankful to Erwin Schrodinger Institute for a stay during a thematic programme "Higher structures and Field Theory",  where this work was partially conducted. 
C.L.-G.  would like to thank the Tsing Hua University \begin{CJK*}{UTF8}{bsmi}國立清華大學\end{CJK*}  and the NCTS \begin{CJK*}{UTF8}{bsmi}國家理論科學研究中心\end{CJK*}  for their hospitality.
Last, all three authors thank the Institut Henri Poincaré for its hospitality in 2023.\\

\vspace{0.1cm}

\noindent
We thank Ruben Louis, Vladimir Salnikov, Rong Tang and, in particular, Barbara Fantechi, Bernhard Keller, Oleksii Kotov and Pavel Mnev for useful  discussions.

\appendix

\section{An ab initio proof of Theorem \ref{thm:ainfty}}

\label{app:alternative}
\noindent The purpose of this Appendix is to give a detailed proof of Theorem \ref{thm:ainfty} without using the homotopy transfer theorem. To be more in line with the rest of the paper, we keep unshifted  degrees.
%and use the following definition of an $ A_\infty$-algebra.
This requires to reformulate the defining properties of  $ A_\infty$ and $ C_\infty$ algebras correspondingly. For the case of the $ A_\infty$ algebra this then actually agrees with its original definition in the literature.  
%new definition are well-known to be equivalent to the previous ones, using the so-called décalage isomorphism.

\begin{definition} \cite{StasheffAinfty1, StasheffAinfty2, Keller1999IntroductionT}.
Let $ (\mathfrak A,d)$ be a complex of $ \CO$-modules. A sequence of $\CO $-linear maps $\mu_i \colon \otimes^i C_\bullet \to C_{\bullet} $,
  with $\mu_i $  of degree $i-2 $ 
is said to be a $ A_\infty$-algebra if it satisfies the "higher associativity" conditions for all $n \geq 1$: 
\begin{equation}
\label{eq:ainfty2}
\sum_{\scalebox{0.5}{$\begin{array}{c} i,k \geq 0, j \geq 1 \\ i+j+k = n \end{array}$}} 
(-1)^{i+jk}
\mu_{n-j+1}(\id^{\otimes i}\otimes \mu_{j} \otimes \id^{\otimes k}) = 0
\end{equation}
with the convention that $\mu_1=
 -
d $.
An $A_\infty$-algebra structure $(\mu_n)_{n \geq 1} $  on a graded $ \CO$-module ${\mathfrak{A}}_\bullet $  is said to be a \emph{$ C_\infty$-algebra} if the following condition is satisfied: for every $ n \geq 1$ and all homogeneous $ a_1, \dots,a_n \in {\mathfrak A}_\bullet$, one has
\begin{equation}\label{eq:Cinfty2} \sum_{\sigma \in {\mathrm{Sh}}(i,n)} 
 e(\sigma)
\theta(\sigma,a)  \mu_{n}(a_{\sigma(1)}, \dots, a_{\sigma(n)})  =0 .\end{equation}
 Here ${\mathrm{Sh}}(i,n)$ stands for the set of all $(i,n)$-shuffles of $\{1, \dots, n\} $, i.e. all permutations $ \sigma$ of $ \{1, \dots,n\}$ such that  $ \sigma(a) \leq \sigma (b) $ if   $1 \leq a \leq b \leq i$ or $i+1 \leq a \leq b \leq n$.
Furthermore, 
  $e(\sigma) \in \{-1,1\}$ is  the ordinary signature of $ \sigma$ and 
$\theta(\sigma,a) $ is the Koszul signature of $\sigma$  defined to be the unique sign that satisfies
 $$ a_{\sigma(1)} \odot \dots \odot a_{\sigma(n)} = \theta(\sigma,a) \, a_1 \odot \dots \odot a_n  $$
  within the graded symmetric algebra $ S(\mathfrak A)$  generated by $\mathfrak A $.
\end{definition}

\noindent
For the non-shifted degree,  theorem  \ref{thm:ainfty} becomes:

\begin{theorem}
\label{thm:ainfty2}
Consider an arborescent Koszul-Tate resolution of $\CO/\CI $ associated to a free or projective $\CO$-module resolution $(\FM_\bullet,d) $ with arborescent operations $ \psi$  and differential  $\delta_\psi $.
Then the following structure maps $(\mu_n)_{n\geq 1} $  define an $A_\infty $-algebra on $ \FM_\bullet \oplus \CO$:

\begin{enumerate}
\item For $n=1 $, we set $ \mu_1=
 -
d$.
\item For $ n=2$, we set for all $ a_1,a_2 \in \FM_\bullet$ and $ F_1,F_2 \in \CO$  $$\begin{array}{rrcl}
    \mu_2\colon & \otimes^2  \left( {\FM}_\bullet \oplus \CO \right) & \rightarrow &  {\FM}_\bullet  \oplus \CO \\
     & (a_1 + F_1 \, ,  \, a_2+F_2) & \mapsto & F_1a_2+ F_2 a_1  +\psi_\vee (a_1,a_2) + F_1 F_2
 \end{array} .$$
\item For all $n \geq 3$, we set for all homogeneous $ a_1, \dots, a_n \in \FM_\bullet$ and $F_1, \dots, F_n \in\CO$
 \begin{equation}
     \label{eq:homotopytransfer2} \begin{array}{rrcl}
    \mu_n\colon & \otimes^n  \left( {\FM}_\bullet \oplus \CO\right) & \rightarrow & {\FM}_\bullet  \oplus  \CO \\
     & (a_1+F_1, \dots, a_n+F_n) & \mapsto & \sum_{ t \in Y_n} (-1)^{P(t[a_1, \dots,a_n])
 +\sum_{i=1}^{n}(n-i)|a_i|
     } 
     \, \,  \psi_t(a_1, \dots, a_n), 
 \end{array} 
 \end{equation}
 where $Y_n$ is the set of ordered binary trees with $n$ leaves, and 
$P(t[a_1, \dots,a_n])$ is the sum of the degrees of all left subtrees
of vertices in $t[a_1, \dots,a_n]$.
 \end{enumerate}
{ Moreover, the structure maps $(\mu_n)_{n \geq 1} $ satisfy \eqref{eq:Cinfty2}, proving that one even has a $C_\infty$-structure.
}
\end{theorem}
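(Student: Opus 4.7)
The plan is to derive the $A_\infty$-relation \eqref{eq:ainfty2} for every $n\geq 1$ directly from the compatibility identity \eqref{eq:psi4} that $\delta_\psi^2=0$ imposes on the arborescent operations, and then to obtain the $C_\infty$-property as a consequence of the symmetry relation \eqref{eq:signs_for_operations}.

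First I would substitute the expansion \eqref{eq:homotopytransfer2} into the left-hand side of \eqref{eq:ainfty2}. Each composite $\mu_{n-j+1}(\mathrm{id}^{\otimes i}\otimes \mu_j\otimes \mathrm{id}^{\otimes k})$ becomes a signed sum over pairs $(t_1,t_2)\in Y_{n-j+1}\times Y_j$ of binary trees, and summing over $i,j,k$ this reorganizes as a signed sum indexed by pairs $(t,A)$, where $t\in Y_n$ is binary and $A$ is an inner vertex of $t$: grafting $t_2$ onto the $(i{+}1)$-th leaf of $t_1$ produces a unique binary tree $t$ with distinguished inner vertex $A$ such that $t_{\uparrow A}=t_2$ and $t_{\downarrow A}=t_1$, and conversely every pair $(t,A)$ arises in this way. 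With the convention at the end of Section \ref{sec:compatibility.conditions} that $\psi_{t_{\uparrow A}}=-d$ when $A$ is a leaf and $\psi_{t_{\downarrow A}}=-d$ when $A$ is the root, the terms arising from $\mu_1=-d$ are absorbed into this framework so that $A$ ranges over all vertices.

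The second step is sign bookkeeping. For each pair $(t,A)$ I would compare the combined sign produced by the $A_\infty$-expansion — the external $(-1)^{i+jk}$ together with the tree signs $P(t_1[\cdot])$, $P(t_2[\cdot])$ from \eqref{eq:homotopytransfer2} and the degree shift $\sum(n{-}i)|a_i|$ — with the sign $(-1)^{W_A}$ appearing in \eqref{eq:psi4}. Once these are shown to match, the $A_\infty$-sum becomes precisely $\sum_{t\in Y_n}\sum_{A}(-1)^{\bullet}\,\psi_{t_{\downarrow A}}\circ_A\psi_{t_{\uparrow A}}$, and invoking \eqref{eq:psi4} rewrites it as
\[
\sum_{t\in Y_n}\;\sum_{A\in\mathrm{InnVer}(t)}(-1)^{W_A+S(t,a)}\,\psi_{\partial_A t}(a_1,\ldots,a_n),
\]
where $\partial_A t$ is a rooted tree having exactly one ternary vertex and all other vertices binary, and $S(t,a)$ is the residual sign collected above. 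This sign-matching step is routine but tedious and will make up most of the written computation.

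The main obstacle is showing that the last sum vanishes. For any tree $t'$ with a single ternary vertex $V$ whose ordered children are $c_1,c_2,c_3$, exactly two binary trees in $Y_n$ yield $t'$ under edge-merging, namely the resolutions $((c_1,c_2),c_3)$ and $(c_1,(c_2,c_3))$. I would show that $(-1)^{W_A+S(t,a)}$ flips under this swap — the left-subtrees along the path from the root to $V$ are common to both resolutions, and only one further left-subtree differs, contributing exactly one extra sign — yielding pairwise cancellation. This is the essential combinatorial identity underlying any homotopy-transferred $A_\infty$-structure.

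For the $C_\infty$-property, the point is that \eqref{eq:signs_for_operations} permits permutations of children at any inner vertex of a binary tree with the corresponding Koszul sign. Hence, fixing the multiset of decorations, many pairs $(t,\sigma)$ with $\sigma\in\mathrm{Sh}(i,n)$ contribute the same underlying value $\psi_t(a_{\sigma(1)},\ldots,a_{\sigma(n)})$ up to a Koszul sign. Grouping the shuffle sum \eqref{eq:Cinfty2} by equivalence classes of decorated planar binary trees under these symmetries and invoking the classical identity that the signed sum of planar binary tree shapes over $(i,n)$-shuffles of a fixed leaf multiset vanishes (the Eulerian idempotent annihilates shuffles, see Theorem 12 of \cite{Getzler-Cheng}) completes the proof.
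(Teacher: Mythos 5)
Your proposal follows essentially the same route as the paper's Appendix~\ref{app:alternative}: reindexing the $A_\infty$-sum by pairs $(t,A)$ with $t\in Y_n$ binary and $A$ a vertex (the paper's ``type $(i,j,k)$'' bijection of Lemma~\ref{lem:aboutmuexplicit} and Equation~\eqref{eq:treestrees}), matching the accumulated signs against $(-1)^{W_A}$ (Lemma~\ref{lem:explicitrelations}), invoking \eqref{eq:psi4}, and cancelling the resulting $\psi_{\partial_A t}$-terms in pairs given by the two binary resolutions of each ternary vertex --- which is exactly the paper's Lemma~\ref{lem:a_infty1}, packaged there as $\partial k_n[a]=0$ for the signed sum $k_n[a]=\sum_{t\in Y_n}(-1)^{P(t[a])}t[a]$. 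The only genuine divergence is the $C_\infty$-step: you group the shuffle sum by symmetry classes and then cite the Eulerian-idempotent/shuffle identity from \cite{Getzler-Cheng}, whereas the paper proves the required vanishing $\sum_{\sigma}\epsilon(\sigma,a)\,\mathrm{pr}(k_n[a_\sigma])=0$ from scratch via an explicit involution on pairs $(t,\sigma)$ built from the leftmost ``mixed-type'' vertex; since the stated purpose of the appendix is an \emph{ab initio} argument independent of the transfer literature, you would need to either supply that combinatorial cancellation yourself or accept a weaker self-containedness. One small omission: you do not treat the case where some argument $a_l$ lies in $\CO$ rather than $\FM_\bullet$, for which the higher $\mu_k$ vanish and the relation reduces to two surviving $\mu_2$-terms cancelling by $\CO$-linearity --- the paper devotes a separate paragraph to this and it should not be skipped.
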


\vspace{0.1cm}
\noindent
Let us explain  the third item.
Let $t[a]$ be an ordered decorated tree with $k$ leaves where $a$ denotes a list of $k$ elements in  $\FM_\bullet$.  A strict subtree $t_1[b]$, with $ b$ a sublist of $a$,  can be located to  the left or to the right
of a vertex $A$ of $t$: 
For instance, in the following ordered binary tree, both $ t_2[c] $ and $t_3[e] $ have  the vertex $ A$ as parent of their roots, but only $ t_2[c]$ is located to the left of $A$. %Also, $ t_1[b]$ is attached to the left of the vertex of $t$ to which is it attached, but the subtree of all descendants of $ A$ is not. 
\begin{equation}
    \label{eq:exampleRightLeft}
\adjustbox{valign =c}{ $\begin{array}{c} t[a] =\\ \\ \\ \\ \\  \end{array}$\scalebox{0.5}{
    \begin{forest}
for tree = {grow' = 90}, nice empty nodes,
[
[ , edge = dashed 
         [\scalebox{2}{$t_1[b]$}, tier = 1 ]
            [,label = {[mystyle] \scalebox{2}{$A\enspace$}},
      [\scalebox{2}{$t_{2}[c]$}, tier =1]
      [\scalebox{2}{$t_3[e]$}, tier =1] 
         ]
 ]
 ]
\path[fill=black] (.parent anchor) circle[radius=3pt]
                (!1.child anchor) circle[radius=3pt]
                (!12.child anchor) circle[radius=3pt];
\end{forest}
}}
\end{equation}
%\noindent
\vspace{-10mm}

\noindent As an example, let us describe  $\mu_3 $ with its signs.
We have $ Y_3=\{\ltree,\rtree\}$.
For the decorated tree $\ltree[a_1,a_2,a_3] $, the left subtrees are the subtrees $| \otimes a_1 $  and $ \vtree[a_1,a_2]$. Hence $ P(\ltree[a_1,a_2,a_3])=|a_2|+1$.
For the tree $\rtree[a_1,a_2,a_3] $, the left subtrees are $|\otimes a_1$ and $|\otimes a_2$, so that
$ P(\rtree[a_1,a_2,a_3])=|a_1|+|a_2|$.
One then computes 
\begin{equation} 
\label{eq:mu3}
\mu_3(a_1,a_2,a_3)= - \psi_{\ltree}(a_1,a_2,a_3) + (-1)^{|a_1|}\psi_{\rtree}(a_1,a_2,a_3)  .
\end{equation}
(Notice that this is consistent with Equation \eqref{eq:m3}).

\vspace{0.1cm}
\noindent
%(we consider the trivial tree as a binary tree).
Also  the set $Y_n$ of ordered binary trees with  $n$ leaves:
$$Y_1 = \lbrace \vert \rbrace, \enspace Y_2 = \left\{ \begin{array}{c}\scalebox{0.55}{\begin{forest}
for tree = {grow' = 90}, nice empty nodes, for tree={ inner sep=0 pt, s sep= 0 pt, fit=band, 
},
[[, tier =1] [, tier= 1]]
\path[fill=black] (.parent anchor) circle[radius=2pt];
\end{forest}} \end{array} \right\}, \quad 
Y_3 = \left\{ \begin{array}{c}\scalebox{0.3}{
 \begin{forest}
for tree = {grow' = 90}, nice empty nodes,
[
 [
 [, tier =1] 
 [, tier = 1]
 ]
 [, tier =1]
]
\path[fill=black] (.parent anchor) circle[radius=2pt]
                (!1.child anchor) circle[radius=2pt];
\end{forest} }
,
\scalebox{0.3}{\begin{forest}
for tree = {grow' = 90}, nice empty nodes,
[
 [, tier =1]
 [
 [, tier =1] 
 [, tier = 1]
 ]
]
\path[fill=black] (.parent anchor) circle[radius=2pt]
                (!2.child anchor) circle[radius=2pt];
\end{forest}}
\end{array}\right\}, \cdots
$$
is described recursively \cite{Loday2002order} by:
\begin{equation}\label{eq:recYn}\quad Y_n =  \left\{ \begin{array}{c}  \scalebox{0.5}{
\begin{forest}
    for tree = {grow' = 90}, nice empty nodes,
[
 [\scalebox{2}{$Y_i$}, tier =1]
 [\scalebox{2}{$Y_k$}, tier =1] 
]
\path[fill=black] (.parent anchor) circle[radius=4pt];
\end{forest}}\\ \end{array}, \, i+k =n \right\} \simeq \bigsqcup_{i+k=n}Y_i\times Y_k.\end{equation}

\vspace{1mm}
\noindent
Before proving Theorem \ref{thm:ainfty2}, some considerations about ordered binary trees are required.
For any homogeneous $ a_1, \dots, a_n \in \FM_\bullet \oplus \CO$, 
we define an element $ k_n[a_1, \dots, a_n] \in \
Tree[\FMb]$ by the recursive formula
 \begin{equation}
 \label{eq:defkn}
   k_n[a_1, \dots,a_n] = \sum_{j=1}^{n-1} (-1)^{|k_j[a_1, \dots, a_j]|} \r \left( k_j[a_1, \dots, a_j] \otimes k_{n-j}[a_{j+1}, \dots, a_n]  \right)  
 \end{equation}    
with initial condition $ k_1(a_1)= |[a_1]$ for all $ a_1\in \FMb$. Here $\r $ is the root map defined on ordered decorated trees, and $|\cdot|$ is the degree of a decorated tree as defined in Section \ref{sec:space}. 
Equation \eqref{eq:defkn} may be also depicted graphically as follows: 
$$ 
\adjustbox{valign =c}{
$
k_n[a] =\sum_{j = 1}^{n-1} (-1)^{|k_j[a_{\leq j}]|}$}
\adjustbox{valign =c}{\begin{forest}
    for tree = {grow' = 90}, nice empty nodes,
[
 [$k_j{[a_{\leq j}]}$, tier =1]
 [$k_{m-j} {[a_{>j}]}$, tier =1] 
]
\path[fill=black] (.parent anchor) circle[radius=2pt];
\end{forest}} 
%\adjustbox{valign =c}{ }
$$
where now $ a= a_1, \dots, a_n$,  $ a_{\leq j} =a_1, \dots, a_j$, and  $a_{>j} = a_{j+1}, \dots, a_n$.
\vspace{1mm}
\noindent

%so that $|k_j[a_1, \dots, a_j]|$ is well-defined.

\begin{example}
\normalfont
Let us compute the first terms. For $n=2,3$, we obtain:
$$ 
\adjustbox{valign =c}{$k_2[a_1, a_2] = (-1)^{|a_1|}$}
\adjustbox{valign =c}{\begin{forest}
    for tree = {grow' = 90}, nice empty nodes,
[
 [$a_1$, tier =1]
 [$a_2$, tier =1] 
]
\path[fill=black] (.parent anchor) circle[radius=2pt];
\end{forest}}
$$

\begin{equation}
\label{eq:k3}
\adjustbox{valign =c}{$\quad k_3[a_1, a_2, a_3] = (-1)^{|a_1| + |a_2|}$}
\adjustbox{valign =c}{
\scalebox{0.5}{
\begin{forest}
for tree = {grow' = 90}, nice empty nodes,
[
 [\scalebox{2}{$a_1$}, tier =1]
 [
 [\scalebox{2}{$a_2$}, tier =1] 
 [\scalebox{2}{$a_3$}, tier = 1]
 ]
]
\path[fill=black] (.parent anchor) circle[radius=4pt]
                (!2.child anchor) circle[radius=4pt];
\end{forest}}
}
\adjustbox{valign =c}{$
+
(-1)^{2|a_1| +|a_2| +1}$}
\adjustbox{valign =c}{
\scalebox{0.5}{
\begin{forest}
for tree = {grow' = 90}, nice empty nodes,
[
[
 [\scalebox{2}{$a_1$}, tier =1]
 [\scalebox{2}{$a_2$}, tier =1] 
]
 [\scalebox{2}{$a_3$}, tier = 1]
 ]
]
\path[fill=black] (.parent anchor) circle[radius=4pt]
                (!1.child anchor) circle[radius=4pt];
\end{forest}}}.
\end{equation}
\end{example}

\vspace{1mm}\noindent We intend to construct $ \mu_n(a_1,\dots,a_n)$ out of  $ \psi$ and the fixed element $k_n[a_1, \dots,a_n]$, see Equation \eqref{eq:defmuoutofk} below. For this purpose it is  useful to first study some properties of $k_n[a_1, \dots, a_n]$: 
\begin{lemma}
\label{lem:knexplicit}
 For every $n \geq 1$, and every $a=(a_1, \dots,a_n) \in (\FM_\bullet)^n$, 
 $k_n[a]$ has degree $\sum_{i=1}^n |a_i| + n-1 $. It is 
 a linear combination  of all  ordered binary trees with $n$ leaves decorated with $ a_1, \dots, a_n$. The prefactor of all terms of this linear combination takes values in $\{ +1,-1\}$ and   explicitly one has:
\begin{equation}
\label{eq:kmExpl} k_n [a] = \sum_{t \in Y_n} (-1)^{P(t[a])} \,t[a]
\end{equation}
 where $P(t[a])$ is the same integer that was defined in Theorem \ref{thm:ainfty2} and the text following it.
\end{lemma}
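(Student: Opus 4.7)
The plan is to prove the two claims simultaneously by induction on $n$, using the recursive definition \eqref{eq:defkn} together with the canonical bijection
\[
Y_n \;\simeq\; \bigsqcup_{j+k=n,\, j,k\geq 1} Y_j \times Y_k
\]
from \eqref{eq:recYn}, which assigns to a pair $(t_1, t_2)$ the tree $\r(t_1, t_2)$ whose left subtree of the root is $t_1$ and whose right subtree is $t_2$.

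The base case $n=1$ is immediate: $k_1[a_1] = |[a_1]$ has degree $|a_1|$, matches the only tree in $Y_1$, and $P$ is empty.

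For the inductive step, I would first handle the degree. Assuming $|k_j[a_{\leq j}]| = \sum_{i\leq j}|a_i| + j - 1$ and $|k_{n-j}[a_{>j}]| = \sum_{i>j}|a_i| + n - j - 1$, the recursive formula together with the fact that the root map $\r$ raises degree by $+1$ (Proposition \ref{prop:2root}) gives $|k_n[a]| = 1 + (\sum_{i\leq j}|a_i| + j-1) + (\sum_{i>j}|a_i| + n-j-1) = \sum_i |a_i| + n-1$, independent of $j$.

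For the explicit formula \eqref{eq:kmExpl}, the key observation is that when $t = \r(t_1, t_2) \in Y_n$ with $t_1 \in Y_j, t_2 \in Y_{n-j}$, the set of strict left subtrees of $t[a]$ decomposes as $\{t_1[a_{\leq j}]\}$ (the left subtree attached to the root itself) together with the left subtrees of $t_1[a_{\leq j}]$ and the left subtrees of $t_2[a_{>j}]$. Summing degrees,
\[
P(t[a]) \;=\; |t_1[a_{\leq j}]| + P(t_1[a_{\leq j}]) + P(t_2[a_{>j}]).
\]
By the induction hypothesis, $|t_1[a_{\leq j}]| = |k_j[a_{\leq j}]|$ for every such $t_1$, so expanding the right-hand side of \eqref{eq:defkn} using the induction hypothesis for both $k_j$ and $k_{n-j}$ produces exactly the sign $(-1)^{|k_j[a_{\leq j}]|}\cdot(-1)^{P(t_1[a_{\leq j}])+P(t_2[a_{>j}])} = (-1)^{P(t[a])}$ in front of $\r(t_1[a_{\leq j}] \otimes t_2[a_{>j}]) = t[a]$. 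As $(j, t_1, t_2)$ ranges over all admissible triples, the bijection above shows the sum is indexed exactly by $t \in Y_n$, yielding \eqref{eq:kmExpl}.

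There is no real obstacle here; the one point to be careful about is the sign bookkeeping, namely verifying that the three contributions $|k_j[a_{\leq j}]|$, $P(t_1[a_{\leq j}])$ and $P(t_2[a_{>j}])$ account for \emph{all} left subtrees of $t[a]$ exactly once (with the root's left subtree counted only through the first summand). This is why the recursion \eqref{eq:defkn} was designed with the prefactor $(-1)^{|k_j[a_1,\dots,a_j]|}$ rather than some other power.
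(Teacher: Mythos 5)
Your proof is correct and follows essentially the same route as the paper's: both rest on the bijection $Y_n \simeq \bigsqcup_{j+k=n} Y_j \times Y_k$ and on the decomposition of the left subtrees of $\r(t_1,t_2)$ as those of $t_1$, those of $t_2$, plus $t_1$ itself, giving $P(\r(t_1[b]\otimes t_2[c])) = P(t_1[b]) + P(t_2[c]) + |t_1[b]|$. The only cosmetic difference is that you unfold the recursion forward by induction while the paper verifies that the closed formula satisfies the recursion and invokes uniqueness; these are the same argument.
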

%\vspace{1mm}
%\noindent
%Let us clarify the sign expression $P(t[a])$. 

\begin{proof} Equation \eqref{eq:recYn} 
implies that all $ t[a]$, with $t \in Y_n$, appears once and only once in the expression of $ k_n[a]$ given by the recurrence relation \eqref{eq:defkn}. 
%[Proof of Lemma \ref{lem:knexplicit}]
%The formula giving the degree can be proven by recursion. 
This proves that $k_n[a] $ is a linear combination of all ordered  binary trees
decorated by $ a_1, \dots,a_n$. Being binary, such trees always have $n-1 $ vertices, which thus leads to the claimed degree of $ k_n[a]$.  

\vspace{1mm}\noindent 
Let us compute the exact value of the prefactors.
We denote by 
$\mathcal C_l(t[a])$
the 
list of 
all left subtrees of  an ordered tree $t[a] $.  For example,  in the tree $t[a]$ in Equation \eqref{eq:exampleRightLeft}, the subtrees $ t_1[b]$ and $t_2[c] $ would belong to that list (completed by the contributions of all strict left subtrees of $ t_1[b]$, $t_2[c] $,  and $t_3[e] $).  For all ordered binary trees $ t_1$ and  $t_2$ decorated by $b$ and $c$, respectively, one has
  $$ \mathcal C_l( \r (t_1 [b] \otimes t_2[c] )) = \mathcal C_l(t_1[b]) \sqcup \mathcal C_l(t_2[c]) \sqcup \{t_1[b]\}.  $$
Since $P(t[a]) $ is the sum of the degrees of the trees in $ \mathcal C_l(t[a])$, we find \begin{equation}\label{eq:Pn} P(\r (t_1 [b] \otimes t_{2}[ c] )) = 
 P(t_1[b] )+
P (t_2 [c] ) + |t_1[b]|. 
\end{equation}
In turn, this relation allows to check that the linear combination given at 
 the right hand side of \eqref{eq:Pn}
satisfies the  recursion relation  Equation \eqref{eq:defkn}. By uniqueness, both expressions coincide. This completes the proof.
\end{proof}

\vspace{.3cm}

%Another property of $k_n$ is that it lies in the kernel of $\partial$ (see Section \ref{} ), i.e.
\begin{lemma}
\label{lem:a_infty1}
Each element of $\lbrace k_n[a] \rbrace$ lies in  $\ker(\partial)$, i.e. for all $ n \geq 1$:
    $$   \partial k_n[a] =0,
    $$
    where $\partial $
 is the differential on $Tree[\FMb]$ defined in Section \ref{sec:constructingdeltapsi}. \end{lemma}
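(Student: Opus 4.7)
The plan is to proceed by induction on $n$. The base cases $n=1,2$ are immediate: $k_1[a_1] = |[a_1]$ has no vertices at all, and the tree $\vee[a_1,a_2]$ appearing in $k_2[a_1,a_2]$ has only a root, which by convention is not an inner vertex. So $\partial k_n = 0$ for $n\le 2$.

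For the inductive step, assuming $\partial k_m = 0$ for all $m < n$, I would first establish a "quasi-Leibniz" identity for $\partial$ with respect to the binary rooting $\r$. The inner vertices of $\r(s_1\otimes s_2)$ split into four classes: (a) the root of $s_1$ (when $s_1$ is non-trivial), (b) the root of $s_2$ (when $s_2$ is non-trivial), (c) the inner vertices of $s_1$, and (d) the inner vertices of $s_2$. A careful check of Definition \ref{def:WA}, with the convention that the root is on $\gamma$ but $A$ is excluded from the iteration, gives
\[
W_A^{\r(s_1,s_2)} \;=\; \begin{cases} 1 & \text{case (a),} \\ 1 + |s_1| & \text{case (b),} \\ W_A^{s_1} + 1 & \text{case (c),} \\ W_A^{s_2} + 1 + |s_1| & \text{case (d),}\end{cases}
\]
yielding the identity
\[
\partial\,\r(s_1\otimes s_2) \;=\; -\,\r(\partial s_1\otimes s_2) \;+\; (-1)^{|s_1|+1}\r(s_1\otimes \partial s_2) \;+\; \mathcal M(s_1,s_2),
\]
where $\mathcal M(s_1,s_2)$ collects the root-merge contributions of cases (a) and (b): for non-trivial $s_i = \r(s_i^L\otimes s_i^R)$, fusing the root of $s_i$ with the root of $\r(s_1\otimes s_2)$ produces the ternary trees $\r_3(s_1^L,s_1^R,s_2)$ or $\r_3(s_1,s_2^L,s_2^R)$.

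Applying this identity termwise to $k_n = \sum_{j=1}^{n-1}(-1)^{|k_j|}\r(k_j\otimes k_{n-j})$ and invoking the inductive hypothesis $\partial k_j = \partial k_{n-j} = 0$, the classes (c) and (d) drop out, so $\partial k_n$ reduces to the root-merge sum. The key combinatorial observation is that each ternary decorated tree of the form $\r_3\bigl(k_p^{(1,p)},\,k_{q-p}^{(p+1,q)},\,k_{n-q}^{(q+1,n)}\bigr)$ with $1\le p < q\le n-1$ appears exactly twice: once from case (a) in the $j = q$ summand with inner recursion index $i = p$, and once from case (b) in the $j = p$ summand with inner recursion index $\ell = q-p$. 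Using $|k_m^{(r,r+m-1)}| = |a_r|+\cdots+|a_{r+m-1}|+m-1$, a direct computation shows the two exponents differ by $2p+1$ (odd), so the two coefficients are $+(-1)^{|k_{q-p}^{(p+1,q)}|}$ and $-(-1)^{|k_{q-p}^{(p+1,q)}|}$ and they cancel.

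The main obstacle is the sign bookkeeping, where three separate conventions interact: the Koszul-type sign in the recursion of $k_n$, the weight $W_A$ with its subtle "root included, $A$ excluded" convention, and the weight shift by $|s_1|$ when sitting on the right of a rooting. Once the weight formula above is pinned down (which one can first verify by hand on the cases $n = 3$ and $n = 4$ before committing to the general argument), the pairwise cancellation in the surviving root-merge part is mechanical and the lemma follows.
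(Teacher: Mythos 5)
Your argument is correct, but it is organized differently from the paper's. The paper does not induct on $n$: it first establishes the closed formula $k_n[a]=\sum_{t\in Y_n}(-1)^{P(t[a])}t[a]$ (Lemma \ref{lem:knexplicit}), writes $\partial k_n[a]$ as a double sum over pairs $(t[a],A)$ with $A$ an inner vertex, and then exhibits a fixed-point-free involution $(t[a],A)\mapsto(\bar t_A[a],\bar A)$ given by re-associating at $A$; since $\partial_A t=\partial_{\bar A}\bar t_A$ and the signs $(-1)^{P(t[a])+W_A}$, $(-1)^{P(\bar t_A[a])+W_{\bar A}}$ are shown to be opposite, everything cancels in one pass. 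You instead work with the recursive definition of $k_n$, prove a quasi-Leibniz rule for $\partial$ relative to the rooting $\r$, dispose of all vertices strictly inside $k_j$ and $k_{n-j}$ by the induction hypothesis, and cancel only the two top-level root-merge contributions producing each ternary tree $\r_3(k_p,k_{q-p},k_{n-q})$. The cancellation mechanism is the same in both proofs (the two binarizations of a ternary vertex carry opposite signs), but your version localizes it at the root and recurses, which lets you bypass Lemma \ref{lem:knexplicit} entirely and concentrates the sign bookkeeping into a single parity check; the paper's version avoids induction and makes the pairing explicit at every vertex, which it then reuses for the sign analysis elsewhere. One small slip: the two exponents differ by $|k_q^{(1,q)}|+|k_p^{(1,p)}|-|k_{q-p}^{(p+1,q)}|$, which equals $2|k_p^{(1,p)}|+1$ rather than $2p+1$ (using $|k_p^{(1,p)}|+|k_{q-p}^{(p+1,q)}|=|k_q^{(1,q)}|-1$); since only the parity matters and both are odd, your conclusion stands.
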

\begin{proof}
Let $t[a] $ be a binary decorated tree with $n$ leaves, $a$ a list of $n$ homogeneous elements in $ \FM_\bullet$, and $A$ an inner vertex of $ t$.
    For every pair $(t[a],A)$ there is a dual one, $ \overline{(t[a],A)} =(\bar{t}_A[a],\bar{A})$, constructed as follows. Assume that $A$ is the root of the right subtree of its parent $P_A $ as in the following picture,
    \begin{equation}
        \label{tree:Aright}
\adjustbox{valign =c}{$t{[a]} =$}
\adjustbox{valign =c}{ \scalebox{0.5}{
    \begin{forest}
for tree = {grow' = 90}, nice empty nodes,
[
[ ,label = {[mystyle] \scalebox{2}{$P_A \enspace$}}, edge = dashed 
         [\scalebox{2}{$t_1{[b]}$}, tier = 1 ]
            [,label = {[mystyle] \scalebox{2}{$A \enspace$}},
      [\scalebox{2}{$t_2{[c]}$}, tier =1]
      [\scalebox{2}{$t_3{[e]}$}, tier =1] 
         ]
 ]
 ]
\path[fill=black] (.parent anchor) circle[radius=3pt]
                (!1.child anchor) circle[radius=3pt]
                (!12.child anchor) circle[radius=3pt];
\end{forest}}
},
    \end{equation}
    where $t_1[b], t_2[c], t_3[d]$ are  subtrees of $ t[a]$. Then we define $\overline{ (t[a],A)}=(\bar{t}_A[a],\bar{A})$  as follows:
    %that differs from $T$ by the location of $A$:  
  
          \begin{equation}
        \label{tree:Aleft}
\adjustbox{valign =c}{$\bar{t}_A[a] =$} 
\adjustbox{valign =c}{\scalebox{0.5}{
    \begin{forest}
for tree = {grow' = 90}, nice empty nodes,
[
[ ,label = {[mystyle] \scalebox{2}{$P_{\bar A} \enspace \enspace $}}, edge = dashed 
            [,label = {[mystyle] \scalebox{2}{$\bar A \enspace \enspace$}},
      [\scalebox{2}{$t_1{[b]}$}, tier =1]
      [\scalebox{2}{$t_2{[c]}$}, tier =1] 
         ]
         [\scalebox{2}{$t_3{[e]}$}, tier = 1 ]
 ]
 ]
\path[fill=black] (.parent anchor) circle[radius=3pt]
                (!1.child anchor) circle[radius=3pt]
                (!11.child anchor) circle[radius=3pt];
\end{forest}
}}
\end{equation}
with the rest of the tree $\overline{t}_A $  being identical to the corresponding part in $t$. We then complete the description of the duality by imposing that it has to be an involution.
Notice that no pair $(t[a],A) $ is dual to itself.

\vspace{0.1cm}
\noindent
%Fix $ a= (a_1, \dots,a_n) \in \FM_\bullet^n$.
In view of the explicit description of $ \partial $ in 
Equation \eqref{eq:differential_on_trees}
and the explicit description of $k_n[a] $ in
Equation \eqref{eq:defkn}, we see that
%Lemma \ref{lem:knexplicit} 
$\partial k_n[a]$ is given by 
%for all $m \geq 2$, $\partial k_m[a] $ is a sum over 
%pairs $(T,A) $, with $T \in Y_n$ and $ A$ an inner vertex of~$T$:
\begin{equation}
\label{eq:deltaasdoublesum}
\partial k_m[a] = \sum_{(t[a],A) } (-1)^{W_A+P(t[a])}  \partial_A (t[a]) \: .
\end{equation}
 Here the sum runs over all pairs $ (t[a],A)$, where $t \in Y_n$ is an ordered decorated tree with $n$ leaves, and $A$ is an inner vertex of $t$.

\vspace{0.2cm}
\noindent
Note that in the above sum, for every pair $(t[a],A)$ there is also a contribution from its dual pair  $\overline{ (t[a],A)}=(\bar{t}_A,\bar A))$. 
We will now show that they cancel against each other. Note first  that
%these two contributions give the same tree, i.e. $\partial_A(t[a])=\partial_{\bar{A}}(\bar t_A[a]) $. The later tree is a binary tree except for one inner vertex from which three subtrees descend, and is obtained by erasing the edges going from $A$ and $ \bar{A}$ to their parents:
$$  
\partial_A(t[a])=\adjustbox{valign =c}{\scalebox{0.5}{
    \begin{forest}
for tree = {grow' = 90}, nice empty nodes,
[
[ ,label = {[mystyle] \scalebox{2}{$A \mapsto P_{A} \enspace \enspace \quad \quad $}}, edge = dashed,
      [\scalebox{2}{$t_1{[b]}$}, tier =1]
      [\scalebox{2}{$t_2{[c]}$}, tier =1] 
         [\scalebox{2}{$t_3{[e]}$}, tier = 1 ]
 ]
 ]
\path[fill=black] (.parent anchor) circle[radius=3pt]
                (!1.child anchor) circle[radius=3pt];
\end{forest}
}} =\partial_{\bar{A}}(\bar t_A [a]  ) 
$$
where we used the pictorial description in Equations \eqref{tree:Aleft} and \eqref{tree:Aright}.
%which becomes a three with three descendents.
We are therefore just left with showing that
\begin{equation}\label{eq:oppositesigns} (-1)^{P(t[a])+W_A} = -(-1)^{ P(\bar t_A[a])+W_{\bar A}}.\end{equation}
%the two prefactors $(-1)^{P(t[a])+W_A}$ and are opposite.
%Since these prefactors are valued , we just have to check that they are opposite.
Recall that the integer $P(t[a])$ 
%and $P(\overline t_A[a])$ 
is obtained by summing up all the degrees of the left subtrees of every vertex in $t$, its root included.
%over vertices in $t$, root included, all the degrees of all subtrees located to the left of their parents. 
But the degrees of these subtrees coincide for both trees $t[a] $ and $\bar{t}_A[a] $, except for the ones that we now describe. The subtree $ t_2[c]$  only appears in the tree \eqref{tree:Aright} and not in the tree \eqref{tree:Aleft}, while the tree $ \r(t_1[b],t_2[c])$  appears as a subtree of the tree \eqref{tree:Aleft} but not of the tree $ \eqref{tree:Aright}$. The degree of these trees are $|t_2[c]| $ and $|t_1[b]|+|t_2[c]| +1 $, respectively, which yields

$$(-1)^{P(t[a])}= -(-1)^{P(\bar{t}_A[a])} (-1)^{|t_2[c]|}. $$
Likewise, the weight $ W_A$ is the sum of the degrees of all trees on the left of the path from the root to $A$, to which the length of that path is added.
Comparing $W_A$ and $ W_{\bar A}$ as in the trees \eqref{tree:Aleft} and \eqref{tree:Aright}, one easily finds
 $$  (-1)^{W_A}=(-1)^{W_{\bar{A}}} (-1)^{|t_2[c]|}.$$
%***********************
%respectively given by 
%${|T_1[b]|}$ and $ 0$. 
This then shows that the equality in Equation 
\eqref{eq:oppositesigns}  holds true.

\end{proof}

\begin{lemma}
\label{lem:aboutmuexplicit}
The following relation holds:
\begin{equation}\label{eq:meaningmu}
\begin{array}{ll}&(-1)^{i+jk}\mu_{n-j+1}(\id^{\otimes i}\otimes \mu_{j} \otimes \id^{\otimes k})  (a_1, \dots, a_n)\\[2pt]=&
\sum\limits_{(t_1,t_2) \in Y_{n-j+1} \times Y_{j}}  \, \epsilon_{t_1,t_2}  \,   \psi_{t_1}(a_1, \dots, a_i  ,  
 \psi_{t_2}(a_{i+1}, \dots, a_{i+j}), a_{i+j+1},\dots, a_n) 
\end{array}\end{equation}
where
  \begin{equation}\label{eq:defepsilon} \begin{array}{rcl}\epsilon_{t_1,t_2} &=&  \eta \,  (-1)^{i +\sum_{r=1}^{i} |a_{r}| +P(t_2[a_{i+1}, 
\dots,a_{i+j}])+ P(t_1[a_1, \dots, a_i  ,  
 \psi_{t_2}(a_{i+1}, \dots, a_{i+j}), a_{i+j+1},\dots, a_n])} 

 \end{array} \end{equation}
with $ \eta =(-1)^{\sum_{r=1}^n |a_r|(n-r)}$.
\end{lemma}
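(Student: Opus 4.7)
The proof is essentially a detailed sign-tracking computation: both sides are sums over the same index set $Y_{n-j+1}\times Y_j$, so one just has to verify that the scalar coefficient in front of each $\psi_{t_1}\bigl(a_1,\ldots,\psi_{t_2}(a_{i+1},\ldots,a_{i+j}),\ldots,a_n\bigr)$ matches $\epsilon_{t_1,t_2}$. The plan is to unfold both $\mu_j$ and $\mu_{n-j+1}$ via formula \eqref{eq:homotopytransfer2}, being careful about the Koszul sign picked up when $\mu_j$ is applied in the middle of a tensor product.

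First, I expand $\mu_j(a_{i+1},\ldots,a_{i+j})$ using \eqref{eq:homotopytransfer2}, producing a sum over $t_2\in Y_j$ with signs $(-1)^{P(t_2[a_{i+1},\ldots,a_{i+j}])+\sum_{r=1}^{j}(j-r)|a_{i+r}|}$. The resulting element $b:=\psi_{t_2}(a_{i+1},\ldots,a_{i+j})$ has degree $|b|=\sum_{r=i+1}^{i+j}|a_r|+(j-2)$. Since $\mu_j$ has degree $j-2$, the operator $\id^{\otimes i}\otimes\mu_j\otimes\id^{\otimes k}$ acquires by the Koszul rule an extra sign $(-1)^{(j-2)\sum_{r=1}^{i}|a_r|}$, which modulo $2$ equals $(-1)^{j\sum_{r=1}^{i}|a_r|}$. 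I then apply \eqref{eq:homotopytransfer2} once more to $\mu_{n-j+1}(a_1,\ldots,a_i,b,a_{i+j+1},\ldots,a_n)$. Writing $(c_1,\ldots,c_{n-j+1})$ for this argument list (with $c_r=a_r$ for $r\le i$, $c_{i+1}=b$, and $c_r=a_{r+j-1}$ for $r>i+1$), the second application contributes the sign $(-1)^{P(t_1[c])+\sum_{r=1}^{n-j+1}(n-j+1-r)|c_r|}$.

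The next step is to collect everything and compare with $\epsilon_{t_1,t_2}=\eta\,(-1)^{i+\sum_{r=1}^{i}|a_r|+P(t_2)+P(t_1)}$. The two $P$-factors and the constant $i$ cancel against the corresponding terms in $\epsilon_{t_1,t_2}$, so it suffices to verify the congruence
\[
jk+(j-2)\!\!\sum_{r=1}^{i}|a_r|+\sum_{r=1}^{j}(j-r)|a_{i+r}|+\!\!\sum_{r=1}^{n-j+1}(n{-}j{+}1{-}r)|c_r|\ \equiv\ \sum_{r=1}^{n}(n-r)|a_r|+\sum_{r=1}^{i}|a_r|\ \pmod{2}.
\]
I split the last sum into the three ranges $r\le i$, $r=i+1$ (using $|c_{i+1}|=\sum_{s=i+1}^{i+j}|a_s|+(j-2)$), and $r\ge i+2$ (where the substitution $r'=r+j-1$ turns $(n-j+1-r)|c_r|$ into $(n-r')|a_{r'}|$). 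The central identity is $k+j+i=n$: it lets me combine the coefficient $k$ of $\sum_{s=i+1}^{i+j}|a_s|$ coming from $|c_{i+1}|$ with the weight $(j-r)$ appearing in the $\mu_j$-sign, and the two sums combine neatly to $\sum_{s=i+1}^{i+j}(n-s)|a_s|$. The contributions from $r\le i$ combine with the $(j-2)$-Koszul sign and the $\sum_{r=1}^{i}|a_r|$ term from $\eta$ to leave only even multiples of $|a_r|$, up to the missing $\sum_{r=1}^i|a_r|$ in $\epsilon_{t_1,t_2}$. All purely numerical terms $jk+k(j-2)=2k(j-1)$ vanish modulo $2$, closing the computation.

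The main obstacle is not conceptual but organizational: one must keep track of four independent sources of signs (the explicit $(-1)^{i+jk}$, the Koszul sign from displacing $\mu_j$, and the two $(-1)^{P+\text{weight}}$ signs from expanding the two $\mu$'s), without losing the $-2$ shifts hidden in $(j-2)$ and $k(j-2)$. A minor separate check is needed for the degenerate cases: $j=1$ is handled by the convention $\psi_{|}(a)=-d(a)$ (with $Y_1=\{\,|\,\}$), and $j=2$ is consistent because $Y_2=\{\vee\}$ with $\mu_2=\psi_{\vee}$ on $\FM_\bullet\otimes\FM_\bullet$, so both special cases fit into the same general formula.
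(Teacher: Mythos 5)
Your proposal is correct and follows essentially the same route as the paper's proof: unfold both applications of $\mu$ via Equation \eqref{eq:homotopytransfer2}, track the Koszul sign $(-1)^{(j-2)\sum_{r=1}^i|a_r|}$ from displacing $\mu_j$, and regroup the weight sums using $|\psi_{t_2}(a_{i+1},\dots,a_{i+j})|=\sum_{r=i+1}^{i+j}|a_r|+j-2$ and $n-i-j=k$. The sign bookkeeping checks out, and your explicit treatment of the degenerate cases $j=1,2$ is a harmless addition the paper leaves implicit.
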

\begin{proof}
A direct computation gives
$$\begin{array}{ll}&(-1)^{i+jk}\mu_{n-j+1}(\id^{\otimes i}\otimes \mu_{j} \otimes \id^{\otimes k})  (a_1, \dots, a_n)\\[2pt]=&(-1)^{i+jk +(j-2)(|a_1|+\dots + |a_i|)}\mu_{n-j+1}(a_{1}, \dots, a_i, \mu_{j}(a_{i+1}, \dots, a_{i+j}), a_{i+j+1}, \dots, a_{i+j+k}) \\[2pt]=&
\sum\limits_{(t_1,t_2) \in Y_{n-j+1} \times Y_{j}}  \, \epsilon_{t_1,t_2}  \,   \psi_{t_1}(a_1, \dots, a_i  ,  
 \psi_{t_2}(a_{i+1}, \dots, a_{i+j}), a_{i+j+1},\dots, a_n) 
\end{array}
$$
Here we used $ |\mu_j| = j-2$ and the Koszul sign convention when exchanging the order of elements in $\FMb$  to go from the first to the second line. To go from the second to the third line one makes use of the  definition of $ \mu_\bullet$ given in Equation \eqref{eq:homotopytransfer2}.
Furthermore, %we used the abbreviation 
$\epsilon_{t_1,t_2} $ is given  by the following complicated but explicit expression:
$$ \begin{array}{rcll}\epsilon_{t_1,t_2} &=& & (-1)^{i+jk +(\sum_{r=1}^i|a_i|)(j-2)}\\
& & \times &(-1)^{P(t_2[a_{i+1}, 
\dots,a_{i+j}])+ \sum_{r=i+1}^{i+j} |a_{r}|(i+j-r)} \\
& & \times & (-1)^{P(t_1[a_1, \dots, a_i  ,  
 \psi_{t_2}(a_{i+1}, \dots, a_{i+j}), a_{i+j+1},\dots, a_n])} \\
 & & \times& (-1)^{  \sum_{r=1}^{i} |a_{r}| (n+1-j-r)+ |\psi_{t_2}[a_{i+1}, \dots, a_{i+j}]|(n-i-j)+\sum_{r=i+j+1}^{n} |a_{r}| (n-r)   } .\end{array}$$ 
Using $ |\psi_{t_2}(a_{i+1, \dots, a_{i+j}})| =  \sum_{r=i+1}^{i+j} |a_r| + j-2 $  and $ n-i-j=k$, this gives, by regrouping  terms in an appropriate manner:
 $$
 \begin{array}{rcll} \epsilon_{t_1,t_2} &=& & (-1)^{i+jk }\\
& & \times &(-1)^{P(t_2[a_{i+1}, 
\dots,a_{i+j}])+  
P(t_1[a_1, \dots, a_i  ,  
 \psi_{t_2}(a_{i+1}, \dots, a_{i+j}), a_{i+j+1},\dots, a_n])} \\ & & \times & (-1)^{\sum_{r=1}^{i} |a_{r}| }
\\ & & \times&  (-1)^{\sum_{r=1}^{i} |a_{r}| (n-r)+ \sum_{r=i+1}^{i+j}|a_r|(n-r)+(2-j)k+\sum_{r=i+j+1}^{n} |a_{r}| (n-r)   } \\
 &=& & (-1)^{i}\\
& & \times &(-1)^{P(t_2[a_{i+1}, 
\dots,a_{i+j}])+ P(t_1[a_1, \dots, a_i  ,  
 \psi_{t_2}(a_{i+1}, \dots, a_{i+j}), a_{i+j+1},\dots, a_n])} \\ & & \times & (-1)^{\sum_{r=1}^{i} |a_{r}| }
\\ & & \times&  (-1)^{\sum_{r=1}^{n} |a_{r}| (n-r)} 
 .\end{array} $$
This is the desired expression.
\end{proof}

%\noindent
%Let us prove 
%Theorem \ref{thm:ainfty}.

\begin{lemma}
\label{lem:explicitrelations}
    Let $ t[a_1, \dots,a_n]$ be an ordered decorated tree, $A$ be a vertex of $t$, and $ a_{i+1}, \dots,a_{i+j}$ the decorations that descend from $A$. We have
\begin{equation*}  \begin{array}{rcl}
        W_{A} &=& i + \sum_{r=1}^i|a_r| + b_A \\
        P(t[a]) &=& P(t_{\uparrow A}[ a_{i+1}, \dots, a_{i+j}]) + P(t_{\downarrow A}[a_{1}, \dots, a_{i}, \psi_{t_2}(a_{i+1}, \dots, a_{i+j}),a_{i+j+1}, \dots, a_{n} ])+ b_A.
    \end{array}
\end{equation*}
%\vspace{1mm}
%\noindent
Here $b_A$ is the number of vertices $B$ of $t[a]$ such that $A$ belongs to the left subtree of $B$.
\end{lemma}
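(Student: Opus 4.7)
Both identities can be proved by analyzing the path $\gamma$ from the root of $t$ to the vertex $A$, assuming throughout that $t$ is a binary ordered tree (which is the setting relevant to this appendix). Each strict ancestor of $A$ on $\gamma$ is either a \emph{left-turn} (the path enters its left child, so $A$ lies in its left subtree) or a \emph{right-turn} (the path enters its right child). Writing $m$ for the number of right-turns, the length of $\gamma$ equals $b_A+m$, since $b_A$ counts precisely the left-turn ancestors.

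For the identity $W_A=i+\sum_{r=1}^{i}|a_r|+b_A$, note that in the definition of $W_A$ only the left subtrees at \emph{right-turn} ancestors contribute, because the left subtree of a left-turn ancestor has its root on $\gamma$ and is therefore excluded. The crucial combinatorial observation is that the leaves of these $m$ left subtrees are exactly $a_1,\dots,a_i$, as these are precisely the leaves preceding the descendants of $A$ in the global leaf-ordering. Each such subtree being binary, its number of vertices equals its number of leaves minus one, so the $m$ subtrees contain a total of $i-m$ vertices; their total degree is therefore $(i-m)+\sum_{r=1}^{i}|a_r|$, and adding the path length $b_A+m$ yields the desired formula.

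For the identity on $P(t[a])$, I partition the vertices of $t$ into those belonging to $t_{\uparrow A}$ (namely $A$ together with the inner vertices lying below $A$) and those belonging to $t_{\downarrow A}$ (the remaining vertices, with $A$ itself now a leaf). Each vertex $V\in\mathrm{Ver}(t_{\uparrow A})$ has the same left subtree in $t$ and in $t_{\uparrow A}$, so contributes identically to $P(t[a])$ and to $P(t_{\uparrow A}[a_{i+1},\dots,a_{i+j}])$. For $V\in\mathrm{Ver}(t_{\downarrow A})$ two cases arise: either $A$ does not lie in the left subtree of $V$, in which case this left subtree coincides with the corresponding one in $t_{\downarrow A}$ and contributes equally; or $A$ lies in the left subtree of $V$, which happens for exactly $b_A$ vertices. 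In the latter case, passing from $t$ to $t_{\downarrow A}$ replaces the sub-subtree $t_{\uparrow A}[a_{i+1},\dots,a_{i+j}]$, whose degree is $\#\mathrm{Ver}(t_{\uparrow A})+\sum_{r=i+1}^{i+j}|a_r|$, by a single leaf whose decoration $\psi_{t_{\uparrow A}}(a_{i+1},\dots,a_{i+j})$ has degree $(\#\mathrm{Ver}(t_{\uparrow A})-1)+\sum_{r=i+1}^{i+j}|a_r|$, a net change of $-1$ in the degree of the left subtree of $V$. Summing these discrepancies over the $b_A$ left-turn vertices gives $P(t[a])-P(t_{\uparrow A}[\dots])-P(t_{\downarrow A}[\dots])=b_A$.

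The only real obstacle is the bookkeeping: one must justify carefully that the leaves in the left subtrees at right-turn ancestors of $A$ are exactly $a_1,\dots,a_i$ (which follows from the fact that the total order on the leaves is induced by the ordering of children at each vertex), and track the degree shift $-1$ arising from the replacement of $t_{\uparrow A}[\dots]$ by $\psi_{t_{\uparrow A}}(\dots)$. The edge cases where $A$ is the root (so $b_A=i=0$ and $t_{\downarrow A}$ is trivial) or an immediate child of the root are handled directly.
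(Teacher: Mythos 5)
Your proof is correct and, for the second identity, follows essentially the same route as the paper: partition the vertices into those of $t_{\uparrow A}$, those of $t_{\downarrow A}$ whose left subtree does not contain $A$, and the $b_A$ vertices whose left subtree does contain $A$, the last group each losing one unit of degree because the subtree $t_{\uparrow A}[a_{i+1},\dots,a_{i+j}]$ is replaced by a leaf decorated with $\psi_{t_{\uparrow A}}(a_{i+1},\dots,a_{i+j})$, whose degree is one less. The paper leaves the first identity to the reader, and your left-turn/right-turn bookkeeping along the path from the root to $A$ (with the observation that the $m$ left subtrees at right-turn ancestors carry exactly the leaves $a_1,\dots,a_i$ and hence $i-m$ vertices) is a correct way to supply it in the binary setting where the lemma is applied.
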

\noindent For the tree in Equation \eqref{eq:AB}, $b_A=1$ since the vertex $B$ is the only one that satisfies the condition above. Furthermore, $ b_B=b_R=0$. 
 \begin{proof}[Proof of Lemma \ref{lem:explicitrelations}]
We leave the first relation to the reader and only prove the second one. 
Let us use the shorthand $a$ for the list $ a_1,\dots,a_n$ and $a_A$ for the list $a_{i+1}, \dots, a_{i+j} $.
Moreover, we denote the vertices of $t$ which are not a leaf by
 ${\mathrm{RVer}}(t)$, i.e.
  $$
  {\mathrm{RVer}}(t) =
  \{ {\mathrm{Root}}(t)\} \, \cup  \,   {\mathrm{InnVer}}(t), $$ and the left subtree of $B \in {\mathrm{RVer}}(t) $ by $\ell(B,t[a]) $. For the trees $t_{\uparrow A} $ and $t_{\downarrow A} $, we use the notations correspondingly. 
A vertex $B \in {\mathrm{RVer}}(t)$ can be either in $ {\mathrm{RVer}}(t_{\uparrow A})$, or in ${\mathrm{RVer}}(t_{\downarrow A}) $. In the later case, $A$ either belongs to the left tree of $B$ or  not. Since by definition 
$P(t[a])= \sum_{B \in {\mathrm{RVer}}(t) } |\ell(B,t[a])|$, we have
 \begin{equation}\label{eq:firstone0}  P(t[a])= \left( \sum_{B \in {\mathrm{RVer}}(t_{\uparrow A}[a])}  + \sum_{\scalebox{0.5}{ $\begin{array}{c}B \in {\mathrm{RVer}}(t_{\downarrow A}[a]) \\ A \notin \ell(B,t[a]) \end{array}$ }} +\sum_{\scalebox{0.5}{ $\begin{array}{c}B \in {\mathrm{RVer}}(t_{\downarrow A}[a]) \\ A \in \ell(B,t[a]) \end{array}$ }}   \right) |\ell(B,t[a])| .\end{equation}
For  $B\in {\mathrm{RVer}}(t_{\uparrow A}) $, the left subtrees $ \ell(B,t[a])$ and $\ell(B,t_{\uparrow A}[a_A] ) $ coincide and
 \begin{equation}\label{eq:firstone1}     \sum_{B \in {\mathrm{RVer}}(t_{\uparrow A})}|\ell(B,t[a])|=\sum_{B \in {\mathrm{RVer}}(t_{\uparrow A})}  |\ell(B,t_{\uparrow A}[a_A])|=P(t_{\uparrow}[a]).\end{equation}
 For a vertex $B \in {\mathrm{RVer}}(t_{\downarrow A}) $ such that $A$ does not belong to  $\ell(B,t[a])$, the left subtrees   
 $ \ell(B,t[a]) $ and $\ell\left(B,t_{\downarrow A}[a_1, \dots,a_i,\psi_{t_{\uparrow}}(a_A), a_{i+j+1}, \dots, a_n] \right) $ coincide, and 
 \begin{equation}\label{eq:firstone2}  \sum_{\scalebox{0.5}{ $\begin{array}{c}B \in {\mathrm{RVer}}(t_{\downarrow A}[a]) \\ A \notin \ell(B,t[a]) \end{array}$ }}   |\ell(B,t[a])|   = \sum_{\scalebox{0.5}{ $\begin{array}{c}B \in {\mathrm{RVer}}(t_{\downarrow A}[a]) \\ A \notin \ell(B,t[a]) \end{array}$ }}  |\ell(B,t_{\downarrow A}[a_1, \dots,a_i,\psi_{t_{\uparrow}}(a_A), a_{i+j+1}, \dots, a_n] )|  . \end{equation}
 If $A$ belongs to the left subtree of $B$ in $ t[a]$, then  $\ell(B,t_{\downarrow A}[a_1, \dots,a_i,\psi_{t_{\uparrow}}(a_A), a_{i+j+1}, \dots, a_n] ) $ is obtained out of $ \ell(B,t[a])$ by replacing the subtree $t_{\uparrow A}(a_A)$  by a new leaf with decoration $ \psi_t[a_{i+1}, \dots, a_{i+j}]$. This substitution reduces the degree by 
one.  Since there are $b_A$ such vertices,  one obtains 
 \begin{equation}\label{eq:firstone3} \sum_{\scalebox{0.5}{ $\begin{array}{c}B \in {\mathrm{RVer}}(t_{\downarrow A}[a]) \\ A \in \ell(B,t[a]) \end{array} $}}   |\ell(B,t[a])|   = \sum_{\scalebox{0.5}{ $\begin{array}{c}B \in {\mathrm{RVer}}(t_{\downarrow A}[a]) \\ A \in \ell(B,t[a]) \end{array} $}}  |\ell(B,t[a_1, \dots,a_i,\psi_{t_{\uparrow}}(a_A), a_{i+j+1}, \dots, a_n] )|  +b_A. \end{equation}
 Equations 
\eqref{eq:firstone1}, \eqref{eq:firstone2}, and \eqref{eq:firstone3} 
give an expression for each one of the three terms that appear in 
\eqref{eq:firstone0}. When combined, this gives the desired result.
% \begin{equation}\label{eq:firstone3} }.
 \end{proof}
%where $\eta$ is as in Lemma\ref{lem:aboutmuexplicit}.

\begin{proof}[Proof of Theorem \ref{thm:ainfty2}]
We have to check that the collection of the $\CO$-linear maps 
$
(\mu_n)_{n \geq 1}
%\FM^{\otimes n}\longrightarrow \FM, \, a_1\otimes \dots \otimes a_n \mapsto (-1)^{\sum_{i=1}^n|a_i|(n-i)}\psi(k_n[a_1, \dots a_n]), \, n>1
$ as in Theorem \ref{thm:ainfty2}
satisfy the axioms of an $A_\infty$-algebra given in Equation \eqref{eq:ainfty2}.
For $n=1$, this condition simply means that $ d^2=0$, which is true. For $n=2$, it means that $\mu_2 $ is a chain map, which one checks by a direct calculation.

\vspace{2mm} \noindent 
In order to check it for all $n \geq 3$, we first notice that
\begin{equation}
\label{eq:defmuoutofk}\mu_n (a_1, \dots , a_n )  (-1)^{\sum_{i=1}^n|a_i|(n-i)}(\psi\circ \mathrm{pr})(k_n[a_1, \dots, a_n]),\end{equation}
where $\psi$ is the chosen arborescent operation and $\mathrm{pr} $ is the projection from ordered to symmetrized trees as in Equation \eqref{eq:psitDef}.
To show this, we  use the properties of $k_n[a_1, \dots, a_n] $, i.e Lemmas \ref{lem:knexplicit} and \ref{lem:a_infty1}, and of $\psi$, i.e. Equation \eqref{eq:deltaPsiSquare}.

\vspace{0.1cm}

\noindent
%Let us assume $ n \geq 3$ and 
Assume that the axioms Equation \eqref{eq:ainfty2} are satisfied up to order $n-1$. 
%that $\mu_k(a_1, \dots,a_k)=0 $ for all $k \geq 3 $: 
%Let us check it at order $n$.
First let us verify the relation \eqref{eq:ainfty2}  when applied to an $n$-tuple $ a_1 , \dots , a_n \in \FM_\bullet \oplus \CO$ where for at least one index $l$ we have $a_l=F \in \CO$.
%Convention \ref{conv:extensionO} implies that
By definition, we have $\mu_k(a_{i_1}, \dots, a_{i_k}) =0$ for all $ k \geq 3$ provided one of the indices $ i_1, \dots, i_k$ is equal to $ l$.
There are only two non-zero contributions in \eqref{eq:ainfty2}.
When $ l $ is neither $1$ nor $n$, the two non-zero contributions  are
\begin{align*}
    (-1)^{l-1}\mu_{n-1}(a_1, \dots,  a_{l-1}, \mu_2(F, a_{l+1}), \dots, a_n )+ (-1)^{l-2}\mu_{n-1}(a_1, \dots,  a_{l-2}, \mu_2(a_{l-1}, F), \dots, a_n )   .
\end{align*}
They add up to zero since 
$ \mu_2(F,b+G)=F (b+G)=\mu_2(b+G,F)$ for every $b \in \FM_\bullet, G \in \CO $ and because $ \mu_{n-1}$ is $ \CO$-linear.
For $l = 1$ and $l=n$, the two non-zero contributions are
$$
\mu_{n-1}(\mu_2(F, a_2), a_3, \dots, a_n )- \mu_2(F, \mu_{n-1}(a_2, \dots, a_n)),
$$
and
$$  (-1)^{n-2}\mu_{n-1}(a_1, \dots, \mu_2(a_{n-1}, F) )+(-1)^{n-1} \mu_2( \mu_{n-1}(a_1, \dots, a_{n-1}), F),
$$
respectively. In both cases,  the two terms add up to zero for the same reasons as  before.

%Since $\mu_2(b, F) = Fb$ for all $b \in \FMb $ and $\mu_{n-1}$ is $\CO$-linear,  the relation holds true also in this case.

\vspace{0.1cm}
\noindent
It remains to check that Equation \eqref{eq:ainfty2}  holds when each one of the elements $a_1 , \dots , a_n \in \FMb$ is homogeneous.
Since $ \psi$ is an arborescent operation such that  $ \delta_\psi^2=0$, Equation \eqref{eq:psi4} holds for every ordered tree $t$ with $n$ leaves. 
We sum  Equation \eqref{eq:psi4}  over all binary ordered trees $t \in Y_n$, with the prefactor $ (-1)^{P(t[a])}$ defined in Lemma \ref{lem:knexplicit}:
\begin{equation}
\label{eq:sumup}\sum_{t \in Y_n}(-1)^{P(t[a])}  \sum_{\scalebox{0.5}{$\begin{array}{r}A \in \mathrm{Ver}(t)\end{array}$}} \psi_{t_{\downarrow A}} \circ_{A} \psi_ {t_{\uparrow A}} (a_1, \dots, a_n)= 
\sum_{t \in Y_n}(-1)^{P(t[a])} \sum_{\scalebox{0.5}{$\begin{array}{r}A \in \mathrm{InnVer}(t)\\  \end{array}$}}  (-1)^{W_A} \psi_{\partial_A t} (a_1, \dots, a_n). 
   \     \end{equation}
 The right-hand side of Equation \eqref{eq:sumup} vanishes for the following reason:  
\begin{equation*}
\begin{array}{ll}
&\sum_{t \in Y_n}(-1)^{P(t[a])} \sum_{\scalebox{0.5}{$\begin{array}{r}A \in \mathrm{InnVer}(t)   \end{array}$}}  (-1)^{W_A}\psi_{\partial_A t} (a_1, \dots, a_n)    \\ =& \psi \circ {\mathrm{pr}} \left( \sum_{t \in Y_n}\sum_{\scalebox{0.5}{$\begin{array}{r}A \in \mathrm{InnVer}(t)\\ \end{array}$}} (-1)^{P(t[a]) + W_A} \partial_A t[a_1, \dots, a_n] \right)  \\ 
 =&  \psi \circ {\mathrm{pr}} \, \, (\partial (k_n[a_1, \dots, a_n])) = 0 . 
\end{array}
\end{equation*}
 The first equality follows from Equation \eqref{eq:psitDef}, the second one from Lemma \ref{lem:knexplicit} and the 
 definition of $ \partial$ given in Equation \eqref{eq:differential_on_trees}. The expression vanishes due to Lemma \ref{lem:a_infty1}. By the convention given in first line of Equation \eqref{eq:conventions}, Equation \eqref{eq:sumup} now becomes: 
\begin{equation}
\label{eq:TYn}
\sum_{t\in Y_n} 
\sum_{\scalebox{0.5}{$\begin{array}{r}A \in \mathrm{Ver}(t)\end{array}$}} 
     (-1)^{P(t[a])+W_A} \psi_{t_{\downarrow A}}(a_1, \dots, a_i, \psi_{t_{\uparrow A}}(a_{i+1}, \dots, a_{i+j}),a_{i+j+1} \dots, a_n)=0.
\end{equation}
\vspace{-2mm}

\noindent
We have to prove that the latter expression is equivalent to Equation \eqref{eq:ainfty2}.
Consider a triple $ (i,j,k) \in \mathbb N_0 \times \mathbb N \times \mathbb N_0$  such that $i+j+k =n $. We say that a pair $ (t[a],A)$, with $ t \in Y_n$  and $A $ a vertex (root included) or a leaf of $t$,  is of type $ (i,j,k)$ if the decorations of  $t_{\uparrow A} $ are $a_{i+1} , \dots , a_{i+j} $. 
For instance, the following pair $(t,A) $ is of type $ (1,3,1)$: 
\begin{equation}
\label{eq:tree131}
    \adjustbox{valign=c}{\scalebox{0.5}{
\begin{forest}
for tree = {grow' = 90}, nice empty nodes, for tree={ inner sep= 0 pt, s sep= 0 pt, 
},
[
[\scalebox{2}{$a_1$}, tier =1]
[ 
[, ,edge label={node[left]{\scalebox{2}{$A\,\,$}}},
[\scalebox{2}{$a_2$}, tier = 1]
[
[\scalebox{2}{$a_3$}, tier = 1]
[\scalebox{2}{$a_4$}, tier = 1]
]
]
[\scalebox{2}{$a_5$}, tier =1]
]
]
\path[fill=black] (.parent anchor) circle[radius=4pt]
(!2.child anchor) circle[radius=4pt]
(!21.child anchor) circle[radius=4pt]
(!212.child anchor) circle[radius=4pt];
\end{forest}}}
\end{equation}
since the leaves that descend from $A$ are $(a_2,a_3,a_4) $.
The summation that appears in Equation \eqref{eq:TYn} can be rearranged as:
$$
\sum_{t\in Y_n}
\sum_{\scalebox{0.5}{$\begin{array}{r}A \in \mathrm{Ver}(t)\end{array}$}} = 
\sum_{\scalebox{0.5}{$\begin{array}{c}i,k \geq 0 , j \geq 1 \\
i+j+k=n
\end{array}$}} \sum_{\scalebox{0.5}{$\begin{array}{c}(t,A)   \hbox{ of } \\ \hbox{ type }  (i,j,k)\end{array}$}} .
 $$
Equation \eqref{eq:TYn} then becomes: 
\begin{equation}
\label{eq:TYn2}
\sum_{\scalebox{0.5}{$\begin{array}{c}i,k \geq 0 , j \geq 1 \\
i+j+k=n
\end{array}$}} \sum_{\scalebox{0.5}{$\begin{array}{c}(t,A)   \hbox{ of } \\ \hbox{ type }  (i,j,k)\end{array}$}}
     (-1)^{P(t[a])+W_A} \psi_{t_{\downarrow A}}(a_1, \dots, a_i, \psi_{t_{\uparrow A}}(a_{i+1}, \dots,a_{i+j}),a_{i+j+1}, \dots, a_n)=0.
\end{equation}
\vspace{-2mm}

%can be any constant non-zero number: we will show that it is equal to $ (-1)^{\sum_{r=1}^n |a_r|(n-r)}$. 

%\vspace{1mm}
\noindent
 There is a natural bijection  \begin{equation}\label{eq:treestrees}  \left\{ \hbox{ Pairs $ (t,A)$ of type $ (i,j,k)$}\right\}
\, \simeq  \,  Y_{n-j+1} \times Y_{j} 
 .\end{equation}
 This bijection maps $ (t,A)$ to the pair $(t_{\downarrow A} , t_{\uparrow A}) $. We recall that when $A$ is a leaf,  $t_{\uparrow A} $ is the trivial tree, and when it is the root $t_{\downarrow A} $ is the trivial tree. Its inverse maps $ (t_1,t_2) \in Y_{n-j+1 \times Y_i}  $ to the pair $ (t,A)$ with $ t \in Y_n$ and  $A $ an inner vertex, a root, or a leaf of $t$ such that 
$t_1=t_{\downarrow A}  $ and $ t_2 = t_{\uparrow A}$. For instance, the pair $(t,A) $ of type $(1,3,1) $ described in Equation \eqref{eq:tree131} corresponds to the pair $ (\rtree,\rtree)\in Y_3 \times Y_3$. 
In view of this bijection, the identity \eqref{eq:meaningmu} given in
 Lemma \ref{lem:aboutmuexplicit} becomes
%the desired equality \eqref{eq:step1} becomes
\begin{equation}
\label{eq:step2}
\begin{array}{ll}&  (-1)^{i+jk}\mu_{n-j+1}(\id^{\otimes i}\otimes \mu_{j} \otimes \id^{\otimes k})  (a_1, \dots, a_n) 
%\\[4pt] =& (-1)^{i+jk +(j-2)(|a_1|+\dots + |a_i|)}\mu_{n-j+1}(a_{1}, \dots, a_i, \mu_{j}(a_{i+1}, \dots, a_{i+j}), a_{i+j+1}, \dots, a_{i+j+k})..
\\=&
\sum\limits_{\scalebox{0.5}{$\begin{array}{c}(t,A)   \hbox{ of } \\ \hbox{ type }   (i,j,k)\end{array}$}}\epsilon_{t_{\downarrow A},t_{ \uparrow A}}  \, \psi_{t_{\downarrow A}}(a_1, \dots, a_i, \psi_{t_{\uparrow A}}(a_{i+1}, \dots, a_{i+j}),  a_{i+j+1},\dots, a_n). 
\end{array}
\end{equation}
%In the above expression, $ (-1)^{P(t[a])+W_A}$ refers to the pair $ (t,A)$ associated to $ (t_1,t_2) $ through the bijection \eqref{eq:treestrees}. 
Now one checks  that $ \epsilon_{t_{\downarrow A},t_{\uparrow A}}= \eta \, (-1)^{P(t[a])+W_A}$. 
in view of the definition of $ \epsilon_{t_{\downarrow A},t_{\uparrow A}} $ given in Equation \eqref{eq:defepsilon} and Lemma \ref{lem:explicitrelations}. 
As a consequence, Equation \eqref{eq:step2} becomes
\begin{equation}
\label{eq:step1}
\begin{array}{ll}  & (-1)^{i+jk}\mu_{n-j+1}(\id^{\otimes i}\otimes \mu_{j} \otimes \id^{\otimes k})  (a_1, \dots, a_n) 
%\\[4pt] =& (-1)^{i+jk +(j-2)(|a_1|+\dots + |a_i|)}\mu_{n-j+1}(a_{1}, \dots, a_i, \mu_{j}(a_{i+1}, \dots, a_{i+j}), a_{i+j+1}, \dots, a_{i+j+k})..
\\=&
\eta \sum\limits_{\scalebox{0.5}{$\begin{array}{c}(t,A)   \hbox{ of } \\ \hbox{ type }   (i,j,k)\end{array}$}} (-1)^{P(t[a])+W_A}  \psi_{t_{\downarrow A}}(a_1, \dots, a_i, \psi_{t_{\uparrow A}}(a_{i+1}, \dots, a_{i+j}), a_{i+j+1},\dots, a_n)
\end{array}\end{equation}
In view of Equation \eqref{eq:TYn2}, this indeed yields  
the $n$-th $A_\infty $-axiom Equation \eqref{eq:ainfty2}.

\vspace{1mm}
\noindent
We are left with the task of checking the axiom defining a $C_\infty$-algebra, i.e.  Equation \eqref{eq:Cinfty2}.
In view of the definition of $\mu_n $ given in Theorem \ref{thm:ainfty2},  it suffices to show that for all $ i \leq n$ and all homogeneous $ a=(a_1, \dots,a_n) \in \FMb^n$:
 $$ \sum_{\sigma \in {\mathrm{Sh}}(i,n)} \epsilon(\sigma,a)  k_n[a_{\sigma}]= 0$$
 lies in the kernel of the projection $ {\mathrm{pr}}$ from ordered decorated trees to symetrized decorated trees. 
  Above, $ a_{\sigma} $ is an abbreviation for $a_{\sigma(1)}, \dots, a_{\sigma(n)}$ and $\epsilon(\sigma,a)$ is the Koszul sign of the permutation $ \sigma$ with the convention that the degree of $a_i $ is  $ |a_i|+1$. 
We therefore have to prove that
 \begin{equation}\label{label:forCinfty}
 \sum\limits_{(\sigma,t) \in {\mathrm{Sh}}(i,n) \times Y_n} \epsilon(\sigma,a)(-1)^{P(t[a_\sigma])}  {\mathrm{pr}}( t[a_{\sigma}])= 0
 \end{equation}

 \vspace{1mm}
 \noindent
 We proceed as follows. We say that a subtree of $t[ a_{\sigma(1)}, \dots, a_{\sigma(n)}] $ is of \emph{$(\leq i)$-type} if all its leaves are in $\{a_1, \dots,a_i \} $ and of \emph{($> i)$-type} if all its leaves are in $\{a_{i+1}, \dots,a_{n} \} $.
 We say that a vertex $A$ in an ordered decorated tree $t[a_{\sigma(1)}, \dots, a_{\sigma(n)} ] $ is of \emph{mixed type} if either its left subtree is of $ (\leq i) $-type and its right subtree is of $ (>i) $-type, or its left subtree is of $(>i)$- type and its right subtree is of $(\leq i)$-type. For every $(\sigma,t) \in {\mathrm{Sh}}(i,n)\times Y_n $,  there exists at least one vertex of mixed type.  We say that two pairs $(t,\sigma), (t',\sigma') \in {\mathrm{Sh}}(i,n) \times Y_n$ with leftmost mixed vertices $A$ and $A'$
 are \emph{dual} if the right subtree of $A$  is the left subtree $\ell(A',t'[a_{\sigma'}])$ of $A'$, the left subtree $\ell(A,t[a_\sigma])$ of $A$  is the right subtree of $A'$, while the rest of the trees $ t[a_\sigma]$ and $t'[{a_{\sigma'}}]$ coincide.

 \vspace{1mm}
 \noindent
 Let us study several properties of dual pairs.
In view of the equivalence relation \eqref{eq:quotientEquiv} whose equivalence classes define the projection $ {\mathrm{pr}}$, we have 
   $$ {\mathrm{pr}}(t[a_\sigma]) =  (-1)^{|\ell(A,t[a_\sigma])||\ell(A',t'[a_{\sigma'}])|} {\mathrm{pr}}(t'[a_{\sigma'}]). $$
 The decorated trees $t[a] $ and $ t'[a_{\sigma'}]$ have left subtrees with exactly the same degrees, except for the left subtrees of the vertices $A$ and $A'$, which are $\ell(A,t[a_\sigma]$ and $\ell(A',t'[a_{\sigma'}]$, respectively. Hence:  
 $$ 
 \begin{array}{rcl} P(t[a_\sigma])+|\ell(A,t[a_\sigma])|&=&P(t'[a_{\sigma'}]) +| \ell(A',t'[a_{\sigma'}])|.  
 \end{array} 
 $$
% with $I,I' \subset \{1, \dots,n\}$ being the indexes of the leaves of  $\ell(A,t[a_\sigma])$
 %and $\ell(A',t'[a_{\sigma'}])$. 
 By definition of the Koszul sign
  $$ \epsilon(\sigma',a)=\epsilon(\sigma,a)(-1)^{ (\sum_{r \in I}  |a_r|-|I_l|)(\sum_{r \in I'}  |a_r|-|I'|)}  = \epsilon(\sigma) (-1)^{(|\ell(A,t[a_{\sigma}])|+1)(|\ell(A',t'[a_{\sigma'}])|) +1) }.$$
  Here $I,I'\subset \{1, \dots,n\}$ are the indices of the leaves of $ \ell(A,t[a_{\sigma}])$ and $\ell(A',t'[a_{\sigma'}]) $, respectively. 
 The three relations above imply that the contribution of a pair and its dual pair  come with opposite signs in the left hand side Equation \eqref{label:forCinfty}.
 Since any pair in $({\mathrm{Sh}}(i,n) \times Y_n) $ admits one and only one dual pair, Equation \eqref{label:forCinfty} holds. This completes the proof of Theorem \ref{thm:ainfty2}.
 
\end{proof}

\section{Ideals invariant under a finite group action}
\label{sec:finitegroups}

\noindent Consider an algebra  $ \CO$ on which a finite group $G$ acts by algebra automorphisms, and let $ \CI \subset \CO$ be a $G$-invariant ideal.
It is natural to look for \emph{$G$-equivariant Koszul-Tate resolutions} $(S(\CE),\delta) $. This means that we require an action  of $ G$ by DGCA automorphisms of $(S(\CE),\delta) $ whose restriction to elements of degree $0$, which are isomorphic to $ \CO$, coincides with the initial $G$-action.
%induced action on $ H_0(S(\CE),\delta) \simeq \CO/\CI$ coincides with the natural action of $G$ on $ \CO/\CI$. 

\vspace{1mm}\noindent 
A pleasant feature of the Tate algorithm presented in Construction \ref{const:tate.algorithm} is that it can be adapted to yield such a  $G$-equivariant Koszul-Tate resolution: it suffices to average at every step (despite the fact that there are infinitely many such steps in general). We prove that the arborescent Koszul-Tate resolution offers the same advantage upon averaging at every step (and there are finitely many such steps in many situations, see Section \ref{sec:complexity}).

\begin{proposition}
Let $G$ be a finite group acting on an algebra $\mathcal O $ over $\mathbb R$ or $\mathbb C $ and $\mathcal I $ a $G $-invariant ideal. There exists a $G$-equivariant arborescent Koszul-Tate resolution.
\end{proposition}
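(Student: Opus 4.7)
The plan is to carry out the recursive construction of Section~\ref{sec:arborescent} while averaging over $G$ at each step where a choice is made. Because $G$ is finite and the base field is $\mathbb{R}$ or $\mathbb{C}$, the order $|G|$ is invertible and the usual Maschke-type averaging is available.

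First, I would ensure the input is already equivariant by producing a $G$-equivariant free or projective $\CO$-module resolution $(\FM_\bullet,d)$ of $\CO/\CI$. In Construction~\ref{const:free.resolution}, whenever one selects generators of an ideal or of a syzygy module, one may replace the chosen set by the union of its $G$-orbits (which is again generating because $\CI$ and $\ker d$ are $G$-stable); equipping $\FM_k$ with the corresponding permutation action of $G$ on the chosen basis makes $d$ a $G$-equivariant $\CO$-linear map. In the projective-only setting, one applies the standard averaging argument to any resolution to produce a homotopy equivalent $G$-equivariant one.

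Once $(\FM_\bullet,d)$ is equivariant, every ingredient of Section~\ref{sec:arborescent} carries a canonical $G$-action via $g\cdot t[a_1,\dots,a_n]:=t[g\cdot a_1,\dots,g\cdot a_n]$: the quotient $\CT[\FM_\bullet]$, the free graded symmetric algebra $S(\CT[\FM_\bullet])$, the root map $\r$ and unroot map $\r^{-1}$, the maps $\partial$, $\partial_A$, $t_{\uparrow A}$, $t_{\downarrow A}$, and the projections $\ptr, \pv, \p$ are all tautologically $G$-equivariant. Consequently, the only possibly non-equivariant datum entering $\delta_\psi$ is the arborescent operation $\psi$.

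The crux is then to adapt the recursion of Corollary~\ref{cor:differentialexists} so that $\psi$ is $G$-equivariant at every degree. Assume inductively that $\psi$ has been constructed equivariantly on decorated trees of degree $\leq i$; then $\delta_\psi$ is equivariant in that range, so by Proposition~\ref{prop:deltaPsiSquare} and Lemma~\ref{cor:spadesuit} the resulting obstruction $B:(\Tdeg[\FM_\bullet])_{i+1}\to\FM_{i-1}$ is a $G$-equivariant $\CO$-linear map valued in $d$-boundaries. Pick any $\CO$-linear lift $\psi^{(i+1)}\colon (\Tdeg[\FM_\bullet])_{i+1}\to\FM_i$ with $d\circ\psi^{(i+1)}=B$, as in the proof of Corollary~\ref{cor:differentialexists}, and replace it by the average
\[
\widetilde{\psi}^{(i+1)} \;:=\; \frac{1}{|G|}\sum_{g\in G} g\circ \psi^{(i+1)}\circ g^{-1}.
\]
Since $d$ is $G$-equivariant and $B$ is $G$-equivariant, $d\circ\widetilde{\psi}^{(i+1)}=B$ still holds, while $\widetilde{\psi}^{(i+1)}$ is now $G$-equivariant by construction. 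Iterating produces a $G$-equivariant arborescent operation $\psi$; the associated derivation $\delta_\psi$ is $G$-equivariant, and by Theorem~\ref{thm:isKT} the pair $(S(\CT[\FM_\bullet]),\delta_\psi)$ is a $G$-equivariant arborescent Koszul-Tate resolution of $\CO/\CI$.

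The only genuine obstacle is the very first step, namely producing a $G$-equivariant starting resolution $(\FM_\bullet,d)$; once that is in place, the rest of the argument is essentially a bookkeeping of the Maschke averaging, made possible by the invertibility of $|G|$ in $\mathbb{R}$ or $\mathbb{C}$.
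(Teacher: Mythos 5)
Your proposal is correct and follows essentially the same two-step strategy as the paper: first make the $\CO$-module resolution $G$-equivariant (your "union of $G$-orbits of generators with the permutation action" is exactly the paper's induced-module construction $\FM_k\otimes_{\mathbb K}\mathbb K[G]$ with the twisted differential), then observe that all tree operations are tautologically equivariant and average the arborescent operations $\psi^{(i+1)}$ over $G$ degree by degree, using that the obstruction $B$ and $d$ are already equivariant so the averaged lift still satisfies $d\circ\widetilde{\psi}^{(i+1)}=B$.
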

\begin{proof}
The proof goes in two steps. 

\vspace{1mm}
\noindent
{\textbf{Step 1.}} Let us show that a $G$-invariant ideal $\CI $ admits a $G$-equivariant free $\CO $-module resolution. 

\vspace{0.1cm}

\noindent
We denote by $(g,F) \mapsto g\triangleright F$ the action of $g \in G$ on $F \in  \CO $.
Since $G$ acts on $\CO $, it acts on any free $\CO $-module.
Let $\FM_1 $ be a free $\CO $-module and $d \colon \FM_1 \to \CO $ an $\CO$-module morphism whose image is $ \CI$. In general, $d$ is not $G$-equivariant. Consider the tensor product 
$$ \FM_{1}' := \FM_{1} \otimes_{\mathbb K} \mathbb K [G]  $$
with $\mathbb K [G]$ being the free vector space generated by the group $G  $.  There is natural $G$-action on this space by 
 $$   h \triangleright (a \otimes g) := (h \triangleright a) \otimes hg \hbox{ for all $a \in \FM_1, g,h \in G$ } . $$
With respect to this $G$-action, the $\CO$-linear morphism \begin{equation} \label{eq:equivD} d' ( a \otimes g) = g \triangleright d  \left(  g^{-1}  \triangleright a \right) \hbox{ for all $a \in \FM_{1}, g \in G$} \end{equation}
is $G$-equivariant. Its image is still $\CI $. Since $d$ is $G$-equivariant, its kernel is preserved under the $G$-action, and for all $g \in G$ 
$$  a \mapsto g \triangleright a $$
is an invertible endomorphism of ${\mathrm{Ker}}(d) $.
Let $\FM_2 $ be a free $\CO$-module equipped with a surjective $\CO$-linear morphism $\FM_2 \longrightarrow {\mathrm{Ker}}(d) $, then consider  
$$ \FM_{2}' := \FM_{2} \otimes_{\K} \mathbb K [G] . $$ The map $d'$ constructed as in \eqref{eq:equivD} is $G$-equivariant and its image is ${\mathrm{Ker}}(d)$, so that
$$ \xymatrix{\FM_{2}' \ar[r]^{d'} & \FM_{1}' \ar[r]^{d'} & \CO \ar[r] & \CO/\CI}  $$
are the first terms of a $G$-equivariant free $\CO$-module resolution of $\CI$.
The procedure continues by recursion.

\vspace{.5cm}

\noindent
{\textbf{Step 2.}}
Let $(\FM_\bullet,d') $ be an $K$-equivariant free $\CO$-module resolution as obtained out of Step 1. In the construction of its arborescent Koszul-Tate resolution,
all operations described in Section \ref{sec:arborescent} are $G$-equivariant except maybe those that involve the arborescent operations 
 $t \mapsto \psi_t  $, so that $\delta_\psi $ will be $G$-equivariant if the maps $ \psi_t $ can be chosen to be $G $-equivariant.

\vspace{0.1cm}

\noindent
 We prove by recursion on the degree $k$ that such choices exist. Equation \eqref{eq:deltaPsiSquare} gives the recursion relation that an  arborescent operation $ \psi$ needs to satisfy in order to have $ \delta_\psi^2=0$: 
 
 \begin{equation}
\label{eq:psi30}
 \begin{gathered}
     d' \circ \psi_t (a_1, \dots, a_n) 
 \, = \\ \,\left(  \sum_{A \in InnVer(t)} \left(\psi_{t_{\downarrow A}} \circ_{A} \psi_ {t_{\uparrow A}} - (-1)^{W_A}\psi_{\partial_A t}\right) -\sum_{i=1}^n(-1)^{W_i}\psi_t(a_1, \dots, d'a_i, \dots,a_n) + \pi^1\circ d'\right) (a_1\otimes \dots \otimes a_n).
 \end{gathered}
 \end{equation}
 More precisely, while proving Theorem \ref{thm:isKT} (and more precisely Corollary \ref{cor:differentialexists}), we showed that $ \delta_\psi^2=0$ if and only if the $ \psi_t$ are recursively constructed so that they satisfy \eqref{eq:psi30}.
 If one assumes that the assignment $(a_1, \dots, a_n) \mapsto \psi_t[a_1, \dots, a_n] $ is $G$-equivariant for all tree $t$
 and all $ a_1, \dots,a_n \FM$ such that the sum $ |t|+|a_1|+ \dots + |a_n|$ of all degrees is $\leq k $, then the right hand side of the previous equation is $G$-equivariant. Since $d'$ is $G$-equivariant, one may redefine  $ \psi_t(a_1, \dots,a_n)$
and all trees $t$ and homogeneous elements $ a_1, \dots,a_n \in \FM_\bullet$ whose degrees add up to $ k+1$ by
 $$ \tilde{\psi}_t[a_1, \dots,a_n] :=  \frac{1}{|K|} \sum_{g \in G} g^{-1} \triangleright \psi_{t} ( g \triangleright a_1, \dots,  g \triangleright a_n ) $$
 and Equation \eqref{eq:psi3} still holds with this new definition.

\vspace{1mm}
\noindent The construction of $G$-equivariant arborescent operations $t \mapsto \psi_t $ then continues by recursion. %completes the proof.
\end{proof}

\section{The reduced complex of the arborescent Koszul-Tate resolution}
\label{app:reduced}

\vspace{0.1cm}

\noindent
We prove in this Appendix the following fact : any two Koszul-Tate resolutions are homotopy equivalent in a unique up to homotopy manner. Here, "homotopy" is to be taken  in the sense of  $Q$-manifolds, see e.g. Section 3.4.3 in \cite{Linf}.
This result already appeared in \cite{Kazdhan-Felder}, but our description is more precise about the properties of this homotopy equivalence.
We deduce from that result that the reduced complex we introduce below is a well-defined notion.

\vspace{0.1cm}

\noindent
Let us adapt to the present context the definitions in \cite{Linf} about homotopy equivalence of morphisms.
Denote by $(\mathbb K[t,dt],d) $ the complex of polynomial exterior forms on $\mathbb K $. Here, it must be understood that the degree of $t$ is zero, and that the degree of $ dt$ is $-1$, so that the degree of the de Rham differential $d$ is $ -1$. 
For $\CEb, \CEb' $ graded $\CO $-modules, consider a graded commutative algebra morphism 
 $$  \mathcal H \colon S(\CEb') \to S(\CEb) \otimes_\mathbb K  \mathbb K[t,dt]  . $$ 
 Recall that for any vector space  $V$ a linear map   valued in $ V \otimes_\mathbb K \mathbb K[t]  $
 can be equivalently seen as a family $ \psi_t$ of maps valued in $V$ and depending on a parameter $ t \in \mathbb K$.
 Therefore, in view of the decomposition
 $$ S(\CEb) \otimes_\mathbb K \mathbb K[t,dt] = S(\CEb) \otimes_\mathbb K \mathbb K[t] \oplus S(\CEb) \otimes_\mathbb K \mathbb K[t] \, dt ,$$
 $ \mathcal H$ may be seen at a family $ \mathcal H_t$ depending on $t \in \mathbb K$ of graded algebra morphisms from 
 $ S(\CEb)   $ to $ S(\CEb) \oplus S(\CEb) 
 {dt}$, where $dt$ is seen as a degree $ -1$ element.
 Moreover, $\mathcal H_t $ decomposes as a sum
 \begin{equation}\label{eq:Ht} \mathcal H_t = \Phi_t + \Delta_t  \, dt\end{equation}
 where
\begin{enumerate}
\item  $\Phi_t $ is a family depending on $ t\in \mathbb K$ of graded commutative algebra morphisms $S(\CEb')\to S(\CEb) $ of degree $0$.
 \item and 
 $\Delta_t \colon  S( \CEb')\to S(\CEb) $ is a family  depending on $ t\in \mathbb K$
 of $\Phi_t $-derivations of degree $+1 $.
 %for every $ t\in \mathbb K$. 
\end{enumerate}
Assume now that $ S(\CEb')$ and $S(\CEb) $ are equipped with degree $-1 $ differentials $\delta$ and $\delta' $ respectively. Then $S(\CEb) \otimes   \mathbb K[t,dt] $ comes equipped with the differential 
$$ \delta \otimes_\mathbb K {\mathrm{id}} +{\mathrm{id}} \otimes_\mathbb K  dt \frac{\partial}{\partial t}   .$$
Here, $  dt \frac{\partial}{\partial t} $ stands for the de Rham differential in one variable
 $$    dt \frac{\partial}{\partial t} P(t) = P'(t)  \,dt \hbox{ and } dt \frac{\partial}{\partial t} (P(t) \, dt )=0  $$
 for any $ P(t) \in \mathbb K[t]$.
If the algebra morphism $\mathcal H $ is a also a chain map, then $\Phi_t $ is a morphism of differential graded commutative algebras and:
 \begin{equation} \label{eq:Phitderivations}
 -\frac{\partial \Phi_t}{ \partial t} = \delta \circ \Delta_t  + \Delta_t \circ \delta'.
 \end{equation}
$\mathcal H $ is called a \emph{homotopy} between the differential graded algebra morphism $\Phi_0 $ and $\Phi_1 $.

 \begin{remark}
 \normalfont
 \label{rmk:aboutgreaterthan2}
Integrating Equation \eqref{eq:Phitderivations} between $0$ and $1$ shows that $\Phi_0 $ and $ \Phi_1$ are homotopic as chain maps with homotopy $\int_0^1 \Delta_t dt $. Notice that for degree reasons, $ \Delta_t$ maps $\CEb' $ to $S^{\geq 1} (\CEb) $ and therefore maps $ S^{\geq 2} (\CEb') $ to $S^{\geq 2} (\CEb) $.
Hence, the same holds for the homotopy $\int_0^1 \Delta_t dt $.
\end{remark}

\begin{definition}
\label{def:hom.equiv}
    Two differential graded commutative algebras $(\CA, \delta)$ and $(\CA', \delta')$ are said  to be homotopy equivalent, if there exist two DGA morphisms $\Phi: (\CA, \delta) \longrightarrow (\CA', \delta')$ and $\Phi': (\CA', \delta') \longrightarrow (\CA, \delta)$ such that the compositions $\Phi'\circ \Phi$ and $\Phi'\circ \Phi$ are homotopic to the identity map.
\end{definition}

\vspace{0.1cm}

\noindent
We start with a lemma.
\begin{lemma}
\label{lem:tensorStillKT}
 Let $(S(\CEb), \delta)$ be a Koszul-Tate resolution of $\CO/\CI$. 
 The homology of $$\left(S(\CEb)    \otimes \mathbb K[t,dt], \delta \otimes id + id \otimesk dt \frac{\partial}{\partial t} \right) $$
 is zero in every degree except in degree $ 0$ where it is $ \CO/\CI$.
\end{lemma}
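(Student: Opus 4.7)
The idea is to reduce the question to a Künneth-style statement: the complex $(\mathbb K[t,dt], d)$ with $d = dt\,\partial_t$ is a resolution of $\mathbb K$, so tensoring a resolution of $\CO/\CI$ with it still gives a resolution of $\CO/\CI$.

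First I would analyze the auxiliary complex $(\mathbb K[t,dt], d)$ on its own. As a graded $\mathbb K$-module it decomposes as $\mathbb K[t]$ in degree $0$ and $\mathbb K[t]\,dt$ in degree $-1$, and $d$ sends $P(t)$ to $P'(t)\,dt$. The kernel in degree $0$ is the line of constants, and the differential is surjective in degree $-1$ because every polynomial has an antiderivative. Hence $H_0(\mathbb K[t,dt], d) \simeq \mathbb K$ and all other homology groups vanish. To be constructive, I would write down explicit chain maps $i \colon (\mathbb K,0) \hookrightarrow (\mathbb K[t,dt],d)$, the inclusion as constants, and $\mathrm{ev}_0 \colon (\mathbb K[t,dt],d) \to (\mathbb K,0)$, sending $P(t) \mapsto P(0)$ and $P(t)\,dt \mapsto 0$, along with the integration homotopy $h(P(t)) := 0$, $h(P(t)\,dt) := \int_0^t P(s)\,ds$, and check by a two-line calculation that $\mathrm{ev}_0 \circ i = \id$ and $\id - i \circ \mathrm{ev}_0 = dh + hd$. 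Thus $(\mathbb K[t,dt],d)$ is a deformation retract of $(\mathbb K, 0)$.

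Next I would tensor this deformation retract with $(S(\CEb),\delta)$ over $\mathbb K$. Because $\mathbb K$ is a field, tensoring is exact and preserves chain homotopies (with the usual Koszul sign on $\mathrm{id}\otimes h$ and $\mathrm{id}\otimes d$ to ensure $D = \delta\otimes \mathrm{id} + \mathrm{id}\otimes d$ squares to zero). Concretely, the maps $\mathrm{id}\otimes i$ and $\mathrm{id}\otimes \mathrm{ev}_0$ exhibit $\bigl(S(\CEb)\otimes \mathbb K[t,dt],\, D\bigr)$ as chain-homotopy equivalent to $(S(\CEb),\delta)$, with homotopy $\mathrm{id}\otimes h$ (up to the Koszul sign). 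Consequently their homologies coincide, and by the defining property of the Koszul-Tate resolution $(S(\CEb),\delta)$ this common homology is $\CO/\CI$ in degree $0$ and zero in all other degrees, which is what was to be proved.

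I do not anticipate a real obstacle here: the only point that needs care is the sign convention one chooses to make $D^2 = 0$, which in turn determines the sign with which $h$ must be extended to the tensor product; once this is fixed consistently, the verification is a direct computation.
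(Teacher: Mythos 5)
Your argument is correct, and it takes a genuinely different route from the paper. The paper organizes $S(\CEb)\otimes_{\mathbb K}\mathbb K[t,dt]$ as a two-row double complex (rows $S(\CEb)_\bullet\otimes\mathbb K[t]$ and $S(\CEb)_\bullet\otimes\mathbb K[t]\,dt$ with horizontal differential $\delta\otimes\mathrm{id}$ and vertical maps $\mathrm{id}\otimes dt\,\tfrac{\partial}{\partial t}$), observes that the rows are exact except at one spot and the columns have homology $S(\CEb)\otimes\mathbb K$ concentrated in the bottom row, and concludes by a diagram chase. You instead prove the one-variable statement once and for all: $(\mathbb K[t,dt],\,dt\,\partial_t)$ deformation-retracts onto $(\mathbb K,0)$ via evaluation at $0$ and the integration homotopy, and then you transport this retract through the tensor product (with the Koszul sign on $\mathrm{id}\otimes h$) to get a chain-homotopy equivalence $S(\CEb)\otimes_{\mathbb K}\mathbb K[t,dt]\simeq S(\CEb)$. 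Your route is more explicit and arguably more robust: it produces an actual contracting homotopy rather than only a homology computation, it sidesteps the unstated diagram chase, and the tensoring step does not even require $\mathbb K$ to be a field since a tensor product of homotopy equivalences is a homotopy equivalence over any ring. The paper's bicomplex argument is shorter to state but leans on the reader to supply the chase. The only point you must fix carefully is the sign convention making $D^2=0$ and the matching sign on $\mathrm{id}\otimes h$, which you have already flagged; once that is done the cross terms $\delta\otimes h$ cancel and the verification is immediate.
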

\begin{proof}
The complex computing this homology has two lines:
$$ \xymatrix{ \cdots\ar[rr]^{\delta \otimesk {\mathrm{id}}} && S(\CEb)_{k+1} \otimesk \mathbb K[t] dt \ar[r]^{\delta \otimesk {\mathrm{id}}}& S(\CEb)_{k} \otimesk \mathbb K[t] dt \ar[r]^{\delta \otimesk {\mathrm{id}}}& S(\CEb)_{k-1} \otimesk \mathbb K[t] dt  \ar[rr]^{\delta \otimesk {\mathrm{id}}}&&\cdots \\    \cdots\ar[rr]^{\delta \otimesk {\mathrm{id}}}&& \ar[r]^{\delta \otimesk {\mathrm{id}}}S(\CEb)_{k+1} \otimesk \mathbb K[t]   \ar[r]^{\delta \otimesk {\mathrm{id}}}\ar[u]^{{\mathrm{id}} \otimesk dt \frac{\partial }{ \partial t}}&S(\CEb)_k \otimesk \mathbb K[t] \ar[r]^{\delta \otimesk {\mathrm{id}}}\ar[u]^{{\mathrm{id}} \otimesk dt \frac{\partial }{ \partial t}}& S(\CEb)_{k-1} \otimesk \mathbb K[t] \ar[u]^{{\mathrm{id}} \otimesk dt \frac{\partial }{ \partial t}}  \ar[rr]^{\delta \otimesk {\mathrm{id}}}&&\cdots\\  }  $$
By assumption, horizontal lines have no homology except in degree $0$ where it is $\CO/\CI \otimesk \mathbb K[t] dt $ and  $\CO/\CI \otimesk \mathbb K[t] $ for the upper and lower lines respectively. The $k$-th column has homology equal to $S(\CEb)\otimesk \mathbb K $ concentrated in the lower line. The result follows by a simple diagram chasing argument.
\end{proof}

\vspace{0.1cm}

\noindent
Lemma \ref{lem:tensorStillKT} allows to prove that Koszul-Tate resolutions of $\CO/\CI $ are terminal objects in the category where objects are symmetric algebras over a free or projective $\mathbb N $-graded $\CO$-module $ \CEb$ equipped with a differential such that $\delta (\mathcal E_{1}) \subset \CI$, and arrows are homotopy classes of differential graded commutative algebras. Let us state it in a less abstract manner:

\begin{lemma}
\label{lem:dga.morphisms}
     Let $(S(\CEb), \delta)$ be a Koszul-Tate resolution of $\CO/\CI$.
     Let $ (S(\CEb'), \delta')$ be  differential graded algebra,
     with $  ({\CE_{i}'})_{i \geq 1} $ a sequence of projective $\CO $-modules.
     If $\delta'(\CE_1') \subset \CI$, then there exists a differential graded commutative algebra morphism $\Phi^{\bullet}: (S(\CEb'), \delta') \longrightarrow (S(\CEb), \delta)$ and any two such morphisms are homotopic.
\end{lemma}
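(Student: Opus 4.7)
Both assertions are proved by the same inductive scheme on the degree of the generators of $\CEb'$, with two ingredients used at each step: (a) projectivity of $\CE_i'$, which upgrades the existence of lifts to the existence of $\CO$-linear lifts (required so that the multiplicative extension of the map on generators is well defined); and (b) acyclicity in positive degrees of the relevant complex---namely $(S(\CEb),\delta)$ for the existence of $\Phi$, and the tensor product complex $(K,\tilde\delta) := (S(\CEb)\otimes_{\mathbb K}\mathbb K[t,dt],\; \delta\otimes\id + \id\otimes dt\,\partial_t)$ for uniqueness, which is acyclic in positive degrees by Lemma~\ref{lem:tensorStillKT}.

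\textbf{Existence of $\Phi$.} Start with $\Phi = \id$ on $S(\CEb')_0 = \CO$. Assume inductively that $\Phi$ has been defined on $S(\CEb')_{<i}$ as a DGCA morphism compatible with the differentials. For $e \in \CE_i'$ the element $\Phi(\delta'(e)) \in S(\CEb)_{i-1}$ is well defined and $\delta$-closed, since $\delta\Phi\delta'(e) = \Phi\delta'^{2}(e) = 0$ by the inductive hypothesis. For $i=1$ this element lies in $\CI = \delta(\CE_1)$ (by the hypothesis $\delta'(\CE_1') \subset \CI$), while for $i \geq 2$ it is a cycle in positive degree of the Koszul-Tate resolution $(S(\CEb),\delta)$. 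In both cases it is $\delta$-exact, and projectivity of $\CE_i'$ then produces an $\CO$-linear lift $\Phi\colon \CE_i' \to S(\CEb)_i$ satisfying $\delta\circ\Phi = \Phi\circ\delta'|_{\CE_i'}$. Extend multiplicatively.

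\textbf{Uniqueness up to homotopy.} Given two such morphisms $\Phi$ and $\Phi'$, let $\mathrm{ev}_s\colon K \to S(\CEb)$ denote the DGCA chain map sending $t\mapsto s$, $dt\mapsto 0$. It suffices to construct a DGCA chain map $\mathcal H\colon S(\CEb') \to K$ satisfying $\mathrm{ev}_0\circ\mathcal H = \Phi$ and $\mathrm{ev}_1\circ\mathcal H = \Phi'$; by the discussion preceding the lemma, this is precisely a homotopy between $\Phi$ and $\Phi'$. Build $\mathcal H$ inductively, with $\mathcal H = \id\otimes 1$ on $\CO$, assuming $\mathcal H$ has been defined on $S(\CEb')_{<i}$ with $\mathrm{ev}_s\circ\mathcal H = \Phi_s$ (writing $\Phi_0 := \Phi$, $\Phi_1 := \Phi'$). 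For $e \in \CE_i'$ the value $\mathcal H(\delta'(e)) \in K_{i-1}$ is a $\tilde\delta$-cycle, either sitting in positive degree (for $i \geq 2$) or having image $\delta'(e) \in \CI$ zero in the degree-$0$ homology $\CO/\CI$ (for $i=1$); by Lemma~\ref{lem:tensorStillKT} it is $\tilde\delta$-exact, and projectivity of $\CE_i'$ yields an $\CO$-linear primitive $\mathcal H_0\colon \CE_i' \to K_i$. The two discrepancies $\mathrm{ev}_s\circ\mathcal H_0 - \Phi_s$ take values in $\ker\delta\subset S(\CEb)_i$ (indeed $\delta\,\mathrm{ev}_s\mathcal H_0 = \mathrm{ev}_s\tilde\delta\mathcal H_0 = \mathrm{ev}_s\mathcal H\delta' = \Phi_s\delta' = \delta\Phi_s$ on $\CE_i'$), so acyclicity of $(S(\CEb),\delta)$ in positive degree and projectivity realize them as $\delta\circ\eta_s$ for some $\CO$-linear $\eta_s\colon \CE_i' \to S(\CEb)_{i+1}$. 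Finally set
\[
\mathcal H(e) := \mathcal H_0(e) - \tilde\delta\bigl((1-t)\,\eta_0(e) + t\,\eta_1(e)\bigr);
\]
the subtracted term is $\tilde\delta$-exact so $\tilde\delta\mathcal H = \mathcal H\delta'$ is unchanged, while applying $\mathrm{ev}_s$ and using $\mathrm{ev}_s\tilde\delta = \delta\,\mathrm{ev}_s$ yields $\mathrm{ev}_s\mathcal H = \Phi_s$, closing the induction.

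\textbf{Main obstacle.} The only delicate point is enforcing, $\CO$-linearly and simultaneously, the chain-map condition together with the two endpoint conditions at $t=0,1$. The two-step scheme above handles this cleanly: first realize the chain-map condition via Lemma~\ref{lem:tensorStillKT}, then correct the endpoint values by the $\tilde\delta$-exact piece $\tilde\delta((1-t)\eta_0 + t\eta_1)$, using that $1-t$ and $t$ are degree-$0$ interpolators in $\mathbb K[t]$ with the prescribed values at the endpoints. Projectivity of each $\CE_i'$ is essential at every lift in order to preserve $\CO$-linearity, and thereby the well-definedness of the multiplicative extension.
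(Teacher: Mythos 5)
Your proof is correct and follows essentially the same strategy as the paper's: an inductive construction on the degree of generators, using projectivity of $\CE_i'$ to obtain $\CO$-linear lifts and acyclicity of $(S(\CEb),\delta)$ (resp.\ of $S(\CEb)\otimes_{\mathbb K}\mathbb K[t,dt]$ via Lemma \ref{lem:tensorStillKT}) to kill the obstructions at each step. The only difference is cosmetic: you first enforce the chain-map condition and then repair the endpoint values by the explicit exact correction $\tilde\delta((1-t)\eta_0+t\eta_1)$, whereas the paper starts from a map with correct endpoints and repairs the chain-map condition while preserving them.
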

\begin{proof}
By assumption, $ \delta' (\CE_1') = \CI$ and $ \delta(\CE_1) \subset \CI$. Since $ \CE_1'$ is a projective $ \CO$-module, there exists a $\CO $-linear map $f^1$ making the following diagramm commutative
$$
\begin{tikzcd}[row sep=2.5em, column sep=2.5em]
   \CE_1' \arrow[r, "\delta"] \arrow[d, "f_1"] & \CO  \arrow[d, "\id"] \\ \CE_1 \arrow[r, "\delta'"] & \CO   
\end{tikzcd}
$$
We then construct $ \Phi$ by recursion out of a family of degree $0$ $ \CO$-linear maps:
 $$ f^n \colon \CE_n' \longrightarrow S(\CEb)_n  .$$
Assume $f^1, \dots, f^n $  are constructed such that the graded algebra morphism $ \Phi^n$, whose restriction to $ \CE_i'$ is $f^i$ for $ i \geq n$ and $0$ otherwise, is a chain map when applied to elements of degree $0$ to $n$. 
Then $ \Phi^n \circ \delta' -  \delta \circ \Phi^n $ is a $\Phi^n $-derivation whose restriction to $\CE_{n+1} $ is valued in the kernel of $\delta $. Since the homology of $\delta $ is zero by assumption, there exists  $ f^{n+1} \colon \CE_{n+1} \to S(\CEb)_{n+1}$ such that 
 $$   \Phi^n \circ \delta' -  \delta \circ \Phi^n  = \delta \circ f^{n+1}  . $$
 It is routine to check that 
the graded algebra morphism $ \Phi^{n+1}$ whose restriction to $ \CE_i'$ is $f^i$ for $ i \leq n+1$ and $0$ otherwise, is a chain map when applied to elements of degree $0$ to $n+1$. The graded algebra morphism $ \Phi$ whose restriction to $ \CE_i'$ is $f^i$ for $ i \in \mathbb N$ is a chain map. This proves the first part of the Lemma.

\vspace{0.1cm}
\noindent
%This is a first step of a recursive construction.  
Let $ \Phi_0,\Phi_1 \colon (S(\CEb'),\delta') \to (S(\CEb),\delta)$
be two such morphisms, coming from sequences $ (f_0^i)_{i \geq 1}  $
and $ (f_ 1^i)_{i \geq 1}$ respectively. 
%More generally,
Let us assume that $ \Phi_0,\Phi_1$ are homotopic up to a certain order $n$, i.e. that we are given a graded algebra 
%We associate to this sequence a sequence of graded algebra 
morphism 
$ \mathcal H^{n} \colon S(\CEb') \to S(\CEb) \otimesk \mathbb K[t,dt]$ which is a chain map when applied to elements of degrees ranging from $0$ to $n$, and such that in the decomposition as in Equation \eqref{eq:Ht}
\begin{equation}
 \label{eq:HtFor_n}
 \mathcal H^{n}_t = \Phi_t^n + \Delta_t^n dt  
 \end{equation}
 we have  $ \Phi_0^n=\Phi_0$ and $\Phi_1^n = \Phi_1 $. For $n$ = 0 we can choose $\mathcal H^0_t = (1-t)\Phi_0 + t\Phi_1$.
 Let us construct $\mathcal H^{n+1} $, making use of the decomposition:
$$   \mathcal H^{n+1}_t = \Phi_t^{n+1} + \Delta_t^{n+1} dt  .$$
The values of $ \Phi_t^{n+1}$ at $t=0 $ and $ t=1$ are imposed to be $ \Phi_0^{n+1}$ and $ \Phi_1^{n+1}$ respectively.  
Consider the failure of $\mathcal H^n $ to be a chain map, when applied to elements of degree $n+1 $. Let  $g^{n+1}$ be the restriction to $ S(\CEb')_{n+1}$ of
 $$ \left( \delta \otimes \id + \id \otimes \mathrm{dt} \frac{\partial}{\partial t} \right) \circ {\mathcal H}^n  -{\mathcal H}^n \circ \delta'  .$$
 By assumption, $g^{n+1}$ is zero on $(S^{\geq 2}\CEb')_{n+1}$. Moreover, its component in  $S(\CEb) \otimes \mathbb K[t]$ is $0$ for $t=0,1$.  The image of $ g^{n+1}$ lies inside the kernel of $\delta \otimes \id + \id \otimes dt \frac{\partial}{\partial t} $.
By lemma \ref{lem:tensorStillKT}, $ g^{n+1}$ is therefore valued in the image of 
$$ \delta \otimes \id + \id \otimes dt \frac{\partial}{\partial t}  \colon (S(\CEb) \otimes \mathbb K[t,dt])_{n+1}  \to (S(\CEb) \otimes \mathbb K[t,dt])_{n}  .$$
Let us view $g^{n+1}$ as a map $\CE'_{n+1} \longrightarrow (S(\CEb) \otimes \mathbb K[t,dt])_{n}$.
Since $ \CE_{n+1}'$ is a projective module, there exists an $ \CO$-linear map $$\hat h^{n+1} \colon
 \CE_{n+1}' \to (S(\CEb) \otimes \mathbb K[t,dt])_{n+1}   $$ 
such that 
$$
g^{n+1} = -\left(\delta \otimes \id + \id \otimes dt \frac{\partial}{\partial t} \right) \circ \hat h^{n+1} .
$$
Let us write $\hat  h^{n+1}$, which is valued in $S(\CEb)\otimesk \mathbb K[t,dt] $, as a decomposition 
 $$ \hat h^{n+1} = \alpha_t^{n+1} + \beta^{n+1}_t dt .$$
Since $\delta \alpha_t^{n+1} = 0$ for $t = 0, 1$, by adding an appropriate boundary term, we modify $\hat h^{n+1}$ in such a way that $\alpha_t^{n+1} = 0$ for $t =0, 1$. 

\vspace{1mm}
\noindent
Define a map $h^{n+1}\colon \CEb' \longrightarrow (S(\CEb) \otimes \mathbb K[t,dt])$
$$
h^{n+1}(a) = \begin{cases}
    \mathcal{H}^n(a),\hbox{ if } a\notin \CE'_{n+1} \\
    \mathcal{H}^n(a) + \hat h^{n+1}(a), \hbox{ if } a\in \CE'_{n+1}.
\end{cases}
$$

\vspace{1mm}
\noindent
Then $\mathcal H^{n+1}$ is defined as the graded commutative algebra morphism that coincides with $h^{n+1} $ on $ \CEb'$. By construction $\mathcal H^{n+1}$ is a chain map when applied to elements of degrees ranging from $0$ to $n+1$.
\end{proof}

\vspace{0.1cm}
\noindent
As an immediate consequence of Lemma \ref{lem:dga.morphisms},
we recover the following result from \cite{Kazdhan-Felder} - with additional clarification about the ideals  $S^{\geq2 } \CEb \oplus \CI   \CEb  $ and $S^{\geq 2} \CEb' \oplus \CI  \CEb' $.

\begin{proposition}
\label{prop:homotopicAlways}
     Two Koszul-Tate resolutions $(S(\CEb), \delta)$ and $(S(\CEb'), \delta')$ of $ \CO/\CI$   are homotopy equivalent in a unique up to homotopy manner:
\vspace{-9mm} 
\begin{equation}\label{eq:existHomotopy}
\begin{tikzcd}[column sep = 4em]
(S(\CEb), \delta) \arrow[r, bend left=20, "\Psi", shift ={(0 ,1mm)}]  \arrow[out=225, in=135, looseness=8, loop, "\mathcal H"] & (S(\CEb'), \delta') \arrow[l, bend left=20, swap, "\Psi'", shift = {(0, -1mm)}]\arrow[out=-45, in=45, looseness=5, loop,swap, "\mathcal H'"]
\end{tikzcd}
\end{equation}
\vspace{-9mm}

\noindent
     where $\Psi, \Psi' $ and each one of the components as in \eqref{eq:Ht} of the homotopies $\mathcal H_t=\Phi_t + \Delta_t \, dt , \mathcal H_t' =\Phi_t' + \Delta_t '\, dt $ preserve the ideals $S^{\geq2 } \CEb \oplus \CI   \CEb  $ and $S^{\geq 2} \CEb' \oplus \CI  \CEb' $.
\end{proposition}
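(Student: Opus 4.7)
The plan is to obtain the pair $(\Psi, \Psi')$ by two applications of Lemma \ref{lem:dga.morphisms} and to obtain the homotopies $\mathcal{H}, \mathcal{H}'$ by applying its uniqueness clause to the compositions and to the identity map. First, since $(S(\CEb), \delta)$ and $(S(\CEb'), \delta')$ are Koszul-Tate resolutions of $\CO/\CI$, one has $\delta(\CE_1) = \CI$ and $\delta'(\CE_1') = \CI$. Hence Lemma \ref{lem:dga.morphisms}, applied twice with the roles of $\CEb$ and $\CEb'$ swapped, yields DGA morphisms $\Psi \colon (S(\CEb), \delta) \to (S(\CEb'), \delta')$ and $\Psi' \colon (S(\CEb'), \delta') \to (S(\CEb), \delta)$. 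The compositions $\Psi' \circ \Psi$ and $\id_{S(\CEb)}$ are then both DGA endomorphisms of $(S(\CEb), \delta)$, so the uniqueness clause of Lemma \ref{lem:dga.morphisms} produces a homotopy $\mathcal{H}_t = \Phi_t + \Delta_t \, dt$ between them; the homotopy $\mathcal{H}'_t = \Phi_t' + \Delta_t' \, dt$ on the primed side is obtained symmetrically. Uniqueness of the equivalence up to homotopy is then one further application: any two choices $(\Psi_1, \Psi_1')$ and $(\Psi_2, \Psi_2')$ have $\Psi_1$ and $\Psi_2$ homotopic as DGA morphisms by the same uniqueness clause, and similarly for $\Psi_i'$.

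Second, I would verify the ideal-preservation properties, which turn out to be automatic from the grading. The key observation is that every DGA morphism produced by Lemma \ref{lem:dga.morphisms} is a graded algebra morphism with $\Psi(\CE_n) \subset S(\CEb')_n$ (and symmetrically for $\Phi_t$), and since $n \geq 1$ the target has no $\CO$-summand, i.e.\ $\Phi_t(\CEb) \subset S^{\geq 1}(\CEb) = \CEb \oplus S^{\geq 2}(\CEb)$. Multiplicativity of $\Phi_t$ then gives $\Phi_t(S^{\geq 2}(\CEb)) \subset S^{\geq 2}(\CEb)$, while $\CO$-linearity gives
\[
\Phi_t(\CI\,\CEb) = \CI \cdot \Phi_t(\CEb) \subset \CI\,\CEb + S^{\geq 2}(\CEb),
\]
establishing the stability of $S^{\geq 2}(\CEb) \oplus \CI\,\CEb$ under $\Phi_t$. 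For the $\Phi_t$-derivation $\Delta_t$ of degree $+1$, the image $\Delta_t(\CE_n)$ lies in $S(\CEb)_{n+1}$, which again has no $\CO$-summand, so $\Delta_t(\CEb) \subset S^{\geq 1}(\CEb)$. The Leibniz rule then yields
\[
\Delta_t(S^{\geq 2}(\CEb)) \subset S^{\geq 1}(\CEb)\cdot S^{\geq 1}(\CEb) = S^{\geq 2}(\CEb),
\]
and likewise $\Delta_t(\CI\,\CEb) \subset \CI \cdot S^{\geq 1}(\CEb) \subset \CI\,\CEb + S^{\geq 2}(\CEb)$. The same reasoning applies to $\Psi'$ and to $\Phi_t', \Delta_t'$.

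There is essentially no obstacle in the argument: the entire content beyond Lemma \ref{lem:dga.morphisms} is the remark that the recursive constructions producing $\Psi$, $\Psi'$ and the homotopy components only involve choices of $\CO$-linear maps out of $\CE_n$ into grading-compatible subspaces of $S(\CEb')$ (resp.\ $S(\CEb)$) which carry no $\CO$-summand. Consequently, no modification of the constructions in the proof of Lemma \ref{lem:dga.morphisms} is required, and the ideal preservation emerges for free from the fact that one never produces a component in $S^0 = \CO$ when starting from $\CEb$.
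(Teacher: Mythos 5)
Your proposal is correct and follows essentially the same route as the paper: existence and uniqueness come from two applications of Lemma \ref{lem:dga.morphisms} (and its uniqueness clause applied to $\Psi'\circ\Psi$ versus the identity), while ideal preservation is the degree argument of Remark \ref{rmk:aboutgreaterthan2}, namely that $\Phi_t$ and $\Delta_t$ send $\CEb$ into $S^{\geq 1}$ so that multiplicativity, the Leibniz rule, and $\CO$-linearity do the rest. Your write-up is in fact slightly more complete than the paper's, since you explicitly check the $\CI\,\CEb$ component of the ideal, which the cited remark only treats implicitly.
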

\begin{proof}
Lemma \ref{lem:dga.morphisms}
implies the existence and uniqueness up to homotopy of morphisms of differential graded commutative algebras as in Equation \eqref{eq:existHomotopy}.
Remark \ref{rmk:aboutgreaterthan2} gives the conditions about the ideals  $ S^{\geq 2} \CEb \oplus \CI\CEb  $ and $S^{\geq 2} \CEb' \oplus \CI  \CEb' $. %\eqref{eq:condtionOnHomotopy}. 
\end{proof}

\vspace{0.1cm}

\noindent
Having found such a homotopy we can make a following quotient construction: consider the quotient of a Koszul-Tate resolution $(S(\CEb), \delta)$ of $\CO/\CI $ by $\CI\CEb\oplus \Sdeg(\CEb)$.
This quotient is isomorphic to the tensor product $\CE_\bullet \otimes_{\CO} \CO/\CI$ as an $\CO$-module. 
Since $\delta(\CI\CEb\oplus \Sdeg(\CEb)) \subset \CI\CEb\oplus \Sdeg(\CEb)$,  the quotient comes equipped with an induced differential $ \underline{\delta}$.

\begin{definition}
    For a Koszul-Tate resolution $(S(\CEb), \delta)$ of $\CO/\CI$, the complex of $ \CO$-modules $(\CEb\otimes \CO/\CI, \underline{\delta})$ is called a reduced complex of $\CO/\CI$.
\end{definition}

\vspace{0.1cm}

\noindent
The following is an immediate corollary of Proposition \ref{prop:homotopicAlways}:

\begin{corollary}
\label{cor:reduced}
Two reduced complexes associated to two Koszul-Tate resolutions of $\CO/\CI $ are homotopy equivalent  and there is distinguished class of homotopies between them.
%in a unique up to homotopy manner.   
In particular, its homology is canonically attached to the ideal $ \CI \subset \CO$. 
\end{corollary}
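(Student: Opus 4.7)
The plan is to derive Corollary \ref{cor:reduced} directly from Proposition \ref{prop:homotopicAlways} by showing that all relevant data descends to the quotients defining the reduced complexes. First I would rewrite the ideal $\CI\CEb \oplus \Sdeg(\CEb)$ whose quotient defines the reduced complex as $S^{\geq 2}\CEb \oplus \CI\CEb$, so that it coincides with the ideal featured in the conclusion of Proposition \ref{prop:homotopicAlways}. Applied to two Koszul-Tate resolutions $(S(\CEb),\delta)$ and $(S(\CEb'),\delta')$ of $\CO/\CI$, that proposition furnishes DGA morphisms $\Psi, \Psi'$ together with homotopies $\mathcal H_t = \Phi_t + \Delta_t\, dt$ and $\mathcal H'_t = \Phi'_t + \Delta'_t\, dt$ between $\Psi'\circ\Psi$, $\Psi\circ\Psi'$ and the respective identity maps, such that each of $\Psi$, $\Psi'$, $\Phi_t$, $\Delta_t$, $\Phi'_t$ and $\Delta'_t$ preserves the ideals $S^{\geq 2}\CEb \oplus \CI\CEb$ and $S^{\geq 2}\CEb' \oplus \CI\CEb'$.

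Next I would observe that, since $\Psi, \Psi'$ preserve these ideals, they pass to $\CO$-linear chain maps $\underline\Psi \colon (\CEb\otimes\CO/\CI, \underline\delta) \to (\CEb'\otimes\CO/\CI, \underline{\delta'})$ and $\underline{\Psi'}$ in the opposite direction. Similarly, the integrated homotopies $\int_0^1 \Delta_t\, dt$ and $\int_0^1 \Delta'_t\, dt$ described in Remark \ref{rmk:aboutgreaterthan2} send $S^{\geq 2}\CEb \oplus \CI\CEb$ into itself (and likewise for the primed side), so they descend to $\CO$-linear maps $\underline{h}\colon \CEb\otimes\CO/\CI \to \CEb\otimes\CO/\CI$ and $\underline{h'}$ realising chain homotopies between $\underline{\Psi'}\circ\underline{\Psi}$ and $\id$, respectively $\underline{\Psi}\circ\underline{\Psi'}$ and $\id$. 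Thus the two reduced complexes are homotopy equivalent in the sense of Definition \ref{def:hom.equiv} (restricted to complexes of $\CO$-modules).

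For the "distinguished class of homotopies" part, I would invoke the uniqueness up-to-homotopy statement of Lemma \ref{lem:dga.morphisms}: any two choices of $\Psi$ (respectively $\Psi'$) differ by a homotopy whose $\Delta_t$-component again preserves the ideals, by the same argument used inside the proof of Proposition \ref{prop:homotopicAlways}. Consequently any two choices of $\underline\Psi$ (respectively $\underline{\Psi'}$) differ by a chain homotopy on the reduced complex, and this homotopy is itself unique up to second-order homotopy. This picks out a well-defined equivalence class of homotopy equivalences $(\CEb\otimes\CO/\CI,\underline\delta) \simeq (\CEb'\otimes\CO/\CI,\underline{\delta'})$.

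Finally, homotopy-equivalent chain complexes have isomorphic homology, and the isomorphism induced on homology depends only on the homotopy class of the equivalence; hence the identification $H_\bullet(\CEb\otimes\CO/\CI,\underline\delta) \cong H_\bullet(\CEb'\otimes\CO/\CI,\underline{\delta'})$ is canonical, which is precisely the last assertion of the corollary. The only mild technical point is checking that the ideal-preservation of $\Delta_t$ for each $t \in \mathbb K$ transfers to ideal-preservation of the integral $\int_0^1 \Delta_t\, dt$; this is immediate by $\CO$-linearity but worth stating explicitly. All remaining content is a direct unpacking of Proposition \ref{prop:homotopicAlways} through the quotient, so the corollary indeed follows with essentially no further work.
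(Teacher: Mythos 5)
Your proposal is correct and follows exactly the route the paper intends: the paper states the corollary as an immediate consequence of Proposition \ref{prop:homotopicAlways}, and your argument is precisely the unpacking of that — identifying $\CI\CEb\oplus\Sdeg(\CEb)$ with $S^{\geq 2}\CEb\oplus\CI\CEb$, descending $\Psi,\Psi'$ and the (integrated) homotopy components to the quotient, and invoking uniqueness up to homotopy from Lemma \ref{lem:dga.morphisms} for the distinguished class. No gaps; this matches the paper's proof.
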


\vspace{0.1cm}

\noindent
Consider an ideal $\CJ \subset \CO $ containing $ \CI$. The tensor product 
$(\CE_\bullet \otimes \CO/\CJ, \underline \delta)$ 
of a reduced complex  of $ \CI $ with $ \CO/\CJ$
 is a complex of $ \CO$-modules, on which $ \underline{\delta}$ induces a differential that we denote again by $ \underline{\delta}$.

 \begin{definition}
    For a Koszul-Tate resolution $(S(\CEb), \delta)$ of $\CO/\CI$ and an ideal $\CJ \subset \CO $ containing $ \CI$, the complex of $ \CO$-modules $(\CEb\otimes \CO/\CJ, \underline{\delta})$ is called a reduced complex of $\CO/\CI$ evaluated at $ \CJ$.
\end{definition}

 \noindent
 Corollary \ref{cor:reduced} implies that
the homology of this complex does not depend on the choice of a Koszul-Tate resolution of $ \CO/\CI$. 
%tensor product of the reduced complex with $\CO/\CJ $ with $\CJ $ any ideal containing $\CI $.

\begin{corollary}
\label{cor:reducedWIthJ}
Two reduced complex of $\CO/\CI$ evaluated at $ \CJ$ associated to two Koszul-Tate resolutions of $\CO/\CI $ are homotopy equivalent  and there is a distinguished class of homotopies between them.
In particular, its homology is canonically attached to the ideals $ \CI \subset \CJ \subset \CO$. 
\end{corollary}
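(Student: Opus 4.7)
The plan is to obtain Corollary \ref{cor:reducedWIthJ} as a direct consequence of Corollary \ref{cor:reduced} by applying the functor $-\otimes_{\CO/\CI} \CO/\CJ$, which makes sense because $\CI \subset \CJ$ endows $\CO/\CJ$ with a natural $\CO/\CI$-module structure, and because the evaluated reduced complex $(\CEb\otimes \CO/\CJ, \underline\delta)$ is naturally identified with $(\CEb\otimes \CO/\CI)\otimes_{\CO/\CI}\CO/\CJ$ equipped with the differential $\underline\delta\otimes \id$.

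First, I would invoke Corollary \ref{cor:reduced} to obtain, for any two Koszul-Tate resolutions $(S(\CEb), \delta)$ and $(S(\CEb'), \delta')$ of $\CO/\CI$, a distinguished class of $\CO/\CI$-linear chain maps $\underline{\Psi}, \underline{\Psi}'$ and $\CO/\CI$-linear homotopies $\underline{\mathcal H}, \underline{\mathcal H}'$ between the reduced complexes $(\CEb\otimes \CO/\CI, \underline{\delta})$ and $(\CEb'\otimes \CO/\CI, \underline{\delta'})$, all of them obtained by descending the corresponding structural data of Proposition \ref{prop:homotopicAlways} through the quotients by $\CI\CEb \oplus \Sdeg(\CEb)$ and $\CI\CEb' \oplus \Sdeg(\CEb')$.

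Second, I would apply the additive functor $-\otimes_{\CO/\CI}\CO/\CJ$ termwise to this whole homotopy-equivalence diagram. Since additive functors preserve both chain maps and chain homotopies, the result is a distinguished class of $\CO/\CJ$-linear chain maps $\underline\Psi\otimes \id$, $\underline\Psi'\otimes \id$ and homotopies $\underline{\mathcal H}\otimes \id$, $\underline{\mathcal H}'\otimes \id$ between $(\CEb\otimes\CO/\CJ, \underline\delta)$ and $(\CEb'\otimes\CO/\CJ, \underline{\delta'})$. Uniqueness up to homotopy persists by the same functoriality argument. The canonical attachment of the homology to the pair $(\CI, \CJ)$ then follows from the standard fact that a chain homotopy equivalence induces an isomorphism on homology.

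The main obstacle is purely of the bookkeeping kind: one has to check that the polynomial-form components $\Phi_t$ and $\Delta_t$ of the homotopies in Proposition \ref{prop:homotopicAlways} actually survive both successive quotients (by $\CI\CEb \oplus \Sdeg(\CEb)$ and then by $\CJ$) in such a way that the fundamental homotopy relation \eqref{eq:Phitderivations} still holds after evaluation. This reduces to the observation that all the maps involved are $\CO$-linear, and that the ideals in question are stable under both the differentials and the DGCA morphisms at stake---a fact already used in the proof of Corollary \ref{cor:reduced}. Once this is in place, the argument is essentially formal.
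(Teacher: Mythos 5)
Your proposal is correct and follows essentially the same route as the paper, which states this corollary without a separate proof and simply deduces it from Corollary \ref{cor:reduced}: the evaluated reduced complex is the base change $(\CEb\otimes\CO/\CI)\otimes_{\CO/\CI}\CO/\CJ$, and the distinguished chain maps and homotopies of Corollary \ref{cor:reduced} (themselves descended from Proposition \ref{prop:homotopicAlways}, whose components preserve the relevant ideals) are transported by this additive functor. Your explicit bookkeeping of why the homotopy data survives the second quotient is a reasonable elaboration of what the paper leaves implicit.
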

\vspace{0cm}
\noindent
Let $\CJ$ be a maximal ideal of $\CO$. Since $\CO/\CJ \simeq \mathbb K $ is a vector space, any reduced complex of $\CO/\CI$ evaluated at $ \CJ$ is a complex of vector spaces over $\mathbb K $.  The homology of the reduced complex of $ \CO/\CI$ evaluated at $ \CJ$ is a vector space as well. We denote by $ b_i(\CO/\CI,\CJ) \in \mathbb N \cup \{+\infty\}$ the dimension of the homology in degree $i \in \mathbb N$. By the above discussion, $b_i(\CO/\CI,\CJ)$ does not depend on the choice of a Koszul-Tate resolution, hence the notation. 
If the free $\CO $-module $\CE_i  $ is indexed by a set $ S_i$, then the chains in degree $i$ of the reduced complex of $ \CO/\CI$ evaluated at $ \CJ$ form a vector space indexed by the same set $S_i$. The $i$-th homology being a vector space of smaller dimension, the next statement follows:

\begin{theorem}
\label{thm:lowernboundonrank}
For any Koszul-Tate resolution $ (S(\CEb),\delta)$ of $ \CO/\CI$, any maximal ideal $ \CJ$ containing $ \CI$, and any $ i \in \mathbb N$, we have 
${\mathrm{rk}}(\CE_i) \geq b_i(\CO/\CI,\CJ)$. 
\end{theorem}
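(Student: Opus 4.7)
The plan is to exhibit the reduced complex evaluated at $\CJ$ as a complex of $\mathbb K$-vector spaces whose chain space in degree $i$ has $\mathbb K$-dimension equal to $\mathrm{rk}(\CE_i)$, and then invoke the trivial bound "homology dimension $\leq$ chain dimension" together with Corollary~\ref{cor:reducedWIthJ} to identify that homology with $b_i(\CO/\CI,\CJ)$.

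More concretely, first I would unwind the definition: the reduced complex of $\CO/\CI $ is $(\CEb \otimes_\CO \CO/\CI,\underline \delta)$, and its evaluation at $\CJ $ is $(\CEb \otimes_\CO \CO/\CJ,\underline \delta)$, which is a complex of $\mathbb K$-vector spaces since $\CO/\CJ \simeq \mathbb K$. If $\CE_i $ is a free $\CO $-module indexed by a set $S_i$, then $\CE_i \otimes_\CO \CO/\CJ $ is a $\mathbb K $-vector space with basis $S_i$, so its dimension is exactly $\mathrm{rk}(\CE_i)$. In the projective (non-free) case, one may either reduce to the free case locally (a finitely generated projective module over the local ring $\CO_\CJ$ is free of the same rank) or directly use that $\CE_i$ is a direct summand of a free module and that tensoring with $\CO/\CJ $ preserves direct summands; in either case $\dim_\mathbb K (\CE_i \otimes_\CO \CO/\CJ) = \mathrm{rk}(\CE_i)$.

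Then I would conclude as follows. The $i$-th homology $H_i(\CEb \otimes_\CO \CO/\CJ,\underline \delta)$ is a subquotient of the chain space $\CE_i \otimes_\CO \CO/\CJ$, so its $\mathbb K$-dimension is bounded above by $\dim_\mathbb K (\CE_i \otimes_\CO \CO/\CJ) = \mathrm{rk}(\CE_i)$. By Corollary~\ref{cor:reducedWIthJ}, this homology depends only on $\CI $ and $\CJ$, and by definition its dimension is $b_i(\CO/\CI,\CJ)$. Combining the two gives $b_i(\CO/\CI,\CJ) \leq \mathrm{rk}(\CE_i)$, which is the desired inequality.

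The only step requiring any care is the identity $\dim_\mathbb K(\CE_i \otimes_\CO \CO/\CJ) = \mathrm{rk}(\CE_i)$ in the projective case and a careful reading of the rank when $\CE_i $ has infinite rank (in which case the inequality is trivially true whenever $b_i = +\infty$ and is the above argument otherwise); no genuine obstacle arises since the theorem is essentially a bookkeeping consequence of the invariance established in Corollary~\ref{cor:reducedWIthJ}.
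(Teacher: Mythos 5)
Your proposal is correct and follows essentially the same route as the paper: the paper likewise observes that the degree-$i$ chains of the reduced complex evaluated at $\CJ$ form a $\mathbb K$-vector space indexed by the same set as the generators of $\CE_i$, so the homology, being a subquotient, has dimension at most $\mathrm{rk}(\CE_i)$, with well-definedness of $b_i(\CO/\CI,\CJ)$ supplied by Corollary~\ref{cor:reducedWIthJ}. Your extra care about the projective and infinite-rank cases is a welcome, if minor, refinement of the paper's argument, which is stated for the free case.
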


\section{Minimal Koszul-Tate resolutions}
\label{app:minimal}

\vspace{0.1cm}

\noindent
Although the arborescent Koszul-Tate resolutions have the advantages listed in the introduction, they may  not be the "smallest" Koszul-Tate resolution.  For instance, for $\CI \subset \CO$ a complete intersection with $r$ generators, the Koszul $\CO$-module resolution $((\FM_i = \wedge^i \CO^r)_{i \geq 1},d = \sum_{i=1}^{n}\phi_i \iota_{e_i}) $
of section \ref{sec:Koszul} is by construction an $\CO$-module resolution with finitely many generators. One can therefore consider the associated arborescent Koszul-Tate resolution. The latter is of course a huge symmetric algebra: for $ r \geq 2$, it has infinitely many generators.

\vspace{0.2cm}
\noindent
Theorem \ref{thm:lowernboundonrank} gives a lower bound to the possible ranks of the free $\CO$-modules $ \CEb$ entering a Koszul-Tate resolution of $ \CO/\CI$.   
In this appendix, we give a context where this lower bound can be reached.
We consider two distinct cases, which may seem restrictive but quite a few examples exist, see section~\ref{sec:basicExamples}: 
\begin{itemize}
    \item[ ] Case i. $\CO$ is a local Noetherian ring with maximal ideal $\CJ$ and residue field $\mathbb K$.
    \item[ ] Case ii. $\CO$ is the polynomial ring $\mathbb K[x_1, \dots, x_n]$, $\CJ$ is the maximal ideal $\CJ:=\langle x_1, \dots, x_n \rangle$ and  $\CI$ is an ideal with homeogeneous generators.
\end{itemize}
Here is a classical theorem, see e.g. \cite{Eisenbud} for case i and  Theorem 2.12 of \cite{peeva2010graded} for case ii. Let $\CJ$ be a maximal ideal containing $\CI $.

\begin{theorem}
\label{thm:basis.theorem}
    Let $M$ be a finitely generated $\CO$-module and $ \CJ$ a maximal ideal of $\CO $ containing $\CI $. In Case ii., in addition, $M$ is graded with respect to the polynomial degree $\mathrm{pol}(.)$.
    \begin{itemize}
        \item[1.] If $\bar e = \lbrace \bar e_1, \dots , \bar e_j\rbrace $ is a basis of the vector space $M/\CJ M$, then its preimage in $M$ is a minimal set of generators of $M$.
        \item[2.] Every minimal set of generators is obtained through 1. 
        \item[3.] In Case i., if $e = \lbrace e_1, \dots , e_j\rbrace$ and $f = \lbrace f_1, \dots, f_j \rbrace$ are two minimals sets of generators of $M$ and  $e_i = \sum_{k=1}^j A_{ij}f_j$ for some matrix $(A_{ij})$, then the matrix $(A_{ij})$ is invertible. In Case ii., one asks additionally that $e$ and $f$ are homogeneous and that the matrix  $(A_{ij})$  has homogeneous entries.
    \end{itemize}
\end{theorem}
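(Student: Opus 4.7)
My plan is to derive all three items from a single key fact---Nakayama's lemma---applied either in its classical form (Case i) or in a graded version (Case ii). Recall that if $N$ is a finitely generated module over a local ring $\CO$ with maximal ideal $\CJ$, then $\CJ N = N$ forces $N = 0$; the graded analogue is that if $N$ is a finitely generated $\mathbb N$-graded $\mathbb K[x_1,\dots,x_n]$-module with $\CJ N = N$, then looking at the lowest-degree nonzero graded piece of $N$ immediately yields a contradiction, so again $N = 0$. Throughout I would treat $M/\CJ M$ as a vector space over the residue field $\mathbb K = \CO/\CJ$, which in Case ii is graded as well.

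For item 1, I would lift $\bar e_1,\dots,\bar e_j$ to elements $e_1,\dots,e_j \in M$ (choosing homogeneous lifts in Case ii, which is possible since $M/\CJ M$ inherits a grading) and set $N := \sum \CO e_i$. The images of the $e_i$ span $M/\CJ M$, so $M = N + \CJ M$, i.e.\ $\CJ\cdot (M/N) = M/N$, and Nakayama forces $M = N$. Minimality is then automatic: any generating family of $M$ descends to a spanning set of the $j$-dimensional vector space $M/\CJ M$ and thus has at least $j$ elements, so $\{e_i\}$---having exactly $j$---is minimal.

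For item 2, I would argue the converse. Given a minimal generating set $\{e_1,\dots,e_k\}$ (homogeneous in Case ii), the images $\bar e_i$ span $M/\CJ M$, and I want them to be linearly independent. If not, one relation $\bar e_k = \sum_{i<k} c_i \bar e_i$ with $c_i\in\mathbb K$ would yield $e_k - \sum_{i<k} c_i e_i \in \CJ M$; expanding the right-hand side in the generators and collecting gives $(1-r_k)\, e_k \in \sum_{i<k}\CO e_i$ with $r_k \in \CJ$. Since $1-r_k \notin \CJ$, it is a unit in Case i (local), and in Case ii its degree-zero component is $1$, which after restricting to the homogeneous degree of $e_k$ allows one to solve for $e_k$; either way $e_k$ lies in the submodule generated by the other $e_i$, contradicting minimality. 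So $\bar e$ is a basis and item 1 applies. Item 3 is then a short consequence: reducing the matrix $A = (A_{ij})$ modulo $\CJ$ gives the change-of-basis matrix between the bases $\bar e$ and $\bar f$ of $M/\CJ M$, so $\det A \notin \CJ$. In Case i, units are exactly $\CO\setminus\CJ$, so $\det A$ is a unit; in Case ii, $\det A$ is homogeneous, and a homogeneous element not in $\CJ$ must have nonzero degree-zero part, hence is itself a nonzero scalar and thus a unit. Either way, $A$ is invertible over $\CO$.

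The main obstacle I anticipate is ensuring uniform treatment of both cases, because in Case ii the ring $\CO = \mathbb K[x_1,\dots,x_n]$ is \emph{not} local in the usual sense---its units are only the nonzero constants. The resolution is to work consistently within the graded category, using homogeneous lifts, homogeneous matrix entries, and the graded Nakayama lemma, so that $\CJ$ plays the role of the maximal ideal of a local ring and the fact that a homogeneous element outside $\CJ$ is a nonzero scalar substitutes for the usual characterization of units. With this convention the three items admit essentially identical proofs in both cases.
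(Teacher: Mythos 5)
Your proof is correct. Note that the paper does not actually prove this statement: it is quoted as a classical result, with references to Eisenbud (Case i) and to Theorem 2.12 of Peeva's book (Case ii). Your argument via Nakayama's lemma (classical in the local case, graded in the polynomial case, with homogeneous lifts and the observation that a homogeneous element outside $\CJ$ is a nonzero scalar) is exactly the standard proof found in those references, so there is nothing to compare and no gap to report.
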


\vspace{0.1cm}

\noindent
Recall that a free or projective resolution $(\FM_{\bullet}, d)$ of $\CO/\CI $ is said to be \emph{minimal at $\CJ $}  if $d(\FM_{i+1}) \subset \CJ \FM_{i}$ for all $i \geq 1 $. 
Theorem \ref{thm:basis.theorem} implies that in  case i. the $  \CO$-module resolution $(\FM_{\bullet}, d)$ obtained as in Construction \ref{const:free.resolution} is minimal at $\CJ $ if and only if at each step
%of Construction \ref{const:free.resolution} 
we choose a minimal set of generators of $\mathrm{Ker}(d_i)$. 
Also, in this case, a minimal free $ \CO$-module resolution is unique up to isomorphisms, and any free  $ \CO$-module resolution can be decomposed as a direct sum of a trivial complex and a minimal free  $ \CO$-module resolution.
In case ii., the same conclusion holds provided that the generators of $\FMb$ are chosen to be homogeneous at every step.
By construction, all free $\CO$-modules $\FMb$ are graded with respect to the polynomial degree $\mathrm{pol}(.)$, and  $\mathrm{pol}(d) = 0$.

%$ \CI$ has homogeneous generators.

\vspace{0.1cm}

\noindent
We give the following definition of minimality at $\CJ $ of a Koszul-Tate resolution of $\CO/\CI$:
\begin{definition}
Let $ \CJ$ be a maximal ideal containing $\CI $. A Koszul-Tate resolution $(S(\CE), \delta)$ is said to be minimal at $\CJ $ if $\delta(\CE_{i+1})  \subset \CJ \CE_{i} \oplus \Sdeg (\CE)_i$ for all $i\geq 1$.
\end{definition}

\vspace{0.2cm}
\noindent
The following result is straightforward.
\begin{lemma}
\label{lem;asbi}
Let $ (S(\CEb), \delta)$ be a Koszul-Tate resolution of $ \CO/\CI$ such that $\CE_i $ is a free module of finite rank for all $ i \geq 1$.
Then $ (S(\CEb), \delta)$ is minimal  at $ \CJ$ if and only if $ {\mathrm{rk}}(\CE_i)=b_i(\CO/\CI,\CJ)$.
\end{lemma}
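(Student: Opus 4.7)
The strategy is to reduce the statement to the observation that minimality at $\CJ$ is exactly the condition that the induced differential on the reduced complex evaluated at $\CJ$ vanishes identically, after which the equivalence with the rank equality becomes a straightforward finite-dimensional linear algebra argument.

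First, I would spell out the reduced complex of $\CO/\CI$ evaluated at $\CJ$ concretely. Since $\CI \subset \CJ$, this complex has underlying $\K$-vector space $\CE_{\bullet} \otimes_{\CO} \CO/\CJ$, and because each $\CE_i$ is free of finite rank $r_i$, each chain space is isomorphic to $\K^{r_i}$. The induced differential $\underline{\delta}$ sends the class of $e \in \CE_{i+1}$ to the class of $\delta(e)$ modulo $\CJ \CEb \oplus S^{\geq 2}(\CEb)$. Writing $\delta(e) = \sum_j r_j f_j + s$ with $f_j$ ranging over a fixed $\CO$-basis of $\CE_i$, $r_j \in \CO$, and $s \in S^{\geq 2}(\CEb)_i$, one has $\underline{\delta}(\bar e) = \sum_j \bar r_j \, \bar f_j$ in $\CE_i \otimes_{\CO} \CO/\CJ$. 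Freeness of $\CE_i$ therefore yields the key equivalence:
\[
\delta(\CE_{i+1}) \subset \CJ \CE_i \oplus S^{\geq 2}(\CEb)_i \;\;\Longleftrightarrow\;\; \underline{\delta}_{i+1} = 0.
\]
Hence $(S(\CEb),\delta)$ is minimal at $\CJ$ if and only if the reduced complex at $\CJ$ has zero differential.

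For the forward direction, if minimality holds then $\underline{\delta}$ vanishes identically, so the homology of the reduced complex at $\CJ$ coincides with the complex itself, giving $b_i(\CO/\CI,\CJ) = \dim_{\K}(\CE_i \otimes_{\CO} \CO/\CJ) = r_i = \mathrm{rk}(\CE_i)$. For the converse, assume $\mathrm{rk}(\CE_i) = b_i(\CO/\CI,\CJ)$ for every $i \geq 1$. Since every chain space is finite dimensional, the standard rank-nullity identity applied to the complex $(\CE_\bullet \otimes \CO/\CJ, \underline{\delta})$ reads
\[
\dim_{\K}(\CE_i \otimes \CO/\CJ) - \dim_{\K} H_i \;=\; \mathrm{rk}(\underline{\delta}_i) + \mathrm{rk}(\underline{\delta}_{i+1}),
\]
and the hypothesis makes the left-hand side vanish for every $i$. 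Since both summands on the right are nonnegative, this forces $\mathrm{rk}(\underline{\delta}_i) = 0$ for all $i$, i.e.\ $\underline{\delta} \equiv 0$. By the equivalence established in the previous paragraph, this is exactly minimality at $\CJ$.

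The only genuinely nontrivial piece is the first paragraph: one must verify that the induced differential $\underline{\delta}$ really is computed by dropping the $S^{\geq 2}$-part of $\delta$ and then reducing the coefficients mod $\CJ$. Everything else is bookkeeping, together with the elementary observation that a complex of finite-dimensional vector spaces whose homology has maximal possible dimension in each degree must have all differentials zero. Note that freeness of $\CE_i$ is essential in order to go from $\sum_j \bar r_j \bar f_j = 0$ to $\bar r_j = 0$ for each $j$; this is why the hypothesis requires each $\CE_i$ to be free rather than merely projective.
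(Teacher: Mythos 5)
Your proposal is correct and follows essentially the same route as the paper: both arguments hinge on the observation that minimality at $\CJ$ is exactly the vanishing of the induced differential on the reduced complex evaluated at $\CJ$, combined with the identification of $\mathrm{rk}(\CE_i)$ with $\dim_{\K}(\CE_i\otimes\CO/\CJ)$. Your write-up is in fact more complete than the paper's one-line proof, since you make the converse direction explicit via the rank--nullity count, which the paper leaves to the reader.
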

\begin{proof}
 Let $ i\in \mathbb N$.
By Theorem \ref{thm:basis.theorem}, the rank of $ \CE_i$ coincides with the dimension of $ \CE_i/\CJ \CE_i$.
The minimality condition means that the differential of the reduced complex  of $ \CO/\CI$ evaluated at $\CJ$ is zero, so that the dimension of $ \CE_i/\CJ \CE_i$ is $b_i(\CO/\CI,\CJ)$. 
\end{proof}

%Sometimes to specify the maximal ideal $\CJ$ we say a Koszul-Tate resolution is minimal at $\CJ$.
\vspace{0.1cm}
\noindent
For Koszul-Tate resolution obtained through Tate algorithm, i.e. Construction \ref{const:tate.algorithm}, this notion of minimality coincides with the naive one. 

\begin{proposition}
\label{prop:Tateisminimal}
Assume $\CO,\CI $ and $\CJ $ are as in Cases (i) and (ii) in the beginning of this section. A Koszul-Tate resolution $(S(\CE), \delta)$ obtained out of the Tate algorithm, i.e. Construction \ref{const:tate.algorithm}, is minimal at $\CJ $ if and only if for every $i \geq 1$, in the $i$-th step of the construction, a minimal set of generators of $H_i(S(\mathcal E_1 \oplus \cdots \oplus \mathcal E_i) , \delta_i) $ is chosen.
\end{proposition}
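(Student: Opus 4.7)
The plan is to prove both directions of the equivalence simultaneously, by induction on $k \geq 1$, at each step using a linear-algebraic invariant attached to $H_k := H_k(S(\CE_1 \oplus \cdots \oplus \CE_k), \delta_k)$. The central object is the projection $\pi_k \colon S(\CE_1 \oplus \cdots \oplus \CE_k)_k = \CE_k \oplus S^{\geq 2}(\CE_1 \oplus \cdots \oplus \CE_{k-1})_k \twoheadrightarrow \CE_k$, which extracts the ``$\CE_k$-component'' of a degree-$k$ element; the minimality condition $\delta(\CE_{k+1}) \subset \CJ\CE_k \oplus S^{\geq 2}(\CE)_k$ is equivalent to $\pi_k(\delta(e)) \in \CJ\CE_k$ for every generator $e$ of $\CE_{k+1}$.

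First I would verify that $\pi_k$ sends every $\delta_k$-boundary into $\CJ\CE_k$. This is a short Leibniz-rule calculation: among monomials of degree $k+1$ in $S(\CE_1 \oplus \cdots \oplus \CE_k)$, only those of the form $a \odot b$ with $a \in \CE_1$, $b \in \CE_k$ have a nonzero $\CE_k$-component after applying $\delta_k$, and that component equals $d_1(a)\cdot b \in \CI\CE_k \subset \CJ\CE_k$. It follows that $\pi_k$ descends to a well-defined $\CO$-linear map $\bar\pi_k \colon H_k \to \CE_k/\CJ\CE_k$, and, crucially, the membership $\pi_k(z) \in \CJ\CE_k$ is independent of the choice of representative of a class $[z] \in H_k$.

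The heart of the argument is then the claim that $\bar\pi_k = 0$ under the inductive hypothesis. Let $z = e + s$ be a cycle with $e \in \CE_k$ and $s \in S^{\geq 2}(\CE_1 \oplus \cdots \oplus \CE_{k-1})_k$; the condition $\delta_k z = 0$ rewrites as $d_k(e) = -\delta_{k-1}(s)$, exhibiting $d_k(e)$ as a $\delta_{k-1}$-boundary, so $[d_k(e)] = 0$ in $H_{k-1}$. Expanding $e = \sum_j f_j e_{k,j}$ on generators of the free $\CO$-module $\CE_k$ produces the relation $\sum_j f_j [d_k(e_{k,j})] = 0$ in $H_{k-1}$; by the inductive hypothesis at step $k$ the classes $[d_k(e_{k,j})]$ form a minimal generating set of $H_{k-1}$, so by Theorem \ref{thm:basis.theorem} they are linearly independent modulo $\CJ H_{k-1}$. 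This forces $f_j \in \CJ$ for all $j$, hence $e \in \CJ\CE_k$ and $\bar\pi_k([z]) = 0$. I expect this to be the main obstacle: it is the step where the Nakayama-type input of Theorem \ref{thm:basis.theorem}, the cycle identity, and the boundary description by Leibniz must interlock.

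Once $\bar\pi_k = 0$ is in hand, both directions at step $k{+}1$ follow quickly. For the forward direction, any representative $z_j$ of any class of $H_k$ already has $\pi_k(z_j) \in \CJ\CE_k$, so picking minimal generators of $H_k$ as the images of generators of $\CE_{k+1}$ automatically yields $\delta(\CE_{k+1}) \subset \CJ\CE_k \oplus S^{\geq 2}(\CE)_k$. For the converse, assume the resolution is minimal at $\CJ$; by Lemma \ref{lem;asbi} we have $\mathrm{rk}(\CE_{k+1}) = b_{k+1}(\CO/\CI,\CJ)$, while applying Theorem \ref{thm:lowernboundonrank} to the Koszul-Tate resolution obtained by replacing $\CE_{k+1}$ with a minimal generating set of $H_k$ (keeping $\CE_{\leq k}$ and continuing arbitrarily) yields $\dim_{\mathbb{K}}(H_k/\CJ H_k) \geq b_{k+1}$; since $\mathrm{rk}(\CE_{k+1}) \geq \dim_{\mathbb{K}}(H_k/\CJ H_k)$ always by Theorem \ref{thm:basis.theorem}, all three quantities coincide, forcing the chosen set at step $k{+}1$ to be minimal. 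The base case $k = 1$ is direct: $\delta(\CE_1) = \CI \subset \CJ$ holds for free, and minimality of $\CE_1$ translates to $\mathrm{rk}(\CE_1) = \dim_{\mathbb{K}}(\CI/\CJ\CI)$ via Theorem \ref{thm:basis.theorem}.
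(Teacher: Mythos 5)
Your proof takes a genuinely different route from the paper's. The paper argues both directions by manipulating explicit redundancy relations: a failure of minimality $\delta f=\sum_j a_j e_j+\cdots$ with some $a_j$ invertible is normalized and hit with $\delta$ to produce a redundancy among the chosen generators $\chi_j$ of the homology, and conversely a redundancy among the $\chi_j$ is lifted to a non-exact cycle $e_1+\sum_j b_j e_j+\cdots$ that the next module must kill outside $\CJ\CE_\bullet\oplus S^{\geq 2}(\CE)$. You instead isolate the structural statement that, once a minimal generating set of $H_{k-1}$ has been used to build $\CE_k$, \emph{every} $\delta_k$-cycle of degree $k$ has its $\CE_k$-component in $\CJ\CE_k$; the Leibniz computation showing that boundaries land in $\CI\CE_k$ and the Nakayama-type step via Theorem \ref{thm:basis.theorem} are both correct for $k\geq 2$, and they give the forward direction cleanly while making explicit that the condition on $\delta(\CE_{k+1})$ detects the choice made one step \emph{earlier}. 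Your converse by rank counting through Lemma \ref{lem;asbi} and Theorem \ref{thm:lowernboundonrank} is also valid (modulo the standing finiteness assumptions needed to identify a minimal generating set of $H_k$ with a basis of $H_k/\CJ H_k$) and is more indirect than, but logically independent of, the paper's explicit construction of a violating element.

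The one genuine gap is the case $k=1$ of your key lemma, which is what the first nontrivial minimality condition $\delta(\CE_2)\subset\CJ\CE_1$ rests on. For a degree-one cycle $e=\sum_j f_j e_{1,j}\in\ker(d_1)\subset\CE_1$ your argument would require the classes $[d_1(e_{1,j})]$ to form a minimal generating set of $H_0$; but $H_0=\CO/\CI$ and all of these classes vanish there, so the linear-independence step collapses. What is actually needed is that the elements $E_j=d_1(e_{1,j})$ form a minimal generating set of the $\CO$-module $\CI$ itself, so that $\sum_j f_j E_j=0$ forces $f_j\in\CJ$ by Theorem \ref{thm:basis.theorem} applied to $M=\CI$. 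Your stated base case (``$\delta(\CE_1)=\CI\subset\CJ$ holds for free'') addresses a vacuous condition and does not supply this. Note that the required hypothesis concerns the construction of $\CE_1$, which the condition ``for every $i\geq 1$'' does not literally cover: if $\CE_1$ is built from a redundant generating set of $\CI$, say $E_1=E_2$, then $e_1-e_2$ is a cycle outside $\CJ\CE_1$, boundaries lie in $\CI\CE_1\subset\CJ\CE_1$, and hence some element of $\delta(\CE_2)$ must lie outside $\CJ\CE_1$ no matter how later steps are chosen. You should either add minimality of the generators of $\CI$ as a standing assumption or fold it into the induction as the true base case.
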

\begin{proof} We will present the proof in the case of local ring, i.e. in case (\emph{i}). For the case  (\emph{ii}), the invertible elements of $\CO$ are simply non-zero constant polynomials, and all families considered have to be made of homogeneous elements.

\vspace{0.1cm}

\noindent
  Recall that in case (\emph{i}), all elements not in the maximal ideal $\CJ $ are invertible.
    Let as assume that the Koszul-Tate resolution $(S(\CE), \delta)$ obtained out of the Tate algorithm is not minimal. This means that $\exists i\in \mathbb N$ such that $\delta_{i+1}(\CE_{i+1}) \not\subset \CJ\CE_{i}\oplus \Sdeg(\CE)_i$.
    %Since  $i= -1$ is impossible since we always have $\delta(\CE_1)\subset \CI \subset \CJ$). 
    Recall that when applying the Tate algorithm at step $i$, one needs to choose  $\chi = \lbrace \chi_1, \dots, \chi_k \rbrace$ a set of generators of $H_{i-1}(S(\mathcal E_1 \oplus \cdots \oplus \mathcal E_{i-1}) , \delta_{i-1})$  and to associate to it a basis  $e_\chi = \lbrace e_{1}, \dots, e_{k}  \rbrace$ of $\CE_{i}$. We then extend $ \delta_i$ to $ S(\mathcal E_1 \oplus \cdots \oplus \mathcal E_{i})$  by imposing $\delta_i (e_i)= \chi_i$ for all $ i=1,\dots, k$. 
    If $\delta_{i+1}(\CE_{i+1}) \not\subset \CJ\CE_{i}\oplus \Sdeg(\CE)_i$, then there exists $f\in \CE_{i+1}$ such that 
    $$
    \delta_{i+1} f = \sum_{j=1}^k a_j e_j\, \, \mathrm{mod}\,\, \Sdeg(\CE)_i
    $$
where at least one of the elements $a_1, \dots, a_k$ does not belong to $ \CJ$,  hence is invertible. 
Without any loss of generality, we may assume  $a_1$ is invertible.
After multiplying the previous equality by $a_1^{-1}$, we obtain
    $$\delta_{i+1} \left(  a^{-1}_1 f \right) = e_{1} + \sum_{j=2}^k \frac{a_j}{a_1} e_j\, \, \mathrm{mod}\,\, \Sdeg (\CE)_i.$$
%    xxx what does that mean? yyy 
   % On the level of homology at degree $i-1$, this translates into:
   By applying $ \delta_{i}$ to the previous equality, one obtains
   $$
     \delta_i(e_1) + \frac{a_2}{a_1}\delta_i(e_2) + \dots + \frac{a_k}{a_1}\delta_i (e_k) = \delta_i(g).
    $$
    for some $g \in S^{\geq 2} S(\mathcal E_1 \oplus \cdots \oplus \mathcal E_{i-1}) $. In particular, $ \delta_i(g)= \delta_{i-1}(g) $ and the previous relation means that in $H_{i-1}(S(\mathcal E_1 \oplus \cdots \oplus \mathcal E_{i-1}) , \delta_{i-1})$ we have 
    $$
     \chi_1 + \frac{\overline{a_2}}{\overline{a_1}}\chi_2 + \dots + \frac{\overline{a_k}}{\overline{a_1}}\chi_k = 0.
    $$
    Therefore, $\chi$ is not a minimal set of generators of $H_{i-1}(S(\mathcal E_1 \oplus \cdots \oplus \mathcal E_{i-1}) , \delta_{i-1})$.

\vspace{0.1cm}

\noindent
Let us prove the converse implication.
    Assume that the Tate algorithm is applied, and that at the $i$-th step one chooses a set $\chi$ which is not a minimal set of generators of $H_{i-1}(S(\mathcal E_1 \oplus \cdots \oplus \mathcal E_{i-1}) , \delta_{i-1})$. Then, after relabeling the generators, we have a relation of the form
    $$
    \chi_1 + \overline{b_2}\chi_2 + \dots + \overline{b_k}\chi_k = 0
    $$
    saying that $\chi$ is redundant. On the level of generators of $\CE_{i}$ this equality reads as
    $$
    \left \{e_{1} + \sum_{2}^k b_j e_j + \dots \right \}\hbox{ is $ \delta$-closed.}
    $$
    Here $\dots$ stands for terms in $\Sdeg_{i+1}(\CE)$ which are governed by a choice of representatives of $H_i(S(\mathcal E_1 \oplus \cdots \oplus \mathcal E_i) , \delta_i) $. Since $\delta \Sdeg(\CE)\cap\CE \subset \CI\CE$, this cycle is not exact in $(S(\mathcal E_1 \oplus \cdots \oplus \mathcal E_{i}) , \delta_{i})$. Hence there exists $f\in \CE_{i+2}$ such that $\delta f = e_{1} + \sum_{j=2}^k b_j e_j + \dots$. Therefore, the Koszul-Tate resolution $(S(\CE), \delta)$ is not a minimal Koszul-Tate resolution.
\end{proof}

%In section \ref{}, the notions of "unnecessary and necessary families" was defined. 

%Let $\CI \subset \CJ \subset \CO $ be as above.

\vspace{0.1cm}

\noindent
Let us now explain how a Koszul-Tate resolution of $\CO/\CI $ minimal at  $ \CJ$ can be obtained as a quotient of an arbitrary Koszul-Tate resolution $(S(\mathcal E), \delta) $ of $\CO/\CI $ in cases i. and ii.
%Let $(S(\mathcal E), \delta) $ be a Koszul-Tate resolution of $\CO/\CI $. 
Let us first interpret the minimality condition in terms of the \emph{reduced complex} $(\CE_\bullet \otimes \CO/\CI, \underline{\delta}) $ of Appendix \ref{app:reduced}), and more precisely in terms of the complex
$  ( \mathcal E_\bullet \otimes \CO / \CJ , \underline{\delta} )$
as in Corollary \ref{cor:reducedWIthJ}.

\vspace{0.1cm}
\noindent
A direct consequence of Proposition  \ref{prop:Tateisminimal} and Lemma \ref{lem;asbi} is that the lower bounds of Theorem \ref{thm:lowernboundonrank} can be always reached:

\begin{corollary}
    Assume $\CO,\CI $ and $\CJ $ are as in Cases (i) and (ii) in the beginning of this section. There exists a Koszul-Tate resolution $(S(\CE), \delta)$ of $ \CO/\CI$ such that for all $ i \in \mathbb N$, the equality $ {\mathrm{rk}}(\CE_i)=b_i(\CO/\CI,\CJ)$ holds.
\end{corollary}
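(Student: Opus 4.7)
The plan is to construct the desired Koszul-Tate resolution by the Tate algorithm of Construction \ref{const:tate.algorithm}, making the optimal choice at each step, and then invoke the two preceding results to conclude. More precisely, at the $i$-th step of Construction \ref{const:tate.algorithm} one is required to choose a set of generators of the $\CO$-module $H_{i-1}(S(\mathcal E_1 \oplus \cdots \oplus \mathcal E_{i-1}) , \delta_{i-1})$; I propose to always choose a \emph{minimal} set of generators (in Case ii., a minimal set of homogeneous generators with respect to the polynomial grading).

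For this choice to be well-defined, the main preliminary point to verify is that the homology modules appearing at each step of the recursion are finitely generated. In Case i. this follows from the Noetherianness of $\CO$ together with the fact that each $\mathcal E_k$ can be taken to be a finitely generated free $\CO$-module (arguing by induction on $k$). In Case ii., the same argument works upon noting that $S(\CE_1 \oplus \cdots \oplus \CE_{i-1})$ is graded with respect to the polynomial degree and that the differential $\delta_{i-1}$ preserves this grading, so the homology is a finitely generated graded $\CO$-module and admits a minimal set of homogeneous generators by Theorem \ref{thm:basis.theorem}.

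Granted this, Proposition \ref{prop:Tateisminimal} applied at every step of the recursion tells us that the resulting Koszul-Tate resolution $(S(\CE),\delta)$ satisfies $\delta(\CE_{i+1}) \subset \CJ\CE_i \oplus S^{\geq 2}(\CE)_i$ for all $i \geq 1$, i.e.\ it is minimal at $\CJ$. Lemma \ref{lem;asbi} then immediately gives $\mathrm{rk}(\CE_i) = b_i(\CO/\CI,\CJ)$ for all $i \in \mathbb N$, completing the argument. The only step requiring genuine care is the finite generation issue addressed in the second paragraph; the rest of the proof is essentially a bookkeeping exercise combining Proposition \ref{prop:Tateisminimal} and Lemma \ref{lem;asbi}.
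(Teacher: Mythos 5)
Your proposal is correct and follows essentially the same route as the paper, which derives the corollary directly from Proposition \ref{prop:Tateisminimal} (run the Tate algorithm choosing a minimal generating set of the homology at each step) combined with Lemma \ref{lem;asbi}. Your extra paragraph checking finite generation of the intermediate homology modules (needed both for Theorem \ref{thm:basis.theorem} and for the finite-rank hypothesis of Lemma \ref{lem;asbi}) is a detail the paper leaves implicit, and it is handled correctly.
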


\vspace{0.1cm}

\noindent
By construction, the evaluation at $\CJ $ of the reduced complex is a complex of $\CO/\CJ \simeq \mathbb K $-modules, i.e. a complex of vector spaces. Now, for complex of vector spaces, a normal form exists \cite{Eisenbud}. To be more precise, there exist sequences $ (C_i)_{i \geq 0}$ and $ (H_i)_{i \geq 1}$ of vector spaces with $ C_0=0$ such that for all $i \geq 1 $, we have
 $$  \CE_i \otimes \CO/\CJ = C_{i} \oplus H_i \oplus C_{i-1}  $$
and the differential is given by $\underline{\delta} (c',h,c) = (c , 0 , 0) $ for any $ i \geq 1$ and any $c' \in C_{i} \, , \, h \in H_i \, , \, c \in C_{i-1} $.
In particular, $H_i $ is isomorphic to the $i$-th homology of the reduced complex evaluated at $\CJ $.
%which is canonical by Corollary  \ref{cor:reducedWIthJ}.
We have for all $i \geq 1 $, in view of Theorem \ref{thm:basis.theorem}, a decomposition of $\CE_i$ as follows:
\begin{equation}
\label{eq:decomponE} 
\begin{array}{rcl}\CE_i  &\, = \,& \CO \otimesk C_{i}  \, \oplus \, \CO \otimesk H_i \, \oplus \, \CO \otimesk C_{i-1}  \\
& \, =\, &   \CC_{i} \oplus \CE'_\bullet \oplus  \CC[-1]_{i} \end{array}
\end{equation}
where we have introduced the following free graded  $\CO$-submodules of $\CE_{\bullet}$:  $\CC[-1]_{\bullet} = \oplus_{i=1}^{\infty}\CO\otimesk C_{i}$, whose $i$-th degree component is $\CO\otimesk C_{i-1}$,  $\CC_{\bullet} = \oplus_{i=1}^{\infty}\CO\otimesk C_{i}$
, whose $i$-th degree component is $\CO\otimesk C_{i}$, and $ \mathcal E_\bullet' = \oplus_{i \geq 0} \CO \otimesk H_i$.

\begin{proposition}
\label{prop:quotientisKT}
Assume $\CO,\CI $ and $\CJ $ are as in Cases (i) and (ii) in the beginning of this section.
The quotient of a Koszul-Tate resolution $ (S (\mathcal E), \delta) $ by the ideal  generated by $\CC[-1]_{\bullet}\oplus \delta \left( \CC[-1]_{\bullet}\right) $ 
is a minimal Koszul-Tate resolution of $\CO/\CI $ isomorphic to $ S(\CE')$.
\end{proposition}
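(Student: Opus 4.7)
The strategy I would follow is to establish three claims in sequence: (a) the ideal $\mathcal J := \langle \CC[-1]_\bullet \oplus \delta(\CC[-1]_\bullet)\rangle$ is a $\delta$-stable graded ideal, so that $(S(\mathcal E)/\mathcal J, \bar\delta)$ is a well-defined DGCA; (b) as a graded commutative $\CO$-algebra, the quotient is isomorphic to $S(\CE')$; (c) the induced differential $\bar\delta$ makes it a minimal Koszul-Tate resolution of $\CO/\CI$. Claim (a) is immediate: the generators $\CC[-1]_\bullet$ and $\delta(\CC[-1]_\bullet)$ are visibly preserved by $\delta$ (using $\delta^2=0$), so $\delta(\mathcal J) \subset \mathcal J$.

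For (b), I would use the decomposition $\CE_i = \CC_i \oplus \CE'_i \oplus \CC[-1]_i$ and the structure of the reduced complex. The crucial input is that for every $c'\in \CC[-1]_i$, one has $\delta(c') = \tilde c + r$ where $\tilde c \in \CC_{i-1}$ lifts the image of $c'$ under the vector-space isomorphism $\underline\delta\colon C_i \xrightarrow{\sim} C_{i-1}$, while $r \in \CJ\CE_{i-1} + \Sdeg(\CE)_{i-1}$. Hence, modulo $\mathcal J$, any generator in $\CC_\bullet$ is congruent to an element in $\CJ\CE_\bullet + \Sdeg(\CE)_\bullet$ (with the $\CC[-1]$-components already killed). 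Iterating this substitution, the composition $S(\CE') \hookrightarrow S(\CE) \twoheadrightarrow S(\CE)/\mathcal J$ will be shown to be surjective; convergence of the iteration uses Nakayama's lemma in Case (i) (local Noetherian) and the graded Nakayama lemma via the positive polynomial grading in Case (ii). Injectivity follows because a nonzero element of $S(\CE')$ cannot live in $\mathcal J$, as $\mathcal J$ is generated by elements with obligatory components in $\CC \oplus \CC[-1]$ outside $S(\CE')$.

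For (c), I would extract acyclicity of the quotient from the short exact sequence $0 \to \mathcal J \to S(\CE) \to S(\CE)/\mathcal J \to 0$. Since $\underline\delta$ restricts to an isomorphism between the $C$-components of consecutive degrees, the sub-DGCA $\mathcal J$ is acyclic: one constructs an explicit $\CO$-linear homotopy by inverting $\delta$ on $\CC[-1]$-generators and extending by the Leibniz rule. The long exact sequence then yields $H_i(S(\CE)/\mathcal J, \bar\delta) = H_i(S(\CE),\delta)$ for all $i$, i.e.\ $\CO/\CI$ in degree $0$ and zero otherwise. Minimality is automatic by construction: since $\CE' = \CO \otimes_\K H_\bullet$ and $\underline\delta$ vanishes on $H_\bullet$, the induced differential satisfies $\bar\delta(\CE'_i) \subset \CJ\CE'_{i-1} \oplus \Sdeg(\CE')_{i-1}$.

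The main obstacle I anticipate is the algebraic identification in step (b), specifically establishing convergence of the Nakayama substitution uniformly across all symmetric powers and all degrees. In Case (ii) this is clean because each substitution strictly increases polynomial degree while preserving homological degree, so only finitely many substitutions occur in each graded piece. In Case (i) one must be careful with the $\CJ$-adic topology on $\CO$: the iteration produces formal series whose convergence relies on the local Noetherian assumption together with the finite generation of each $\CE_i/\CJ\CE_i$. A secondary subtlety is producing the contraction of $\mathcal J$ in step (c) compatibly with the multiplicative structure, which I would handle by giving the contraction first on the generating subcomplex $\CC[-1]_\bullet \oplus \delta(\CC[-1]_\bullet)$ via the inverse of $\underline\delta$ and then propagating it to the ideal using the Leibniz rule, checking that $h^2=0$ on generators suffices.
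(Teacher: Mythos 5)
Your overall architecture (quotient is $S(\CE')$ as an algebra; acyclicity transfers; minimality is automatic) matches the paper's, but two of your key steps would not go through as written. First, step (c): you propose to prove acyclicity of the ideal $\mathcal J$ by defining a contraction on the generators $\CC[-1]_\bullet \oplus \delta(\CC[-1]_\bullet)$ and "propagating it to the ideal using the Leibniz rule." A derivation extension of a homotopy is not a homotopy for the identity: if $h$ and $\delta$ are both derivations, then $h\delta+\delta h$ is a derivation, and on a product it acts as a weight-counting operator (equal to $k\cdot\mathrm{id}$ on the $k$-th symmetric power of the contracted generators), not as $\mathrm{id}$. Moreover this identity only holds for the "diagonal" part $\delta_s$ of the differential that exactly matches $\CC[-1]_i$ with $\CC_{i-1}$; the full $\delta$ differs from $\delta_s$ by terms valued in $\CJ\CE\oplus\Sdeg(\CE)$, and the ideal $\mathcal J$ contains mixed products with factors from $\CE'$ on which your $h$ has no action. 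The paper circumvents both problems: it first proves acyclicity of the non-unital \emph{subalgebra} $\mathcal R\simeq S^{\geq 1}(\mathcal C_\bullet\oplus\CC[-1]_\bullet)$ (where the counting-operator trick is legitimate and the full $\delta$ is conjugated to $\delta_s$ via the chain isomorphism $\mathfrak j$), and then, instead of your long exact sequence for the ideal, uses the tensor decomposition $S(\CE)\simeq S(\CE')\otimes(\CO\oplus\mathcal R)$ and a double-complex/filtration argument in which the acyclic columns are copies of $\CO\oplus\mathcal R$. If you insist on your short-exact-sequence route, you still need that same filtration (or a homological perturbation lemma) to get acyclicity of $\mathcal J= S(\CE')\otimes\mathcal R$ for the full differential; it does not follow from a contraction on generators alone.

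Second, step (b): your Nakayama-style iteration is more roundabout than necessary and, in Case (i), genuinely risks producing non-terminating substitutions (formal $\CJ$-adic series) rather than elements of $S(\CE)$ — completion is not available here. The paper's fix is a one-step change of free generators: using that $\ker\delta|_{\CC[-1]_\bullet}=0$ and that the transition matrix from $\{\CC,\CE',\CC[-1]\}$ to $\{\delta(\CC[-1]),\CE',\CC[-1]\}$ is invertible (Theorem \ref{thm:basis.theorem}, item 3 — invertible determinant in the local case, triangularity with respect to the polynomial grading in the graded case), one shows $S(\CE)=S\bigl(\delta(\CC[-1]_\bullet)\oplus\CE'_\bullet\oplus\CC[-1]_\bullet\bigr)$. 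After that, the quotient by the ideal generated by $\CC[-1]_\bullet\oplus\delta(\CC[-1]_\bullet)$ is tautologically $S(\CE')$, and your injectivity worry disappears (your stated injectivity argument is also too quick, since the generators $\delta(c')$ do have components in $\CE'$ and $\Sdeg(\CE)$). I recommend replacing the iteration by this invertible-change-of-generators argument and replacing the Leibniz-extended contraction by the filtration on $S(\CE')\otimes(\CO\oplus\mathcal R)$.
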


\noindent
First, let us show that $ \CC[-1]_{\bullet}\cap \delta \left(  \CC[-1]_{\bullet} \right) = 0$, so the direct sum is well-defined. If the intersection is not zero, then any $a \in  \CC[-1]_{\bullet}\cap \delta \left( \CC[-1]_{\bullet} \right)$ belongs as well to the kernel of $\delta$. Let $s:\CC[-1]_{\bullet} \longrightarrow \CC_{\bullet}$ and $s^{-1}: \CC_{\bullet} \longrightarrow \CC[-1]_{\bullet}$ be the suspension and the desuspension maps. Let $\delta_s$ be the degree $-1$ derivation  of  $S(\CE_{\bullet})$ whose restriction to $\CE$ maps $(a, b, c)$ to $(c, 0, 0)$. Here we used  decomposition \eqref{eq:decomponE}.

\begin{lemma}
\label{lem:ker=0}
    $\ker \delta|_{\CC[-1]_{\bullet}} = 0$.
\end{lemma}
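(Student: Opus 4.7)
The plan is to isolate the ``leading part'' of $\delta$ restricted to $\CC[-1]_\bullet$ via an auxiliary $\CO$-linear map and to show that this map is already injective, which immediately implies the lemma. Let $\pi_{\CE_\bullet}\colon S(\CE)_\bullet \twoheadrightarrow \CE_\bullet$ be the projection killing $\Sdeg(\CE)$, and let $\pi_{\CC_\bullet}\colon \CE_\bullet \twoheadrightarrow \CC_\bullet$ be the projection along the decomposition \eqref{eq:decomponE}. I will study the composition
\[
\phi \;:=\; \pi_{\CC_\bullet} \circ \pi_{\CE_\bullet} \circ \delta \;\colon\; \CC[-1]_\bullet \longrightarrow \CC_\bullet.
\]
Since $\phi(a)$ is a summand of $\delta(a)$ for every $a\in \CC[-1]_\bullet$, once $\phi$ is known to be injective, $\delta a=0$ will force $\phi(a)=0$ and hence $a=0$.

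The first key step is to check the congruence $\phi \equiv s \bmod \CJ$, where $s$ is the suspension introduced right before the lemma. This is built into the definition of the decomposition \eqref{eq:decomponE}: reducing modulo $\CJ\CE + \Sdeg(\CE)$ converts $\delta$ into the reduced differential evaluated at $\CJ$, which has the normal form $\underline{\delta}(c',h,c)=(c,0,0)$. Thus the summand $C_{i-1}\subset \CE_i\otimes\CO/\CJ$ corresponding to $\CC[-1]_i$ is sent isomorphically via $s$ onto the summand $C_{i-1}\subset\CE_{i-1}\otimes\CO/\CJ$ corresponding to $\CC_{i-1}$, while the $\CE'_{i-1}$- and $\CC[-1]_{i-1}$-components vanish modulo $\CJ$. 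Lifting to $\CO$-modules, this yields precisely $\phi \equiv s \bmod \CJ$.

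The second step is a Nakayama-type determinant argument applied to a single element. Given $a\in\CC[-1]_i$, pick an (in case (ii), homogeneous) $\CO$-basis $(e_j)$ of $\CC[-1]_i$ and write $a=\sum_{j\in J} f_j e_j$ with $J$ finite. Setting $e'_j:=s(e_j)$ and expanding $\phi(e_j)=\sum_m M_{mj}\, e'_m$ with $M_{mj}\in \CO$, the previous step yields $M_{mj}\equiv \delta_{mj}\bmod \CJ$ whenever $m,j\in J$. Projecting the equation $\phi(a)=0$ onto the $e'_m$ with $m\in J$, one obtains a square linear system $M|_J\cdot (f_j)_{j\in J}=0$ whose matrix satisfies $\det M|_J\equiv 1\bmod\CJ$. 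In case (i), $\det M|_J$ is then a unit in the local ring $\CO$ and $M|_J$ is invertible, forcing $f_j=0$. In case (ii), $\det M|_J$ is a non-zero element of the integral domain $\CO$; Cramer's identity $\mathrm{adj}(M|_J)\cdot M|_J=\det(M|_J)\cdot \id$ combined with $M|_J\cdot(f_j)=0$ gives $\det(M|_J)\,f_j=0$, hence again $f_j=0$. In either case $a=0$, so $\phi$, and a fortiori $\delta|_{\CC[-1]_\bullet}$, is injective.

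The main obstacle is the first step: one must verify carefully that the lifts implicit in \eqref{eq:decomponE} make the ``leading coefficient'' of $\phi$ coincide exactly with the suspension $s$ rather than with some more general isomorphism, i.e.\ that the $\CC[-1]$-to-$\CC$ block of $\delta$ modulo $\CJ$ is the desuspension identity on $C_{i-1}$. Once this normal form is nailed down, the rest reduces to the classical observation that a matrix congruent to the identity modulo a proper ideal is invertible over a local ring and has non-zero determinant over an integral domain.
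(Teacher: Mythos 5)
Your proof is correct and follows essentially the same route as the paper's: both isolate the $\CC[-1]_i\to\CC_{i-1}$ block of $\delta$, observe that it is congruent to the suspension $s$ modulo $\CJ$ (because $\delta-\delta_s$ is valued in $\CJ\CE_{i-1}\oplus S^{\geq 2}(\CE)_{i-1}$), and deduce injectivity of that block. The only divergence is in Case ii, where the paper argues by induction on the polynomial degree of the homogeneous components of $a$ (the correction term strictly raises $\mathrm{pol}$-degree since its entries lie in $\CJ$), whereas you use $\det M|_J\equiv 1\bmod\CJ$ together with Cramer's identity in the integral domain $\mathbb K[x_1,\dots,x_n]$; both are valid, and yours has the minor advantage of handling the two cases almost uniformly without invoking the grading at that step.
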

\begin{proof}
    From the definition of $\delta_s$ it follows that the difference $\mathcal D\coloneq (\delta - \delta_s)|_{\CC[-1]_i}$ is valued in $\CJ  \CE_{i-1}  \oplus  S^{\geq 2}(\CE)_{i-1}$.
Let $a\in \ker \delta|_{\CC[-1]_i}$, then $\mathrm{p}_{\CC_{i-1}} \circ \delta(a) = \delta_s (a) + \mathrm{p}_{\CC_{i-1}} \circ \mathcal D (a) = (s + \mathrm{p}_{\CC_{i-1}}\circ \mathcal D)(a) = 0$, where $\mathrm{p}_{\CC_{i-1}}$ is the projection on $\CC_{i-1}$. In Case i. the map $s + \mathrm{p}_{\CC_{i-1}}\circ \mathcal D: \CC[-1]_i \longrightarrow \CC_{i-1}$ has an invertible determinant, therefore it is invertible, so we deduce $a$ = 0. In Case ii. we decompose $a$ into a finite sum of its homogeneous components with respect to $\mathrm{pol}(.)$: $sa = \sum_{i\in B} a^{(i)}$ for some ordered set $B$, such that $\mathrm{pol}(a^{(i)}) < \mathrm{pol}(a^{(j)})$ if $i < j$. Note that $s$ does not change the polynomial degree. Let $a^{(0)}$ be the first component of this decomposition. Then $(sa)^{(0)} = -\mathrm{p}_{\CC_{i-1}}\circ \mathcal D(a^{(0)}) = Xa^{(0)}$, where $X$ is a matrix with components belonging to $\CJ$. Therefore $a^{(0)} = 0$. By repeating the argument, we establish $a = 0$.
\end{proof}
\noindent
From Lemma \ref{lem:ker=0} it follows that $ \CC[-1]_{\bullet} \simeq \delta \left( \CC[-1]_{\bullet} \right)$. Therefore, $\delta \left( \CC[-1]_{\bullet} \right)$ is a free $\CO$-module. Note that $\delta \left( \CC[-1]_{\bullet} \right) \cap \CEb' = 0$, since $\mathrm{p}_{\CC_{i-1}} \circ \delta$ is invertible as established in Lemma  \ref{lem:ker=0}.
\begin{lemma}
    $S(\CEb) = S(\delta (\CC[-1]_{\bullet})\oplus \CEb' \oplus{\mathcal C[-1]}_{\bullet} )$.
\end{lemma}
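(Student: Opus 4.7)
The plan is to prove that the natural graded commutative $\CO$-algebra morphism $\phi \colon S(\CG_\bullet) \to S(\CEb)$ induced by the inclusion $\CG_\bullet \hookrightarrow S(\CEb)$, where $\CG_\bullet := \delta(\CC[-1]_\bullet) \oplus \CEb' \oplus \CC[-1]_\bullet$, is an isomorphism of graded algebras, which yields the desired equality. In view of Lemma \ref{lem:ker=0} and the observation immediately preceding it that $\delta(\CC[-1]_\bullet) \cap \CEb' = 0$, the module $\CG_\bullet$ is a well-defined free (Case i) or homogeneously free (Case ii) graded $\CO$-submodule of $S(\CEb)$, with the same graded rank as $\CEb = \CC_\bullet \oplus \CEb' \oplus \CC[-1]_\bullet$.

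The central step is to analyse the linearisation $\pi_1 \circ \phi|_{\CG_\bullet} \colon \CG_\bullet \to \CEb$, where $\pi_1 \colon S(\CEb) \twoheadrightarrow \CEb$ denotes the projection onto polynomial degree one, and to show that it is an isomorphism of graded $\CO$-modules. On $\CEb' \oplus \CC[-1]_\bullet$ this linearisation is the canonical inclusion. On $\delta(\CC[-1]_\bullet)$, for $a \in \CC[-1]_i$ one writes $\delta(a) = \delta_s(a) + \mathcal D(a) = s(a) + \mathcal D(a)$, where $s \colon \CC[-1]_i \to \CC_{i-1}$ is the suspension and $\mathcal D(a) \in \CJ\CE_{i-1} \oplus S^{\geq 2}(\CE)_{i-1}$; applying $\pi_1$ gives $\pi_1 \delta(a) = s(a) + r$ with $r \in \CJ \CE_{i-1}$. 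Thus, with respect to the target decomposition $\CEb = \CC_\bullet \oplus \CEb' \oplus \CC[-1]_\bullet$, the linearisation is block-triangular with identity diagonal on the $\CEb'$ and $\CC[-1]_\bullet$ blocks and with $a \mapsto s(a) + \pi_1(\mathcal D(a))$ on the block $\delta(\CC[-1]_\bullet) \to \CC_\bullet$. Invertibility of this last map follows from precisely the Nakayama-style argument of Lemma \ref{lem:ker=0} in Case i, or from the homogeneous polynomial-degree argument of the same lemma in Case ii; so the whole linearisation is an isomorphism.

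To conclude, one lets $\alpha \colon \CEb \to \CG_\bullet$ be the inverse of the linearisation and extends it to a graded algebra morphism $\tilde \alpha \colon S(\CEb) \to S(\CG_\bullet)$. The composite $\phi \circ \tilde \alpha$ is then a graded algebra endomorphism of $S(\CEb)$ whose restriction to polynomial degree one is the identity; filtering $S(\CEb)$ by the ideals $S^{\geq k}(\CEb)$, the associated graded endomorphism is the identity, and a triangular inversion (again Nakayama in Case i, homogeneity in Case ii) shows that $\phi \circ \tilde \alpha$ is an automorphism. By a symmetric argument $\tilde \alpha \circ \phi$ is an automorphism of $S(\CG_\bullet)$, so $\phi$ itself is an isomorphism. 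The main technical obstacle I anticipate is this final lifting step, i.e.\ promoting a linear isomorphism to an algebra isomorphism through the polynomial-degree filtration; however, this amounts to iterating the same triangular-inversion technique already established in the proof of Lemma \ref{lem:ker=0}, and so presents no essential new difficulty.
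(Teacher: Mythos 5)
Your argument is correct and is essentially the paper's own proof: both rest on writing the new generators as an invertible linear transformation of the old ones plus terms of polynomial degree at least two (the invertibility of the linear block being exactly the Nakayama-style/homogeneity argument of Lemma \ref{lem:ker=0}), and then inverting this triangular change of generators. The only cosmetic difference is that the paper organizes the final inversion as an induction on homological degree, whereas you filter by polynomial degree in $S^{\geq k}(\CEb)$; these coincide here because in each fixed homological degree that filtration is finite.
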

\begin{proof}
    It is enough to establish that the generators of $\CEb$ can be expressed through the combinations of the generators $\delta (\CC[-1]_{\bullet})\oplus \CEb' \oplus{\mathcal C[-1]}_{\bullet}$. Let $e_1, \dots, e_n$ be a basis of $\CE_i$ and $f_1, \dots, f_m$ be a basis of $\delta (\CC[-1]_{\bullet})\oplus \CEb' \oplus{\mathcal C[-1]}_{\bullet}$. Then it is clear from the previous discussion that $m = n$ and $(f_1, \dots, f_n) = A(e_1, \dots, e_n) + R(e)$, where $A$ is an invertible matrix and $R(e)$ is an $n$-tuple where each component is at least quadratic in $e_i$. By induction in the homological degree we establish the statement of the lemma.
\end{proof}
\vspace{1mm}
\noindent
Let us introduce a map $\mathfrak j \colon S^{\geq 1}(\mathcal C_{\bullet} \oplus {\mathcal C}[-1]_{\bullet}) \longrightarrow S(\CE)$, a graded algebra morphism that extends a map $j\colon \mathcal C_{\bullet} \oplus {\mathcal C}[-1]_{\bullet} \longrightarrow \delta\left (\CC[-1]_{\bullet} \right) \oplus \CEb' \oplus{\mathcal C[-1]}_{\bullet}\,\,\,$, $(a, b) \mapsto ( \delta \circ s^{-1}(a), 0,  b)$. Denote by $\mathcal R$ the image of $\mathfrak j$.

\begin{corollary}
\label{cor:iso.red}
The graded algebra morphism $\mathfrak j $ is injective, and therefore it is a graded algebra isomorphism
$\mathcal R \simeq  S^{\geq 1}(\mathcal C_{\bullet} \oplus {\mathcal C[-1]}_{\bullet}) $. The quotient $ S(\CE)/\mathcal R$ is isomorphic to  $S(\CE')$.
\end{corollary}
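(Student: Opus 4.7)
The approach is to reduce both halves of the corollary to the decomposition
$$S(\CE) \;=\; S\bigl(\delta(\CC[-1]_\bullet) \,\oplus\, \CE_\bullet' \,\oplus\, \CC[-1]_\bullet\bigr)$$
provided by the preceding lemma, which exhibits $S(\CE)$ as a free graded symmetric $\CO$-algebra on a direct sum of three projective submodules. By the universal property of symmetric algebras, this gives a canonical $\CO$-algebra identification
$$S(\CE) \;\simeq\; S\bigl(\delta(\CC[-1]_\bullet)\oplus\CC[-1]_\bullet\bigr) \,\otimes_{\CO}\, S(\CE_\bullet').$$
Both claims will follow once $j$ is shown to be an $\CO$-module isomorphism from $\CC_\bullet\oplus\CC[-1]_\bullet$ onto the ``outer'' two summands $\delta(\CC[-1]_\bullet)\oplus\CC[-1]_\bullet$.

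For this, the $\CC[-1]_\bullet$-component of $j$ is the identity, so only the component $\delta\circ s^{-1}\colon \CC_\bullet\to \delta(\CC[-1]_\bullet)$ needs inspection. It is surjective by the very definition of its target, and it is injective because $s^{-1}$ is an $\CO$-module isomorphism $\CC_\bullet\to\CC[-1]_\bullet$ while $\ker(\delta|_{\CC[-1]_\bullet})=0$ by Lemma \ref{lem:ker=0}. Hence $j$ is an $\CO$-module isomorphism onto $\delta(\CC[-1]_\bullet)\oplus\CC[-1]_\bullet$. The morphism $\mathfrak j$ is the graded-algebra extension of $j$; under the tensor factorization above it takes values in the first tensor factor (times $1$), and the bijectivity of $j$ on generators then promotes to an algebra isomorphism
$$\mathfrak j\colon S^{\geq 1}(\CC_\bullet\oplus\CC[-1]_\bullet)\;\xrightarrow{\ \sim\ }\; S^{\geq 1}\bigl(\delta(\CC[-1]_\bullet)\oplus\CC[-1]_\bullet\bigr)\;=\;\mathcal R.$$
This settles the injectivity assertion and the identification $\mathcal R\simeq S^{\geq 1}(\CC_\bullet\oplus\CC[-1]_\bullet)$.

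For the second assertion, under the same tensor factorization the two-sided ideal of $S(\CE)$ generated by $\mathcal R$ corresponds to $S^{\geq 1}\bigl(\delta(\CC[-1]_\bullet)\oplus\CC[-1]_\bullet\bigr)\otimes_\CO S(\CE_\bullet')$---the augmentation ideal in the first tensor factor, times the full second factor. Quotienting collapses the first factor onto its degree-zero piece $\CO$, leaving $\CO\otimes_\CO S(\CE_\bullet')\simeq S(\CE_\bullet')$, which is exactly the claim.

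I do not anticipate any serious analytical obstacle; the only point requiring care is a notational one, namely that in ``$S(\CE)/\mathcal R\simeq S(\CE_\bullet')$'' the symbol $\mathcal R$ must be read as the \emph{ideal} it generates in $S(\CE)$, in keeping with Proposition \ref{prop:quotientisKT}, whereas the first half of the corollary uses $\mathcal R$ only as an $\CO$-submodule. Once the three-fold direct-sum structure of the generating module of $S(\CE)$ and the bijectivity of $\delta\circ s^{-1}$ are in hand, everything else is a formal consequence of the universal property of free graded symmetric algebras over~$\CO$.
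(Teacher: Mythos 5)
Your argument is correct and follows exactly the route the paper intends: injectivity of $j$ from Lemma \ref{lem:ker=0} plus the fact that $\delta(\CC[-1]_\bullet)\oplus\CC[-1]_\bullet$ is a direct summand of the new generating module, then the standard tensor factorization $S(A\oplus B)\simeq S(A)\otimes_\CO S(B)$ to read off both the isomorphism onto $\mathcal R$ and the quotient. Your remark that $\mathcal R$ must be read as the ideal it generates in the quotient statement is also the correct reading, consistent with how Proposition \ref{prop:quotientisKT} uses it.
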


\noindent
By construction, $\delta $ restricts to $ \mathcal R$, and we still denote by $ \delta$ this restriction.

\begin{lemma}
The complex $(\mathcal R, \delta)$ is acyclic. 
\end{lemma}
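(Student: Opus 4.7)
The plan is to transfer the statement to a standard contractible Koszul complex using the isomorphism $\mathfrak{j}$ of Corollary \ref{cor:iso.red}. Under this isomorphism, $\delta$ on $\mathcal{R}$ pulls back to a degree $-1$ derivation $\tilde\delta := \mathfrak{j}^{-1} \circ \delta \circ \mathfrak{j}$ on $S^{\geq 1}(\mathcal{C}_\bullet \oplus \mathcal{C}[-1]_\bullet)$, which I would first compute on the generating $\mathcal{O}$-module $\mathcal{C}_\bullet \oplus \mathcal{C}[-1]_\bullet$. For $b \in \mathcal{C}[-1]_\bullet$, one has $\mathfrak{j}(0,b) = (0,0,b)$, so $\delta(\mathfrak{j}(0,b)) = \delta(b) = (\delta(b),0,0) = \mathfrak{j}(s(b),0)$ by the very definition of $\mathfrak{j}$; hence $\tilde\delta(b) = s(b) \in \mathcal{C}_\bullet$. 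For $a \in \mathcal{C}_\bullet$, the element $\mathfrak{j}(a,0) = \delta(s^{-1}(a))$ lies in the image of $\delta$, so $\delta^2 = 0$ forces $\tilde\delta(a) = 0$. Therefore $(\mathcal{R},\delta) \simeq (S^{\geq 1}(\mathcal{C}_\bullet \oplus \mathcal{C}[-1]_\bullet), \tilde\delta)$, where $\tilde\delta$ is the unique derivation restricting to the suspension $s$ on $\mathcal{C}[-1]_\bullet$ and to zero on $\mathcal{C}_\bullet$.

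It remains to prove that this ``tautological Koszul complex'' is acyclic. Since $\tilde\delta$ is $\mathcal{O}$-linear and both $\mathcal{C}_\bullet = \mathcal{O} \otimes_\mathbb{K} C_\bullet$ and $\mathcal{C}[-1]_\bullet = \mathcal{O} \otimes_\mathbb{K} C[-1]_\bullet$ are obtained by extension of scalars from graded $\mathbb{K}$-vector spaces, we have
$$
\bigl(S^{\geq 1}_\mathcal{O}(\mathcal{C}_\bullet \oplus \mathcal{C}[-1]_\bullet), \tilde\delta\bigr) \;\simeq\; \mathcal{O} \otimes_\mathbb{K} \bigl(S^{\geq 1}_\mathbb{K}(C_\bullet \oplus C[-1]_\bullet), \tilde\delta\bigr),
$$
so it suffices to establish acyclicity over $\mathbb{K}$.

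Fix a homogeneous $\mathbb{K}$-basis $\{v_\alpha\}_{\alpha \in I}$ of $C_\bullet$; then $\{s^{-1}v_\alpha\}$ is a basis of $C[-1]_\bullet$. One obtains a tensor decomposition
$$
S_\mathbb{K}(C_\bullet \oplus C[-1]_\bullet) \;\simeq\; \bigotimes_{\alpha \in I} S_\mathbb{K}\bigl(\mathbb{K}\, v_\alpha \oplus \mathbb{K}\, s^{-1}v_\alpha\bigr),
$$
under which $\tilde\delta$ splits as a sum $\sum_\alpha \tilde\delta_\alpha$ of pairwise commuting partial differentials, each acting on a single tensor factor by $\tilde\delta_\alpha(s^{-1}v_\alpha) = v_\alpha$, $\tilde\delta_\alpha(v_\alpha) = 0$. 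Each factor is a classical one-variable Koszul complex (a polynomial-times-exterior algebra whose parities depend on $|v_\alpha|$), and its cohomology is $\mathbb{K}$ concentrated in the bottom polynomial degree. By the Künneth formula, the tensor product has cohomology $\mathbb{K}$ concentrated in polynomial degree zero. Restricting to $S^{\geq 1}$ kills this remaining class, yielding the desired acyclicity.

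The only non-routine step is the explicit determination of $\tilde\delta$ on generators, where the identity $\delta^2 = 0$ together with the defining formula $\mathfrak{j}(a,b) = (\delta s^{-1}(a), 0, b)$ must be used carefully; once that is in place, the acyclicity is a direct consequence of standard Koszul computations. An alternative avoiding Künneth would be to construct an explicit contracting homotopy by extending $h_0 : a \mapsto s^{-1}(a)$ (on $\mathcal{C}_\bullet$) and $h_0|_{\mathcal{C}[-1]_\bullet} = 0$ as a derivation of $S^{\geq 1}(\mathcal{C} \oplus \mathcal{C}[-1])$; then $[\tilde\delta, h_0]$ equals the Euler derivation counting polynomial degree, which is invertible on $S^{\geq 1}$ in characteristic zero, producing the homotopy directly.
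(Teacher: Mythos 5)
Your proof is correct, and its first half coincides with the paper's: you transport $\delta|_{\mathcal R}$ through the algebra isomorphism $\mathfrak j$ and identify the transferred differential on generators as $b\mapsto s(b)$, $a\mapsto 0$ (the paper's $\delta_s$), using exactly the two observations that $\mathfrak j(s(b),0)=\delta(b)$ by definition and that $\delta^2=0$ kills the generators coming from $\mathcal C_\bullet$. For the remaining acyclicity of $\bigl(S^{\geq 1}(\mathcal C_\bullet\oplus\mathcal C[-1]_\bullet),\delta_s\bigr)$ you take a mildly different primary route: reduce to $\mathbb K$ by extension of scalars, split into one-variable Koszul factors, and apply K\"unneth, with the class in $S^0$ removed by the restriction to $S^{\geq 1}$. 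The paper instead extends $a\mapsto s^{-1}(a)$ to a derivation $h$ and observes that $[\delta_s,h]$ is multiplication by $k$ on $S^k$ --- which is precisely the ``alternative'' you sketch at the end. Both arguments are valid and both implicitly require that positive integers be invertible in $\mathbb K$ (the K\"unneth route through the odd-generator factors, the homotopy route through dividing by $k$), so neither is more general than the other; the homotopy version is shorter and avoids choosing a basis, while yours makes the local structure of the contractible factors explicit.
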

\begin{proof}
Let us equip $ S^{\geq 1}(\mathcal C_{\bullet} \oplus {\mathcal C[-1]}_{\bullet}) $ with the differential $d_s$ given by $d_s(a, b) = (sb, 0)$ for all $ a \in \mathcal C_{\bullet}, b \in {\mathcal C[-1]}_{\bullet}$, then extended to a graded derivation. By construction, $\mathfrak j$ is a chain map. 
Consider the degree $ +1$ $\CO$-linear derivation ${h} $ on $S^{\geq 1}(\mathcal C_{\bullet} \oplus {\mathcal C[-1]}_{\bullet})$
  which is defined on $\mathcal C_{\bullet} \oplus {\mathcal C[-1]}_{\bullet}$ by
  $$ h (a,b) = (0, s^{-1}a) $$ 
  for all $a \in \CC[-1]_{\bullet}    $ and $ b \in {\mathcal C}_{\bullet}$.
  We have $ h \circ \delta_s + \delta_s \circ h= {\mathrm{id}} $ on $\mathcal C_{\bullet} \oplus \widehat{\mathcal C}_{\bullet} $.
 Hence for any $a \in S^{k}(\mathcal C_{\bullet} \oplus \widehat{\mathcal C}_{\bullet})$ we have $(\delta_s \circ h + h \circ \delta_s)a = ka$. The acyclicity of $(S^{\geq 1}(\mathcal C_{\bullet} \oplus \widehat{\mathcal C}_{\bullet}), \delta_s)$ 
 follows.

\end{proof}

\begin{proof}[Proof of Proposition \ref{prop:quotientisKT}]
By Corollary \ref{cor:iso.red},
 there is a natural graded algebra isomorphism
  \begin{equation}\label{eq:desirediso} S(\CEb) \simeq S(\CEb') \otimes\left( \CO \oplus \mathcal R \right) ,\end{equation}
under which the differential $\delta $ decomposes as follows, 
  $$
\xymatrix{  
   S(\CEb')_{i-3} \otimes \mathcal R_{j+1} & & & \\
    & S(\CEb')_{i-2} \otimes \mathcal R_{j+1}& & \\
     & & S(\CEb')_{i-1} \otimes \mathcal R_j& S(\CEb')_i \otimes \mathcal R_j \ar[d]^{{\mathrm{id}} \otimes \delta} 
    \ar[l]^{ \, \delta' \otimes {\mathrm{id}}
    } \ar[ull]\ar[uulll]
    \\
      & & & S(\CEb')_i \otimes \mathcal R_{j-1}\\}  
$$
with the understanding that $ \mathcal R_{0} = \CO  $. Above, the horizontal and vertical components of the differential  $\delta$ are $ \delta'\otimes {\mathrm{id}}$ and ${\mathrm{id}} \otimes \delta $ respectively, where $ \delta'$ is the differential induced by $ \delta$ on  $ S(\CEb')=S(\CEb)/\mathcal R$.
\vspace{0.1cm}
\noindent
Since the homology of all vertical lines is zero in every degree except in degree $0$ where it is $\CO $, the homology of $(S(\CEb),\delta) $ coincides with the homology of
$(S(\CEb'), \delta')$.
%in every degree. 
Hence $(S(\CE'), \delta') $ is again a Koszul-Tate resolution of $ \CO/\CI$. Last, the reduced complex of $(S(\CEb'),\delta') $ is by construction equipped with a differential valued in $\CJ \CEb' $, so that the reduced complex tensored with $\CO/\CJ$ has a trivial differential. Since its computes a complex whose homology in degree $i$ is $ b_i(\CO/\CI,\CJ)$, this completes the proof. 
\end{proof}

\bibliographystyle{alpha}
\bibliography{bibliography}
  \end{document}